\newcommand*{\LargerCdot}{{\raisebox{0ex}{\scalebox{2}{$\cdot$}}}}
\newcommand{\bull}{{\bullet}}
\newcommand{\vol}{{\rm vol}}
\newcommand{\ord}{{\rm ord}}
\newcommand{\fm}{\mathfrak{m}}
\newcommand{\fa}{\mathfrak{a}}
\newcommand{\cO}{\mathcal{O}}
\newcommand{\bR}{\mathbb{R}}
\newcommand{\bC}{\mathbb{C}}
\newcommand{\bZ}{\mathbb{Z}}
\newcommand{\lct}{{\rm lct}}
\newcommand{\wt}{{\rm wt}}
\newcommand{\Val}{{\rm Val}}
\newcommand{\fe}{{\mathfrak{e}}}
\newcommand{\hvol}{{\widehat{\rm vol}}}
\newcommand{\bx}{{\bf x}}
\newcommand{\hV}{{\widehat{V}}}
\newcommand{\cF}{{\mathcal{F}}}
\newcommand{\bV}{{\mathbb{V}}}
\newcommand{\cI}{{\mathcal{I}}}
\newcommand{\hE}{{\widehat{E}}}
\newcommand{\tcF}{{\widetilde{\mathcal{F}}}}
\newcommand{\bQ}{{\mathbb{Q}}}
\newcommand{\cX}{{\mathcal{X}}}
\newcommand{\cV}{{\mathcal{V}}}
\newcommand{\cL}{{\mathcal{L}}}
\newcommand{\ft}{{\mathfrak{t}}}
\newcommand{\cR}{{\mathcal{R}}}
\newcommand{\cW}{{\mathcal{W}}}
\newcommand{\bE}{{\mathbb{E}}}
\newcommand{\cG}{{\mathcal{G}}}
\newcommand{\bT}{{\mathbb{T}}}
\newcommand{\ocV}{{\bar{\cV}}}
\newcommand{\bP}{{\mathbb{P}}}
\newcommand{\CM}{{\rm CM}}
\newcommand{\bF}{{\mathbb{F}}}
\newcommand{\tcI}{{\widetilde{\mathcal{I}}}}
\newcommand{\ep}{{\epsilon}}
\newcommand{\bin}{{\bf in}}
\newcommand{\cC}{{\mathcal{C}}}
\newcommand{\cS}{{\mathcal{S}}}
\newcommand{\covol}{{\rm covol}}
\newcommand{\cA}{{\mathcal{A}}}
\newcommand{\bD}{{\mathbb{D}}}
\newtheorem{thm}{Theorem}[section]
\newtheorem{lem}[thm]{Lemma}
\newtheorem{cor}[thm]{Corollary}
\newtheorem{defn}[thm]{Definition}
\newtheorem{prop}[thm]{Proposition}
\newtheorem{rem}[thm]{Remark}
\newtheorem{examp}[thm]{Example}
\begin{document}

\title{K-semistability is equivariant volume minimization}
\author{Chi Li}
%\date{}

\maketitle{}

\abstract{This is a continuation to the paper \cite{Li15a} in which a problem of minimizing normalized volumes over $\bQ$-Gorenstein klt singularities was proposed. Here we consider its relation with K-semistability, which is an important concept in the study of K\"{a}hler-Einstein metrics on Fano varieties. In particular, we prove that for a $\bQ$-Fano variety $V$, the K-semistability of $(V, -K_V)$ is equivalent to the condition that the normalized volume is minimized at the canonical valuation $\ord_V$ among all $\bC^*$-invariant valuations on the cone associated to any positive Cartier multiple of $-K_V$. In this case, it's shown that $\ord_V$ is the unique minimizer among all $\bC^*$-invariant quasi-monomial valuations. These results allow us to give characterizations of K-semistability by using equivariant volume minimization, and also by using inequalities involving divisorial valuations over $V$. }

\tableofcontents

\section{Introduction}

Valuation theory is a classical subject which has been studied in different fields of mathematics for more than a century. As a celebrated example of their applications in algebraic geometry, Zariski proved the resolution of surface singularities by proving a local uniformization theorem showing that every valuation of a surface can be resolved. Recently valuations have also become important in studying singularities of plurisubharmonic functions in complex analysis (see for example \cite{BFJ08}). In this paper, we use the tool of valuation to study K-stability of Fano varieties.

The concept of K-stability was introduced by Tian \cite{Tia97} in his study of K\"{a}hler-Einstein problem on any Fano manifold. It was introduced there to test the properness of so-called Mabuchi-energy. 
This energy is defined on the space of smooth K\"{a}hler potentials whose critical points are K\"{a}hler-Einstein metrics. 
Tian proved in \cite{Tia97} that the properness of the Mabuchi-energy is equivalent to the existence of a K\"{a}hler-Einstein metric on a smooth Fano manifold without holomorphic vector field. 
The notion of K-stability was later generalized to a more algebraic formulation in \cite{Don02}. By the work of many people on Yau-Tian-Donaldson conjecture spanning a long period of time (see in particular \cite{Tia97}, \cite{Ber15} and \cite{CDS15, Tia15}), we now know that, for a smooth Fano manifold $V$, the existence of K\"{a}hler-Einstein metric on $V$ is equivalent to K-polystability of $V$. In \cite{Li13}, based on recent progresses on the subject, the author proved some equivalent characterizations of K-semistability. Roughly speaking, for smooth Fano manifolds, ``K-semistable" is equivalent to ``almost K\"{a}hler-Einstein'' (see \cite{Li13} for details). In particular, it was proved there that K-semistability is equivalent to the lower boundedness of the Mabuchi-energy.

Motivated by the work of \cite{MSY08} in the study of Sasaki-Einstein metric, which is a metric structure closely related to the K\"{a}hler-Einstein metric, in \cite{Li15a} the author proposed to understand K-semistability of Fano manifolds from the point view of volume minimizations. In this point of view, for any Fano manifold $V$, we consider the space of real valuations on the affine cone $X=C(V,-K_V)$ with center being the vertex $o$ (we denote this space by $\Val_{X,o}$) and study a normalized volume functional $\hvol$ on $\Val_{X,o}$. Such space of valuations were studied in \cite{JM10, BFFU13} with close relations to the theory of Berkovich spaces. The volume functional $\hvol(v)$ of a real valuation $v$ is defined to be $A_X(v)^n \vol(v)$, where $A_X(v)$ is the log discrepancy of $v$ (see \cite{JM10,BFFU13, Kol13}) and $\vol(v)$ is the volume of $v$ (see \cite{ELS03, Laz96, BFJ12}). In \cite{Li15a}, we conjectured that the Fano manifold $V$ is K-semistable if and only if $\hvol$ is minimized at the canonical valuations $\ord_V$. Here we prove one direction of this conjecture, i.e. volume minimizing implies K-semistability. Indeed we prove that any $\mathbb{Q}$-Fano variety $V$ is K-semistable if and only if $\hvol$ is minimized among $\bC^*$-invariant valuations (see Theorem \ref{main}). \footnote{Recently it has been clear that the general case can be reduced to the equivariant case considered in this paper, see \cite{LX16}.} 
%The complete confirmation of the other direction of the conjecture is the joint work with Yuchen Liu in the paper \cite{LL16} %(see also the Postscript Notes \ref{sec-ps}). 

The key idea to prove our result is to extend the calculations in \cite{Li15a} to show a generalization of a result in \cite{MSY08} in which they showed that the derivative of a volume functional on the space of (normalized) Reeb vector fields is the classical Futaki invariant originally defined in \cite{Fut83}. Here we will show that the derivative of the normalized volume at the canonical valuation $\ord_V$ is a variant of CM weight that generalizes the classical Futaki invariant. We achieve this by first deriving an 
integral formula for the normalized volumes for any real valuations centered at $o$. The derivation of this formula uses the tool of filtrations (see \cite{BC11}) and has a very concrete convex geometric meaning which allows us to use the theory of 
Okounkov bodies (\cite{Oko96, LM09}) and coconvex sets (\cite{KK14}) to understand it. 

Given this formula, then as in \cite{Li15a} we can take its derivative and use the recent remarkable work of Fujita \cite{Fuj15b} to show that Ding-semistability implies equivariant volume minimization, where Ding-semistability is a concept derived from Berman's work in \cite{Ber15} and is shown very recently to be equivalent to K-semistability in \cite{BBJ15, Fuj16}. Moreover, the volume formula shows that any $\bC^*$-invariant minimizer has a Dirac-type Duistermaat-Heckman measure. This together with some gap estimates allow us to conclude the uniqueness of $\bC^*$-invariant minimizing valuation that is quasi-monomial (Theorem \ref{thm-unique}).

For the other direction of implication, we directly relate
the volume function on $X$ (i.e. on $\Val_{X,o}$) to that on some degeneration of $X$ which appears in the definition of K-stability. The argument in this part also depends on the work of \cite{BHJ15}, which seems be the first to systematically use
the tool of valuations to study K-stability. On the other hand, we will also use our earlier work in \cite{LX14} which essentially helps us to reduce the test of K-stability to Tian's original definition. 

As a spin-off of our proof, we get the an equivalent characterization of K-semistability using divisorial valuations in Theorem \ref{thmdiv}. This generalizes a result about Fujita's divisorial stability in \cite{Fuj15a} (see also \cite{Li15a}).

We end this introduction by remarking that while the work here is a natural continuation of the idea initiated in \cite{Li15a}, the techniques and tools used here are mostly independent of those used in \cite{Li15a} and so this paper can be read independent of \cite{Li15a}.

%It is known that if we have a K\"{a}hler-Einstein metric on a smooth Fano manifold $V$ with anti canonical line bundle denoted by $-K_V$, then on the cone $X=C(V, -K_V)$ there is a Ricci-flat cone metric. This is a special example in the study of Ricci cone metric and the related Sasaki-Einstein metric. The Reeb vector field corresponding to a Sasaki-Einstein metric is unique if it exist. It was proved in \cite{} this Reeb vector field is the critical point of a volume function on the space of Reeb vector fields. If a Reeb vector field generates a free
%$S^1$-action, then the quotient space is a K\"{a}hler-Einstein Fano manifold. 
%Apriori we don't know if there is a minimizer of the volume functional or not, since the space of Reeb vector fields is not easily determined. So in the literature usually a torus action is fixed and one first minimize volume function on the fixed 

\section{Preliminaries}

\subsection{Normalized volumes of valuations}

In this section, we will briefly recall the concept of real valuations and their associated invariants (see \cite{ZS60} and \cite{ELS03} for details). We will also recall normalized volume functional defined in \cite{Li15a}. 

Let $X={\rm Spec}(R)$ be an $n$-dimensional normal affine variety. A real valuation $v$ on the function field $\bC(X)$ is a map $v: \bC(X)\rightarrow \bR$, satisfying:
\begin{enumerate}
\item
$v(fg)=v(f)+v(g)$;
\item $v(f+g)\ge \min\{v(f), v(g)\}$.
\end{enumerate}
In this paper we also require $v(\bC^*)=0$, i.e. $v$ is trivial on $\bC$. Denote by $\cO_v=\{f\in \bC(X); v(f)\ge 0\}$ the valuation ring of $v$. Then $\cO_v$ is a local ring. The valuation $v$ is said to be finite over $R$, or on $X$, if $\cO_v\supseteq R$. Let $\Val_X$ denote the space of all real valuations that are trivial on $\bC$ and finite over $R$. For any $v\in \Val_X$, the center of $v$ over $X$, denoted by ${\rm center}_X(v)$ or $c_X(v)$, is defined
to be the image of the closed point of ${\rm Spec}(\cO_v)$ under the map ${\rm Spec}(\cO_v)\rightarrow {\rm Spec}(R)=X$. 
For any $v\in \Val_{X}$, let $\Gamma'_v=v(R)$ denote its valuation semigroup and $\Gamma_v$ denote the abelian group generated by $\Gamma'_v$ inside $\bR$. Denote by ${\rm rat.rank}\; v$ the rational rank of the abelian group $\Gamma_v$. Denote by ${\rm tr.deg}\; v$ the transcendental degree of the field extension $K_v/\bC$ where $K_v$ is the residue field of $\cO_v$. Abhyankar proved the following inequality in \cite{Abh56}:
\begin{equation}\label{eq-Abh}
{\rm tr.deg}\; v+ {\rm rat.rank}\; v\le n.
\end{equation}
$v$ is called Abhyankar if the equality in \eqref{eq-Abh} holds. A valuation is called divisorial if ${\rm tr.deg}\; v=n-1$ (and hence ${\rm rat.rank}\; v=1$ by \eqref{eq-Abh}). Divisorial valuations are Abhyankar. Since we are working in characteristic 0, it's well-known that Abhyankar valuations are quasi-monomial (see \cite[Proposition 2.8]{ELS03}, \cite[Proposition 3.7]{JM10}). $v\in \Val_{X,o}$ is quasi-monomial if there exists a birational morphism $\mu: Y\rightarrow X$, and a regular system of parameters $z=\{z_1,\dots, z_r\}$ for the local ring $\cO_{Y,W}$ of $v$ on $Y$ ($W$ is the center of $v$ on $Y$) such that $v(z_1), \dots, v(z_r)$ freely generates the value group $\Gamma_v$. In this case, we can assume that there exist $r$ $\bQ$-linearly independent positive real numbers $\alpha_1, \dots, \alpha_r$ such that $v$ is defined as follows. For any $f\in R$, we can expand $\mu^*f=\sum_{m\in \bZ^r_{\ge 0}} c_m z^m$ with each $c_m$ either 0 or a unit in $\widehat{\cO_{Y,W}}$, and then:
\[
v(f)=\min\left\{\sum_{i=1}^r m_i \alpha_i; c_m\neq 0 \right\}.
\]
Following \cite{JM10}, we will also use log smooth models in addition to algebraic coordinates for representing quasi-monomial valuations. A log smooth pair over $X$ is a pair $(Y, D)$ with $Y$ regular and $D$ a reduced effective simple normal crossing divisor, together with a proper birational morphism $\mu: Y\rightarrow X$. We will denote by ${\rm QM}_{\eta}(Y,D)$ the set of all quasi-monomial valuations $v$ that can be described in the above fashion at the point $\eta\in Y$ with respect to coordinates $\{z_1, \dots, z_r\}$ such that $z_i$ defines at $\eta$ an irreducible component $D_i$ of $D$. We put ${\rm QM}(Y, D)=\cup_{\eta}{\rm QM}_{\eta}(Y,D)$.

From now on in this paper, we assume that $X$ has $\bQ$-Gorenstein klt singularity. Following \cite{JM10} and \cite{BFFU13}, we can define the log discrepancy for any valuation $v\in \Val_X$. This is achieved in three steps in \cite{JM10} and \cite{BFFU13}. Firstly, for a divisorial valuation $\ord_E$ associated to a prime divisor $E$ over $X$, define $A_X(E)=\ord_{E}(K_{Y/X})+1$, where $\pi: Y\rightarrow X$ is a smooth model of $X$ containing $E$. Next for any
quasi-monomial valuation $v\in {\rm QM}_\eta(Y,D)$ where $(Y,D=\sum_{k=1}^N D_k)$ is log smooth and $\eta$ is a generic point of an irreducible component of $D_1\cap \dots\cap D_N$, we define $A_X(v)=\sum_{k=1}^N v(D_k)A_X(D_k)$. Lastly for any valuation $v\in \Val_X$, we define 
\[
A_X(v)=\sup_{(Y,D)}A_X(r_{Y,D}(v))
\]
where $(Y,D)$ ranges over all log smooth models over $X$, and $r_{(Y,D)}: \Val_X\rightarrow {\rm QM}(Y,D)$ are contraction maps that induce a homeomorphism $\displaystyle\Val_X\rightarrow \lim_{\stackrel{\longleftarrow}{(Y,D)}}{\rm QM}(Y,D)$. For details, see \cite{JM10} and \cite[Theorem 3.1]{BFFU13}. We will need one basic property of $A_X$: for any proper birational morphism $Y\rightarrow X$, we have (see \cite[Remark 5.6]{JM10}, \cite[Proof 3.1]{BFFU13}):
\begin{equation}\label{ldbirat}
A_X(v)=A_Y(v)+v(K_{Y/X}).
\end{equation}

From now on, we also fix a closed point $o\in X$ with the corresponding maximal ideal of $R$ denoted by $\fm$. We will be interested in the space $\Val_{X,o}$ of all valuations $v$ with ${\rm center}_X(v)=o$. If $v\in \Val_{X,o}$, then $v$ is centered on the local ring $R_\fm$ (see \cite{ELS03}). In other words, $v$ is nonnegative on $R_\fm$ and is strictly positive on the maximal ideal of the local ring $R_\fm$.
For any $v\in \Val_{X,o}$, we consider its valuative ideals:
\[
\fa_p(v)=\{f\in R; v(f)\ge p\}.
\]
Then by \cite[Proposition 1.5]{ELS03}, $\fa_p(v)$ is $\fm$-primary, and hence is of finite codimension in $R$ (cf. \cite{AM69}). We define the volume of $v$ as:
\[
\vol(v)=\lim_{p\rightarrow +\infty}\frac{\dim_{\bC}R/\fa_p(v)}{p^n/n!}.
\]
By \cite{ELS03, Mus02, LM09, Cut12}, the limit on the right hand side exists and is equal to the multiplicity of the graded family of ideals $\fa_\bull=\{\fa_p\}$:
\begin{equation}\label{vol=mul}
\vol(v)=\lim_{p\rightarrow +\infty}\frac{e(\fa_p)}{p^n}=:e(\fa_\bullet),
\end{equation}
where $e(\fa_p)$ is the Hilbert-Samuel multiplicity of $\fa_p$.

Now we can define the normalized volume for any $v\in \Val_{X,o}$ (see \cite{Li15a}):
\[
\hvol(v)=\left\{
\begin{array}{lll}
A_X(v)^n\vol(v), & \mbox{ if } & A_X(v)<+\infty,\\
+\infty, & \mbox{ if } & A_X(v)=+\infty.
\end{array}
\right.
\]
The following estimates were proved in \cite{Li15a}. The second estimate motivates the definition of $\hvol(v)=+\infty$ when $A_X(v)=+\infty$.
\begin{thm}[{\cite[Corollary 2.6, Theorem 3.3]{Li15a}}]
Let $(X,o)$ be a $\mathbb{Q}$-Gorenstein klt singularity. The following uniform estimates hold:
\begin{enumerate}
\item 
There exists a positive constant $K=K(X,o)>0$ such that
$\hvol(v)>K$ for any $v\in \Val_{X,o}$. 
\item
There exists a positive constant $c=c(X,o)>0$ 
that 
\[
\hvol(v)\ge c \frac{A_X(v)}{v(\mathfrak{m})}, \text{ for all } v\in \Val_{X,o} \text{ with } A_X(v)<+\infty.
\]
\end{enumerate}
\end{thm}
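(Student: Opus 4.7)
The strategy is to convert control of the log discrepancy $A_X(v)$ into control of the codimensions $\dmk R/\fa_p(v)$ and then feed this into the volume formula \eqref{vol=mul}. The central input is a uniform Izumi-type inequality at the klt singularity $(X,o)$: there exists $C = C(X,o)>0$ such that
\[
v(f) \le C\, A_X(v)\, \ord_\fm(f) \quad \text{for every } v\in\Val_{X,o} \text{ with } A_X(v)<+\infty \text{ and every } f\in R.
\]
Such a bound is a log-discrepancy-weighted form of Izumi's comparison theorem valid at any klt singularity, and can be derived from the multiplier ideal theory as in \cite{JM10, BFFU13}.

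For part (1), applying this Izumi bound to any $f\in\fa_p(v)$ yields $\ord_\fm(f) \ge p/(CA_X(v))$, hence $\fa_p(v)\subseteq \fm^{\lceil p/(CA_X(v))\rceil}$. Taking codimensions and using the asymptotic $\dmk R/\fm^k \sim e(\fm)\,k^n/n!$, then dividing by $p^n/n!$ and letting $p\to\infty$, gives
\[
\vol(v)\ \ge\ \frac{e(\fm)}{(CA_X(v))^n}.
\]
Multiplying by $A_X(v)^n$ yields $\hvol(v)\ge e(\fm)/C^n$, so one takes $K := e(\fm)/C^n$.

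For part (2), both sides are invariant under $v\mapsto tv$, so we normalize $v(\fm)=1$ and reduce the claim to $A_X(v)^{n-1}\vol(v)\ge c$. When $A_X(v)$ is bounded above by a fixed $M$, this is immediate from (1): $A_X(v)^{n-1}\vol(v)=\hvol(v)/A_X(v)\ge K/M$. The substantive case is $A_X(v)\to\infty$, where the Izumi lower bound $\vol(v)\ge e(\fm)/(CA_X(v))^n$ degrades to $A_X(v)^{n-1}\vol(v) \ge K/A_X(v)\to 0$ and is therefore insufficient. To handle this regime, I would approximate $v$ by quasi-monomial valuations on a log smooth model $(Y,D)$ in the sense of \cite{JM10}, write $v=\sum_i\alpha_i\ord_{D_i}$ with $A_X(v)=\sum_i\alpha_i A_X(D_i)$ and $v(\fm)=\sum_i\alpha_i\ord_{D_i}(\fm)$, and express $\vol(v)$ via an Okounkov-body construction attached to $\fm$ on $Y$. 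Both sides of the desired inequality then become explicit homogeneous functions of the weight vector $\alpha$, and the inequality can be verified at the vertices of the weight simplex in terms of the finite ratios $A_X(D_i)/\ord_{D_i}(\fm)$.

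The principal obstacle is uniformity across all log smooth models: refining $(Y,D)$ introduces new exceptional divisors whose Izumi ratios could a priori grow. The klt hypothesis (forcing $A_X(D_i)>0$ for each exceptional component) together with the additivity \eqref{ldbirat} of $A_X$ along birational morphisms, and the continuity of $\vol$ restricted to quasi-monomial valuations on a fixed resolution, are the ingredients needed to promote the vertex-by-vertex bound to one uniform on all of $\Val_{X,o}$.
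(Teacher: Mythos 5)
Your argument for part (1) is correct and follows the same route as the cited reference: the $A_X$-weighted Izumi inequality (this is exactly \cite[Proposition 2.3]{Li15a}, which the present paper also points to) yields $\fa_p(v)\subseteq\fm^{\lceil p/(CA_X(v))\rceil}$, whence $\vol(v)\ge e(\fm)/(CA_X(v))^n$ and $\hvol(v)\ge e(\fm)/C^n$. (Take $K$ strictly below $e(\fm)/C^n$ to get the strict inequality in the statement; this is cosmetic.)

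Part (2) is not proved. You correctly normalize $v(\fm)=1$, reduce to $A_X(v)^{n-1}\vol(v)\ge c$, dispatch the bounded-$A_X$ regime via part (1), and correctly observe that the Izumi bound degenerates when $A_X(v)\to\infty$. But the proposed patch has concrete problems. First, the identity $v(\fm)=\sum_i\alpha_i\ord_{D_i}(\fm)$ is false: for a quasi-monomial valuation of weight $\alpha$ on a log smooth pair $(Y,D)$, $\alpha\mapsto v_\alpha(\fm)$ is a minimum of linear forms (piecewise-linear and concave), and one only has the inequality $v_\alpha(\fm)\ge\sum_i\alpha_i\ord_{D_i}(\fm)$, with strict inequality typical. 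Relatedly, writing $v=\sum_i\alpha_i\ord_{D_i}$ misdescribes quasi-monomial valuations — a positive combination of valuations is in general not a valuation. Second, even with corrected formulas, $A_X(v)^{n-1}\vol(v)v(\fm)$ is a degree-zero but highly nonlinear function of $\alpha$ (here $\vol(v)$ is a normalized covolume of a coconvex set), and verifying an inequality at the vertices of the weight simplex says nothing about its interior unless one has an appropriate convexity/concavity statement for this specific combination; none is given and none is obvious. Indeed, for monomial valuations on $\bC^n$ with $\sum\alpha_i=1$ the quantity $(\min_i\alpha_i)/\prod_i\alpha_i$ attains its infimum on the boundary strata, not at interior points, so a vertex-by-vertex check is not the right mechanism even in the model case. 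Finally, the uniformity across all log smooth models is precisely where the substance of the estimate lies, and, as you yourself note, your sketch leaves it open. What is missing is a genuinely new lower bound on $\vol(v)$ of order $v(\fm)/A_X(v)^{n-1}$ (after scaling), which does not follow from the $\fm$-adic sandwich $\fm^{\lceil p/v(\fm)\rceil}\subseteq\fa_p(v)\subseteq\fm^{\lceil p/(CA_X(v))\rceil}$ alone.
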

From now on in this paper, we will always assume our valuation $v$ satisfies $A_X(v)<+\infty$. 

Notice that $\hvol(v)$ is rescaling invariant: $\hvol(\lambda v)=\hvol(v)$ for any $\lambda>0$. By Izumi-type theorem proved in \cite[Proposition 2.3]{Li15a}, we know that
$\hvol$ is uniformly bounded from below by a positive number on $\Val_{X,o}$. If $\hvol$ has a global minimum $v_*$ on $\Val_{X,o}$, i.e. $\hvol(v_*)=\inf_{v\in \Val_{X,o}}\hvol(v)$, then
we will say that $\hvol$ is globally minimized at $v_*$ over $(X,o)$. In \cite{Li15a}, the author conjectured that this holds for all $\bQ$-Gorenstein klt singularities and proved that this is the case under a continuity hypothesis.\footnote{H.Blum recently confirmed this conjecture in \cite{Blum16}} In this paper, we will be interested in the following $\bC^*$-invariant setting. 
\begin{defn}\label{defeqmin}
Assume that there is a $\bC^*$-action on $(X,o)$. We denote by $(\Val_{X,o})^{\bC^*}$ the space of $\bC^*$-invariant valuations in $\Val_{X,o}$. 
We say that $\hvol$ is $\bC^*$-equivariantly minimized at a ($\bC^*$-invariant) valuation $v_*\in (\Val_{X,o})^{\bC^*}$ if 
\[
\hvol(v_*)=\inf_{v\in (\Val_{X,o})^{\bC^*}}\hvol(v).
\]
\end{defn}

\subsection{K-semistability and Ding-semistability}
In this section we recall the definition of K-semistability, and its recent equivalence Ding-semistability.
\begin{defn}[{\cite{Tia97, Don02}, see also \cite{LX14}}]\label{defKstable}
Let $V$ be an $(n-1)$-dimensional $\bQ$-Fano variety. 
\begin{enumerate}
\item For $r>0\in \bQ$ such that $r^{-1}K^{-1}_V=L$ is Cartier. A test configuration (resp. a semi test configuration) of $(X, L)$ consists of the following data
\begin{itemize}
\item A variety $\cV$ admitting a $\bC^*$-action and a $\bC^*$-equivariant morphism $\pi: \cV\rightarrow \bC$, where the action of $\bC^*$ on $\bC$ is given by the standard multiplication.
\item A $\bC^*$-equivariant $\pi$-ample (resp. $\pi$-semiample) line bundle $\cL$ on $\cV$ such that $(\cV, \cL)|_{\pi^{-1}(\bC\backslash\{0\})}$ is equivariantly isomorphic to $(V, r^{-1}K^{-1}_V)\times \bC^*$ with the natural $\bC^*$-action.
\end{itemize}
A test configuration is called a special test configuration, if the following are satisfied
\begin{itemize}
\item $\cV$ is normal, and $\cV_0$ is an irreducible $\bQ$-Fano variety;
\item $\cL=r^{-1}K^{-1}_{\cV/\bC}$.
%and the $\bC^*$-action on $\cL_0=-r^{-1}K_{\cV_0}\rightarrow \cV_0$ is the induced by the action on $\cV_0$.
\end{itemize}
\item 
Assume that $(\cV, \cL)\rightarrow \bC$ is a normal test configuration. Let $\bar{\pi}: (\ocV, \bar{\cL})\rightarrow \bP^1$ be the
natural equivariant compactification of $(\cV, \cL)\rightarrow \bC$. The CM weight of $(\cV, \cL)$ is defined by the intersection formula (see \cite{Wan12, Oda13}):
\[
{\rm CM}(\cV, \cL)=\frac{1}{n(-K_V)^{n-1}}\left((n-1)r^{n}\bar{\cL}^{n}+n r^{n-1} \bar{\cL}^{n-1}\cdot K_{\ocV/\bP^1}\right).
\]
\item
\begin{itemize}
\item %The pair $(V, -K_{V})$ 
$V$ is called K-semistable if $\CM(\cV, \cL)\ge 0$ for any normal test configuration $(\cV, \cL)/\bC^1$ of $(V, r^{-1}K_V^{-1})$.
\item %The pair is $(V, -K_{V})$ 
$V$ is called K-polystable if $\CM(\cV, \cL)\ge 0$ for any normal test configuration $(\cV, \cL)/\bC^1$ of $(V, r^{-1}K_V^{-1})$, and the equality holds if and only if $\cV\cong V\times\bC^1$.
\end{itemize}
\end{enumerate}
\end{defn}
\begin{rem}\label{remaction}
For any special test configuration, the $\bC^*$-action on $\cV_0$ induces a natural action on $K_{\cV_0}^{-1}$ by pushing forward the holomorphic $(n,0)$-vectors. Because $\cL_0=r^{-1}K_{\cV_0}^{-1}$, we can always assume that the $\bC^*$-action on $\cL_0$ is induced from this natural $\bC^*$-action. Moreover, for special test configurations, CM weight reduces to a much simpler form:
\begin{equation}\label{CMstc}
{\rm CM}(\cV, \cL)=\frac{-r^n \bar{\cL}^n}{n(-K_V)^{n-1}}=-\frac{(-K_{\cV/\bP^1})^n}{n(-K_V)^{n-1}}.
\end{equation}
\end{rem}
We will need the concept of Ding-semistability, which was derived from Berman's work in \cite{Ber15}.
\begin{defn}[\cite{Ber15, Fuj15b}]
\begin{enumerate}
\item
Let $(\cV, \cL)/\bC^1$ be a normal semi-test configuration of $(V, r^{-1}K^{-1}_V)$ and $(\ocV, \bar{\cL})/\bP^1$ be its natural
compactification. Let $D_{(\cV, \cL)}$ be the $\bQ$-divisor on $\cV$ satisfying the following conditions:
\begin{itemize}
\item The support ${\rm Supp}D_{(\cV, \cL)}$ is contained in $\cV_0$. 
\item The divisor $-r^{-1} D_{(\cV, \cL)}$ is a $\bZ$-divisor corresponding to the divisorial sheaf $\bar{\cL}(r^{-1}K_{\ocV/\bP^1})$.
\end{itemize}
\item 
The Ding invariant ${\rm Ding}$ invariant ${\rm Ding}(\cV, \cL)$ of $(\cV, \cL)/\bC$ is defined as:
\[
{\rm Ding}(\cV, \cL):=\frac{-r^n \bar{\cL}^{n}}{n (-K_V)^{n-1}}-\left(1-\lct(\cV, D_{(\cV,\cL)}; \cV_0)\right).
\]
\item
$V$ is called Ding semistable if ${\rm Ding}(\cV, \cL)\ge 0$ for any normal test configuration $(\cV, \cL)/\bC$ of $(V, -r^{-1}K_V)$.
\end{enumerate}
\end{defn}
Notice that ${\rm Ding}(\cV, \cL)={\rm CM}(\cV, \cL)$ for special test configurations. 
It was proved in \cite{LX14} that to test K-semistability (or K-polystability), one only needs to test on special test configurations. By the recent work in \cite{BBJ15, Fuj16}, the same is true for Ding-semistability. Moreover we have the following equivalence result:
\begin{thm}[\cite{BBJ15, Fuj16}]\label{thm-BBJ}
For a $\mathbb{Q}$-Fano variety $V$, $V$ is K-semistable if and only if $V$ is Ding-semistable.
\end{thm}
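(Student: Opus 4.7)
The plan is to reduce both notions of semistability to their counterparts on special test configurations, on which the two invariants coincide, and then invoke the existing reduction theorems on each side. The first observation is that on a special test configuration $(\cV, \cL)$ of $(V, r^{-1}K_V^{-1})$ one has ${\rm Ding}(\cV, \cL) = \CM(\cV, \cL)$: indeed $\cL = r^{-1}K_{\cV/\bC}^{-1}$ forces $D_{(\cV, \cL)} = 0$, and since $(\cV, \cV_0)$ is plt with $\cV_0$ a reduced irreducible $\bQ$-Fano divisor, $\lct(\cV, 0; \cV_0) = 1$; both invariants then reduce to the formula \eqref{CMstc}.

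Second, by \cite{LX14}, K-semistability can be tested using only special test configurations: given any normal test configuration $(\cV, \cL)$, an explicit sequence of equivariant birational operations over $\bC^1$ (base change, normalization, and an MMP with scaling) produces a special test configuration $(\cV^s, \cL^s)$ satisfying $\CM(\cV^s, \cL^s) \le \CM(\cV, \cL)$. Hence $V$ is K-semistable iff $\CM \ge 0$ on every special test configuration.

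The third and decisive step is to establish the analogous reduction for the Ding invariant: under the same sequence of modifications, the Ding invariant is non-increasing. This is the main content of \cite{Fuj16}, with a parallel pluripotential-theoretic approach in \cite{BBJ15}. Granting this reduction, the theorem follows at once: $V$ is K-semistable iff $\CM \ge 0$ on every special test configuration iff ${\rm Ding} \ge 0$ on every special test configuration (by the equality in step one) iff $V$ is Ding-semistable.

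The main obstacle is precisely this monotonicity of the Ding invariant under the MMP. For the CM weight, monotonicity reduces to formal intersection-theoretic inequalities once one tracks the relative canonical bundle through each birational modification. The Ding invariant, however, contains the log canonical threshold term $\lct(\cV, D_{(\cV, \cL)}; \cV_0)$, which is not governed by intersection numbers alone and requires genuinely new input. In \cite{Fuj16} this is handled by carefully combining discrepancy estimates on the total space with the intersection-theoretic behavior of $D_{(\cV, \cL)}$; in \cite{BBJ15} it is recast as the monotonicity of a non-Archimedean Ding functional along geodesic rays produced by the MMP. Either route circumvents the fact that, unlike the CM weight, the Ding invariant is not manifestly a polynomial in the input data of the test configuration.
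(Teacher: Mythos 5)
Your outline is correct and coincides with what the paper itself says in the remark following Theorem \ref{thm-BBJ}: the paper does not prove this result but cites it to \cite{BBJ15, Fuj16}, explaining that both directions reduce to special test configurations (where ${\rm Ding}=\CM$ by the observation you make in step one), that \cite{LX14} handles the CM side, and that \cite{BBJ15} (smooth case) and \cite{Fuj16} (general $\bQ$-Fano case) establish the monotonicity of the Ding invariant under the \cite{LX14} MMP process. So you have reconstructed the same reduction the paper describes, and correctly identified the genuinely non-formal step — the behavior of the lct term in the Ding invariant under base change, normalization, and MMP with scaling — as the content that is deferred to the cited references.
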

\begin{rem}
In \cite{BBJ15} Berman-Boucksom-Jonsson outlined a proof of the above result when $V$ is a smooth Fano manifold. They showed that it's sufficient to check Ding semistability on special test configurations. This was achieved by showing that the (normalized) Ding invariant is decreasing along a process that transforms any test configuration into a special test configuration. This process was first used in \cite{LX14} for proving results on K-(semi)stability of $\mathbb{Q}$-Fano varieties. 
The equivalence of Ding-semistability and K-semistability for the general case of $\mathbb{Q}$-Fano varieties has been proved in \cite{Fuj16} by similar and more detailed arguments. 
\end{rem}

We will need another result of Fujita, which was proved by applying a criterion for Ding semistability (\cite[Proposition 3.5]{Fuj15b}) to a sequence of semi-test configurations constructed from a filtration. 
%We refer the detailed definitions about filtrations to \cite{BC11} (see also \cite{WN12, BHJ} and \cite[Section 4.1]{Fuj15b}). 
We here briefly recall the relevant definitions about filtrations and refer the details to \cite{BC11} (see also \cite{BHJ15} and \cite[Section 4.1]{Fuj15b}).
\begin{defn}\label{defn-gdfiltr}
 A good filtration of a graded $\bC$-algebra $R=\bigoplus_{m=0}^{+\infty}R_m$ is a decreasing, % left continuous,
multiplicative
and linearly bounded $\bR$-filtrations of $R$. In other words, for each $m\ge 0\in \bZ$, there is a family of subspaces $\{\cF^{x}R_m\}_{x\in \bR}$ of $R_m$ such that:
\begin{enumerate}
\item $\cF^x R_m\subseteq \cF^{x'}R_m$, if $x\ge x'$;
%\item $\cF^xR_m=\bigcap_{x'<x}\cF^{x'}R_m$; 
\item $\cF^x R_m\cdot \cF^{x'} R_{m'}\subseteq \cF^{x+x'}R_{m+m'}$, for any $x, x'\in \bR$ and $m, m'\in \bZ_{\ge 0}$;
\item $e_{\min}(\cF)>-\infty$ and $e_{\max}(\cF)<+\infty$, where $e_{\min}(\cF)$ and $e_{\max}(\cF)$ are defined by the following 
operations:
\begin{equation}
\def\arraystretch{1.5}
\begin{array}{l}
e_{\min}(R_m,\cF)=\inf\{t\in\bR; \cF^t R_m\neq R_m\}; \\
e_{\max}(R_m,\cF)=\sup\{t\in\bR; \cF^t R_m\neq 0\};\\
\displaystyle e_{\min}(\cF)=e_{\min}(R_{\LargerCdot}, \cF)=\liminf_{m\rightarrow +\infty} \frac{e_{\min}(R_m, \cF)}{m}; \\
\displaystyle e_{\max}(\cF)=e_{\max}(R_{\LargerCdot}, \cF)=\limsup_{i\rightarrow +\infty} \frac{e_{\max}(R_m, \cF)}{m}. 
\end{array}
\end{equation}
\end{enumerate}
\end{defn}
Following \cite{BHJ15}, given any good filtration $\{\cF^{x}R_m\}_{x\in\bR}$ and $m\in \bZ_{\ge 0}$, the successive minima is the decreasing sequence 
\[
\lambda^{(m)}_{\max}=\lambda^{(m)}_1\ge \cdots \ge \lambda^{(m)}_{N_m}=\lambda^{(m)}_{\min}
\]
where $N_m=\dim_{\bC} R_m$, defined by:
\[
\lambda^{(m)}_j=\max\left\{\lambda \in \bR; \dim_{\bC} \cF^{\lambda} R_m \ge j \right\}.
\]
%In this paper, the
%filtrations we shall consider are always decreasing, multiplicative, linearly bounded $\bR$-filtrations of the graded algebra $R=\bigoplus_{m=0}^{+\infty}H^0(V, L^{\otimes m})=\bigoplus_{m=0}^{+\infty}R_m$. %, and we will call such filtrations to be {\it good}.
%For such filtrations, we define the following invariants after \cite{BC11}:
%\begin{equation}
%\def\arraystretch{1.5}
%\begin{array}{l}
%e_{\min}(R_m,\cF)=\inf\{t\in\bR; \cF^t R_m\neq R_m\}; \\
%e_{\max}(R_m,\cF)=\sup\{t\in\bR; \cF^t R_m\neq 0\};\\
%\displaystyle e_{\min}(\cF)=e_{\min}(R_{\bull}, \cF)=\liminf_{m\rightarrow +\infty} \frac{e_{\min}(R_m, \cF)}{m}; \\
%\displaystyle e_{\max}(\cF)=e_{\max}(R_{\bull}, \cF)=\limsup_{i\rightarrow +\infty} \frac{e_{\max}(R_m, \cF)}{m}. 
%\end{array}
%\end{equation}
Denote $R^{(t)}=\bigoplus_{k=0}^{+\infty} \cF^{kt}R_k$. When we want to emphasize the dependence of $R^{(t)}$ on the filtration $\cF$, we also denote $R^{(t)}$ by $\cF R^{(t)}$.

From now on, we let $R_m=H^0(V, mL)$.
The following concept of volume will be important for us:
\begin{equation}
\vol\left(R^{(t)}\right)=\vol\left(\cF R^{(t)}\right):=\limsup_{k\rightarrow+\infty}\frac{\dim_{\bC}\cF^{mt}H^0(V, mL)}{m^{n-1}/(n-1)!}.
\end{equation}
We need the following lemma by in \cite{BHJ15} in our proof of uniqueness result.
\begin{prop}[{\cite{BC11}, \cite[Corollary 5.4]{BHJ15}}]\label{BHJvol}

\begin{enumerate}
\item 
The probability measure 
\[
\nu_m:=\frac{1}{N_m} \sum_{j}\delta_{m^{-1}\lambda^{(m)}_j}=-\frac{d}{d t} \frac{{\rm dim}_{\bC} \cF^{mt}H^0(V, mL)}{N_m}
\]
converges weakly as $m\rightarrow+\infty$ to the probability measure:
\[
\nu:=-V^{-1} d\; \vol\left(R^{(t)}\right)=-V^{-1} \frac{d}{d t} \vol\left(R^{(t)}\right) dt.
\]
\item The support of the measure $\nu$  is given by ${\rm supp}\; \nu=[\lambda_{\min}, \lambda_{\max}]$ with 
\begin{align}
&
\displaystyle \lambda_{\min}:=\inf\left\{t\in \bR | \vol\left(R^{(t)}\right)<L^{n-1} \right\}; \label{lambdamin} \\
&
\lambda_{\max}=\lim_{m\rightarrow+\infty}\frac{\lambda_{\max}^{(m)}}{m}=\sup_{m\ge 1}\frac{\lambda_{\max}^{(m)}}{m}.\label{lambdamax}
\end{align}
\end{enumerate}
Moreover, $\nu$ is absolutely continuous with respect to the Lebesgue measure, except perhaps for a Dirac mass at $\lambda_{\max}$.
\end{prop}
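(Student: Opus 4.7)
The plan is to follow the strategy of Boucksom--Chen \cite{BC11}: encode the filtration $\cF$ by a concave function on the Okounkov body of $L$, then translate all three assertions into statements about the push-forward of Lebesgue measure under that function. First I would fix an admissible flag on $V$ and form the Okounkov body $\Delta=\Delta(L)\subset \bR^{n-1}$, so that $(n-1)!\,\vol(\Delta)=L^{n-1}=V$. For each $t\in\bR$, applying the Okounkov construction to the graded subalgebra $\cF R^{(t)}=\bigoplus_k \cF^{kt}R_k$ yields a convex body $\Delta_t\subset\Delta$ satisfying $(n-1)!\,\vol(\Delta_t)=\vol(\cF R^{(t)})$. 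Multiplicativity of $\cF$ together with convexity of Okounkov bodies upgrades $t\mapsto\Delta_t$ to a concave, upper-semicontinuous function $G^{\cF}:\Delta\to \bR$ whose super-level set $\{G^{\cF}\ge t\}$ is exactly $\Delta_t$; linear boundedness of $\cF$ forces $G^{\cF}$ to take values in a compact interval.

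With this dictionary in place, part (1) reduces to convergence of distribution functions. From the definition of the successive minima, $\#\{j:m^{-1}\lambda^{(m)}_j\ge t\}=\dim_{\bC}\cF^{mt}R_m$, so the distribution function $t\mapsto \nu_m([t,+\infty))$ is precisely $N_m^{-1}\dim_{\bC}\cF^{mt}R_m$. Dividing numerator and denominator by $m^{n-1}/(n-1)!$ and invoking the Okounkov body limit applied to both $R$ and $\cF R^{(t)}$, these distribution functions converge pointwise almost everywhere to $V^{-1}\vol(\cF R^{(t)})=\vol(\Delta_t)/\vol(\Delta)$. Since both the pre-limit and the limit distribution functions are monotone and uniformly compactly supported, pointwise a.e.\ convergence upgrades automatically to weak convergence of the associated probability measures, yielding $\nu_m\rightharpoonup \nu$.

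For part (2), the identity $\vol(\cF R^{(t)})=(n-1)!\,\vol(\{G^{\cF}\ge t\})$ reduces \eqref{lambdamin} to the identity $\lambda_{\min}=\mathrm{essinf}_\Delta G^{\cF}$, which is immediate from unraveling definitions. For \eqref{lambdamax}, multiplicativity of the filtration produces the superadditivity $\lambda^{(m+m')}_{\max}\ge \lambda^{(m)}_{\max}+\lambda^{(m')}_{\max}$, and Fekete's lemma yields $\lim_m \lambda^{(m)}_{\max}/m=\sup_m \lambda^{(m)}_{\max}/m$; this common value coincides with $\mathrm{esssup}_\Delta G^{\cF}$, which is the upper endpoint of $\mathrm{supp}\,\nu$.

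The main obstacle is the final absolute continuity statement. Since $\nu$ is the push-forward of the normalized Lebesgue measure on $\Delta$ under the concave function $G^{\cF}$, I would apply the Brunn--Minkowski inequality to its super-level sets: concavity of $G^{\cF}$ guarantees the inclusion $\lambda\{G^{\cF}\ge t_1\}+(1-\lambda)\{G^{\cF}\ge t_2\}\subset \{G^{\cF}\ge \lambda t_1+(1-\lambda)t_2\}$, which via Brunn--Minkowski makes $t\mapsto \vol(\{G^{\cF}\ge t\})^{1/(n-1)}$ concave, hence locally Lipschitz, on $(-\infty,\lambda_{\max})$. This forces the distribution function itself to be locally Lipschitz there, so $\nu$ has no singular part on $(-\infty,\lambda_{\max})$. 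The only remaining point where a jump in the distribution function can occur is $t=\lambda_{\max}$, and such a jump arises precisely when $\{G^{\cF}=\lambda_{\max}\}$ has positive Lebesgue measure in $\Delta$, accounting for the potential Dirac mass.
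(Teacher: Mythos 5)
The paper does not prove Proposition~\ref{BHJvol} itself; it is imported by citation from Boucksom--Chen \cite{BC11} and Boucksom--Hisamoto--Jonsson \cite{BHJ15}. Your argument faithfully reproduces the strategy of those references (encoding $\cF$ via the concave transform $G^{\cF}$ on the Okounkov body, reading $\nu$ as the pushforward of normalized Lebesgue measure, and invoking Brunn--Minkowski to get concavity of $t\mapsto\vol(\{G^{\cF}\ge t\})^{1/(n-1)}$ and hence absolute continuity away from $\lambda_{\max}$), and it is essentially correct; the only step you leave implicit is that $\lambda_{\max}=e_{\max}(\cF)$ agrees with $\mathrm{esssup}_{\Delta}G^{\cF}$, which requires noting both that $e_{\max}(\cF)=\sup_\Delta G^{\cF}$ by unwinding definitions and that a concave u.s.c.\ function on a convex body has $\sup=\mathrm{esssup}$ (push a neighborhood of a near-maximizer into the interior by a small homothety).
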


%\begin{eqnarray*}
%&&e_{\min}(R_i,\cF)=\inf\{t\in\bR; \cF^t R_i\neq R_i\}\\
%&&e_{\max}(R_i,\cF)=\sup\{t\in\bR; \cF^t R_i\neq 0\}\\
%&&e_{\min}(R_{\bull}, \cF)=\liminf_{i\rightarrow +\infty} \frac{\inf\{t\in\bR; \cF^t R_i\neq R_i\}}{i}>-\infty;\\
%&&e_{\max}(R_{\bull}, \cF)=\limsup_{i\rightarrow +\infty} \frac{\sup\{t\in\bR; \cF^t R_i\neq 0\}}{i}<+\infty.
%\end{eqnarray*} 
Following \cite{Fuj15b}, we also define a sequence of ideal sheaves on $V$:
\begin{equation}\label{filtideal}
I^{\cF}_{(m,x)}={\rm Image}\left(\cF^xR_m\otimes L^{-m}\rightarrow \cO_V\right), %=\cF^{x}R_m
\end{equation}
and define $\overline{\cF}^x R_m:=H^0(V, L^m\cdot I^{\cF}_{(m,x)})$ to be the saturation of $\cF^x R^m$ such that
$\cF^x R^m\subseteq \overline{\cF}^x R_m$.
$\cF$ is called saturated if $\overline{\cF}^x R_m=\cF^x R_m$ for any $x\in \bR$ and $m\in \bZ_{\ge 0}$. 
Notice that with our notations we have:
\[
\vol\left(\overline{\cF}R^{(t)}\right):=\limsup_{k\rightarrow+\infty}\frac{\dim_{\bC}\overline{\cF}^{kt}H^0(V, kL)}{k^{n-1}/(n-1)!}.
\]
The following result of Fujita will be crucial for us in proving that Ding-semistability implies volume minimization.
\begin{thm}[{\cite[Theorem 4.9]{Fuj15b}}]\label{Fujthm}
Assume $(V, -K_V)$ is Ding-semistable. Let $\cF$ be a {\it good} 
$\bR$-filtration of $R=\bigoplus_{m=0}^{+\infty} H^0(V, mL)$ where $L=r^{-1}K_V^{-1}$. 
Then the pair $(V\times\bC, \cI_\bull^r\cdot (t)^{d_\infty})$ is sub log canonical, where
\begin{eqnarray}\label{eq-cI}
&&\mathcal{I}_m=I^{\cF}_{(m, m e_{+})}+I^{\cF}_{(m, m e_{+}-1)}t^1+\cdots+I^{\cF}_{(m, m e_{-}+1)} t^{m(e_+-e_-)-1}+(t^{m(e_+-e_-)}),\\
&&d_{\infty}=1-r(e_+-e_-)+\frac{r^n}{(-K_V)^{n-1}}\int_{e_-}^{e_+}\vol\left(\overline{\cF} R^{(t)}\right)dt\nonumber\\
%&&
%\cI^{\cF}_{(r,x)}={\rm Image}\left(\cF^xR_r\otimes L^{-r}\rightarrow \cO_V\right)=\cF^{x}R_r
\end{eqnarray}
and $e_+, e_-\in \bZ$ with $e_+\ge e_{\max}(R_\bull, \cF)$ and $e_-\le e_{\min}(R_\bull, \cF)$.
\end{thm}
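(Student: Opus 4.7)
The plan is to construct, for each integer $m \gg 0$, a normal semi-test configuration $(\cV_m, \cL_m)$ of $(V, L)$ out of the ideal $\cI_m$, apply Ding-semistability to each via the earlier criterion \cite[Proposition 3.5]{Fuj15b}, and then pass to the limit $m \to \infty$. Concretely, $\cV_m$ is the $\bC^*$-equivariant normalized blow-up of $V \times \bC$ along $\cI_m$ (the $\bC^*$-action rotates the $\bC$-factor in the standard way, and $\cI_m$ is $\bC^*$-invariant by its very construction), and $\cL_m$ is the $\pi$-semiample line bundle obtained from pulling back $L$ along the first projection and twisting by $-\tfrac{1}{m}E_m$, where $E_m$ is the exceptional divisor of $\mu_m: \cV_m \to V \times \bC$. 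By design, the divisorial content of $D_{(\cV_m, \cL_m)}$ pushed forward to $V \times \bC$ is captured by the $\bQ$-ideal $\cI_m^{r/m}$ together with an explicit power of $(t)$.

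Using the $\bC^*$-equivariant decomposition of the pushforward $(\mu_m)_* \cO_{\cV_m}(k \cL_m)$ into weight spaces indexed by the successive minima $\lambda^{(m)}_j$ of $\cF$, one computes the top self-intersection $\bar{\cL}_m^{\,n}$ on the compactification as a double sum over $k$ and $j$. After isolating the leading order in $m$ via \eqref{filtideal} and the saturation $\overline{\cF}^x R_m$, this sum becomes a Riemann-type approximation of $\int_{e_-}^{e_+} \vol(\overline{\cF} R^{(t)})\, dt$. Invoking \cite[Proposition 3.5]{Fuj15b}---which reformulates $\mathrm{Ding}(\cV_m, \cL_m) \geq 0$ as sub-log canonicity of an associated pair on the base $V \times \bC$---then yields that $(V \times \bC, \cI_m^r \cdot (t)^{c_m})$ is sub-log canonical, for an explicit coefficient $c_m$ that converges to $d_\infty$ as $m \to \infty$.

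For the limit step one exploits the multiplicativity of $\cF$: the inclusions $\cF^x R_m \cdot \cF^y R_{m'} \subseteq \cF^{x+y} R_{m+m'}$ together with the explicit shape \eqref{eq-cI} of $\cI_m$ force $\{\cI_m\}$ to behave as a graded sequence of ideals on $V \times \bC$, in the sense that $\cI_m^k \subseteq \cI_{mk}$ up to bookkeeping of the $(t)$-factor. Standard results on asymptotic multiplier ideals of graded sequences then allow the finite-level sub-log canonicity, together with $c_m \to d_\infty$, to transfer to sub-log canonicity of the limiting pair $(V \times \bC, \cI_\bullet^r \cdot (t)^{d_\infty})$, since this latter statement is equivalent to the valuative inequality $A_{V \times \bC}(v) \geq r \lim_m v(\cI_m)/m + d_\infty\, v(t)$ for every divisorial valuation $v$.

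The main obstacle I expect is the precise identification at finite level: matching the threshold $\lct(\cV_m, D_{(\cV_m, \cL_m)}; (\cV_m)_0)$ appearing in $\mathrm{Ding}(\cV_m, \cL_m)$ with a log canonical threshold on $V \times \bC$ attached to the pair $(V \times \bC, \cI_m^r \cdot (t)^{c_m})$. This requires careful tracking of discrepancies under the blow-up $\mu_m$, along with the exact bookkeeping needed to turn the $\cO_{\cV_m}(-E_m)$-twist into the $\cI_m^{r/m}$-factor on $V \times \bC$. A secondary difficulty is ensuring that the Riemann-sum convergence $c_m \to d_\infty$ is uniform enough across all divisorial valuations for the universally quantified sub-log canonicity statement to genuinely pass to the limit.
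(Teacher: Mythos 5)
Your proposal correctly reconstructs the argument in \cite[Theorem 4.9]{Fuj15b}: this paper does not re-prove the statement but cites it directly, and Fujita's actual proof proceeds exactly as you outline, by building flag-ideal semi-test configurations $\cV_m = \mathrm{Bl}_{\cI_m}^{\mathrm{norm}}(V\times\bC)$, invoking \cite[Proposition 3.5]{Fuj15b} to convert $\mathrm{Ding}(\cV_m,\cL_m)\ge 0$ into sub-log canonicity of $(V\times\bC,\cI_m^{r/m}\cdot(t)^{c_m})$, and then passing to the limit in $m$ via the valuative reformulation. One remark on your ``secondary difficulty'': no uniformity of the Riemann-sum convergence $c_m\to d_\infty$ over divisorial valuations is actually required, because $d_\infty$ and the $c_m$ are scalars independent of $v$, and the valuative inequality is passed to the limit pointwise in $v$ (for fixed $v$, one uses that $v(\cI_m)/m\to v(\cI_\bullet)$ by Fekete's lemma for the graded sequence, together with $c_m\to d_\infty$); so that concern dissolves.
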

\begin{rem}\label{rem-D2K}
By Theorem \ref{thm-BBJ}, we can replace ``Ding-semistable" by ``K-semistable" in Fujita's result.
\end{rem}

\section{Statement of Main Results}\label{sec-results}
Let $V$ be a $\bQ$-Fano variety. If $L=r^{-1}K^{-1}_V$ is an ample Cartier divisor for an $r\in \bQ_{>0}$, we will denote the affine cone by $C_r:=C(V, L)={\rm Spec}\bigoplus_{k=0}^{+\infty}H^0(V, kL)$. Notice that $C_r$ has $\bQ$-Gorenstein klt singularities at the vertex $o$ (see \cite[Lemma 3.1]{Kol13}). $(C_r,o)$ has a natural $\bC^*$-action. Denote by $v_0$ the canonical $\bC^*$-invariant divisorial valuation $\ord_V$ where $V$ is considered as the exceptional divisor of the blow up $Bl_oX\rightarrow X$.
The following is the main theorem of this paper, which in particular confirms one direction of part 4 of the conjecture in \cite{Li15a}. 
\begin{thm}\label{main}
Let $V$ be a $\bQ$-Fano variety. The following conditions are equivalent:
\begin{enumerate}
\item $(V, -K_V)$ is K-semistable;
\item For an $r_*\in \bQ_{>0}$ such that $L=-r_*^{-1}K_V$ is Cartier, $\hvol$ is $\bC^*$-equivariantly minimized at $\ord_V$ over $(C_{r_*}, o_*)$;
\item For any $r\in \bQ_{>0}$ such that $L=-r^{-1}K_V$ is Cartier, $\hvol$ is $\bC^*$-equivariantly minimized at $\ord_V$ over $(C_r, o)$.
%\item For an $r_*\in \bQ_{>0}$ such that $L=-r_*^{-1}K_V$ is Cartier, $\hvol$ is globally minimized at $\ord_V$ over $(C_{r_*}, o_*)$;
%\item For any $r\in \bQ_{>0}$ such that $L=-r^{-1}K_V$ is Cartier, $\hvol$ is globally minimized at $\ord_V$ over $(C_r, o)$;
\end{enumerate}
\end{thm}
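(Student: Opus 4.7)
The plan is to prove the chain (3) $\Rightarrow$ (2) $\Rightarrow$ (1) $\Rightarrow$ (3). Since (2) is just (3) specialised to a single $r = r_*$, only the last two implications have content.

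For (1) $\Rightarrow$ (3): Fix $r \in \bQ_{>0}$ with $L = -r^{-1}K_V$ Cartier, and let $X = C_r$. To any $v \in (\Val_{X,o})^{\bC^*}$ with $A_X(v) < +\infty$ I would attach a good $\bR$-filtration $\cF_v$ on $R = \bigoplus_m H^0(V, mL)$ by
\[
\cF_v^x R_m = \{ s \in R_m : v(s) \geq x \},
\]
where sections of $mL$ are identified with weight-$m$ elements of the coordinate ring of $X$ via $\bC^*$-homogeneity. The first technical step is a volume formula: using the Newton--Okounkov body machinery of \cite{Oko96, LM09} together with the coconvex-set theory of \cite{KK14}, I would derive an integral identity expressing $\vol(v)$ (and a companion formula for $A_X(v)$) in terms of the volumes $\vol(\cF_v R^{(t)})$, so that $\hvol(v)$ becomes an explicit functional of $\cF_v$ and of the normalisation forced by the log discrepancy. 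With this in hand I feed $\cF_v$ into Fujita's Theorem \ref{Fujthm}, using Theorem \ref{thm-BBJ} to convert K-semistability into Ding-semistability. The sub log canonicity of $(V \times \bC,\ \cI_\bull^r \cdot (t)^{d_\infty})$ supplied by Fujita, translated through the integral formula, should give precisely $\hvol(v) \geq \hvol(\ord_V)$.

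For (2) $\Rightarrow$ (1): I argue contrapositively. If $V$ is not K-semistable then by \cite{LX14} there is a \emph{special} test configuration $(\cV, \cL)$ of $(V, r_*^{-1}(-K_V))$ with $\CM(\cV, \cL) < 0$, and by Remark \ref{remaction} its central fibre $V_0$ is a $\bQ$-Fano variety carrying an induced $\bC^*$-action with generator $\xi$. The cone $C(V_0, r_*^{-1}(-K_{V_0}))$ is a flat equivariant degeneration of $C_{r_*}$, and the graded-algebra formalism of \cite{BHJ15} transports $\ord_{V_0}$ (perturbed by $s\xi$) back across the degeneration to give a family $\{ v_s \}_{s \in [0,\epsilon)}$ of $\bC^*$-invariant valuations on $(C_{r_*}, o_*)$ with $v_0 = \ord_V$. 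The decisive computation, generalising the Martelli--Sparks--Yau formula of \cite{MSY08} from the Sasaki setting to arbitrary valuations, is
\[
\left. \frac{d}{ds} \hvol(v_s) \right|_{s=0} = c \cdot \CM(\cV, \cL)
\]
for some strictly positive constant $c$; negativity of the right-hand side then violates equivariant minimality of $\ord_V$ and contradicts (2).

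The main obstacle, shared by both directions, is the rigorous derivation of the integral formula for $\hvol$ on $(\Val_{X,o})^{\bC^*}$ and the identification of its first variation with the CM weight. The difficulty is twofold: $A_X$ is defined as a supremum over log smooth models, so its behaviour along one-parameter families of valuations must be controlled by hand, and in direction (2) $\Rightarrow$ (1) the family $v_s$ is in general only quasi-monomial, not divisorial, so the Okounkov--coconvex picture has to handle filtrations whose associated graded rings need not be finitely generated. A secondary obstacle is making sense of the ``pull-back'' of a valuation across the non-isomorphic degeneration $C(V_0) \rightsquigarrow C_{r_*}$, which is exactly where the filtration/test-configuration dictionary of \cite{BHJ15} is indispensable.
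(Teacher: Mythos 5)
Your plan — derive an integral formula for $\hvol$ via filtrations, feed the filtration of a $\bC^*$-invariant valuation into Fujita's Theorem \ref{Fujthm} (after converting K-semistability to Ding-semistability via Theorem \ref{thm-BBJ}) for $(1)\Rightarrow(3)$, and for $(2)\Rightarrow(1)$ reduce to special test configurations by \cite{LX14}, transport $\ord_{\cV_0}$ across the cone degeneration and identify the first variation of $\hvol$ with the CM weight — is structurally identical to the paper's argument. You also correctly flag the two real obstacles: controlling $A_X$ along a family, and the non-obvious correspondence between valuations on the degenerate cone and on $C_{r_*}$.

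One gap worth naming in your $(2)\Rightarrow(1)$ step: you write as if the cone over the central fibre can be taken with the same Cartier multiple $r_*$ that appears in hypothesis (2), namely $C(\cV_0, r_*^{-1}(-K_{\cV_0}))$. In general it cannot. The construction of the flat degeneration of affine cones requires choosing $r^{-1} = N \gg 1$ sufficiently divisible so that $-N K_{\cV/\bC}$ is relatively very ample and $H^0(V,mL) = H^0(\cV_0, m\cL_0)$ for all $m$; this $r$ is usually not $r_*$. Hypothesis (2) gives minimality only over $(C_{r_*},o_*)$, so after constructing the one-parameter family $w_\beta$ on $X = C_r$ you must still compare $\hvol$ on $C_r$ to $\hvol$ on $C_{r_*}$ via the inclusion of Veronese subalgebras $R = \bigoplus H^0(V,kL) \hookrightarrow R_* = \bigoplus H^0(V, kL_*)$; the paper does this by producing a twin family $w_{*\beta}$ on $X_*$ with $\hvol(w_\beta) = \sigma^{-1}\hvol(w_{*\beta})$ where $\sigma = r_*/r$ (Lemmas \ref{mulvol}--\ref{corww*}), which requires matching both the volumes (via the filtration rescaling) and the log discrepancies. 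Without this comparison the minimality at $r_*$ does not directly forbid $\CM(\cV,\cL) < 0$ for a test configuration living at level $r$. This is a fillable gap, but it is a distinct point from the filtration/test-configuration dictionary you flag, and your phrasing suggests you have not yet seen it.
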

\begin{examp}
\begin{enumerate}
\item
Over $(\bC^n, 0)$, $\hvol$ is globally minimized at the valuation $v_*=\ord_E$, where $E\cong \bP^{n-1}$ is the exceptional divisor of the blow up $Bl_o\bC^n\rightarrow \bC^n$.
This follows from an inequality of de-Fernex-Ein-Musta\c{t}\u{a} as pointed out in \cite{Li15a}. As a corollary we have a purely algebraic proof of the following statement:
\begin{cor}
$\mathbb{P}^{n-1}$ is K-semistable.
\end{cor}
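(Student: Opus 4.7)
The plan is to deduce the corollary directly from Theorem \ref{main} by verifying condition (3) for $V=\bP^{n-1}$. First, I would choose the rational number $r$ so that $L=r^{-1}(-K_V)$ is the hyperplane bundle $\cO_{\bP^{n-1}}(1)$; since $-K_{\bP^{n-1}}=\cO(n)$ this forces $r=n$ (which is certainly in $\bQ_{>0}$ with $L$ Cartier). With this choice the affine cone is
\[
C_r \;=\; \mathrm{Spec}\,\bigoplus_{k\ge 0} H^0(\bP^{n-1},\cO(k)) \;=\; \mathrm{Spec}\,\bC[x_1,\ldots,x_n] \;=\; \bC^n,
\]
the vertex $o$ is the origin, and the canonical $\bC^*$-invariant divisorial valuation $\ord_V$ attached to the blow-up of the vertex is identified with $\ord_E$, where $E\cong\bP^{n-1}$ is the exceptional divisor of $Bl_o\,\bC^n\to \bC^n$.

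Next I would invoke the global minimization fact recalled just above the corollary: $\hvol(v)\ge \hvol(\ord_E)$ for every $v\in \Val_{\bC^n,o}$, which is a direct consequence of the de~Fernex--Ein--Musta\c{t}\u{a} inequality $\lct(\fa)^n\cdot e(\fa)\ge n^n$ applied to the graded family $\fa_\bullet(v)$ together with $A_{\bC^n}(v)\ge p\cdot \lct(\fa_p(v))$ and $\vol(v)=\lim_p e(\fa_p(v))/p^n$; equality is achieved by $\ord_E$ since $A_{\bC^n}(\ord_E)=n$ and $\vol(\ord_E)=1$, giving $\hvol(\ord_E)=n^n$. In particular, the inequality holds on the smaller set $(\Val_{\bC^n,o})^{\bC^*}$, so $\hvol$ is $\bC^*$-equivariantly minimized at $\ord_V=\ord_E$ over $(C_r,o)=(\bC^n,0)$, which is exactly condition (3) of Theorem \ref{main}.

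Applying the implication (3)$\Rightarrow$(1) of Theorem \ref{main} then yields that $(\bP^{n-1},-K_{\bP^{n-1}})$ is K-semistable, completing the argument. The only substantive input is the de~Fernex--Ein--Musta\c{t}\u{a} inequality, which has been quoted; everything else is a bookkeeping identification of the cone, the vertex, and the canonical valuation. Thus no real obstacle arises here — the genuine content is packaged inside Theorem \ref{main}, whose ``equivariant minimization $\Rightarrow$ K-semistability'' direction does the work.
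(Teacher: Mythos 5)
Your proposal is correct and matches the paper's argument: the paper also obtains this corollary by noting that $\hvol$ is globally (hence $\bC^*$-equivariantly) minimized at $\ord_E$ over $(\bC^n,0)$ via the de Fernex--Ein--Musta\c{t}\u{a} inequality and then invoking the ``equivariant minimization $\Rightarrow$ K-semistability'' direction of Theorem \ref{main}. The only cosmetic point is that fixing $r=n$ verifies condition (2) rather than (3), but since a single $r_*$ suffices by the equivalence in Theorem \ref{main}, the deduction is identical in substance.
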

As pointed out in \cite{PW16}, an algebraic proof of this result could also be given by combining Kempf's result on Chow stabilities of rational homogeneous varieties, and the fact that asymptotic Chow-semistability implies K-semistability.  
\item
Over $n$-dimensional $A_1$ singularity $X^n=\{z_1^2+\dots+z^2_{n+1}=0\}\subset\bC^{n+1}$ with $o=(0,\dots, 0)$, $\hvol$ is minimized among $\bC^*$-invariant valuations at $v_*=\ord_E$, where $E\cong Q$ (the smooth quadric  
hypersurface) is the exceptional divisor
of the blowup $Bl_oX\rightarrow X$. This follows from the above theorem plus the fact that $Q$ has K\"{a}hler-Einstein metric and hence is K-polystable. In \cite{Li15a}, $v_*$ was shown to minimize $\hvol$ among valuations of the special form $v_{\bx}$ determined by weight $\bx\in \bR^{n+1}_{>0}$. In the following paper \cite{LL16}, we will show that $v_*=\ord_Q$ indeed globally minimizes $\hvol$.
\end{enumerate}
\end{examp}

Under any one of the equivalent conditions of Theorem \ref{main}, we can prove the uniqueness of minimizer among $\bC^*$-invariant quasi-monomial valuations.
Further existence and uniqueness results for general klt singularities will be dealt with in a forthcoming paper \cite{LX16}.
\begin{thm}\label{thm-unique}
Assume $V$ is K-semistable. Then $\ord_V$ is the unique minimizer among all $\bC^*$-invariant quasi-monomial valuations. 
\end{thm}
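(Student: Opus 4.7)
The strategy is to set up a correspondence between $\bC^*$-invariant valuations on $X=C_r$ and graded $\bR$-filtrations on $R=\bigoplus_m H^0(V,mL)$, use the integral volume formula and Theorem \ref{Fujthm} to show that any minimizer must have Dirac-type Duistermaat-Heckman measure, and then conclude from the structure of such filtrations that $v$ is a rescaling of $\ord_V$. To any $\bC^*$-invariant $v$ I would attach the filtration $\cF_v^x R_m:=\{s\in R_m:v(s)\ge x\}$; because the $\bC^*$-invariance of $v$ forces $v(\sum_m s_m)=\min_m v(s_m)$ for graded decompositions, the map $v\mapsto \cF_v$ is an injection onto the space of good graded filtrations, and the canonical valuation $\ord_V$ (normalized so $\ord_V|_{R_m}\equiv m$) corresponds to the trivial filtration whose Duistermaat-Heckman measure in the sense of Proposition \ref{BHJvol} is a single Dirac mass.

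The main step is to apply the integral volume formula developed in the proof of Theorem \ref{main} to rewrite $\hvol(v)=A_X(v)^n\vol(v)$ as a functional of $A_X(v)$ together with the Duistermaat-Heckman measure $\nu_v$ of $\cF_v$, and then combine this with Theorem \ref{Fujthm} applied to $\cF_v$. Under the K-semistability hypothesis, Fujita's theorem produces a sub log canonical pair on $V\times\bC$ built from the ideals $\cI_\bull$ and the exponent $d_\infty$. Testing this pair against a divisorial valuation on $V\times\bC$ constructed from $v$ yields the inequality $\hvol(v)\ge \hvol(\ord_V)$, and the equality case (when the pair is in fact log canonical) should force the measure $\nu_v$ to be concentrated at a single point $c$, which is the Dirac-type Duistermaat-Heckman measure advertised in the introduction.

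Once $\nu_v=\delta_c$, Proposition \ref{BHJvol} together with \eqref{lambdamin}--\eqref{lambdamax} forces $\lambda_{\min}(\cF_v)=\lambda_{\max}(\cF_v)=c$, since the absolutely continuous part of $\nu_v$ on $[\lambda_{\min},\lambda_{\max}]$ must vanish. The multiplicativity $\cF_v^x R_m\cdot \cF_v^y R_{m'}\subseteq \cF_v^{x+y}R_{m+m'}$, the identity $\lambda_{\max}=\sup_m \lambda^{(m)}_{\max}/m$, and gap estimates (using the quasi-monomial hypothesis to control the filtration at finite level) then upgrade this asymptotic concentration to the finite-level statement $\lambda^{(m)}_{\min}=\lambda^{(m)}_{\max}=cm$ for every $m$. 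Equivalently $v(s)=cm$ for every nonzero $s\in R_m$, so $v=c\cdot\ord_V$ on graded elements and hence everywhere by $\bC^*$-invariance; by the rescaling-invariance of $\hvol$ this is the same minimizer as $\ord_V$.

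The main obstacle will be the middle step: translating Fujita's sub log canonical condition into a sharp inequality $\hvol(v)\ge \hvol(\ord_V)$ with a Dirac-type characterization of the equality case. This requires choosing the correct divisorial valuation on $V\times\bC$ built from $v$, performing integration by parts carefully against the volume function $\vol(\overline{\cF_v}R^{(t)})$, and exploiting the monotonicity/convexity of that function to control the equality case. A secondary technical difficulty is the finite-level gap estimate needed to pass from ``asymptotic Dirac'' to ``every $\nu_m$ is a Dirac at $cm$,'' where the quasi-monomial assumption on $v$ and the multiplicative structure of $\cF_v$ are essential.
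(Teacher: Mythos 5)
Your broad outline---show the minimizer forces the Duistermaat--Heckman measure to be a Dirac mass, then conclude---matches the paper's, but both halves diverge from the actual argument in ways that create real gaps.

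\textbf{Getting the Dirac measure.} You propose to revisit Fujita's sub log canonical criterion (Theorem \ref{Fujthm}) and analyze its \emph{equality case}; this is not what the paper does and is considerably more delicate than necessary. The paper instead observes that the interpolation $\Phi(\lambda_*, s)$ of Lemma \ref{lemintfn} is convex in $s$, with $\Phi(\lambda_*,0)=\hvol(\ord_V)/r^n$ and $\Phi(\lambda_*,1)=\hvol(v_1)/r^n$; if these two agree and are the minimum, then $\Phi$ is constant and $\Phi''\equiv 0$. The explicit second-derivative formula
\[
\Phi''(s)=-n(n+1)\int(\lambda_*t-1)^2\,\frac{d\vol\left(R^{(t)}\right)}{(\lambda_*st+(1-s))^{n+2}},
\]
together with the support description of Proposition \ref{BHJvol}, then forces $\nu$ to be a point mass (otherwise $\Phi''<0$ somewhere). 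Your route would require showing that strict sub lc plus strictness somewhere forces a strict inequality in $\hvol$, which is exactly the analysis the convexity argument is designed to bypass.

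\textbf{From Dirac to $v=c\cdot\ord_V$.} Your claim that the asymptotic concentration $\nu_v=\delta_c$ can be ``upgraded'' to the finite-level statement $\lambda^{(m)}_{\min}=\lambda^{(m)}_{\max}=cm$ for every $m$ is both unnecessary and unjustified: a Dirac limiting measure does not force each finite-level measure $\nu_m$ to be a Dirac, and multiplicativity of $\cF_v$ alone will not give you that. What the paper does is identify the endpoints of the support of $\nu$ with the structural constants $c_1=\inf_{\fm}(v_1/v_0)$ and $c_2=\sup_{\fm}(v_1/v_0)$. The identity $\lambda_{\max}=c_2$ is elementary, but $\lambda_{\min}=c_1$ is where the generalized Izumi inequality \eqref{eq-gIzumi} for $w=v_1|_{\bC(V)}$ is used: it shows that for any $x>c_1$ the sections in $\cF^{kx}R_k$ vanish to positive order along the Rees divisor, hence $\vol(\cF R^{(x)})<L^{n-1}$. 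This is precisely the role of the quasi-monomial hypothesis---it guarantees that Izumi's theorem applies to $w$---not a finite-level gap estimate. Once $c_1=c_2$, the ratio $v_1/v_0$ is constant on $\fm$ and $v_1=c_1\ord_V$, which is the rescaling-equivalence you want. Your sketch cannot close this step as stated.
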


Through the proof of the above theorem, we get also a characterization of K-semistability using divisorial valuations of the field $\bC(V)$.
%by essentially combining the existing literature. We state it here for the convenience of the reader.
\begin{defn}
For any divisorial valuation $\ord_F$ over $V$ where $F$ is a prime divisor on a birational model of $V$, we define the quantity:
\begin{equation}
\Theta_V(F)=A_V(F)-\frac{r^n}{(-K_V)^{n-1}}\int^{+\infty}_0 \vol\left(\cF_F R^{(t)}\right)dt,
\end{equation}
where $L=-r^{-1}K_V$ is an ample Cartier divisor for $r\in \bQ_{>0}$ and $\cF_F R_\bullet=\{\cF_F R_k\}$ is the filtration 
defined in \eqref{divfil} by $\ord_F$ on $R=\bigoplus_{k=0}^{+\infty} H^0(V, kL)$.
\end{defn}
\begin{rem}
\begin{enumerate}
\item
It's easy to verify that the $\Theta_V(F)$ does not depend on the choice of $L$. See Lemma \ref{intresc}. In particular when $-K_V$ is Cartier then we can choose $L=-K_V$ and $r=1$. 
\item
If $F$ is a prime divisor on $V$, then $A_V(F)=1$ and $\Theta_V(F)$ is nothing but (a multiple of) Fujita's invariant $\eta(F)$ defined in \cite{Fuj15a}. In particular, the following theorem can be seen as a generalization of result about Fujita's divisorial stability in \cite{Fuj15a} (see also \cite{Li15a}).
\end{enumerate}
\end{rem}
\begin{thm}\label{thmdiv}
Assume $V$ is a $\bQ$-Fano variety. Then $V$ is K-semistable if and only if $\Theta_V(F)\ge 0$ for any divisorial 
valuation $\ord_F$ over $V$. Moreover, if $\Theta_V(F)>0$ for any divisorial valuation $\ord_F$ over $V$, then $V$ is K-stable.
 \end{thm}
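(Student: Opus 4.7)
The plan is to derive Theorem~\ref{thmdiv} by combining the equivariant volume minimization characterization of K-semistability (Theorem~\ref{main}) with an explicit identification of the invariant $\Theta_V(F)$ as a normalized volume ``gap'' on the cone $C_r=C(V,L)$. For each divisorial valuation $\ord_F$ over $V$, the multiplicative, linearly bounded filtration $\cF_F$ on $R=\bigoplus_m H^0(V,mL)$ defined by $\cF_F^x R_m=\{s\in R_m:\ord_F(s)\ge x\}$, together with the grading by degree, determines a natural $\bC^*$-invariant quasi-monomial valuation $v_F$ on $C_r$ centered at $o$. Applying the integral formula for $\hvol$ on $(\Val_{C_r,o})^{\bC^*}$ that is developed in the paper, I expect to obtain an identity of the form
\[
\hvol(v_F)-\hvol(\ord_V)=c\cdot \Theta_V(F)
\]
for an explicit positive constant $c$ depending only on $V$, $L$, and $A_V(F)$. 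This identity is the heart of the argument: it explains why $\Theta_V(F)$ is the correct quantity, as it measures the variation in $\hvol$ along the $\bC^*$-invariant direction corresponding to $\ord_F$.

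With this identity in hand, the ``only if'' direction is immediate from Theorem~\ref{main}: K-semistability forces $\hvol(v_F)\ge\hvol(\ord_V)$ for every divisorial $\ord_F$, hence $\Theta_V(F)\ge 0$. For the converse direction, I would proceed through Ding-semistability. Fujita's Theorem~\ref{Fujthm}, combined with Theorem~\ref{thm-BBJ}, characterizes K-semistability by the sub log canonicity of a family of pairs built from arbitrary good filtrations on $R$, a condition that reduces to non-negativity of an integral built out of $\int\vol(\overline{\cF}R^{(t)})dt$. Since the divisorial filtrations $\cF_F$ are rich enough to test this condition, either via approximation of an arbitrary good filtration by divisorial ones, or equivalently via the reduction of arbitrary test configurations to special ones whose central-fiber $\bC^*$-actions produce divisorial valuations over $V$ (using \cite{LX14}), the hypothesis $\Theta_V(F)\ge 0$ for all divisorial $\ord_F$ yields K-semistability.

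For the K-stability statement, the argument upgrades as follows. Any non-trivial normal test configuration reduces, after the \cite{LX14} process, to a non-trivial special test configuration, whose $\bC^*$-action on the central fiber, pulled back through the degeneration, produces a genuinely non-trivial divisorial valuation $\ord_F$ over $V$ satisfying $\CM(\cV,\cL)=c\cdot\Theta_V(F)$ with the same positive constant $c$ as above. Strict positivity of $\Theta_V(F)$ for every divisorial $\ord_F$ then forces $\CM(\cV,\cL)>0$ on every non-trivial normal test configuration, i.e., $V$ is K-stable. The main obstacle will be establishing the precise identity $\hvol(v_F)-\hvol(\ord_V)=c\cdot\Theta_V(F)$: one must carefully construct $v_F$, verify that the integral volume formula on $(\Val_{C_r,o})^{\bC^*}$ specializes correctly (tracking all normalization factors involving $r$ and $(-K_V)^{n-1}$), and confirm that the family $\{v_F\}$ indexed by divisorial $\ord_F$ over $V$ is rich enough to detect equivariant volume minimization. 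Once these points are secured, both statements of the theorem follow from the structural results already assembled.
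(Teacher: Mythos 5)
The overall strategy you outline matches the paper's, but the central identity you expect to prove is incorrect, and this would derail the argument if pursued literally. You posit
\[
\hvol(v_F)-\hvol(\ord_V)=c\cdot\Theta_V(F)
\]
for a constant $c$ depending only on $V$, $L$, $A_V(F)$. There is no such finite-difference identity: the quantity $\Theta_V(F)$ is (up to the positive factor $n(-K_V)^{n-1}$, and a factor $q$ in the $w_\beta$-parametrization) the \emph{derivative} at $\alpha=0$ of the function $\alpha\mapsto\hvol(v_\alpha)$, where $v_\alpha$ is the $\bC^*$-invariant quasi-monomial valuation on the model $(Y',V+\bF)$ with weight $(1,\alpha)$. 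Concretely the paper's formulas \eqref{eq-volvalpha}--\eqref{eq-dvolalpha} give $\hvol(v_\alpha)=(r+\alpha A_V(F))^nL^{n-1}-nA_X(v_\alpha)^n\int_0^\infty\vol(\cF_FR^{(t)})\frac{\alpha\,dt}{(1+\alpha t)^{n+1}}$, which is manifestly \emph{not} affine in $\alpha$; its derivative at $\alpha=0$ is $n(-K_V)^{n-1}\Theta_V(F)$. Your ``only if'' chain (K-semistable $\Rightarrow$ $\hvol(v_\alpha)\ge\hvol(\ord_V)$ $\Rightarrow$ $\Theta_V(F)\ge 0$) is salvageable, but the second implication must go through the one-sided derivative at $\alpha=0$, not through a linear identity. (The convexity of the interpolation established in Lemma \ref{lemintfn} does make $\Theta_V(F)\ge 0$ equivalent to the one-sided ray condition, but that is still not an equality of numbers.)

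For the converse, the alternate route you float, approximating an arbitrary good filtration by divisorial ones and then invoking Fujita's criterion, is not what the paper does and is not needed; the paper's argument is more direct. The route that actually works is the one you sketch second: by \cite{LX14} it suffices to test on special test configurations; any non-trivial special test configuration produces a divisor $F$ over $V$ (via \eqref{ordF}) for which the derivative $\left.\frac{d}{d\beta}\right|_{\beta=0}\hvol(w_\beta)=q\,n(-K_V)^{n-1}\Theta_V(F)$ equals $n(-K_V)^{n-1}\CM(\cV,\cL)$, so nonnegativity (resp. positivity) of $\Theta_V(F)$ over all divisorial $F$ forces nonnegativity (resp. positivity) of the Futaki invariant of every non-trivial special test configuration, hence K-semistability (resp. K-stability). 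Your K-stability paragraph correctly anticipates this, but again the identification is of $\Theta_V(F)$ with a \emph{derivative} of $\hvol$, not with a difference.
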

%We expect that the converse to the last statement is also true and there should be a K-polystable version. We hope to discuss them elsewhere. 

The rest of this paper is devoted to the proof of Theorem \ref{main}, Theorem \ref{thm-unique} and Theorem \ref{thmdiv}. Before we go into details, we make some general discussion on idea of the proof of Theorem \ref{main}, from which the proof of Theorem \ref{thm-unique} and the proof Theorem \ref{thmdiv} will be naturally derived.

In Section \ref{sec-D2V}, we will prove that K-semistability, or equivalently Ding-semistability (by Theorem \ref{thm-BBJ}), implies equivariant volume minimization. More precisely we prove $(1)\Rightarrow (3)$. To achieve this, we will generalize the calculations in \cite{Li15a} where we compared the
$\hvol(v_0)$, the normalized volume of the canonical valuation $v_0=\ord_V$,  with $\hvol(v_1)$ for some special $\bC^*$-invariant divisorial valuation $v_1$. %However since the continuity of volume functions at non-quasi-monomial valuations is not known at present, it's hard to use the approximation approach. So 
Here we will deal with general real valuations directly using the tool of filtrations. More precisely, we will associate an appropriate graded filtration to any real valuation $v_1$
with $A_X(v_1)<+\infty$, and will be able to
%we will consider the family of weight functions $\{v_t; t\in [0,1]\}$ as in Section \ref{secwtfn}, which interpolates $v_0$ and any fixed real valuation $v_1$. Under the assumption that $v_1$ is $\bC^*$-invariant, 
calculate $\vol(v_1)$ using the volumes of associated sublinear series. Izumi's theorem is a key to make this work, ensuring that we get linearly bounded filtrations. 
Motivated by the formulas in the case of $\bC^*$-invariant valuations, we will consider a concrete convex interpolation between $\hvol(v_0)$ and $\hvol(v_1)$. 
%By the formula we obtained in \eqref{eqvolvt2}, we will see that $\hvol(v_t)$ is convex with respect to $t$.
By the convexity, we just need to show that the directional derivative of the interpolation at $v_0$ is nonnegative. The formulas up to this point works for any real valuation not necessarily $\bC^*$-invariant.

In the $\bC^*$-invariant case, same as for the case in \cite{Li15a}, it will turn out that the derivative matches Fujita's formula and his result in Theorem \ref{Fujthm} (\cite{Fuj15b}) gives exactly the nonnegavitity. As mentioned earlier, we will deal with the non-$\bC^*$-invariant case in \cite{LL16}. % In this respect, Fujita's formula is again important as the case in \cite{Li15a}, giving exactly the derivative of $\hvol$ at the canonical valuation.  %Another point in the argument is that Fujita's estimate is actually sharp and can be used without going into inversion of adjunctions.

In Section \ref{secvol2semi}, we will prove the implication $(2)\Rightarrow (1)$. Since $(3)\Rightarrow (2)$ trivially, this completes the proof of Theorem \ref{main}. 
This direction of the implication depends on the work \cite{BHJ15} and \cite{LX14}. We learned from \cite{BHJ15} that test configurations can be studied using the point of view of valuations: the irreducible components of the central fibre give rise to $\bC^*$-invariant divisorial valuations, which however is not finite over the cone in general. In our new point of view, these $\bC^*$-invariant valuations should be considered as tangent vectors at the canonical valuation. By \cite{LX14}, special test configurations are enough for testing K-semistability. For any special test configuration, there is just one tangent vector and so it's much easier to deal with. Then again the point is that the derivative of $\hvol$ along the tangent direction is exactly the Futaki invariant on the central fibre. So we are done. 

%For completeness, We consider a family of quasi-monomial valuations starting from $v_0$ and ``tangent" to a valuation on $X$ determined by $F$, similar to what we did in \cite{Li15a}. Using the volume formula, we will see that the derivative of the normalized volume is given by $\Theta_V(F)$. Combining this with the proof of Theorem \ref{main} we can finish its proof.

\section{K-semistable implies equivariant volume minimizing}\label{sec-D2V}
Assume that $V$ is a $\bQ$-Fano variety and $L=-r^{-1}K_V$ is an ample Cartier divisor for a fixed $r\in \bQ_{>0}$. Define $R=\bigoplus_{i=0}^{+\infty} H^0(V, iL)=:\bigoplus_{i=0}^{+\infty} R_i$ and denote by $X=C(V, L)={\rm Spec}(R)$ the affine cone over $V$ with the polarization $L$. Let $v_0=\ord_V$ be the ``canonical valuation" on $R$ corresponding to the canonical $\bC^*$-action on $X$.
\begin{thm}\label{ks2mv}
If $V$ is K-semistable, then $\hvol$ is $\mathbb{C}^*$-equivariantly minimized at the canonical valuation $\ord_V$ over $(X=C(V,L),o)$.
\end{thm}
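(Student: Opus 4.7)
The plan is, given any $\bC^*$-invariant valuation $v_1\in(\Val_{X,o})^{\bC^*}$ with $A_X(v_1)<+\infty$, to prove $\hvol(v_1)\ge \hvol(v_0)$ where $v_0=\ord_V$. The strategy has three parts: (i) attach to $v_1$ a good $\bR$-filtration $\cF$ on the section ring of $(V,L)$; (ii) express $\hvol$ of a one-parameter family $v_\epsilon$ interpolating between $v_0$ and $v_1$ as an integral of volumes of twisted sub-linear series; (iii) identify the derivative at $\epsilon=0^+$ with the quantity $d_\infty$ of Theorem~\ref{Fujthm} and invoke Fujita's theorem, via Theorem~\ref{thm-BBJ}, to conclude this derivative is non-negative, which together with a convexity argument forces the desired inequality.

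For step (i), $\bC^*$-invariance of $v_1$ on $X=C(V,L)$ means $v_1$ preserves the weight decomposition $R=\bigoplus_m R_m$ with $R_m=H^0(V,mL)$, so the prescription
\[
\cF^{x}R_m=\{s\in R_m\,:\, v_1(s)\ge x\}
\]
defines a multiplicative decreasing $\bR$-filtration on each graded piece. The Izumi-type comparison of \cite[Proposition 2.3]{Li15a} gives $c\cdot v_0\le v_1\le C\cdot v_0$ on $R$ for some $0<c<C<+\infty$, so $\cF$ is linearly bounded and hence good in the sense of Definition~\ref{defn-gdfiltr}. Since $\fa_p(v_1)\cap R_m=\cF^p R_m$, counting successive minima and passing to the limit using Proposition~\ref{BHJvol} transforms $\vol(v_1)$ into an explicit integral formula of the shape $\vol(v_1)\propto \int \vol\bigl(\cF R^{(t)}\bigr)\,dt$, generalizing the special-divisorial computation of \cite{Li15a}.

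For step (ii), I interpolate by rescaling the filtration: set $\cF_\epsilon^{x}R_m:=\cF^{x/\epsilon}R_m$ and let $v_\epsilon$ be the associated $\bC^*$-invariant valuation, so that after normalization $v_\epsilon\to v_0$ as $\epsilon\to 0^+$ while $v_{\epsilon=1}$ is equivalent to $v_1$. Using the rescaling invariance $\hvol(\lambda v)=\hvol(v)$ and monotonicity/convexity of $\epsilon\mapsto \hvol(v_\epsilon)$ (modeled on the estimates in \cite{Li15a}), it suffices to show
\[
\left.\frac{d}{d\epsilon}\right|_{\epsilon=0^+}\hvol(v_\epsilon)\;\ge\;0.
\]
Here $A_X(v_\epsilon)$ is affine in $\epsilon$ while the integral formula from step (i) differentiates to yield, up to a positive constant, exactly the expression
\[
d_\infty=1-r(e_+-e_-)+\frac{r^{n}}{(-K_V)^{n-1}}\int_{e_-}^{e_+}\vol\bigl(\overline{\cF}R^{(t)}\bigr)\,dt
\]
appearing in Theorem~\ref{Fujthm}.

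For step (iii), K-semistability implies Ding-semistability by Theorem~\ref{thm-BBJ}, so Fujita's Theorem~\ref{Fujthm} applies to $\cF$ and yields that $(V\times\bC,\,\cI_\bull^{r}\cdot(t)^{d_\infty})$ is sub log canonical; a standard log-discrepancy computation then forces $d_\infty\ge 0$. Hence the directional derivative at $\epsilon=0^+$ is non-negative, and the convexity of step (ii) gives $\hvol(v_1)\ge\hvol(v_0)$, completing the proof. I expect the main technical obstacles to be (a) constructing the interpolation $v_\epsilon$ as genuine valuations in $\Val_{X,o}$ rather than only as filtrations, since naive linear interpolation of valuations need not remain in $\Val_X$, and (b) controlling the gap between $\cF$ and its saturation $\overline{\cF}$: only $\overline{\cF}$ appears in Fujita's statement, whereas $\vol(v_1)$ is computed from $\cF$ itself, so one must show the two integrals agree in the limit $\epsilon\to 0^+$.
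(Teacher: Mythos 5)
Your overall architecture --- attach a linearly bounded filtration to $v_1$, express $\vol(v_1)$ as an integral of fibre volumes, interpolate toward $v_0$, compute the one-sided derivative at the base, and close via Fujita's Theorem~\ref{Fujthm} and Theorem~\ref{thm-BBJ} --- is exactly the paper's strategy. You also correctly observe that $\bC^*$-invariance gives $\cF^x R_m=\fa_x(v_1)\cap R_m$, and your second flagged obstacle (saturation of $\cF$) is resolved in the paper precisely by Lemma~\ref{lemsat}: for $\bC^*$-invariant $v_1$ the filtration is automatically saturated, so no $\cF$ vs.\ $\overline\cF$ gap arises.

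However, your step (ii) has a genuine error, and it is the key step. Setting $\cF_\epsilon^x R_m:=\cF^{x/\epsilon}R_m$ does not interpolate between $v_0$ and $v_1$: the associated valuation is just $\epsilon v_1$, which by $\hvol(\lambda v)=\hvol(v)$ makes $\hvol(v_\epsilon)$ \emph{constant} in $\epsilon$, and as $\epsilon\to 0^+$ the family degenerates to the trivial valuation, not to $\ord_V$. There is no normalization that fixes this, since any rescaling of $\epsilon v_1$ is again $v_1$ up to scale. The correct path is a genuine convex combination: on homogeneous $f_k\in R_k$ one takes $v_s(f_k)=(1-s)v_0(f_k)+s\,v_1(f_k)$ (constructed as the $\bC^*$-invariant extension of $s\cdot(v_1|_{\bC(V)})$ with linearly varying cone coefficient), and the paper's function $\Phi(\lambda,s)=\int_{c_1}^{+\infty}\bigl((1-s)+\lambda s t\bigr)^{-n}\bigl(-d\vol(R^{(t)})\bigr)$ is convex in $s$ precisely because the kernel $t\mapsto\bigl((1-s)+\lambda s t\bigr)^{-n}$ is convex in $s$, while $\Phi(\lambda,0)=L^{n-1}$ and $\Phi(\lambda,1)=\lambda^{-n}\vol(v_1)$; choosing $\lambda=\lambda_*=r/A_X(v_1)$ then reduces everything to $\Phi_s(\lambda_*,0)\ge0$. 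An additional free scaling parameter $\lambda$ is needed here to normalize the endpoint; your scheme has no analogue.

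Your step (iii) is also imprecise in a way that would block the proof for non-divisorial $v_1$. The derivative $\Phi_s(\lambda_*,0)$ is not $d_\infty$ up to a positive constant; after unwinding the definitions it equals $\frac{nL^{n-1}}{A_X(v_1)}\bigl(A_Y(v_1)-v_1(\tcI_\bull^{r}\cdot\cI_V^{d_\infty})\bigr)$, which depends on $v_1$ through both terms, not only through the filtration it defines. Non-negativity of this quantity is the statement that $v_1$ is a log-canonical place of the pair $(Y,\tcI_\bull^r\cdot\cI_V^{d_\infty})$; Fujita's theorem gives sub-log-canonicity of the corresponding pair on $V\times\bC$ (tested against \emph{divisorial} valuations), one transports this to $Y$, and then one must invoke the semicontinuity properties of $A_Y(\cdot)$, $v\mapsto v(\tcI_\bull)$, and $v\mapsto v(\cI_V)$ from \cite{BFFU13} to pass from divisorial $v$ to arbitrary $v\in\Val_{X,o}$. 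The isolated inequality $d_\infty\ge0$ does not yield this.
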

The rest of this section is devoted to the proof of this theorem.
\subsection{A general volume formula} 
Let $v_1$ be any real valuation centered at $o$ with $A_X(v_1)<+\infty$. In this section, we don't assume $v_1$ is $\bC^*$-invariant. 
We will derive a volume formula for  $\vol(v_1)$ with the help of an appropriately defined filtration associated to $v_1$.
%We will prove $\hvol(v_1)\ge \hvol(v_0)$ with the help of an appropriate graded filtration associated to any real valuation. 
To define this filtration, we decompose any $g\in R=\bigoplus_{k=0}^{+\infty}R_k$ into ``homogeneous"
components $g=g_{k_1}+\cdots+g_{k_p}$ with $g_{k_j}\neq 0$ and $k_1<k_2<\cdots<k_p$ and define $\bin(g)$ to be the ``initial component" $g_{k_1}$. 
Then the filtration we will consider is the following:
\begin{equation}\label{mirafil}
\cF^x R_k=\{f\in R_k; \exists g\in R \text{ such that } v_1(g)\ge x \text{ and } \bin(g)=f\}\bigcup\{0\}.
\end{equation}
We notice that following properties of $\cF^x R_{\bull}$.
\begin{enumerate}
\item 
For fixed $k$, 
$\{\cF^x R_k\}_{x\in \bR}$ is a family of decreasing $\bC$-subspace of $R_k$. 
\item
$\cF$ is multiplicative:
$v_1(g_i)\ge x_i$ and $\bin(g_i)=f_i\in R_{k_i}$ implies 
\[
v_1(g_1 g_2)\ge x_1+x_2, \quad \bin(g_1 g_2)=f_1\cdot f_2\in R_{k_1+k_2}.
\]
\item 
If $A(v_1)<+\infty$, then $\cF$ is linearly bounded. Indeed by Izumi's theorem in \cite[Proposition 1.2]{Li13} (see also \cite{Izu85, Ree89, BFJ12}), there exist $c_1, c_2\in (0, +\infty)$ such that
\[
c_1 v_0\le v_1\le c_2 v_0.
\]
If $v_1(g)\ge x$, then  $k:=v_0(g)=v_0(\bin(g))\ge c_2^{-1}x$. This implies that if $x> c_2 k$, then 
$\cF^x R_k=0$. So $\cF$ is linearly bounded from above. On the other hand, if $v_0(f)= k$, then $v_1(f)\ge c_1 k$. This implies that if $x \le c_1 k$, then $\cF^x R_k=R_k$. So $\cF$ is linearly bounded from below. 
Note that the argument in particular shows the following relation:
\begin{equation}\label{cFlbd}
\inf_{\fm}\frac{v_1}{v_0}\le e_{\min}(\cF)\le e_{\max}(\cF)\le \sup_{\fm}\frac{v_1}{v_0}.
\end{equation}
\end{enumerate}
For later convenience, from now on we will fix the following constant:
\begin{equation}\label{defc1}
c_1:=\inf_{\fm}\frac{v_1}{v_0}>0.
\end{equation}
To state the following lemma, we first introduce a notation (see \cite[Section 2]{DH09}). If $Z$ is a proper closed subscheme of $Y$ and $v\in \Val_Y$, let $\cI_Z$ be the ideal sheaf of $Z$ and define:
\begin{equation}\label{eq-vZ}
v(Z)=v(\cI_Z):=\min\left\{v(\phi); \phi \in \cI_Z(U), U\cap {\rm center}_Y(v)\neq \emptyset\right\}.
\end{equation}
%The following lemma holds for any $v_1\in \Val_{X,o}$ %with $A_X(v_1)<+\infty$ 
%and does not depend on the filtration. 
\begin{lem}\label{lemdefc1}
For any $v_1\in \Val_{X,o}$, we have the following identity:
\begin{equation}\label{infv1}
c_1=v_1(V)=\inf_{i>0} \frac{v_1(R_i)}{i},
\end{equation}
where $v_1(V)$ is defined as in \eqref{eq-vZ} and $v_1(R_i)=\inf\{v(f); f\neq 0\in R_i\}$.
\end{lem}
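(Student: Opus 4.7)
The plan is to establish the lemma's two equalities in sequence: first $c_1=\inf_{i>0}v_1(R_i)/i$, then $\inf_{i>0}v_1(R_i)/i=v_1(V)$.

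For the first equality, I will exploit the fact that $v_0=\ord_V$ is the grading valuation on $R=\bigoplus_i R_i$: for any nonzero $g=g_{k_1}+\cdots+g_{k_p}$ with $k_1<\cdots<k_p$ and $g_{k_j}\in R_{k_j}\setminus\{0\}$, one has $v_0(g)=k_1$. Setting $\alpha:=\inf_{i>0}v_1(R_i)/i$, the bound $v_1(g_{k_j})\ge v_1(R_{k_j})\ge\alpha k_j$ together with $v_1(g)\ge\min_j v_1(g_{k_j})$ yields $v_1(g)\ge\alpha k_1=\alpha v_0(g)$ for every $g\in\fm$, hence $c_1\ge\alpha$. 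Conversely, restricting the infimum defining $c_1$ to nonzero homogeneous elements $f\in R_i\subset\fm$, which satisfy $v_0(f)=i$, gives $c_1\le v_1(f)/i$ for every $i\ge 1$ and every such $f$; taking infima yields $c_1\le\alpha$.

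For the second equality, I plan to work on the blowup $\pi\colon Y=Bl_oX\to X$, which is canonically identified with the total space of $L^{-1}$ over $V$, the exceptional divisor coinciding with the zero section $V\hookrightarrow Y$. Let $\pi_Y\colon Y\to V$ be the bundle projection and let $\tau\in H^0(Y,\pi_Y^*L^{-1})$ be the tautological section, which cuts out $V$; hence $v_1(\tau)=v_1(V)$ in the sense of \eqref{eq-vZ}, since $\tau$ is a local equation of $V$ in any local trivialization of $\pi_Y^*L^{-1}$ near the center. For $f\in R_i=H^0(V,iL)$ the regular function $\pi^*f$ on $Y$ factors as $\pi^*f=\tau^i\cdot\pi_Y^*f$, where $\pi_Y^*f$ is a section of $\pi_Y^*L^i$ regular along $V$. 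This yields $v_1(f)=i\,v_1(V)+v_1(\pi_Y^*f)\ge i\,v_1(V)$ and hence $v_1(R_i)/i\ge v_1(V)$. For the matching upper bound, let $W=c_Y(v_1)\subset V$; since $L|_W$ is ample, Serre vanishing applied to $\cI_W\otimes iL$ on $V$ ensures that for $i\gg 0$ some $f\in R_i$ does not vanish identically on $W$. For such an $f$ the section $\pi_Y^*f$ is nonvanishing at the generic point of $W$, whence $v_1(\pi_Y^*f)=0$ and $v_1(f)/i=v_1(V)$.

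The main obstacle is the second step. Its delicate ingredient is the identification $Bl_oX\cong\mathrm{Tot}(L^{-1})$ together with the multiplicative decomposition $\pi^*f=\tau^i\cdot\pi_Y^*f$ against the tautological section of $\pi_Y^*L^{-1}$, which allow one to read off $v_1(f)$ in terms of $v_1(V)$; the easier inequality $v_1(R_i)/i\ge v_1(V)$ then follows automatically, while the reverse hinges on producing, for $i\gg 0$, a section of $iL$ nonvanishing on the possibly nonclosed center $W$, for which ampleness of $L|_W$ and Serre vanishing suffice. By contrast, the first equality is essentially bookkeeping with the grading induced by $v_0$.
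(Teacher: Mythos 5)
Your proposal is correct and takes essentially the same route as the paper: both use the grading of $R$ to relate $c_1$ to the homogeneous infimum, and both use the identification of $Bl_oX$ with the total space of $L^{-1}$ to factor $\mu^*f$ through a local equation of $V$, then invoke ampleness of $L$ to produce, for $i\gg 0$, a section of $iL$ not vanishing on the center. The only differences are cosmetic — you chain the equalities through $\inf_i v_1(R_i)/i$ rather than through $v_1(V)$, and you cite Serre vanishing on $\cI_W\otimes iL$ where the paper simply invokes global generation of $mL$.
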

\begin{proof}
Let $\mu: Y:=Bl_oX\rightarrow X$ be the blow up of $o$ with exceptional divisor $V$. Then $Y$ can be identified with the global space of the line bundle 
$L^{-1}\rightarrow V$ and we have the natural projection $\pi: Y\rightarrow V$.
For any $f\in H^0(V, iL)$, we have $v_1(f)=v_1(\mu^*f)\ge i v_1(V)$. To see this, we write $f=h\cdot s^i$ on any affine open set $U$ of $X$ where $h\in \mathcal{O}_X(U)$ and
$s$ is a generator of $\cO_V(L)(U)$. Then $\mu^*f=\mu^*h\cdot s^i$ can be considered as a regular function on $\pi^{-1}U\cong U\times \mathbb{C}$. Since $V\cap \pi^{-1}(U)=\{s=0\}$, we have $v_1(s)=v_1(V)$ and $v_1(\mu^*f)=v_1(h\cdot s^i)=v_1(h)+i v_1(s)\ge i v_1(V)$. The identity holds if and only $v_1(h)=0$, i.e. if and only $f$ does not vanish on
the center of $v_1$ over $Y$ which is contained in $V$. 
%From this we get:
%\begin{equation}
%v_1(V)\le \inf_{i>0}\frac{v_1(R_i)}{i}.
%\end{equation}
When $m$ is sufficiently large, $mL$ is globally generated on $V$ so that we can find $f\in H^0(V, mL)$ such that $f$ does not vanish on the center of $v$ on $V$. Then 
we have $\frac{v_1(f)}{v_0(f)}=v_1(f)/m=v_1(\mu^*f)/m=v_1(V)$. So from the above discussion we get:
\begin{equation*}
c_1=\inf_{\fm}\frac{v_1}{v_0}\le \inf_{i>0} \frac{v_1(R_i)}{i}=v_1(V).
\end{equation*}
For any $t>c_1$, there exists $f\in R$ such that $v_1(f)/v_0(f)<t$. Decompose $f$ into components: $f=f_{i_1}+\dots+f_{i_k}$, where $f_{i_j}\in R_{i_j}=H^0(V, i_j L) (1\le j\le k)$ and $1\le i_1<\dots<i_k$. Then there exists $i_*\in \{i_1, \dots, i_k\}$ such that
\begin{eqnarray*}
\frac{v_1(f)}{v_0(f)}&=&\frac{v_1(f_{i_1}+\dots+f_{i_k})}{i_1}\ge \frac{\min\{v_1(f_{i_1}), \dots, v_1(f_{i_k})\}\}}{i_1}\\
&=&\frac{v_1(f_{i_*})}{i_1}\ge \frac{v_1(f_{i_*})}{i_*}\ge v_1(V).
\end{eqnarray*}
So we get $t > v_1(V)$ for any $t>c_1$. As a consequence $c_1\ge v_1(V)$.
\end{proof}
The reason why we can use filtration in \eqref{mirafil} to calculate volume comes from the following observation.
\begin{prop}\label{propdim}
For any $m\in \bR$, we have the following identity:
\[
\sum_{k=0}^{+\infty}\dim_{\bC}\left(R_k/\cF^m R_k\right)=\dim_{\bC}\left(R/\fa_m(v_1)\right).
\]
\end{prop}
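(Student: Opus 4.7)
The plan is to exploit the natural filtration of $R$ by ``lowest-degree'' to reduce the right-hand side to a telescoping sum whose graded pieces are precisely the $R_k/\cF^m R_k$ on the left. Concretely, let $R_{\ge k}=\bigoplus_{j\ge k}R_j$ and define
\[
F_k=\mathrm{image}\bigl(R_{\ge k}\to R/\fa_m(v_1)\bigr)=\bigl(R_{\ge k}+\fa_m(v_1)\bigr)/\fa_m(v_1),
\]
a decreasing filtration of $R/\fa_m(v_1)$ with $F_0=R/\fa_m(v_1)$. The first step is to verify that this filtration exhausts, i.e.\ $F_k=0$ for $k\gg 0$. This will follow from the Izumi-type inequality $v_1\ge c_1 v_0$ already recorded in \eqref{cFlbd}: for $f\in R_{\ge k}$ one has $v_1(f)\ge c_1 v_0(f)\ge c_1 k$, so once $k>m/c_1$ every element of $R_{\ge k}$ lies in $\fa_m(v_1)$.

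The main step, and the one that carries the real content, is to identify the graded piece
\[
F_k/F_{k+1}\;\cong\;R_k\big/\cF^m R_k.
\]
I would prove this by considering the composition $R_k\hookrightarrow R_{\ge k}\twoheadrightarrow F_k/F_{k+1}$. Surjectivity is immediate from $R_{\ge k}=R_k\oplus R_{\ge k+1}$. For the kernel, an element $f_k\in R_k$ lies there iff $f_k=g+h$ for some $g\in R_{\ge k+1}$ and $h\in\fa_m(v_1)$. Reading this as $h=f_k-g$ and using that $f_k\in R_k$ while $g\in R_{\ge k+1}$, the definition of $\bin$ gives $\bin(h)=f_k$ when $f_k\ne 0$; this is exactly the statement that $f_k\in\cF^m R_k$. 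Conversely, given $f_k\in\cF^m R_k$ with witness $g\in R$ satisfying $v_1(g)\ge m$ and $\bin(g)=f_k$, one reads $f_k=g-(g-f_k)$ with $g\in\fa_m(v_1)$ and $g-f_k\in R_{\ge k+1}$, producing the desired decomposition.

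Once the two steps above are in place, the proposition follows by summing dimensions along the finite filtration:
\[
\dmk\bigl(R/\fa_m(v_1)\bigr)=\dmk F_0=\sum_{k\ge 0}\dmk(F_k/F_{k+1})=\sum_{k\ge 0}\dmk(R_k/\cF^m R_k),
\]
where finiteness on the left is guaranteed by $\fm$-primaryness of $\fa_m(v_1)$ (recalled after Izumi's estimate), and on the right only finitely many summands are nonzero because $\cF^m R_k=R_k$ as soon as $c_1 k\ge m$. The only delicate point is the kernel computation for $F_k/F_{k+1}$, since it is exactly there that one has to translate between the ``initial component'' definition of $\cF$ and membership in $\fa_m(v_1)$; I do not anticipate any genuine obstacle beyond being careful with the case $f_k=0$.
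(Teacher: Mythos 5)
Your proof is correct, and it takes a genuinely different route from the paper's. The paper argues directly at the level of bases: for each $k$ it lifts a basis of $R_k/\cF^m R_k$ to elements $f^{(k)}_i\in R_k$ and then verifies by hand that the resulting classes $[f^{(k)}_i]$ form a basis of $R/\fa_m(v_1)$; linear independence is read off from $\bin$ exactly as you do in your kernel computation, but spanning is proved by a downward-induction/maximality contradiction argument with a two-case analysis according to whether $f\in\cF^m R_k$ or not. You instead package the whole thing into the exhaustive decreasing filtration $F_k=\bigl(R_{\ge k}+\fa_m(v_1)\bigr)/\fa_m(v_1)$, identify each associated-graded piece $F_k/F_{k+1}$ with $R_k/\cF^m R_k$ via the single clean kernel computation, and then invoke additivity of dimension along a finite filtration. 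The two approaches have the same mathematical content (the decomposition by initial degree), but yours replaces the paper's ad hoc spanning argument with the standard associated-graded machinery, which avoids the case split and the maximality argument and makes the role of the ``initial component'' transparent. The only ingredients you rely on are the Izumi bound $v_1\ge c_1 v_0$ for exhaustion of the filtration (as in the paper's remark before the proposition) and the $\fm$-primaryness of $\fa_m(v_1)$ for finiteness of the left side, both of which the paper has already established.
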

Notice that because of linear boundedness, the sum on the left hand side is a finite sum. More precisely, by the above discussion, when $k\ge m/c_1$, then $\dim_{\bC}\left(R_k/\cF^m R_k\right)=0$.
\begin{proof}
For each fixed $k$, let $d_k=\dim_{\bC}(R_k/\cF^m R_k)$. Then we can choose a basis of $R_k/\cF^m R_k$:
\[
\left\{[f^{(k)}_i]_k \; |\; f^{(k)}_i\in R_k, 1\le i\le d_k \right\},
\] 
where $[\cdot]_k$ means taking quotient class in $R_k/\cF^mR_k$. Notice that for $k\ge \lceil m/c_1 \rceil$, the set is empty.
We want to show that the set 
\[
\mathfrak{B}:=\left\{[f^{(k)}_i]\;|\; 1\le i\le d_k, 0\le k\le \lceil m/c_1\rceil-1 \right\},
\] 
is a basis of $R/\fa_m(v_1)$, where $[\cdot]$ means taking quotient in $R/\fa_m(v_1)$. 
\begin{itemize}
\item
We first show that $\mathfrak{B}$ is a linearly independent set. 
For any nontrivial linear combination of $[f^{(k)}_i]$:
 \[
\sum_{k=0}^{N} \sum_{i=1}^{d_k}c^{(k)}_i [f^{(k)}_i]=\left[\sum_{k=0}^{N} \sum_{i=1}^{d_k} c^{(k)}_i f^{(k)}_i\right]=[f^{(k_1)}+\cdots+f^{(k_p)}]=:[F],
 \] 
where $f^{(k_j)}\neq 0\in R_{k_j}\setminus \cF^m R_{k_j}$ and 
$k_1<k_2<\cdots<k_p$. In particular $\bin(F)=f^{(k_1)}\not\in \cF^m R_{k_1}$. By the definition of $\cF^m R_{k_1}$, we know that
\[
f^{(k_1)}+\cdots+f^{(k_p)}\not\in \fa_m(v_1),
\]
which is equivalent to $[F]\neq 0\in R/\fa_m(v_1)$.
\item 
We still need to show that $\mathfrak{B}$ spans $R/\fa_m(v_1)$.
%We only need to show that for any $f\in R_k$ such that $[f]\neq 0\in R/\fa_m(v_1)$, 
%$[f]$ can be written as a linear combination of $[f^{(k)}_i]\in \mathfrak{B}$. 
Suppose on the contrary $\mathfrak{B}$ does not span $R/\fa_m(v_1)$. Then there is some $k\in \bZ_{>0}$ and $f\in R_k-\fa_m(v_1)$ such that $[f]\neq 0\in R/\fa_m(v_1)$ can not
be written as a linear combination of $[f^{(k)}_i]$, i.e. not in the span of $\mathfrak{B}$. We claim that we can choose a maximal $k$ such that this happens. Indeed, this follows from 
the fact that the set
\[
\left\{v_0(g)\; |\; g\in R-\fa_m(v_1)\right\}
\]
is finite (because $v_1(g)<m$ implies $v_0(g)\le c_1^{-1}v_1(g)<m/c_1$). 

So from now on we assume that $k$ has been chosen such that for any $k'>k$ and $g\in R_{k'}$, $[g]$ is in the span of $\mathfrak{B}$.
%Notice that $f\in R_k$ implies $v_1(f)\ge c_1 v_0(f)=c_1 k$. So if
%$k\ge m c_1^{-1}$, then $[f]=0\in R/\fa_m(v_1)$. Equivalently 
%\begin{equation}\label{kgemc1}
%R_k\subseteq \fa_m(v_1) \text{ if } k\ge m c_1^{-1}.
%\end{equation}
Then there are two cases to consider.
\begin{enumerate}
\item
If $f\in R_k\setminus \cF^m R_k$, then since $\{[f^{(k)}_i]_k\}$ is a basis of $R_k/\cF^m R_k$, we can write $f=\sum_{j=1}^{d_k} c_j f^{(k)}_j+h_k$ where $h_k\in \cF^m R_k$. So 
there exists $h\in \fa_m(v_1)$ such that $\bin(h)=h_k$ and
$f=\sum_{j=1}^{d_k} c_j f^{(k)}_j+h$. By the maximality of $k$, we know that $[h_k-h]=[h_k]$ is in the span of $\mathfrak{B}$. Then so is $[f]=\sum_{j=1}^{d_k} c_j [f^{(k)}_j]\in R/\fa_m(v_1)$. This contradicts the condition that $[f]$ is not in the span of $\mathfrak{B}$.
\item
If $f\in \cF^m R_k\subseteq R_k$, then by the definition of $\cF^mR_k$, $f+h\in \fa_m(v_1)$ for some $h\in R$ such that $\bin(f+h)=f$. Since we assumed that
$[f]\neq 0\in R/\fa_m(v_1)$, we have $h\neq 0$ and $k':=v_0(h)>v_0(f)=k$. 
Now we can decompose $h$ into homogeneous components: 
\[
h=h^{(k_1)}+\cdots+h^{(k_p)},
\]
with $h^{(k_j)}\in R_{k_j}$ and $k'=k_1<\cdots<k_p$. Because $k'>k$ and the maximal property of $k$, we know that each $[h^{(k_j)}]$ in the span of $\mathfrak{B}$. 
So we have $[f]=[(f+h)-h]=[-h]$ is in the span of $\mathfrak{B}$. This contradicts our assumption that $[f]$ is not in the span of $\mathfrak{B}$. 
\end{enumerate}
\end{itemize}
\end{proof}
%\[
%v_1(H^0(V, iL))\ge c_1 i \Rightarrow v_1(f)\ge c_1 (i+j) \mbox{ for } f\in H^0(V, \cO_V(iL)\otimes\cI_Z^j).
%\]
%because $f\in H^0(V, iL)$ means $\pi_0^*f$ vanishes on $V$ to the $i$-th order and in particular vanishes on $Z$ to the $i$-th order.
%\[
%v_1(f)\ge c_1 i \Rightarrow f\in \cI_Z^i
%\]
%Our goal is to prove $\hvol(v_1)\ge \hvol(v_0)$. 
The above proposition allows us to derive a general formula for $\hvol(v_1)$ with the help of $\cF$ defined in \eqref{mirafil}.
Indeed, we have:
\begin{eqnarray}\label{wc1acodim}
n!\dim_{\bC}R/\fa_m(v_1)&=&n!\sum_{k=0}^{+\infty}\dim_{\bC}R_k/\cF^{m} R_k\nonumber \\
&=&n!\sum_{k=0}^{\lfloor m/c_1\rfloor }\left(\dim_{\bC}R_k-\dim_{\bC}\cF^{m} R_k\right). 
\end{eqnarray}
For the first part of the sum, we have:
\begin{equation}\label{vtsuma1}
n!\sum_{k=1}^{\lfloor m/c_1\rfloor }\dim_{\bC}R_k=\frac{m^n}{c_1^n}L^{n-1}+O(m^{n-1}).
\end{equation}
For the second part of the sum, we use Lemma \ref{lemlim} in Section \ref{secpflems} to get:
%\begin{lem}\label{lemlim}
\begin{eqnarray}\label{vtsuma2}
\lim_{m\rightarrow +\infty}\frac{n!}{m^{n}}\sum_{k=0}^{\lfloor m/c_1 \rfloor}\dim_{\bC}(\cF^{m}R_k)= n \int_{c_1}^{+\infty}\vol\left(R^{(t)}\right)\frac{ dt}{t^{n+1}}.
\end{eqnarray}
%\end{lem}
Combining \eqref{wc1acodim}, \eqref{vtsuma1} and \eqref{vtsuma2}, we get the first version of volume formula:
\begin{eqnarray}\label{volv1a}
\vol(v_1)&=&\lim_{p\rightarrow +\infty} \frac{n!}{m^n}\dim_{\bC} R/\fa_{m}(v_1)\\
&= &\frac{1}{c_1^n}L^{n-1}- n\int_{c_1}^{+\infty}\vol\left(R^{(t)}\right)\frac{dt}{t^{n+1}}\nonumber.
\end{eqnarray}
We can use integration by parts to get a second version:
\begin{eqnarray}\label{volv1b}
\vol(v_1)&=&\frac{1}{c_1^n}L^{n-1}+\int_{c_1}^{+\infty}\vol\left(R^{(t)}\right)d\left(\frac{1}{t^n}\right)\nonumber\\
&=&\frac{1}{c_1^n}L^{n-1}+\left[\left.\vol\left(R^{(t)}\right)\frac{1}{t^n}\right|^{+\infty}_{c_1}\right]-\int^{+\infty}_{c_1}\frac{d\vol\left(R^{(t)}\right)}{t^n}\nonumber\\
&=&-\int_{c_1}^{+\infty} \frac{d\vol\left(R^{(t)}\right)}{t^n}.
\end{eqnarray}
Motivated by the case of $\bC^*$-invariant valuations (see \eqref{eqvolvs} in Section \ref{secC*inv}), we define a function of two parametric variables $(\lambda, s)\in (0,+\infty)\times [0,1]$:
\begin{eqnarray}\label{eqintfn}
\Phi(\lambda, s)&=&\frac{1}{(\lambda c_1 s+(1-s))^n}L^{n-1}-n \int^{+\infty}_{c_1}
\vol\left(R^{(t)}\right)\frac{\lambda s dt}{(1-s+\lambda s t)^{n+1}}\nonumber\\
&=&\int^{+\infty}_{c_1} \frac{-d\vol\left(R^{(t)}\right)}{((1-s)+\lambda st)^{n}}.
\end{eqnarray}
In Section \ref{seconvex}, we will interpret and re-derive the above formulas \eqref{volv1a}-\eqref{eqintfn} using the theory of Okounkov bodies and coconvex sets.

The usefulness of $\Phi(\lambda, s)$ can be seen from the following lemma.
\begin{lem}\label{lemintfn}
$\Phi(\lambda, s)$ satisfies the following properties:
\begin{enumerate}
\item 
For any $\lambda\in (0,+\infty)$, we have:
\[
\Phi\left(\lambda, 1\right)= \vol(\lambda v_1)=\lambda^{-n}\vol(v_1), \quad \Phi(\lambda, 0)=\vol(v_0)=L^{n-1}.
\]
\item
For fixed $\lambda\in (0,+\infty)$, $\Phi(\lambda, s)$ is continuous and convex with respect to $s\in [0,1]$.
\item 
The derivative of $\Phi(\lambda, s)$ at $s=0$ is equal to:
\begin{equation}\label{dirder0}
\Phi_s(\lambda,0)=n\lambda L^{n-1}\left(\lambda^{-1}-c_1-\frac{1}{L^{n-1}}\int^{+\infty}_{c_1} \vol\left(R^{(t)}\right) dt \right).
\end{equation}
\end{enumerate}
\end{lem}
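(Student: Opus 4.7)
The plan is to verify the three claims essentially by direct manipulation of the two equivalent expressions for $\Phi(\lambda,s)$ in \eqref{eqintfn}, using the earlier volume formula \eqref{volv1a} and the monotonicity of $t\mapsto \vol(R^{(t)})$.

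For part $(1)$, I would substitute $s=0$ and $s=1$ into the first expression. The case $s=0$ is immediate: the integral vanishes because of the factor $s$, and the prefactor becomes $L^{n-1}$. For $s=1$, a straightforward rescaling matches $\Phi(\lambda,1)$ with the right hand side of \eqref{volv1a}; indeed, $\lambda v_1$ is a valuation with associated constant $c_1(\lambda v_1)=\lambda c_1$ and filtration satisfying $\tilde{\cF}^{y}R_k=\cF^{y/\lambda}R_k$, so the change of variables $u=t/\lambda$ in \eqref{volv1a} applied to $\lambda v_1$ reproduces $\Phi(\lambda,1)$. The identity $\vol(\lambda v_1)=\lambda^{-n}\vol(v_1)$ is the standard rescaling invariance.

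For part $(2)$, I would work with the second expression for $\Phi$. Continuity in $s$ follows once we observe that linear boundedness of $\cF$ forces $\vol(R^{(t)})=0$ for $t>e_{\max}(\cF)$, so the integral is actually supported on the bounded interval $[c_1,e_{\max}(\cF)]$; uniform continuity of the integrand in $s$ on this interval yields continuity of $\Phi(\lambda,\cdot)$. For convexity, the key observation is that $t\mapsto \vol(R^{(t)})$ is non-increasing (because the sublinear series $\cF^{kt}R_k$ shrinks as $t$ grows), so $-d\vol(R^{(t)})$ defines a positive Borel measure on $[c_1,+\infty)$. A direct calculation shows
\begin{equation*}
\partial_s^2\bigl((1-s+\lambda s t)^{-n}\bigr)=n(n+1)(\lambda t-1)^2(1-s+\lambda s t)^{-n-2}\ge 0,
\end{equation*}
so the integrand is convex in $s$ for every fixed $\lambda,t>0$, and convexity is preserved under integration against a positive measure.

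For part $(3)$, I would differentiate the first expression for $\Phi$ term by term at $s=0$; differentiation under the integral is justified by the compact support of the integrand in $t$. The derivative of $(\lambda c_1 s+(1-s))^{-n}$ at $s=0$ equals $-n(\lambda c_1-1)=n(1-\lambda c_1)$, contributing $nL^{n-1}(1-\lambda c_1)$. The integral term contains the prefactor $s$, so its derivative at $s=0$ is simply $-n\lambda\int_{c_1}^{+\infty}\vol(R^{(t)})\,dt$. Adding and factoring out $n\lambda L^{n-1}$ produces \eqref{dirder0}.

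There is no serious obstacle here: the statement is essentially a calculus exercise once the integral representation \eqref{eqintfn} is in hand. The only point that requires a little care is the application of dominated convergence and differentiation under the integral sign, both of which are justified by the linear boundedness of $\cF$ established just before Lemma \ref{lemdefc1} (so that all relevant integrals are really over a bounded $t$-interval).
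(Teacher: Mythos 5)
Your proof is correct and follows essentially the same route as the paper: part (1) by substitution, part (2) by observing that $-d\vol(R^{(t)})$ is a positive measure of finite total mass $L^{n-1}$ against which the convex (in $s$) integrand $t\mapsto ((1-s)+\lambda st)^{-n}$ is integrated, and part (3) by direct differentiation of the first expression in \eqref{eqintfn}. The paper's own treatment is terser (it simply states these points), so your write-up supplies welcome detail but no new idea.
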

\begin{proof}
The first item is clear. To see the second item,  first notice that $-d \vol(R^{(t)})$ has a positive density with respect to the Lebesgue measure $dt$ since $\vol(R^{(t)})$ is decreasing with respect to $t\in \bR$. Moreover it has finite total measure equal to $\vol(R^{(x)})=L^{n-1}$ for any $x\le c_1$.  So the claim follows from the fact that the function $s\rightarrow 1/((1-s)+\lambda s t)^n$ is continuous and convex. The third item follows from direct calculation using the formula in \eqref{eqintfn}.
\end{proof}

Roughly speaking, the parameter $\lambda$ is a rescaling parameter, and $s$ is an interpolation parameter. To apply this lemma 
to our problem, we let $\lambda=\frac{r}{A_X(v_1)}=:\lambda_*$ such that
\[
\Phi(\lambda_*, 1)=\frac{A_X(v_1)^n\vol(v_1)}{r^n}=\frac{\hvol(v_1)}{r^n}.
\]
Recall that $\hvol(v_0)=r^n L^{n-1}$. So our problem of showing $\hvol(v_1)\ge \hvol(v_0)$ is equivalent to showing that $\Phi(\lambda_*, 1)\ge \Phi(\lambda_*, 0)$.
By item 2-3 of the above lemma, we just need to show that $\Phi_s(\lambda_*, 0)$ is non-negative. This will be proved in the $\bC^*$-invariant case in the next section.

\subsubsection{A summation formula }\label{secpflems}
\begin{lem}\label{lemlim}
Let $\{\cF^{x}R_i\}_{x\in \bR}$ be a good filtration as in Definition \ref{defn-gdfiltr}. 
For any $\alpha\ge 0\in \bR$ and $\beta>-c_1$, we have the following identity:
\begin{eqnarray}\label{eqlimsum}
&&\lim_{p\rightarrow +\infty}\frac{n!}{p^{n}}\sum_{i=0}^{\lfloor \alpha p/(\beta+c_1)\rfloor}\dim_{\bC}(\cF^{\alpha p-\beta i}R_i)\\
&&\hskip 4cm = n \int_{c_1}^{+\infty}\vol\left(R^{(x)}\right)\frac{\alpha^n dx}{(\beta+x)^{n+1}}\nonumber.
\end{eqnarray}
\end{lem}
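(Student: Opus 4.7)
The approach is to interpret the LHS as a Riemann sum approximating the RHS integral after a change of variable. Set $h_i(t):=\dim_\bC\cF^{it}R_i$; then each summand in \eqref{eqlimsum} equals $h_i(t_p(i))$ with $t_p(i):=\alpha p/i-\beta$, since $\alpha p-\beta i = i\,t_p(i)$. The essential input is the Boucksom--Chen asymptotic recalled in Proposition~\ref{BHJvol}, which together with the definition of $\vol(R^{(t)})$ yields
\[
\lim_{i\to\infty}\frac{(n-1)!\,h_i(t)}{i^{n-1}} = \vol\!\left(R^{(t)}\right)\qquad\text{for a.e.~}t,
\]
along with the uniform bound $h_i(t)\le\dim_\bC R_i = O(i^{n-1})$ and the vanishing $h_i(t)=0$ for $t>e_{\max}(\cF)$ coming from linear boundedness of $\cF$.

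I would reparameterize by $\xi:=i/p$, which takes values in the lattice $\{1/p,2/p,\ldots\}\cap(0,\alpha/(\beta+c_1)]$, and set $t(\xi):=\alpha/\xi-\beta$. A direct rewriting gives
\[
\frac{n!}{p^n}\sum_{i=0}^{\lfloor\alpha p/(\beta+c_1)\rfloor}h_i(t_p(i)) = \frac{n}{p}\sum_{i\ge 1}\left(\frac{i}{p}\right)^{\!n-1}\frac{(n-1)!\,h_i(t(i/p))}{i^{n-1}}+O(p^{-1}),
\]
the error absorbing the isolated $i=0$ term. This is $n$ times a Riemann sum on $(0,\alpha/(\beta+c_1)]$ for $\xi\mapsto g_p(\xi):=\xi^{n-1}(n-1)!\,h_{\lfloor p\xi\rfloor}(t(\xi))/\lfloor p\xi\rfloor^{n-1}$. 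The pointwise a.e. limit of $g_p$ is $\xi^{n-1}\vol(R^{(t(\xi))})$, and $g_p$ is uniformly bounded on the compact interval by $(\alpha/(\beta+c_1))^{n-1}L^{n-1}+O(p^{-1})$; bounded convergence then yields
\[
\lim_{p\to\infty}\frac{n!}{p^n}\sum_i h_i(t_p(i)) = n\int_0^{\alpha/(\beta+c_1)}\xi^{n-1}\vol\!\left(R^{(t(\xi))}\right)d\xi.
\]

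Finally, I would perform the change of variable $x=t(\xi)=\alpha/\xi-\beta$, giving $\xi=\alpha/(\beta+x)$, $d\xi=-\alpha(\beta+x)^{-2}dx$, and $\xi^{n-1}=\alpha^{n-1}(\beta+x)^{-(n-1)}$. The interval $\xi\in(0,\alpha/(\beta+c_1)]$ transforms with reversed orientation to $x\in[c_1,+\infty)$, and the integral becomes
\[
n\int_{c_1}^{+\infty}\vol\!\left(R^{(x)}\right)\frac{\alpha^n\,dx}{(\beta+x)^{n+1}},
\]
which is the RHS of \eqref{eqlimsum}.

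The main technical issue is justifying the Riemann-sum convergence, since the Boucksom--Chen pointwise limit holds only a.e. and $\vol(R^{(t)})$ may admit a single jump at $t=e_{\max}(\cF)$ (the Dirac mass in Proposition~\ref{BHJvol}). This isolated jump contributes nothing to the Lebesgue integral, and the monotonicity of $t\mapsto h_i(t)$ permits standard sandwich arguments (Darboux-type) to upgrade pointwise a.e. convergence to convergence of Riemann sums. The boundary behavior $\xi\to 0^+$ (where $t(\xi)\to+\infty$) is harmless because $h_i(t)\equiv 0$ once $t>e_{\max}(\cF)$.
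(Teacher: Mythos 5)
Your proposal matches the paper's approach: both reinterpret the sum as an integral approximation, perform the change of variable $x = \alpha/\xi - \beta$ (equivalently $y = \alpha p/(\beta+x)$ in the paper's notation), invoke the Boucksom--Chen asymptotic for $\dim_\bC\cF^{mt}R_m/m^{n-1}$, and pass to the limit under the uniform bound $\dim_\bC R_i = O(i^{n-1})$. Where you invoke "bounded convergence plus a Darboux-type sandwich" (correctly noting that bounded convergence does not apply directly to a Riemann sum), the paper makes the sandwich explicit: it bounds the sum from above and below by integrals of the auxiliary step functions $\phi(y)=\dim_\bC(\cF^{\alpha p-\beta y}R_{\lfloor y\rfloor})$ and $\psi(y)=\dim_\bC(\cF^{\alpha p-\beta y}R_{\lceil y\rceil})$ using the monotonicity of $\cF$ on unit intervals, then applies Fatou's lemma separately to the $\limsup$ and $\liminf$.
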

\begin{proof}
Define $\phi(y)=\dim_{\bC}(\cF^{\alpha p-\beta  y}R_{\lfloor y\rfloor})$. Then $\phi(y)$ is an increasing function on $[m, m+1)$ for any $m\in \bZ_{\ge 0}$ and $\phi(y)\le \dim_{\bC}R_{\lfloor y\rfloor}\le C y^{n-1}$.
Moreover, because $\cF^x$ is decreasing in $x$, $\phi(y)\ge \dim_{\bC}(\cF^{\alpha p-\beta\lfloor y\rfloor)}R_{\lfloor y\rfloor})$. So we have:
\begin{eqnarray*}
\sum_{i=0}^{\lfloor \alpha p/(\beta+c_1)\rfloor}\dim_{\bC}(\cF^{\alpha p-\beta i}R_i)&\le &\left(\sum_{i=0}^{\lfloor \alpha p/(\beta+c_1) \rfloor-1}\dim_{\bC}(\cF^{\alpha p-\beta i}R_i)\right)+\dim_{\bC}R_{\lfloor\alpha p/(\beta+c_1)\rfloor}\\
&\le & 
\left(\int_0^{\alpha p/(\beta+c_1)}\phi(y)dy\right)+O(p^{n-1})\\
&=&\left(p\int_{c_1}^{+\infty} \phi\left(\frac{\alpha p}{\beta+x}\right)\frac{\alpha dx}{(\beta+x)^2}\right)+O(p^{n-1}).
\end{eqnarray*}
We notice that:
%\begin{eqnarray*}
%&&\frac{\phi(p/(\alpha+x))}{p^{n-1}/(n-1)!}=\frac{\dim_{\bC}(\cF^{c_1px/(\alpha+x)}R_{\lfloor p/(\alpha+x)\rfloor})}{p^{n-1}/(n-1)!}\\
%&\le &\frac{\dim_{\bC}(\cF^{c_1 x\lfloor p/(\alpha+x)\rfloor}R_{\lfloor p/(\alpha+x)\rfloor})}{\lfloor p/(\alpha+x)\rfloor^{n-1}/(n-1)!}\frac{\lfloor p/(\alpha+x)\rfloor^{n-1}}{(p/(\alpha+x))^{n-1}}\frac{1}{(\alpha+x)^{n-1}}\\
%&\le &\frac{\vol(\cF_{\bull}^{c_1x})}{(\alpha+x)^{n-1}}.
%\end{eqnarray*}
% we get:
\begin{eqnarray*}
&&\limsup_{p\rightarrow +\infty}\frac{\phi(\alpha p/(\beta+x))}{p^{n-1}/(n-1)!}=\limsup_{p\rightarrow +\infty}\frac{\dim_{\bC}(\cF^{\alpha px/(\beta+x)}R_{\lfloor \alpha p/(\beta+x)\rfloor})}{p^{n-1}/(n-1)!}\\
&\le &\limsup_{p\rightarrow+\infty}\frac{\dim_{\bC}(\cF^{x\lfloor \alpha p/(\beta+x)\rfloor}R_{\lfloor \alpha p/(\beta+x)\rfloor})}{\lfloor \alpha p/(\beta+x)\rfloor^{n-1}/(n-1)!}\frac{\lfloor \alpha p/(\beta+x)\rfloor^{n-1}}{(\alpha p/(\beta+x))^{n-1}}\frac{\alpha^{n-1}}{(\beta+x)^{n-1}}\\
&=&\vol\left(R^{(x)}\right)\frac{\alpha^{n-1}}{(\beta+x)^{n-1}}.
\end{eqnarray*}
The last identity holds by \cite{LM09} and \cite{BC11} (see also \cite[Theorem 5.3]{BHJ15}). So by Fatou's lemma, we have:
\begin{eqnarray}\label{limsup}
&&\limsup_{p\rightarrow +\infty}\frac{n!}{p^{n}}\sum_{i=0}^{\lfloor \alpha p/(\beta+c_1)\rfloor}\dim_{\bC}(\cF^{\alpha p-\beta i}R_i)\nonumber\\
&\le& n \limsup_{p\rightarrow +\infty}\left(\int_{c_1}^{+\infty}\frac{(n-1)!}{p^{n-1}}\phi\left(\frac{\alpha p}{\beta+x}\right)\frac{\alpha dx}{(\beta+x)^2}+O(p^{-1})\right)\nonumber\\
&\le & n \int_{c_1}^{+\infty}\limsup_{p\rightarrow +\infty}\frac{\phi(\alpha p/(\beta+x))}{p^{n-1}/(n-1)!}\frac{\alpha dx}{(\beta+x)^2}\nonumber\\
&\le & n \int_{c_1}^{+\infty}\vol\left(R^{(x)}\right)\frac{\alpha^n dx}{(\beta+x)^{n+1}}.
\end{eqnarray}
Similarly we can prove the other direction. Define $\psi(y)=\dim_{\bC}(\cF^{\alpha p-\beta y}R_{\lceil y\rceil})$. Then $\psi(y)$ is an increasing function on $(m, m+1]$ for any $m\in \bZ_{\ge 0}$ and satisfies $\psi(y)\le \dim_{\bC}R_{\lceil y\rceil}\le C y^{n-1}$. Moreover, $\psi(y)\le \dim_{\bC}(\cF^{\alpha p-\beta\lceil y\rceil )}R_{\lceil y\rceil})$. So we have:
\begin{eqnarray*}
\sum_{i=0}^{\lfloor \alpha p/(\beta+c_1)\rfloor}\dim_{\bC}(\cF^{\alpha p-\beta i}R_i)&\ge &\left(\sum_{i=1}^{\lceil \alpha p/(\beta+c_1) \rceil}\dim_{\bC}(\cF^{\alpha p-\beta i}R_i)\right)-\dim_{\bC} R_{\lceil \alpha p/(\beta+c_1)\rceil}\\
&\ge & 
\left(\int_0^{\alpha p/(\beta+c_1)}\psi(y)dy\right)+O(p^{n-1})\\
&=&\left(p\int_{c_1}^{+\infty} \psi\left(\frac{\alpha p}{\beta+x}\right)\frac{\alpha dx}{(\beta+x)^2}\right)+O(p^{n-1}).
\end{eqnarray*}
%We estimate:
%\begin{eqnarray*}
%&&\frac{\psi(p/(\alpha+x))}{p^{n-1}/(n-1)!}=\frac{\dim_{\bC}(\cF^{c_1px/(\alpha+x)}R_{\lceil p/(\alpha+x)\rceil})}{p^{n-1}/(n-1)!}\\
%&\ge &\frac{\dim_{\bC}(\cF^{c_1 x\lceil p/(\alpha+x)\rceil}R_{\lceil p/(\alpha+x)\rceil})}{\lceil p/(\alpha+x)\rceil^{n-1}/(n-1)!}\frac{\lceil p/(\alpha+x)\rceil^{n-1}}{(p/(\alpha+x))^{n-1}}\frac{1}{(\alpha+x)^{n-1}}\\
%&\le &\frac{\vol(\cF_{\bull}^{c_1x})}{(\alpha+x)^{n-1}}.
%\end{eqnarray*}
We can then estimate:
\begin{eqnarray*}
&&\liminf_{p\rightarrow +\infty}\frac{\psi(\alpha p/(\beta+x))}{p^{n-1}/(n-1)!}=\liminf_{p\rightarrow +\infty}\frac{\dim_{\bC}(\cF^{\alpha px/(\beta+x)}R_{\lceil \alpha p/(\beta+x)\rceil})}{p^{n-1}/(n-1)!}\\
&\ge &\liminf_{p\rightarrow+\infty}\frac{\dim_{\bC}(\cF^{x\lceil \alpha p/(\beta+x)\rceil}R_{\lceil \alpha p/(\beta+x)\rceil})}{\lceil \alpha p/(\beta+x)\rceil^{n-1}/(n-1)!}\frac{\lceil \alpha p/(\beta+x)\rceil^{n-1}}{(\alpha p/(\beta+x))^{n-1}}\frac{\alpha^{n-1}}{(\beta+x)^{n-1}}\\
&= &\vol\left(R^{(x)}\right)\frac{\alpha^{n-1}}{(\beta+x)^{n-1}}.
\end{eqnarray*}
Similar as before, by using Fatou's lemma, we get the other direction of the inequality:
\begin{eqnarray}\label{liminf}
&&\liminf_{p\rightarrow +\infty}\frac{n!}{p^{n}}\sum_{i=0}^{\lfloor \alpha p/(\beta+c_1)\rfloor}\dim_{\bC}(\cF^{\alpha p-\beta i}R_i)\nonumber\\
&\ge& n \liminf_{p\rightarrow +\infty}\left(\int_{c_1}^{+\infty}\frac{(n-1)!}{p^{n-1}}\psi\left(\frac{\alpha p}{\beta+x}\right)\frac{\alpha dx}{(\beta+x)^2}+O(p^{-1})\right)\nonumber\\
&\ge & n \int_{c_1}^{+\infty}\liminf_{p\rightarrow +\infty}\frac{\psi(\alpha p/(\beta+x))}{p^{n-1}/(n-1)!}\frac{\alpha dx}{(\beta+x)^2}\nonumber\\
&\ge & n \int_{c_1}^{+\infty}\vol\left(R^{(x)}\right)\frac{\alpha^n dx}{(\beta+x)^{n+1}}.
\end{eqnarray}
Combining \eqref{limsup} and \eqref{liminf}, we get the identity \eqref{eqlimsum} we wanted:
\[
\lim_{p\rightarrow+\infty}\frac{n!}{p^n}\sum_{i=0}^{\lfloor \alpha p/(\beta+c_1)\rfloor}\dim_{\bC}(\cF^{\alpha p-\beta i}R_i)=n\int^{+\infty}_{c_1}\vol\left(R^{(x)}\right)\frac{\alpha^n dx}{(\beta+x)^{n+1}}.
\]
\end{proof}

\subsection{Case of $\bC^*$-invariant valuations}\label{secC*inv}

$v_0=\ord_V$ corresponds to the canonical $\bC^*$-action on $R$ given by $a\circ f_k=a^k f_k$ for $f_k\in R_k$ and $a\in \bC^*$.
From now on, $v_1$ is assumed to be $\bC^*$-invariant: $v_1(a\circ f)=v_1(f)$ for any $f\in R$ and $a\in \bC^*$. Let $w:=v_1|_{\bC(V)}$. Then it's easy to see that $v_1$ is a $\bC^*$-invariant extension of $w$ in the following way. First choose an affine neighborhood $U$ of ${\rm center}_V(w)$ and a local generator $s$ of $\cO_X(L)(U)$. Each $f_k\in R_k=H^0(V, kL)$ can be written as $f_k=h\cdot s^k$ on the open set $U$. Define
$w(f_k):=w(h)$. Then for any $f_k\in R_k=H^0(V, kL)$, we have:
\[
v_1(f_k)=w(f_k)+c_1 k
\]
where $c_1=v(V)$ is the constant appearing in Lemma \ref{lemdefc1}. More generally if $f=\sum_k f_k\in R=\bigoplus_k R_k$, we have:
\begin{equation}\label{eqC*ext}
v_1(f)=\min\left\{w(f_k)+c_1 k; f=\sum_k f_k \text{ with } f_k\neq 0 \right\}.
\end{equation}
By \eqref{eqC*ext} it's clear that the valuative ideals $\fa_m(v_1)$ are homogeneous with respect to the $\mathbb{N}$-grading of $R=\bigoplus_{k\in \mathbb{N}} R_k$. 
\begin{lem}\label{lemsat} %\label{cor2fil}
Assume that $v_1$ is $\bC^*$-invariant. $\cF=\{\cF^x\}_{x\in \bR}$ defined in \eqref{mirafil} is equal to:
\begin{equation}\label{eqC*fil}
\cF^x R_k=\fa_x(v_1)\cap R_k=\{ f\in R_k \;|\; v_1(f)\ge x\}.
%&&\{f\in R_k \;|\; \exists g\in R \text{ such that } v_1(g)\ge x \text{ and } \bin(g)=f\}=\cF^x R_k.
\end{equation}
As a consequence, $\cF$ is left-continuous: $\cF^xR_m=\bigcap_{x'<x}\cF^{x'}R_m$ and saturated.
%\end{lem}
%$\cF^x H^0(V, mL)=\bar{\cF}^x H^0(V, mL)$ for any $x\in \bR$ and $m\gg 1$.
%The filtration $\cF$ associated to $\bC^*$-invariant valuation $v_1$ is 
\end{lem}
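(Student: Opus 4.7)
The plan is to first establish the equality \eqref{eqC*fil}, and then derive both left-continuity and saturation as formal consequences. One inclusion of \eqref{eqC*fil} is trivial: for $f\in R_k$ with $v_1(f)\ge x$, taking $g=f$ in the defining condition of \eqref{mirafil} works since a homogeneous element is its own initial form. For the reverse inclusion, I would take a nonzero $f\in\cF^x R_k$ with witness $g\in R$ satisfying $v_1(g)\ge x$ and $\bin(g)=f$, decompose $g=g_{k_1}+\cdots+g_{k_p}$ with $g_{k_j}\in R_{k_j}$ and $k_1<\cdots<k_p$ so that $f=g_{k_1}$ and $k_1=k$, and then invoke the $\bC^*$-invariance formula \eqref{eqC*ext} to obtain $v_1(g)=\min_j\{w(g_{k_j})+c_1 k_j\}\le w(g_{k_1})+c_1 k=v_1(f)$. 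This yields $v_1(f)\ge v_1(g)\ge x$, as required.

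Left-continuity is then immediate from the characterization $\cF^x R_m=\{f\in R_m:v_1(f)\ge x\}$ together with the Archimedean property of $\bR$: if $v_1(f)\ge x'$ for every $x'<x$, then $v_1(f)\ge x$, and the reverse inclusion in $\cF^x R_m=\bigcap_{x'<x}\cF^{x'}R_m$ is obvious.

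For saturation, I would work locally near the center $Z$ of $w:=v_1|_{\bC(V)}$ on $V$ (if $w$ is trivial, then $v_1$ is just a scalar multiple of $\ord_V$ and saturation is obvious). Since $V$ is projective, such a center exists by the valuative criterion of properness; pick an affine open $U\subset V$ meeting $Z$ so that $\cO_V(U)\subseteq\cO_{V,Z}\subseteq\cO_w$, together with a trivialization $s$ of $L|_U$. Given $f\in\overline{\cF}^x R_m=H^0(V,L^m\cdot I^{\cF}_{(m,x)})$, the construction of $I^{\cF}_{(m,x)}$ in \eqref{filtideal} lets me write $f/s^m=\sum_i h_i(g_i/s^m)$ on $U$ with $g_i\in\cF^x R_m$ and $h_i\in\cO_V(U)$, hence $f=\sum_i h_i g_i$ on $U$. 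Using that $v_1$ extends $w$, we have $v_1(h_i)=w(h_i)\ge 0$, and the valuation inequality applied to this local decomposition yields $v_1(f)\ge\min_i(v_1(h_i)+v_1(g_i))\ge x$, so $f\in\cF^x R_m$.

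I expect the main obstacle to be the saturation step rather than the characterization: while \eqref{eqC*fil} and left-continuity are essentially formal from \eqref{eqC*ext}, saturation requires recognizing that a global section of the twisted ideal sheaf $L^m\cdot I^{\cF}_{(m,x)}$ can be tested against $v_1$ through a purely local computation near the center of $w$, where the coefficients $h_i$ are automatically in the valuation ring. The existence of this local center (which hinges on the projectivity of $V$) is the only non-formal input.
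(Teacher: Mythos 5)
Your proof is correct and follows essentially the same route as the paper's. The identity \eqref{eqC*fil} and left-continuity are handled exactly as the paper intends (the paper simply declares them to follow from \eqref{eqC*ext}, which you unpack correctly via the initial-term decomposition and the minimum formula). For saturation, your local computation — writing $f/s^m=\sum_i h_i(g_i/s^m)$ near the center of $w$, using $v_1(h_i)=w(h_i)\ge 0$ because the coefficients lie in the valuation ring, and then applying the valuation axioms — is the same mechanism as the paper's, which phrases it as the one-line observation $v_1\bigl(I^{\cF}_{(m,x)}\bigr)\ge x-c_1 m$ followed by $v_1(f)=v_1(h)+c_1 m\ge x$ for $f=h\cdot s^m$ with $h\in I^{\cF}_{(m,x)}$. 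Your version makes explicit why the generation of the ideal sheaf by elements of value $\ge x-c_1 m$ forces all local sections to have that lower bound. The aside about the trivial-$w$ case and the appeal to the valuative criterion of properness are both correct but not strictly needed, since the center of $w$ on $V$ is just the image under the projection $Y=\mathrm{Bl}_o X\to V$ of the center of $v_1$ on $Y$, which already exists by hypothesis.
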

\begin{proof}
The identity \eqref{eqC*fil} follows easily from the $\bC^*$-invariance of $v_1$ and the left continuity is clear from \eqref{eqC*fil}. We just need to show the saturatedness.
Let $\pi: Y=Bl_oX\rightarrow X$ be the blow up of $o$ on $X$.
For any $f\in H^0(V, mL)$, locally we can write $f=h\cdot s$ where $s$ is a local trivializing section of $L^m$ on an open neighborhood of ${\rm center}_Y(v_1)\subset V$. Then we have
\[
v_1(f)=v_1(\pi^*(h s))=v_1(\pi^*h)+v_1(V)m.
\]
So we have $v_1(f)\ge x$ if and only if $v_1(\pi^*h)=v_1(h)\ge x-c_1 m$. From this we see that $v_1\left(I_{(m,x)}^{\cF}\right)\ge x-c_1 m$. If $f\in H^0(V, mL\cdot I_{(m,x)}^{\cF})$, then locally $f=h\cdot s$ with $h\in I_{(m,x)}^{\cF}$. So we have:
$v_1(f)=v_1(h)+c_1 m\ge x$ and hence $f\in \cF^x H^0(V, mL)$.
%$\fa_{x}(v'_1)=\{h\in \cO_V; v'_1(h)\ge x\}$. Then from the above discussion, we have 
%$I^{\cF}_{(m,x)}=\fa_{x-m}(v'_1)$ and $\cF^x H^0(V, mL)=\cF^x H^0(V, L^m\cdot I^{\cF}_{(m,x)})$, where $I^{\cF}_{(m,x)}$ was defined in 
%\eqref{filtideal}. This by definition means that $\cF^x$ is saturated.
\end{proof}
Remember that we want to prove $\Phi_s(\lambda_*, 0)\ge 0$. 
By \eqref{dirder0} and $\lambda_*=r/A_X(v_1)$, we have 
%\[
%s=s(t)=\frac{t A_X(v_1)}{(1-t)A_X(v_0)+t A_X(v_1)}=:\frac{t A_1}{A_t}.
%\]Then we have:
%\begin{eqnarray*}
%\frac{d}{dt}f_\lambda(0)&=&-n (\lambda c_1 -1)L^{n-1}- n\int^{+\infty}_{\lambda c_1}\vol(\cF^{x/\lambda})dx\\
%&=& n \lambda L^{n-1} \left(\lambda^{-1}- c_1- \frac{1}{L^{n-1}}\int^{+\infty}_{c_1}\vol\left(R^{(t)}\right)dx\right)\\
%\end{eqnarray*}
%When $\lambda=\frac{r}{A_X(v_1)}$, then
\begin{eqnarray}\label{Phis0}
\Phi_s(\lambda_*,0)&=&n\lambda_* L^{n-1}\left(\lambda_*^{-1}-c_1-\frac{1}{L^{n-1}}\int^{+\infty}_{c_1} \vol\left(R^{(t)}\right) dt \right)\\
&=&\frac{n L^{n-1}}{A_X(v_1)}\left(A_X(v_1)-c_1 r- \frac{r}{L^{n-1}}\int^{+\infty}_{c_1}\vol\left(R^{(t)}\right)dt\right).
%&=&n\lambda L^{n-1}\left(A_Y(v_1)-r-\frac{r^n}{(-K)^n}\int^{+\infty}_{c_1}\vol\left(R^{(t)}\right)dx\right).
\end{eqnarray}
%\begin{eqnarray}\label{hvoltwalp}
%\vol(\tw_\alpha)&= &\frac{(A_0\alpha+ A_1)^n}{(1+\alpha)^n}L^{n-1}- n\int_0^{+\infty}\vol(\cF^{c_1(x+1)})\frac{(A_0\alpha+A_1)^n dx}{(\alpha+x+1)^{n+1}}\nonumber\\
%&=&\frac{(A_0+A_1\beta)^n}{(1+\beta)^n}L^{n-1}-n\beta\int_0^{+\infty}\vol(\cF^{c_1 (x+1)})\frac{(A_0+A_1\beta)^ndx}{(1+(x+1)\beta)^{n+1}}\nonumber\\
%&=:&\Phi(\beta).
%\end{eqnarray}
%Notice that $\Phi(0)=r^n L^{n-1}=\hvol(v_0)$. The derivative of $\hvol(v_t)$ at $t=0$ can be easily calculated by using \eqref{hvolvt1} (one can also get the same formula by using
%\eqref{hvolvt2} and integration by parts):
%\begin{eqnarray}\label{Phi'0}
%\Phi'(0)&=&(n r^{n-1}(A_X(v_1)-r)-r^n n (c_1-1))L^{n-1}-n r^n\int^{+\infty}_{c_1}\vol(\cF^x)dx\nonumber\\
%&=&n(A_X(v_1)-c_1 r)r^{n-1}L^{n-1}-n r^n \int^{+\infty}_{c_1}\vol(\cF^x)dx\nonumber\\
%\Phi'(0)&=&-n r^{n-1}(A_X(v_1)-r) \int_{c_1}^{+\infty}d\vol(\cF^x)+n r^n \int_{c_1}^{+\infty} (x-1) d\vol(\cF^{x})\\
%&=&n (A_X(v_1) r^{n-1}- r^n) L^{n-1}+nr^n\left[\left.{(x-1)\vol(\cF^x)}\right|^{+\infty}_{c_1}\right]-n r^n \int_{c_1}^{+\infty} \vol(\cF^x) dx\\
%&=&n (A_X(v_1)-r)r^{n-1}L^{n-1}+n (r-c_1 r)r^{n-1} L^{n-1} -n r^n\int_{c_1}^{+\infty} \vol(\cF^x)dx\\
%&=&n (A_X(v_1)-c_1 r) (-K_V)^{n-1}-n r^n \int_{c_1}^{+\infty} \vol(\cF^x)dx.
%\end{eqnarray}
Under the blow-up $\pi: Y\rightarrow X$, we have
 $\pi^*K_X=K_Y-(r-1)V$ (see \cite[3.1]{Kol13}). So by the property of log discrepancy in \eqref{ldbirat}, we have
\begin{eqnarray*}
A_{X}(v_1)&=&A_{(Y, (1-r)V)}(v_1)=A_{Y}(v_1)+(r-1)v_1(V)\\
&=&A_{Y}(v_1)+(r-1)c_1. \quad (\text{by } \eqref{infv1})
\end{eqnarray*}
This gives $A_X(v_1)-c_1 r=A_Y(v_1)-c_1$. So, using also $L=r^{-1}K_V^{-1}$, we get another expression for $\Phi_s(\lambda_*, 0)$ in \eqref{Phis0}:
\begin{eqnarray}\label{dPhi}
\Phi_s(\lambda_*, 0)&=&\frac{n L^{n-1}}{A_X(v_1)} \left(A_Y(v_1)-c_1-\frac{r^n}{(-K_V)^{n-1}}\int^{+\infty}_{c_1}\vol\left(\cF R^{(t)}\right)dt\right)\\
&=&\frac{n L^{n-1}}{A_X(v_1)}\left(A_Y(v_1)-c_1\left(1+\frac{r^n}{(-K_V)^{n-1}}\int^{+\infty}_{1}\vol(\cF R^{(c_1 t)})dt\right)\right)\nonumber.
\end{eqnarray}
Next we bring in Fujita's criterion for Ding-semistability in \cite{Fuj15b}. For simplicity, we denote $\tcF^x=\cF^{c_1 x}$. By \eqref{cFlbd} and \eqref{defc1} we can choose $e_{+}$ with 
\[
e_+\ge \frac{\sup_{\fm} \frac{v_1}{v_0}}{\inf_{\fm}\frac{v_1}{v_0}}\ge \frac{e_{\max}(\cF)}{c_1}=e_{\max}(\tcF)
\] 
and $e_-=1$. Notice that by relation \eqref{cFlbd}, $e_{\min}(\tcF)=e_{\min}(\cF)/c_1\ge 1$. Denote by the $\cI_V$ the ideal sheaf of $V$ as a subvariety of $Y$. Notice that $Y$ is nothing but the total space of the line bundle $L^{-1}\rightarrow V$ so that we have a canonical projection denoted by $\rho: Y\rightarrow V$. Now similar to that in \cite{Fuj15b}, we define:
\begin{eqnarray}\label{eq-tcI}
&&\cI^{\tcF}_{(m,x)}={\rm Image}\left(\tcF^xR_m\otimes L^{-m}\rightarrow \cO_V\right)\nonumber\\
&&\tcI_m=\rho^*\cI_{(m, m e_+)}^{\tcF}+\rho^*\cI_{(m, me_+-1)}^{\tcF} \cdot \cI_V+\cdots\nonumber\\
&&\hskip 4cm \cdots+\rho^*\cI_{(m, me_-+1)}^{\tcF}\cdot \cI_V^{m(e_+-e_-)-1}+\cI_V^{m(e_+-e_-)}.
\end{eqnarray}

Then $\cI^{\tcF}_{(m,x)}$ (resp. $\tcI_m$) is an ideal sheaf on $V$ (resp. $Y$). Moreover, $\tcI_{\bull}:=\{\tcI_m\}_{m}$ is a graded family of coherent ideal sheaves by \cite[Proposition 4.3]{Fuj15b}. By Lemma \ref{lemdefc1}, we have $v_1(\cI_V)=c_1$. On the other hand, using Lemma \ref{lemsat}, we get:
\begin{equation}\label{v1cI}
v_1(\cI^{\tcF}_{(m,x)})=v_1\left({\rm Image}(\cF^{c_1x}R_m\otimes L^{-m}\rightarrow \cO_V)\right)\ge c_1x-c_1m.
\end{equation}
Combining \eqref{v1cI} and the inclusion $\cI_V^{m(e_+-e_-)}\subset \tcI_m$, we get $v_1(\tcI_m)=v_1(\cI_V^{m(e_+-e_-)})=m(e_+-e_-)v_1(\cI_V)=m c_1 (e_+-1)$ and hence
\[
v_1(\tcI_\bull)=\inf_{m} \frac{v_1(\tilde{\cI}_m)}{m}=c_1 (e_+-e_-)=c_1(e_+-1).
\]
We define $d_\infty$ as in Fujita's Theorem \ref{Fujthm}:
\[
d_{\infty}=1-r(e_+-e_-)+\frac{r^{n}}{(-K_V)^{n-1}}\int_1^{e_+} \vol\left(\overline{\tcF} R^{(t)}\right)dt.
\]
Then we get:
\begin{eqnarray*}
v_1(\tcI_\bull^{r}\cdot \cI_V^{d_\infty})&=&r c_1(e_+-e_-)+c_1  d_\infty=c_1\left(1+\frac{r^n}{(-K_V)^{n-1}}\int_1^{e_+} \vol\left(\overline{\tcF} R^{(t)}\right)dt\right)\\
&=&c_1 \left(1+\frac{r^n}{(-K_V)^{n-1}}\int_1^{+\infty} \vol\left(\overline{\cF} R^{(c_1 t)}\right)dt\right)
\end{eqnarray*}
Comparing the above expression with that in \eqref{dPhi} we %denote $C_*=n\lambda_* (-K_V)^{n-1}$ to 
get the estimate:
\begin{eqnarray*}
\Phi_s(\lambda_*, 0)&=&\frac{nL^{n-1}}{A_X(v_1)}\left(A_Y(v_1)-v_1(\tcI_\bull^{r}\cdot \cI_V^{d_\infty})\right)\\
&&\hskip 0.3cm+\frac{nL^{n-1}}{A_X(v_1)}\left(\frac{r^n}{(-K_V)^{n-1}}\int^{+\infty}_1 \left(\vol(\overline{\cF}R^{(c_1 t)})-\vol(\cF R^{(c_1 t)})) \right)dt\right)\\
&\ge& \frac{nL^{n-1}}{A_X(v_1)}\left(A_Y(v_1)-v_1(\tcI_\bull^{r}\cdot \cI_V^{d_\infty})\right).
\end{eqnarray*} 
Note that the last inequality is actually an equality because $\cF$ is saturated by Lemma \ref{lemsat}. 
So to show $\Phi_s(\lambda_*, 0)\ge 0$, we just need to show 
\begin{equation}\label{lcineq}
A_Y(v_1)-v_1(\tcI_\bull^{r}\cdot \cI_V^{d_\infty})\ge 0.
\end{equation}
We prove this by deriving the following regularity from Fujita's result.
\begin{lem}
Assume that $(V, K_V^{-1})$ is K-semistable. Then 
$(Y, \tcI_{\bull}^{r}\cdot \cI_V^{d_\infty})$ is sub log canonical. 
\end{lem}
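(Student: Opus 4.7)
The strategy is to apply Fujita's criterion (Theorem \ref{Fujthm}, combined with Remark \ref{rem-D2K} so that K-semistability is the input) to the rescaled filtration $\tcF^{x}=\cF^{c_1x}$ on $R=\bigoplus_m H^0(V,mL)$, and then transfer the resulting sub log canonicity from $V\times\bC$ to $Y$ via local trivializations of the line bundle $L^{-1}\to V$.

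First I would check that $\tcF$ is a good filtration on $R$ to which Theorem \ref{Fujthm} applies. It is decreasing, multiplicative, and linearly bounded since $\cF$ is; the rescaling gives $e_{\min}(\tcF)=c_1^{-1}e_{\min}(\cF)$ and $e_{\max}(\tcF)=c_1^{-1}e_{\max}(\cF)$. By the relation \eqref{cFlbd} together with the definition \eqref{defc1} of $c_1=\inf_{\fm}(v_1/v_0)$, we have $e_{\min}(\tcF)\ge 1$ and $e_{\max}(\tcF)\le e_+$. Applying Theorem \ref{Fujthm} to $\tcF$ with $e_-=1$ therefore yields that the pair $(V\times\bC,\tilde{\mathcal{I}}_{\bull}^{r}\cdot(t)^{d_\infty})$ is sub log canonical, where $\tilde{\mathcal{I}}_m$ is obtained by substituting $\tcF$ into the formula \eqref{eq-cI} and $d_\infty$ is exactly the quantity defined in the excerpt.

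Next I would carry out the local identification between this pair and $(Y,\tcI_{\bull}^{r}\cdot\cI_V^{d_\infty})$. Cover $V$ by affine opens $U$ over which $L$ admits a nowhere-vanishing section $s$; the dual $s^{*}$ is a fiberwise linear coordinate on $\rho^{-1}(U)\subset Y$ inducing an isomorphism $\rho^{-1}(U)\cong U\times\bA^{1}_{s^{*}}$ under which $\cI_V|_{\rho^{-1}(U)}$ corresponds to the principal ideal $(s^{*})$, i.e.\ to $(t)$ with $t=s^{*}$. Since $\rho^{*}\cI^{\tcF}_{(m,x)}$ is by definition the trivial fiberwise extension of $\cI^{\tcF}_{(m,x)}$, comparing the two defining formulas \eqref{eq-tcI} and \eqref{eq-cI} termwise gives
\[
\tcI_m\big|_{\rho^{-1}(U)}\ \cong\ \tilde{\mathcal{I}}_m\big|_{U\times\bA^{1}},\qquad \cI_V\big|_{\rho^{-1}(U)}\ \cong\ (t)\big|_{U\times\bA^{1}}.
\]
Thus the two pairs agree on the chart.

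Finally, sub log canonicity is a local property, so the sub log canonicity of $(V\times\bC,\tilde{\mathcal{I}}_{\bull}^{r}\cdot(t)^{d_\infty})$ established in the first step transfers chart by chart to the sub log canonicity of $(Y,\tcI_{\bull}^{r}\cdot\cI_V^{d_\infty})$ along $V$. Away from $V$ the ideal $\cI_V$ is the unit ideal and $Y\setminus V\cong X\setminus\{o\}$ is klt (since $X$ is klt at $o$), so there is nothing to check there; one can also just restrict attention to a neighborhood of $V$ in $Y$ since the inequality \eqref{lcineq} only involves $v_1\in\Val_{X,o}$, whose center on $Y$ lies in $V$. The main subtlety in the argument is verifying that the piecewise definition of $\tcI_m$ in \eqref{eq-tcI}, involving $\rho^{*}$ of the Fujita ideals multiplied by successive powers of $\cI_V$, matches term-for-term Fujita's piecewise definition on $V\times\bC$ under the trivialization; once this bookkeeping is in place, the conclusion is immediate from Theorem \ref{Fujthm}.
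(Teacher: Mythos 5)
Your proposal is correct and takes essentially the same approach as the paper: apply Fujita's Theorem \ref{Fujthm} (via Remark \ref{rem-D2K}) to the rescaled filtration $\tcF$ to get sub log canonicity on $V\times\bC$, then transfer this to $(Y,\tcI_\bull^r\cdot\cI_V^{d_\infty})$ chart-by-chart using local trivializations of $L^{-1}\to V$ that identify $\cI_V$ with $(t)$ and $\tcI_m$ with $\cI_m$. The only difference is that you spell out the routine checks (goodness of $\tcF$, the bound $e_{\min}(\tcF)\ge 1$) that the paper leaves implicit; note also a small notational slip, as the fiber coordinate $t$ on $\rho^{-1}(U)$ is the local generator $s$ of $\cO_V(L)(U)$ viewed as a function on the total space of $L^{-1}$, not the dual section $s^*$.
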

\begin{proof}
By Fujita's Theorem \ref{Fujthm} (\cite[Theorem 4.9]{Fuj15b}) and Remark \ref{rem-D2K}, $(V\times \bC, \cI_\bull^r\cdot (t)^{d_\infty})$ is sub log canonical where $\cI_\bullet=\{\cI_m\}$ is the ideal sheaf on $V\times \bC$ defined in \eqref{eq-cI}. Informally we get $\cI_\bullet$ on $V\times\bC$ if in \eqref{eq-tcI} $\rho$ is replaced by the canonical fibration $\rho_0: V\times \bC\rightarrow V$. By choosing an affine cover of $\{U_i\}$ of $V$, we have
$(\tcI^r_m\cdot \cI^{d_\infty}_V)(\rho^{-1}(U_i))\cong (\cI^r_m\cdot (t)^{d_\infty})(\rho_0^{-1}(U_i))$ for any $m\ge 0$. Since sub log canonicity can be tested by testing on all open sets of an affine cover, we get
the conclusion.
\end{proof}
By the above Lemma, we get \eqref{lcineq} using the same approximation argument as in \cite[Proof of Theorem 4.1]{BFFU13}. 
Because the space of divisorial valuations is dense in $\Val_{X,o}$ we want to use some semicontinuity properties to get the inequality \eqref{lcineq} that already holds for divisorial valuations. More precisely, the sub log canonicality means that, for any divisorial valuation $\ord_F$ over $Y$, we have:
\[
A_Y(F)-r\cdot \ord_F(\tcI_\bullet)- d_\infty\cdot \ord_F(\cI_V)\ge 0.
\]
Consider the function $\phi: \Val_{X,o}\rightarrow \mathbb{R}\cup \{+\infty\}$ defined by:
\[
\phi(v)=A_Y(v)-r v(\tcI_\bullet)-d_\infty\cdot v(\cI_V)=A_X(v)-r v(\tcI_\bullet)- (r-1+d_\infty) \cdot v(\cI_V).
\]
If we endow $\Val_{X,o}$ with the topology of pointwise convergence, then
\begin{itemize}
\item $v\mapsto v(\cI_V)$ is continuous by \cite[Proposition 2.4]{BFFU13};
\item $v\mapsto v(\tcI_\bullet)$ is upper semicontinuous by \cite[Proposition 2.5]{BFFU13}; 
\item $v\mapsto A_Y(v)$ is lower semicontinuous by \cite[Theorem 3.1]{BFFU13}. 
\end{itemize}
Combining these properties we know that $\phi$ is a lower semi-continuous function on $\Val_{X,o}$. Furthermore it was shown in \cite{BFFU13} that $\phi$ is continuous on small faces of $\Val_{X,o}$ and satisfies $\phi\ge \phi \circ r_\tau$ for any $\tau$ a good resolution dominating the blow up $\tau: Y\rightarrow X$ (we refer to \cite{BFFU13} for the definition of the contraction $r_\tau$ and the notions of {\it small faces} and {\it good resolutions}). This allows us %to follow \cite[Proof of Theorem 4.1]{BFFU13} 
to show that $\phi\ge 0$ on the space of divisorial valuations implies $\phi(v)\ge 0$ for any valuation $v\in \Val_{X,o}$.

\subsubsection{Appendix: Interpolations in the $\bC^*$-invariant case} \label{appC*}

Assume $v_1$ is $\bC^*$-invariant. Let $w=v_1|_{\bC(V)}$ be its restriction as considered at the beginning of Section \ref{secC*inv}. We can connect the two valuations $v_0$ and $v_1$ by a family of $\bC^*$-invariant valuations $v_s$ for $s\in [0,1]$. $v_s$ is defined as the following $\bC^*$-invariant extension of $s w$:
for any $f_k\in R_k$ define:
\begin{equation}\label{eq-vsfk}
v_s(f_k)=s w(f_k)+(s c_1+(1-s))k=(1-s) v_1(f_k)+s c_1 v_0(f_k).
\end{equation}
The second identity follows from \eqref{eqC*ext}. For $f\in R$, define
\[
v_s(f)=\min\left\{v_s(f_k); f=\sum_k f_k \text{ with } f_k\neq 0 \right\}.
\]
%For our later purpose, we introduce the following condition:
%\begin{defn}
%Con
%\end{defn}
%Similar to the setting of quasi-monomial valuations, we define the log discrepancy of the weight function $v_t$ as 
%$A_X(v_t)=(1-t)A_X(v_0)+t A_X(v_1)=:A_t$, and the normalized volume $\hvol(v_t)=A_X(v_t)^n \vol(v_t)$. If we introduce the normalizing 
%parameter:
%\[
%s=s(t)=\frac{t A_X(v_1)}{(1-t)A_X(v_0)+t A_X(v_1)}=:\frac{t A_1}{A_t},
%\]
%then
%\begin{lem}
%If $\vol(v_t)$ is convex with respect to $t\in [0,1]$, then $\hvol(v_t)$ is convex with respect to $s$. 
%\end{lem}
%\begin{proof}
%Define a new family of weight functions 
%\[
%w_t:=\frac{v_t}{A_t}=\frac{t A_X(v_0)}{A_t}\frac{v_0}{A_X(v_0)}+\frac{t A_X(v_1)}{A_t}\frac{v_1}{A_X(v_1)}=(1-s) w_0+s w_1.
%\] 
%Then we have $A(w_t)\equiv 1$ and $\hvol(v_t)=\hvol(w_t)=\vol(w_t)$. So the corollary follows from Lemma \ref{lemconvex}.
%\end{proof}
Because $c_1>0$, it's easy to see that ${\rm center}_X(v_s)=o$, i.e. $v_s\in \Val_{X,o}$. From \eqref{eq-vsfk} $v_s$ is indeed an interpolation between $v_0=\ord_V$ and the given valuation $v_1$. The valuation $v_s$ is $\bC^*$-invariant and its valuative ideals $\fa_m(v_s)$ are homogeneous under the natural $\mathbb{N}$-grading of $R=\bigoplus_k R_k$. 
%\begin{lem}\label{lemhom}
%Assume that $v$ is a $\bC^*$-invariant valuation in $\Val_{X,o}$. Then for any $\lambda\in \bR$, the valuative ideal $\fa_\lambda(v)$ is homogeneous:
%\begin{equation}\label{eqhom}
%\fa_\lambda(v)=\bigoplus_{k=0}^{+\infty}\left( \fa_\lambda(v)\cap R_k\right).
%\end{equation}
%\end{lem}
%The above argument clearly implies Lemma \ref{lemsat}. 
%\begin{cor}\label{lemsat}
%Assume that $v_1$ is $\bC^*$-invariant. Then the following two filtrations are the same:
%\begin{eqnarray*}
%&&\{ f\in R_k \;|\; v_1(f)\ge x\};\\
%&&\{f\in R_k \;|\; \exists g\in R \text{ such that } v_1(g)\ge x \text{ and } \bin(g)=f\}=\cF^x R_k.
%\end{eqnarray*}
%In other words, $\cF^x R_k=\fa_m(v_1)\cap R_k$. %In particular, $\cF^xR_k$ is left-continuous with respect to $x$.
%\end{cor}
%It's clear that $\cF:=\cF_{v_1}$ is a decreasing, left-continuous, multiplicative filtration of the graded $\bC$-algebra $R$. 
%However, in general, we don't expect $\cF$ to be saturated. However, we 
%have
\begin{prop}
Assume that $v_1$ is $\bC^*$-invariant. 
The volume of $v_s$ defined above is equal to $\Phi(1,s)$ defined in \eqref{eqintfn}. In other words, we
have the formula:
\begin{eqnarray}\label{eqvolvs}
\vol(v_s)&=&\int^{+\infty}_{c_1}\frac{-d\vol\left(R^{(t)}\right)}{((1-s)+st)^n}.
\end{eqnarray}
\end{prop}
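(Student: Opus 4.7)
The plan is to recognize that $v_s$ is itself a $\bC^*$-invariant element of $\Val_{X,o}$ with finite log discrepancy (by the same Izumi-type bounds used for $v_1$, applied to $sw$), so the general volume formula \eqref{volv1b} applies to $v_s$. The rest is then a change of variable relating the filtration of $v_s$ to the filtration $\cF$ of $v_1$.

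Using $v_1(f_k) = w(f_k) + c_1 k$, the definition simplifies on $f_k \in R_k$ to $v_s(f_k) = s\, v_1(f_k) + (1-s)\, v_0(f_k)$. Applying Lemma \ref{lemdefc1} to $v_s$ then gives
\[
c_1(v_s) = v_s(V) = \inf_{i>0} \frac{v_s(R_i)}{i} = s c_1 + (1-s).
\]
By Lemma \ref{lemsat}, the filtration $\cF_s$ associated to $v_s$ via \eqref{mirafil} satisfies $\cF_s^{x} R_k = \{f \in R_k : v_s(f) \ge x\}$. Solving $s\, v_1(f) + (1-s) k \ge x$ for $s \in (0,1]$ yields $\cF_s^{x} R_k = \cF^{(x-(1-s)k)/s} R_k$. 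In particular, setting $x = kt$ and $\tau = (t-(1-s))/s$, we get $\cF_s^{kt} R_k = \cF^{k\tau} R_k$, and therefore $\vol(R_s^{(t)}) = \vol(R^{(\tau)})$.

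Now I would apply formula \eqref{volv1b} directly to $v_s$:
\[
\vol(v_s) = -\int_{sc_1+(1-s)}^{+\infty} \frac{d\vol(R_s^{(t)})}{t^n}.
\]
The change of variables $t = (1-s) + s\tau$, $dt = s\, d\tau$, sends the lower limit $t = sc_1 + (1-s)$ to $\tau = c_1$; the chain-rule factor $1/s$ in $d\vol(R_s^{(t)})$ cancels with the Jacobian $s$, producing
\[
\vol(v_s) = \int_{c_1}^{+\infty} \frac{-d\vol(R^{(\tau)})}{((1-s) + s\tau)^n} = \Phi(1,s),
\]
which is the desired formula \eqref{eqvolvs}. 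The boundary case $s=0$ is handled separately, either directly (since $v_0 = \ord_V$ gives $\vol(v_0) = L^{n-1} = \vol(R^{(c_1)}) = \Phi(1,0)$) or by invoking the continuity of $\Phi(1,s)$ in $s$ from Lemma \ref{lemintfn} together with the evident continuity of $\vol(v_s)$ as $s \to 0^+$.

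There is no real obstacle here: once one commits to the viewpoint that $v_s$ is a valuation in its own right with computable constant $c_1(v_s)$ and explicit filtration, the proposition reduces to an application of the already-established volume formula followed by a one-variable substitution. The only item warranting care is the verification that $v_s$ is a well-defined valuation on $\bC(X)$ (not just on homogeneous elements), but this is exactly the construction of a $\bC^*$-invariant extension as in \eqref{eqC*ext}, carried out with the shifted constant $sc_1 + (1-s)$ in place of $c_1$, and requires no new argument.
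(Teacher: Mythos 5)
Your proof is correct and takes a cleaner route than the paper. The paper re-derives the formula from scratch: it writes $\dim_\bC R/\fa_m(v_s)$ as a sum over graded pieces using the homogeneity of $\fa_m(v_s)$, invokes the summation Lemma~\ref{lemlim} with $\alpha=1/s$, $\beta=(1-s)/s$, and then integrates by parts exactly as in the derivation of \eqref{volv1a}--\eqref{volv1b}. You instead observe that $v_s$ is itself a $\bC^*$-invariant valuation in $\Val_{X,o}$ whose filtration $\cF_s$ (via Lemma~\ref{lemsat}) is an affine reparametrization of $\cF$, namely $\cF_s^x R_k = \cF^{(x-(1-s)k)/s}R_k$, so that $\vol(R_s^{(t)})=\vol(R^{((t-(1-s))/s)})$ and $c_1(v_s)=sc_1+(1-s)$ by Lemma~\ref{lemdefc1}; you then apply \eqref{volv1b} as a black box to $v_s$ and pull back the Stieltjes measure along $t=(1-s)+s\tau$. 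This is the same computation conceptually, but packaged as ``apply the established formula, then change variables,'' which avoids re-running the limiting argument.

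Two small points worth cleaning up. First, you justify applying \eqref{volv1b} to $v_s$ by asserting $A_X(v_s)<+\infty$ via Izumi applied to $sw$; this is more than is needed and in the wrong logical direction (Izumi-type bounds are a \emph{consequence} of finite log discrepancy). What the derivation of \eqref{volv1b} actually requires is linear boundedness of the filtration, and for $\cF_s$ this follows immediately from linear boundedness of $\cF$ and the affine relation above, with no separate Izumi argument. Second, the phrase about the ``chain-rule factor $1/s$ cancelling the Jacobian $s$'' reads as if $d\vol(R_s^{(t)})$ has a density; since this measure can carry a Dirac mass at $\lambda_{\max}$ (Proposition~\ref{BHJvol}), it is cleaner to say directly that the Stieltjes measure $d\vol(R_s^{(t)})$ is the pushforward of $d\vol(R^{(\tau)})$ under $t=(1-s)+s\tau$, which is a consequence of the functional identity and requires no differentiability. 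With these cosmetic adjustments the argument is airtight.
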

\begin{proof}
Because $\fa_m(v_s)$ is homogeneous and using \eqref{eq-vsfk} and Lemma \ref{lemsat}, we have:
\begin{eqnarray*}
\dim_{\bC} R/\fa_m(v_s)&=&\sum_{i=0}^{+\infty} \dim_{\bC} R_i/\left(\fa_m(v_s)\cap R_i\right)\\
&=&\sum_{i=0}^{+\infty}\left( \dim_{\bC} R_i - \dim_{\bC}\left(\fa_{m-(1-s)i)/s}(v_1)\cap R_i\right)\right) \\
&=&\sum_{i=0}^{+\infty} \left(\dim_{\bC} R_i-\dim_{\bC}\left(\cF^{\frac{m-(1-s)i}{s}}R_i\right)\right).
\end{eqnarray*}
Because $\cF^xR_i=R_i$ if $x\le c_1 i$ by the definition of $c_1$ (see the discussion before Lemma \ref{lemdefc1}), we deduce that:
\[
\cF^{\frac{m-(1-s)i}{s}}R_i=R_i \mbox{ if } \frac{m-(1-s) i}{s}\le c_1i \text{ or equivalently } 
i\ge \frac{m}{c_1 s+(1-s)}.
\]
So we get the following identity:
\begin{eqnarray}\label{codimvs}
n!\dim_{\bC}R/\fa_m(v_s)%&=&n!\sum_{i=0}^{+\infty}\dim_{\bC}R_i/\cF^{p}_{v_t}R_i\\
&=&n!\sum_{i=0}^{\lfloor m/(c_1 s+(1-s))\rfloor }\dim_{\bC}R_i-\dim_{\bC}\cF^{\frac{m-(1-s)i}{s}} R_i. 
\end{eqnarray}
For the first part of the sum, we have:
\begin{equation}\label{vtsum1}
n!\sum_{i=1}^{\lfloor m/(c_1 s+(1-s))\rfloor }\dim_{\bC}R_i=\frac{m^n}{(c_1 s+(1-s))^n}L^{n-1}+O(p^{n-1}).
\end{equation}
For the second part of the sum, we use Lemma \ref{lemlim} to get:
%\begin{lem}\label{lemlim}
\begin{eqnarray}\label{vtsum2}
&&\lim_{m\rightarrow +\infty}\frac{n!}{m^{n}}\sum_{i=0}^{\lfloor m/(c_1 s+(1-s))\rfloor}\dim_{\bC}(\cF^{\frac{m-(1-s)i}{s}}R_i)\\
&&\hskip 4cm = n \int_{c_1}^{+\infty}\vol\left(R^{(t)}\right)\frac{s dt}{((1-s)+s t)^{n+1}}\nonumber.
\end{eqnarray}
%\end{lem}
Combining \eqref{codimvs}, \eqref{vtsum1} and \eqref{vtsum2}, we have:
\begin{eqnarray}\label{eqvolvt1}
\vol(v_s)&=&\lim_{m\rightarrow +\infty} \frac{n!}{m^n}\dim_{\bC} R/\fa_m(v_s)\\
&= &\frac{1}{(c_1 s+(1-s))^n}L^{n-1}- n\int_{c_1}^{+\infty}\vol\left(R^{(t)}\right)\frac{s dt}{((1-s)+st)^{n+1}}\nonumber.
\end{eqnarray}
We can use integration by parts to further simplify the formula:
\begin{eqnarray}\label{eqvolvt2}
\vol(v_s)&=&\frac{1}{(c_1 s+(1-s))^n}L^{n-1}+\int_{c_1}^{+\infty}\vol\left(R^{(t)}\right)d\left(\frac{1}{((1-s)+st)^n}\right)\nonumber\\
&=&\frac{1}{(c_1 s+(1-s))^n}L^{n-1}+\nonumber\\
&&\hskip 0.7cm\left[\left.\vol\left(R^{(t)}\right)\frac{1}{(s t+(1-s))^n}\right|^{+\infty}_{t=c_1}\right] -\int^{+\infty}_{c_1}\frac{d\vol(R^{(t)})}{(st+(1-s))^n}\nonumber\\
&=&-\int_{c_1}^{+\infty} \frac{d\vol\left(R^{(t)}\right)}{(st+(1-s))^n}.
\end{eqnarray}\end{proof}
\section{Uniqueness among $\bC^*$-invariant valuations}

In this section, we prove Theorem \ref{thm-unique}. We first prove the result for divisorial valuations. Suppose $E_2$ is a prime divisor centered at $o$ such that $\hvol(\ord_{E_2})=\hvol(\ord_V)$. 
We want to show that $E_2=V$. The main observation is that the interpolation function $\Phi(\lambda_*, s)$ in \eqref{eqintfn} must be a constant function independent of $s\in [0,1]$. Indeed in this case  
$\Phi(s):=\Phi(\lambda_*, s)$ is a convex function satisfying $\Phi(0)=\Phi(1)=\min_{s\in [0,1]}\Phi(s)$. So $\Phi(s)\equiv \Phi(0)=\Phi(1)$. Recall the expression in \eqref{eqintfn}:
\begin{eqnarray}\label{eq-Phis}
\Phi(s)&=&\frac{1}{(\lambda_* c_1 s+(1-s))^n}L^{n-1}-n \int^{c_2}_{c_1}
\vol\left(R^{(t)}\right)\frac{\lambda_* s dt}{(1-s+\lambda_* s t)^{n+1}}.
%&=&\int^{+\infty}_{c_1} \frac{-d\vol\left(R^{(t)}\right)}{((1-s)+\lambda_* st)^{n}}.
\end{eqnarray}
Here we have changed the improper integral to a finite integral by choosing $c_2\gg 1$ such that $\vol\left(R^{(t)}\right)=0$ for $t\ge c_2$.
Because $\vol\left(R^{(t)}\right)$ is piecewise continuous on $[c_1, c_2]$, we easily verify that $\Phi(s)$ in \eqref{eq-Phis} is a smooth function of $s\in [0,1]$. We can calculate its second order derivative:
\begin{eqnarray}\label{eq-ddPhi}
\Phi''(s)&=&\frac{(n+1)n(\lambda_* c_1-1)^2}{(\lambda_* c_1 s+(1-s))^{n+2}}L^{n-1}\nonumber\\
&&\hskip 10mm -n(n+1)\int^{c_2}_{c_1}\vol\left(R^{(t)}\right)\frac{\lambda_* (\lambda_* t-1)(-2+n s(\lambda_* t-1)) dt}{(1-s+\lambda_* st)^{n+3}}\nonumber\\
&=&-n(n+1)\int_{c_1}^{c_2}(\lambda_* t-1)^2\frac{d \vol\left(R^{(t)}\right)}{(\lambda_* st+(1-s))^{n+2}}.
\end{eqnarray}
The second identity follows from integration by parts. 
By Proposition \ref{BHJvol}, $\nu=-\frac{1}{L^{n-1}}d\vol\left(R^{(t)}\right)$, supported on the interval $[\lambda_{\min}, \lambda_{\max}]$, is absolutely continuous on $[\lambda_{\min}, \lambda_{\max})$ with respect to the Lebesgue measure and possibly has a Dirac mass at $\lambda_{\max}$.
We also know that $\Phi''(s)\equiv 0$ for $s\in [0,1]$ since $\Phi(s)$ is a constant. Using the last expression in \eqref{eq-ddPhi}, we see that $\lambda_{\max}=\lambda_{\min}$ and
$\nu=\delta_{\lambda_{\max}}$. Otherwise, $\Phi''(s)$ is not identically equal to zero on the nonempty open interval 
$(\lambda_{\min}, \lambda_{\max})$.
%Because $\vol\left(R^{(t)}\right)$ is continuous with respect to $t$, we see that $\Phi(s)$ is differentiable. Its second derivative is equal to:
%\[
%\Phi''(s)=-n(n+1)\int^{+\infty}_{c_1}(\lambda_* t-1)^2\frac{d \vol\left(R^{(t)}\right)}{(\lambda_* st+(1-s))^{n+1}}
%\]
%By looking at its second order derivative, we see that the measure $-d\vol(R^{(t)})$ must be a Dirac measure by the second expression in \eqref{eqintfn}. 

We will show that $\nu$ being a Dirac measure indeed implies $\ord_{E_2}=\ord_{V}$. The latter statement can be thought of a counterpart of the result in \cite[Theorem 6.8]{BHJ15}. Indeed, the argument given below is motivated the proof in \cite[Lemma 5.13]{BHJ15}.

%We will use the same notations as before.
Let $\ord_{E_2}^{(0)}$ denote the restriction of $\ord_{E_2}$ under the inclusion $\bC(V)\subset \bC(X)$. 
%$v_{E}$ induces a valuation $v^{(0)}_E$ on $\bC(X)$ and a valuation $v^{(0)}_{*E}$ on $\bC(X_*)$ in the following way. 
As before, $v_1:=\ord_{E_2}$ is a $\bC^*$-invariant extension of $\ord_{E_2}^{(0)}$ in the following way.
Choose an affine open neighborhood $U$ of ${\rm center}_V(\ord_{E_2}^{(0)})$ and a trivializing section $s\in \cO_V(L)(U)$, then any $f\in H^0(V, kL)$ can be locally written as $f=h \cdot s^k$. We define $\ord_{E_2}^{(0)}(f)=\ord_{E_2}(h)$ such that 
%It's clear that $v^{(0)}_{E}(f)\ge 0$ for any $f\in H^0(V, mL)$ and the equality holds if and only if $f$ does not vanish on the center $F$ of $v_{E}$.
%For any $f\in R_k$, we can write $f=h_j \cdot s_j^{k}$ %and $\pi^*f=\pi^*(h)\cdot \pi^*s$.  where $\pi: L\rightarrow V$ and $s$ is a meromorphic section of $L^k$ which is at the center of $E_2$ and is considered as rational function on the total space $L$.
%over $U_j$ that is an affine open set meeting the center of $\ord_{E_2}$.
%Then we have:
\[
\ord_{E_2}(f)=\ord_{E_2}(h_j)+k\cdot \ord_{E_2}(s_j)=\ord_{E_2}(h_j)+k c_1=\ord_{E_2}^{(0)}(f)+k c_1.
\]
%If we defined $\ord^{(0)}_{E_2}$ be the restriction of $\ord_{E_2}$ on $\bC(V)$, then 

In particular we have,
\[
\cF^{kx}R_k=\cF^{k(x-c_1)}_{\ord_{E_2}^{(0)}}R_k=\{f\in R_k; \ord_{E_2}^{(0)}(f)\ge k(x-c_1) \}.
\]

Let $Z$ be the center of $\ord^{(0)}_{E_2}$ on $V$. Then by general Izumi's theorem (see \cite{HS01}), we have:
\begin{equation}\label{eq-gIzumi}
\ord^{(0)}_{E_2}(h)\le C_2 w(h), \text{ for any } h\in \cO_{V,Z},
\end{equation}
where $w$ is a Rees valuation of $Z$. Let $V'\rightarrow V$ be the normalized blow up of $Z$ inside $V$ and $G=\mu^{-1}Z$. The Rees valuations are given up to scaling by vanishing order along irreducible components of $G$ (see \cite[Example 9.6.3]{Laz04}, \cite[Definition 1.9]{BHJ15}). By Izumi's theorem, the Rees valuations of $Z$ are comparable to each other (\cite{Ree89, HS01}). %. By the proof of \cite[Lemma 5.13]{BHJ15}, 
So for any $x> c_1$ and $0<\delta\ll 1$, we have:
\[
\cF^{k x}R_k=\cF^{k(x-c_1)}_{\ord^{(0)}_{E_2}}R_k\subseteq H^0(Bl_ZV, k(L-\delta G)).
\]
Since $\mu^*L-\delta G$ is ample on $V'$ for $0<\delta\ll 1$, this implies:
\[
\vol(\cF R^{(x)})\le (\mu^*L-\delta G)^{n-1}< L^{n-1}
\]
for any $x>c_1$. So $\lambda_{\min}=c_1=\inf_{\fm}\frac{\ord_{E_2}}{\ord_V}$ by \eqref{lambdamin} in Lemma \ref{BHJvol}. For any $x<\lambda_{\max}$, $\vol(R^{(x)})>0$ by \cite[Lemma 1.6]{BC11} (see also \cite[Theorem 5.3]{BHJ15}). It's easy to see that
$\lambda_{\max}=\sup_{\fm}\frac{\ord_{E_2}}{\ord_V}=:c_2$ by \eqref{lambdamax} and our definition of filtration $\cF R$ (in the $\bC^*$-invariant case).
So if $d \vol(R^{(x)})$ is the Dirac measure, 
then $c_1=c_2$ and hence $\ord_{E_2}=\ord_V$.

It's clear that the above proof works well for a $\bC^*$-invariant valuation $v_1$ as long as the general Izumi's inequality as in \eqref{eq-gIzumi} holds for $w:=v_1|_{\bC(V)}$. In particular, this holds for any $\bC^*$-invariant quasi-monomial valuation $v_1$. Indeed, if $v_1$ is a $\bC^*$-invariant quasi-monomial valuation. Then $w=v_1|_{\bC(V)}$ is also a quasi-monomial valuation by Abhyankar-Zariski inequality (see \cite[(1.3)]{BHJ15}). Denote $Z={\rm center}_V(w)$. We can find a log smooth model $(W, \sum_i E_i)\rightarrow V$ such ${\rm center}_V E_i=Z$ and $w$ is a monomial valuation on the model $(W, \sum_i E_i)$. Since the general Izumi inequality holds for $\ord_{E_i}$, it also holds for $w$ (cf. \cite{BFJ12}).

\begin{rem}
We expect that the generalized Izumi inequality holds for any valuation $v^{(0)}$ of $\bC(V)$ with $A_V(v^{(0)})<+\infty$, i.e. there exists $C=C(v^{(0)})$ such that
\[
v^{(0}(h)\le C w(h) \text{ for any } h\in \cO_{V,Z}
\] 
where $Z={\rm center}_V (v^{(0)})$ and $w$ is a Rees valuation of $Z$.
If $Z$ is a closed point, this is indeed true (see \cite{Li15a}). 
The general case can probably be reduced to the case of closed point (cf. \cite[Proof of Proposition 5.10]{JM10}).
\end{rem}

%\begin{lem}\label{lemconv}
%For any $\bC^*$-invariant valuation $v_1$ over $(X=C(V,L), o)$, the volume function $\vol(v_t)$ is continuous and convex with respect to $t\in [0,1]$.
%\end{lem}
%\begin{proof}
%Consider the family of functions of $t$ with parameter $x\in [c_1, e_{\max}]$. $F_x(t)=1/(tx+(1-t))^n$
%Using the formula \eqref{eqvolvt2}, the statement follows from the convexity and uniform continuity of $F_x(t)$ with respect to $x\in [c_1, e_{\max}]$.
%\end{proof}
%\begin{rem}
%The convexity of $\vol(v_t)$ should hold in much more generality. We plan to study this elsewhere.
%\end{rem}

\section{Equivariant volume minimization implies K-semistability}\label{secvol2semi}

In this section, we will prove the other direction of implication. 

\begin{thm}\label{mv2ks}
Let $V$ be a $\bQ$-Fano variety and $L_*=r_*^{-1}K_V^{-1}$ an ample Cartier divisor for a fixed $r_*\in \bQ_{>0}$. Assume that $\hvol$ is minimized among $\bC^*$-invariant valuations at $\ord_V$ on $X_*:=C(V, L_*)$. Then $V$ is K-semistable.
\end{thm}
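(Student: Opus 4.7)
My plan is to proceed in three steps: reduction to special test configurations, construction of a one-parameter family of $\bC^*$-invariant valuations associated with such a test configuration, and identification of the derivative of $\hvol$ at $\xi=0$ with (a positive multiple of) the CM weight.

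First, by the main result of \cite{LX14}, in order to prove K-semistability of $V$ it suffices to check $\CM(\cV, \cL) \ge 0$ for every special test configuration $(\cV, \cL)/\bC$ of $(V, L_*)$. For such a test configuration, $\CM(\cV, \cL)$ collapses via \eqref{CMstc} to the intersection number $-(-K_{\cV/\bP^1})^n / \bigl(n(-K_V)^{n-1}\bigr)$ on the natural compactification $\ocV$, which is the quantity I need to control.

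Second, I would associate to a fixed special test configuration $(\cV, \cL)$ a one-parameter family $\{v_\xi\}_{\xi\ge 0} \subset (\Val_{X_*,o})^{\bC^*}$ with $v_0 = \ord_V$. Following \cite{BHJ15}, the $\bC^*$-action on $\cV$ induces a good $\bZ$-filtration $\cF$ of the section ring $R=\bigoplus_k H^0(V,kL_*)$; combining $\cF$ with the canonical weight grading of $R$ produces, for each $\xi\ge 0$, a well-defined $\bC^*$-invariant valuation $v_\xi$ on $X_*$ centered at $o$, concretely defined on homogeneous decompositions $f=\sum_k f_k$ by
\[
v_\xi(f) = \min_{k}\bigl(k + \xi\,\lambda_\cF(f_k)\bigr).
\]
This is exactly parallel to the interpolation $v_s$ built in Section \ref{appC*} from a single $\bC^*$-invariant valuation $v_1$, with the essential difference that here $\cF$ comes from a test configuration rather than the restriction of a cone valuation to $\bC(V)$. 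For $\xi$ small enough the $k$-term dominates, ensuring $v_\xi \in \Val_{X_*,o}$.

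Third and most importantly, I would compute $\hvol(v_\xi)$ as $\xi\to 0^+$. The general volume formula of Section~\ref{sec-D2V}, applied to the filtration $\cF$, expresses $\vol(v_\xi)$ as an integral of $\vol(\cF R^{(t)})$ against an explicit $\xi$-dependent kernel, while $A_{X_*}(v_\xi)$ is handled by adjunction on $Bl_o X_*$ exactly as in the derivation of \eqref{dPhi}. Applying \cite[Section~5]{BHJ15} to identify the quantities $\vol(\cF R^{(t)})$ with intersection numbers on $\ocV$, and assembling the integral, I would arrive at the anticipated identity
\[
\frac{d}{d\xi}\Big|_{\xi=0^+}\hvol(v_\xi) \;=\; c\cdot \CM(\cV,\cL), \qquad c>0,
\]
which is the valuation-theoretic counterpart of the MSY derivative formula of \cite{MSY08}. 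Granting this identity, the hypothesis that $\hvol$ is $\bC^*$-equivariantly minimized at $\ord_V$ forces the right derivative to be $\ge 0$, hence $\CM(\cV,\cL)\ge 0$, and K-semistability follows.

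The main obstacle is the third step: converting the filtration-theoretic integral formula for $\vol(v_\xi)$ into the intersection number $(-K_{\cV/\bP^1})^n$ and matching the log discrepancy contribution with the $K_{\ocV/\bP^1}$ term in the CM weight. The Okounkov body / coconvex set interpretation alluded to in Section~\ref{sec-D2V}, together with the Duistermaat--Heckman measure perspective of \cite{BHJ15} on test configurations, should make the matching cleanest, and the positive constant $c$ should carry exactly the factors $r_*^n$ and $(-K_V)^{n-1}$ prescribed by \eqref{CMstc}. A secondary technical point is the passage $\xi \to 0^+$ in the integral formula, where one must ensure dominated-convergence-type estimates hold uniformly so that the derivative is actually computable term by term.
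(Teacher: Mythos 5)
Your proposal is correct in outline and, interestingly, it matches the \emph{simplified} version of the proof that the paper itself acknowledges in the remark closing Section \ref{secvol2semi}, rather than the argument the paper actually carries out. The paper's main proof introduces an intermediate object that you bypass: the cone $\cX_0 = C(\cV_0, \cL_0)$ over the central fibre $\cV_0$, together with the torus-invariant valuation $\wt_\beta = \wt_{\xi_0 + \beta\eta}$ on $\cX_0$. The purpose of $\wt_\beta$ is to make the log discrepancy calculation geometric: because the canonical equivariant multi-form $\Omega$ on $\cX_0$ has a uniform $T_0$-weight and is $T_1$-invariant, one gets $A_{\cX_0}(\wt_\beta)\equiv r$ almost for free, and then $\hvol(\wt_\beta)$ is matched to $\hvol(w_\beta)$ through the filtration equality $\dim\cF^\lambda_{\wt_\beta}S_m = \dim\cF^\lambda_{w_\beta}R_m$. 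Your route instead works directly with the quasi-monomial interpolation $v_\xi$ on the original cone and computes $A_{X_*}(v_\xi)$ algebraically via $A_X(w_\beta)=(1+\beta a_1)r+\beta q\,A_V(F)$, which is constant in $\beta$ precisely because $a_1 = -q\,A_V(F)/r$ by \cite[Proposition 4.11]{BHJ15}. Both routes work; your direct one is shorter but leans more on \cite{BHJ15} (you need both their log-discrepancy identity for $a_1$ and \cite[Proposition 3.12]{BHJ15} for the identification $\int x\,d\vol(\cF^{(x)}) = -\bar\cL^n/n$), while the paper's route is longer but sets up a $w_\beta\leftrightarrow\wt_\beta$ correspondence that the author uses elsewhere.

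Two minor points you should tighten. First, the filtration coming from a special test configuration naturally lives on $R=\bigoplus_k H^0(V,kL)$ where $L=-r^{-1}K_V$ is a possibly \emph{higher} Cartier multiple than $L_*$; passing from $X=C(V,L)$ back to $X_* = C(V,L_*)$ requires the $\sigma=r_*/r$ rescaling identities $\vol(w_\beta)=\sigma^{-1}\vol(w_{*\beta})$ and $A_X(w_\beta)=A_{X_*}(w_{*\beta})$, which give $\hvol(w_\beta)=\sigma^{-1}\hvol(w_{*\beta})$. Without this, the minimization hypothesis on $X_*$ does not directly constrain the family you built on $X$. Second, your claim that ``for $\xi$ small enough the $k$-term dominates'' is the correct intuition, but the cleanest verification that $v_\xi\in\Val_{X_*,o}$ is to exhibit $v_\xi$ as a quasi-monomial valuation on a single log smooth model $(Y',V+\bF)$ with nonnegative weight $(1+\beta a_1, \beta q)$, which also makes the log discrepancy computation immediate.
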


As explained in Section \ref{sec-results}, this combined with Theorem \ref{ks2mv} will complete the proof of Theorem \ref{main}. Indeed, Theorem \ref{ks2mv} shows $(1)\Rightarrow (3)$ and $(3)\Rightarrow (2)$ trivially.

%\subsection{Proof of volume minimizing $\Rightarrow$ K-semistable}

The rest of this section is devoted to the proof of this theorem. By \cite{LX14}, to prove the K-semistability, we only need to consider special test configurations. As we will see, this reduction simplifies the calculations and arguments in a significant way. So from now on, we assume that there is a special test configuration $(\cV, \cL)\rightarrow \bC$ such that $\cL= -r^{-1} K_{\cV/\bC}$ is a relatively very ample line bundle. Notice that $r$ is in general not the same as $r_*$. However, for the sake of testing K-semistability which is of asymptotic nature (see \cite{Tia97, Don02, LX14}), we can choose $0<r=\frac{1}{N}$ for $N\gg 1$ sufficiently divisible such that $L$ is an integral multiple of $L_*$ . For later convenience we will define this constant:
\begin{equation}
\sigma:=\frac{r^{-1}}{r_*^{-1}}=\frac{r_*}{r} \in \bZ_{>0}, \text{ such that } \quad L=\sigma L_*.
\end{equation}
The central fibre of $\cV\rightarrow \bC$ is a $\bQ$-Fano variety, denoted by $\cV_0$ or $E$. Because the polarized family $(\cV, \cL)\rightarrow \bC$ is flat, we can choose $r^{-1}=N\gg 1$ sufficiently divisible such that $\cL=-r^{-1}K_{\cV/\bC}=-N K_{\cV/\bC}$ is a relatively very ample line bundle over $\bC$ and $H^0(V, mL)=H^0(\cV_0, m\cL_0)$ for every $m\in \bZ_{\ge0}$. By taking affine cones, we then get a flat family $\cX\rightarrow \bC$ with general fibre $X=C(V, L)$ and the central fibre $\cX_0=C(\cV_0, \cL_0)$ (see \cite[Aside on page 98]{Kol13}). 
Notice that we can write:
\[
\def\arraystretch{2}
\begin{array}{llll}
X_*&= &{\rm Spec}\bigoplus_{i=0}^{+\infty} H^0(V, iL_*)&=:{\rm Spec}\; R_*; \\
X&= &{\rm Spec}\bigoplus_{i=0}^{+\infty} H^0(V, iL)&=:{\rm Spec}\;  R; \\
\cX_0&=&{\rm Spec}\bigoplus_{i=0}^{+\infty}H^0(\cV_0, i\cL_0)&=:{\rm Spec}\; S;\\
\cX&=&{\rm Spec}\bigoplus_{i=0}^{+\infty}H^0(\cV, i\cL)&=:{\rm Spec}\;\cR.
\end{array}
\]
Our plan is to define families of valuations $\wt_\beta$ on $\cX_0$, $w_\beta$ on $X$ and $w_{*\beta}$ on $X_*$, and then 
study the relation between their normalized volumes. 

\subsection{Valuation $\wt_\beta$ on $\cX_0$}\label{sec-wtbeta}

By the definition of special test configuration, there is an equivariant $T_1=\bC^*$ action on $(\cV, \cL)\rightarrow \bC$ which fixes the central fibre $\cV_0$. This naturally induces an equivariant $T_1=:e^{\bC \eta}$ action on $\cX\rightarrow \bC$ which fixes $\cX_0$. There is also another natural $T_0:=e^{\bC\xi_0}\cong \bC^*$-action which fixes each point on the base $\bC$ and rescales the fibre. These two actions commute and generate a $\bT=(\bC^*)^2$ action on $\cX_0$. 
For any $\tau=(\tau_0, \tau_1)\in \bZ^2$ and $t=(t_0, t_1)\in (\bC^*)^2$, we will denote $t^{\tau}=t_0^{\tau_0}t_1^{\tau_1}$. Consider the weight decomposition of $S$ under the $\bT$ action:
\[
S=\bigoplus_{\tau\in \Gamma} S_{\tau},
\]
where
%we denoted:
%\begin{itemize}
%\item 
$S_{\tau}=\{f\in S; t\circ f=t^\tau\cdot f \text{ for all } t\in (\bC^*)^2\}$ for any $\tau\in \bZ^2$ and 
%\item 
$\Gamma=\{\tau \in \mathbb{Z}^2; S^\tau\neq \{0\} \}\subset \mathbb{Z}^2$. 
%\end{itemize}

Denote by $\mathfrak{t}_{\bC}$ the Lie algebra of $\bT$. The two generator $\{\xi_0, \eta\}$ allows us to identity $\mathfrak{t}_{\bC}=N\otimes_{\bZ} \mathbb{C}$ for a lattice $N\cong\mathbb{Z}^2$. 
%We also define $\mathfrak{t}_{\bQ}=N\otimes_{\bZ}\bQ$ and $\ft_{\bR}=N\otimes_{\bZ}\bR$. Following standard notations in toric geometry, we also define $M={\rm Hom}_{\bZ}(N, \bZ)$ and $M_{\bR}=M\otimes\bR$. Then there is a natural pairing on $N_{\bR}\times M_{\bR}$ and a canonical embedding $\Gamma\hookrightarrow M$.
Let $\mathfrak{t}_{\bR}=N\otimes_{\bZ}\bR$. Any element $\xi\in \mathfrak{t}_{\bR}$ is a holomorphic vector field of the form $\xi=a_0 \xi_0+ a_1 \eta$. Any $\xi=a_0\xi_0+a_1\eta\in \ft_{\bR}$ determines a valuation $\wt_{\xi}$ on $\bC(\cX_0)$:
\[
\wt_{\xi}(f)=\min \left\{\langle\tau,  \xi \rangle; f^\tau\neq 0 \right\} \text{ for } f=\sum_\tau f^\tau \in S.
\]
This is clearly a $\bT$-invariant valuation. In particular it is $\bC^*$-invariant, i.e. $T_0$-invariant. 
%The $T_1$ action generated by $\eta$ on $\cV_0$ determines a valuation $\wt_\eta^{(0)}$ on $\bC(\cV'_0)$ and $\wt_{\xi}$ can also be viewed as a $T_0=\bC^*$-invariant extension of $a_1\wt^{(0)}\eta$. 
\begin{defn}
The cone of positive vectors is defined to be:
\begin{equation}\label{eq-conepos}
\ft^{+}_{\bR}=\{ \xi=a_0\xi_0+a_1 \eta \in \ft_\bR; \langle \tau, \xi\rangle=a_0\tau_0+a_1 \tau_1 >0 \text{ for any } \tau \in \Gamma\setminus{(0,0)} \}.
\end{equation}
\end{defn}
%\begin{rem}
$\ft^{+}_{\bR}$ is essentially the same as the cone of Reeb vector fields considered in Sasaki geometry (see \cite{MSY08}). The following is a standard fact:
%\end{rem}
\begin{lem}\label{lem-opencone}
The cone $\ft^{+}_{\bR}$ contains an open neighborhood of $\xi_0$. 
\end{lem}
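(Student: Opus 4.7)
\textbf{Proof plan for Lemma~\ref{lem-opencone}.} The goal is to rewrite $\ft^+_{\bR}$ as an intersection of finitely many open half-spaces and verify that $\xi_0$ lies strictly in each one. The key input is that the graded algebra $S=\bigoplus_{i\ge 0}H^0(\cV_0,i\cL_0)$ is finitely generated, which holds because $\cV_0$ is projective and $\cL_0$ is ample. Hence the weight semigroup $\Gamma\subset \bZ^2$ is also finitely generated: pick homogeneous $\bT$-eigenvector generators of $S$ with weights $\gamma_1,\dots,\gamma_m\in \Gamma\setminus\{(0,0)\}$. Then every $\tau\in\Gamma\setminus\{0\}$ can be written as a nontrivial $\bZ_{\ge 0}$-combination $\tau=\sum_j k_j\gamma_j$, so $\langle\tau,\xi\rangle=\sum_j k_j\langle\gamma_j,\xi\rangle$. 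Consequently
\[
\ft^+_{\bR}=\bigcap_{j=1}^m \bigl\{\xi\in\ft_\bR\ \bigl|\ \langle\gamma_j,\xi\rangle>0\bigr\}\!,
\]
which is manifestly an open convex cone in $\ft_\bR$.

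It remains to check that $\xi_0$ satisfies all $m$ strict inequalities. This amounts to the observation that for every nonzero $\tau=(\tau_0,\tau_1)\in\Gamma$ one has $\tau_0>0$, since $\tau_0$ is the $T_0$-weight, i.e.\ the grading index $i$ in $S=\bigoplus_{i\ge 0}H^0(\cV_0,i\cL_0)$, and the degree-zero piece $H^0(\cV_0,\mathcal{O}_{\cV_0})=\bC$ contributes only the trivial weight $(0,0)$. Therefore $\langle\gamma_j,\xi_0\rangle=(\gamma_j)_0>0$ for $j=1,\dots,m$, so $\xi_0\in\ft^+_\bR$ and the displayed intersection supplies an open neighborhood of it.

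The only mildly non-trivial step is the passage from generators to arbitrary elements of $\Gamma$; everything else is bookkeeping about the grading. No quantitative Izumi-type estimates are needed because we only require openness near $\xi_0$, not a uniform estimate over all of $\ft^+_\bR$.
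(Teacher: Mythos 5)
Your argument is correct, and it takes a route that differs slightly from the paper's. The paper also invokes finite generation of $S$, but uses it to produce a single constant $A>0$ with $|\tau_1|<A\tau_0$ for all $\tau=(\tau_0,\tau_1)\in\Gamma\setminus\{0\}$ (citing the proof of Corollary 3.4 in \cite{BHJ15}), and then verifies directly that the explicit set $\{a_0\xi_0+a_1\eta : a_0>0,\ |a_1/a_0|<A^{-1}\}$ lies in $\ft^+_\bR$. You instead choose finitely many homogeneous $\bT$-eigenvector algebra generators of $S$ with weights $\gamma_1,\dots,\gamma_m\in\Gamma\setminus\{0\}$, observe that every nonzero $\tau\in\Gamma$ is a nontrivial $\bZ_{\ge 0}$-combination of the $\gamma_j$, and conclude that $\ft^+_\bR=\bigcap_j\{\langle\gamma_j,\cdot\rangle>0\}$ is a finite intersection of open half-spaces containing $\xi_0$ (the latter because any $\gamma_j$ with vanishing $T_0$-weight would land in $S_0=\bC$ and hence be $(0,0)$, which you excluded). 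This is arguably cleaner: it shows at once that $\ft^+_\bR$ itself is open, rather than exhibiting one explicit neighborhood, and it sidesteps the numerical estimate. One small caveat: your side remark that ``the weight semigroup $\Gamma$ is also finitely generated'' is not quite what the generator argument gives — it gives only that $\Gamma$ is contained in the semigroup generated by the $\gamma_j$ — but that containment is all the proof uses, so nothing is affected. You are also correct that no Izumi-type estimate enters here; the paper does not use one for this lemma either.
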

\begin{proof}
For a fixed $\tau_0\in \bZ_{>0}$, 
\[
H^0(\cV_0, \tau_0L_0)=S_{\tau_0}=\bigoplus_{\tau_1} S_{(\tau_0, \tau_1)} \text{ with } S_{(\tau_0, \tau_1)}\neq 0
\]
is the weight space decomposition with respect to the action $T_1=e^{\bC\eta}$ on $(\cV_0, \cL_0)$. Because $S$ is finitely generated, there exists $A>0\in\bR$ such that 
$-A \tau_0< \tau_1< A \tau_0$ for any $(\tau_0, \tau_1)$ appearing in the above decomposition (see \cite[Proof of Corollary 3.4]{BHJ15}). So for any $(a_0, a_1)\in \bR^2$ with $a_0>0$ and $|a_1 a_0^{-1}|<A^{-1}$, we have:
\[
 a_0 \tau_0+a_1 \tau_1\ge a_0 (\tau_0+a_0^{-1}a_1 \tau_1)\ge a_0(\tau_0-|a_0^{-1} a_1| A\tau_0)>0.
\]
The lemma follows immediately by choosing $a_0=1$.
%$\wt_{\xi_0}=\ord_{\cV_0}$ is the divisorial valuation where $\cV_0$ is identified with the blow up the vertex $o'\in \cX'_0$. 
\end{proof}
One sees directly that $\xi\in \ft_{\bR}^+$ if and only if the center of $\wt_{\xi}$ is the vertex $o'\in \cX_0$, i.e. $\wt_{\xi}\in \Val_{\cX_0,o'}$. 
For $\beta\in \mathbb{R}$, let $\xi_\beta=\xi_0+\beta \eta\in \ft_{\bR}$. The associated valuation $\wt_{\xi_\beta}$ will be denoted by $\wt_\beta$.
%Conversely we have $\Gamma=(\ft_{\bR}^{+})^{\vee}\cap M$.
%$M=N^{\vee}={\rm Hom}_{\bZ}(N, \bZ)\cong \bZ^2$, $M_{\bR}=M\otimes_{\bZ}\bR$ and $M_{\bC}=M\otimes_{\bZ}\bC$.
It's clear that $\xi_0\in \ft_{\bR}^{+}$. By Lemma \ref{lem-opencone}, for $0\le \beta\ll 1$, $\xi_\beta\in \ft^{+}_{\bR}$ and hence the valuation $\wt_\beta$ is in $\Val_{\cX_0, o'}$. Moreover, from the definition, we know that 
\begin{equation}\label{eq-wtbeta}
\wt_{\beta}(f)=k + \beta\cdot \wt_{\eta}(f) \text{ for any } f\in S_k=H^0(\cV_0, k\cL_0).
\end{equation}

%The main point is that the associated valuation $\wt_\beta$ corresponds to the valuation $v^{(1)}_E$ on $\bC(X)$ that we are going to discuss.
% When $m\gg 1$, by flatness, we have a $(\bC^*)^2$-equivariant vector bundle $\mathcal{E}_m$ over $\bC^1$, whose space of global sections is given by $H^0(\cV, m\cL)$. By raising $\cL$ to higher powers, we can assume that this holds for any $m\ge 1$.

\subsection{Valuation $w_\beta$ on $X$}\label{sec-v1E}

%\subsection{Valuation $v^{(1)}_E$ on $\bC(X)$}\label{sec-v1E}

For any $f\in H^0(V, m L)$, $f$ determines 
a $T_1$-invariant section $\bar{f}\in H^0(V\times \bC^*, \pi_1^*(mL))$. Since we have a $T_1$-equivariant isomorphism $(\cV\backslash \cV_0, \cL)\cong (V\times\bC^*, \pi_1^*L)$, $\bar{f}$ can be seen as a meromorphic section of $H^0(\cV, m\cL)$. Recall that $\cV_0$ will also be denoted by $E$. We will define a valuation $v^{(1)}_{E}$ on $\bC(X)$ satisfying
$v^{(1)}_{E}(f)=\ord_{\cV_0}(\bar{f})$ for any $f\in H^0(V, mL)$ and another valuation $v^{(1)}_{*E}$ on $\bC(X_*)$ such that $v^{(1)}_E$ is the restriction of $v^{(1)}_{*E}$ under inclusion $\bC(X)\hookrightarrow \bC(X_*)$ that is induced by the natural embedding:
\[
R=\bigoplus_{k=0}^{+\infty} H^0(V, k L)=\bigoplus_{k=0}^{+\infty} H^0(V, \sigma k L_*) \hookrightarrow R_*=\bigoplus_{m=0}^{+\infty} H^0(V, mL_*).
\]
To get a precise formula for $v^{(1)}_{E}$ and $v^{(1)}_{*E}$, we need some preparations. Denote by $v_{\cV_0}$ or $v_E$ the restriction of the valuation $\ord_{\cV_0}$ on $\bC(\cV)\cong \bC(V\times\bC)$ to $\bC(V)$. By \cite[Lemma 4.1]{BHJ15}, if $\cV_0$ is not the strict transform of $V\times \{0\}$, then
\begin{equation}\label{ordF}
v_{\cV_0}=v_E=\ord_{\cV_0}|_{\bC(V)}=q\cdot \ord_{F}
\end{equation}
for a prime divisor $F$ over $V$ and a rational $q>0$ (called multiplicity of $v$). Otherwise $v_{\cV_0}=v_E$ is trivial. 
\begin{rem}
A product test configuration, i.e. a test configuration induced from a $\bC^*$ action on $V$, does not necessarily give rise to a trivial valuation. See \cite[Example 3]{LX14} (see also \cite[Example 2.8]{BHJ15}).
\end{rem}
$v_{E}$ induces a valuation $v^{(0)}_E$ on $\bC(X)$ and a valuation $v^{(0)}_{*E}$ on $\bC(X_*)$ in the following way. Choose an affine open neighborhood $U$ of ${\rm center}_V(v_E)$ and a local trivializing section $s$ of $L$ on $U$. Any $f\in H^0(V, mL)$ can be locally written as $f=h s^m$ on $U$. Define:
\[
v^{(0)}_{E}(f)=v_{E}(h).
\]
It's clear that $v^{(0)}_{E}(f)\ge 0$ for any $f\in H^0(V, mL)$ and the equality holds if and only if $f$ does not vanish on the center of $v^{(0)}_{E}$ over $V$. In particular, we see that $v^{(0)}_E$ is trivial if and only if $v_E$ is trivial.  If $v_E=q\cdot\ord_F$ is not trivial and hence a divisorial valuation. We claim that 
\begin{lem}\label{lem-v0Ediv}
$v^{(0)}_E$ is a divisorial valuation. 
\end{lem}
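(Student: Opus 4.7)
The strategy is to realize $v^{(0)}_E$ as a positive rational multiple of $\ord_{\tilde F}$ for an explicit prime divisor $\tilde F$ on a birational model of $X$. Morally, $v^{(0)}_E$ is the ``Gauss extension'' of the divisorial valuation $v_E=q\cdot\ord_F$ from $\bC(V)$ to $\bC(X)$, extending by zero along the cone/fiber direction; such Gauss extensions of divisorial valuations remain divisorial.

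To make this precise, first choose a smooth projective birational model $\mu:Y'\to V$ on which $F$ appears as a prime divisor. Form the fiber product $\tilde Y:=Y'\times_V Bl_oX$, which is the total space of the line bundle $\mu^*L^{-1}\to Y'$. Since $\mu$ is birational and $Bl_oX\to X$ is birational, the induced morphism $\nu:\tilde Y\to X$ is proper birational and identifies $\bC(\tilde Y)=\bC(X)$. Let $\tilde F\subset\tilde Y$ be the reduced preimage of $F$ under the projection $\tilde Y\to Y'$; since the fibers of this projection are affine lines and hence irreducible, $\tilde F$ is an irreducible divisor in $\tilde Y$.

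The verification then amounts to checking $v^{(0)}_E=q\cdot\ord_{\tilde F}$ on the coordinate ring $R$. Given $f\in R_m=H^0(V,mL)$, choose an affine neighborhood $U$ of the center of $v_E$ on $V$ and a trivialization $s$ of $L|_U$, and write $f=h\cdot s^m$ with $h\in\cO_V(U)$. On $\mu^{-1}(U)\subset Y'$, the pulled-back section $\mu^*s$ trivializes $\mu^*L$ and does not vanish along the generic point of $F$. In the induced fiber coordinates on $\nu^{-1}(U)\subset\tilde Y$, the divisor $\tilde F$ is cut out purely by a base-direction equation for $F$, so the corresponding fiber coordinate is a unit near the generic point of $\tilde F$. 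Consequently $\ord_{\tilde F}(\nu^*f)=\ord_F(h)$, and multiplying by $q$ yields $q\cdot\ord_{\tilde F}(f)=q\cdot\ord_F(h)=v_E(h)=v^{(0)}_E(f)$. The two valuations agree on $R$ and hence on $\bC(X)$, so $v^{(0)}_E=q\cdot\ord_{\tilde F}$ is divisorial. The only real subtlety lies in the bookkeeping of the identifications $\bC(\tilde Y)\cong\bC(X)$ and in confirming that $\tilde F$ is genuinely a prime divisor; once those are settled, the local computation is immediate.
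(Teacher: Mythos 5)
Your proof is correct and follows the paper's argument essentially verbatim: the paper passes to a birational model $V'\to V$ on which $F$ appears, forms the total space $Y'$ of the pulled-back line bundle $\mu^*L^{-1}\to V'$ (which is exactly your $\tilde Y$, the fiber product with $Bl_oX$), sets $\bF=\pi'^{-1}(F)$ (your $\tilde F$), and asserts $v^{(0)}_E=q\cdot\ord_{\bF}$ ``by definition.'' Your only addition is to spell out the local verification that the fiber coordinate has order zero along the vertical divisor $\tilde F$, a step the paper leaves implicit.
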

\begin{proof}
We can find a birational morphism $\mu: V'\rightarrow V$ such that $F$ is a prime divisor on $V'$. 
$Y'$ denotes the total space of the pull back line bundle $\pi': (\mu')^*L^{-1}\rightarrow V'$. Define $\bF=\pi'^{-1}(F)$, which is a prime divisor on $Y'$. 
Then we have a sequence of birational morphisms $Y'\stackrel{\pi_2}{\rightarrow} Y=Bl_oX\stackrel{\pi_1}{\rightarrow} X$, and by definition it's easy to see that $v^{(0)}_E=q\cdot \ord_{\bF}$. 
\end{proof}
The above construction can be carried out for $X_*=C(V, L_*)$ by using $L_*$ instead of $L$ and we get a valuation denoted by $v^{(0)}_{*E}$, a divisor $\bF_*$ over $X_*$ such that $v^{(0)}_{*E}=q\cdot \ord_{\bF_*}$. One verifies that $v^{(0)}_E$ is the restriction of $v^{(0)}_{*E}$ under the natural embedding $R\hookrightarrow R_*$. Later we will need the following log discrepancies.
\begin{lem}\label{ldbF}
We have identities of log discrepancies:
\begin{equation}
A_X(\bF)=A_V(F)=A_{X_*}(\bF_*).
\end{equation}
\end{lem}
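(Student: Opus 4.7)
The plan is to compute $A_X(\bF)$ directly from its definition
$A_X(\bF)=\ord_\bF(K_{Y'/X})+1$
on the smooth model $Y'$ constructed in (the proof of) Lemma \ref{lem-v0Ediv}, and to show the answer reduces to $\ord_F(K_{V'/V})+1=A_V(F)$. An identical argument applied to the cone $X_*=C(V,L_*)$ and the analogous divisor $\bF_*$ will then yield $A_{X_*}(\bF_*)=A_V(F)$, so there is no need to redo the $X_*$ computation independently.

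The main step is to factor $K_{Y'/X}$ through the intermediate blow-up $Y=Bl_oX$. I would begin from the identity
$\pi_1^*K_X=K_Y-(r-1)V$
already invoked in Section \ref{sec-D2V} (it follows from adjunction on $Y$ together with $L=-r^{-1}K_V$). The map $\pi_2\colon Y'\to Y$ is the base change of $\mu\colon V'\to V$ along the line-bundle fibration $\pi\colon Y\to V$, since $Y'=Y\times_V V'$. The base-change formula therefore gives
$K_{Y'/Y}=(\pi')^* K_{V'/V}$ and $\pi_2^*V=V'_0$,
where $V'_0\subset Y'$ denotes the zero section. Combining these,
\begin{equation*}
K_{Y'/X}=K_{Y'/Y}+\pi_2^*\bigl(K_Y-\pi_1^*K_X\bigr)=(\pi')^*K_{V'/V}+(r-1)V'_0.
\end{equation*}

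The key observation is that $\bF=(\pi')^{-1}(F)$ is a line bundle over $F$, whereas $V'_0\cong V'$ is the zero section, so the two are distinct prime divisors on $Y'$ that meet only in codimension two; in particular $\ord_\bF(V'_0)=0$. The remaining term is pulled back from $V'$, so $\ord_\bF\bigl((\pi')^*K_{V'/V}\bigr)=\ord_F(K_{V'/V})$. Substituting gives $A_X(\bF)=\ord_F(K_{V'/V})+1=A_V(F)$, and replacing $(L,r)$ by $(L_*,r_*)$ throughout produces the parallel identity $A_{X_*}(\bF_*)=A_V(F)$.

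I do not expect an essential obstacle here: the whole point is that the only $r$-dependent correction to $K_{Y'/X}$, namely $(r-1)V'_0$, is supported away from $\bF$, so the cone scaling drops out of the log discrepancy. The only care required is to take $\mu\colon V'\to V$ as a log-smooth model on which $F$ is a prime divisor, so that $V'$ and $Y'$ are regular and the relative canonical divisors on both sides of the base-change square are well defined as ordinary Weil divisors.
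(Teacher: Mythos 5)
Your proof is correct, and it follows the same skeleton as the paper's (factor $Y'\to X$ through $Y=Bl_oX$, use $\pi_1^*K_X=K_Y-(r-1)V$, kill the $(r-1)V$ term because $\ord_{\bF}(V)=0$), but the final step — comparing $A_Y(\bF)$ with $A_V(F)$ — is done by a genuinely different mechanism. The paper writes $K_{Y'}+\bF=\pi_2^*K_Y+A_Y(\bF)\bF+\Delta$ and applies adjunction along the zero section $V'\subset Y'$ to land on $K_{V'}+F=\mu^*K_V+A_Y(\bF)F+\Delta|_{V'}$, then reads off $A_V(F)=A_Y(\bF)$. You instead observe that $Y'=Y\times_V V'$ (the total space of the pulled-back line bundle) and invoke base change of the relative canonical along the smooth projection $\pi:Y\to V$ to get $K_{Y'/Y}=(\pi')^*K_{V'/V}$ in one stroke. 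This buys a cleaner and slightly more robust computation: the $r$-dependence is visibly confined to the term $(r-1)V'_0$ supported off $\bF$, and the argument also transparently covers the case where $F$ lives already on $V$ (take $V'=V$, $K_{V'/V}=0$), which the paper's phrasing leaves a bit implicit. The one thing worth making explicit in your write-up is that $K_{Y/V}$ and hence $K_{Y'/Y}$ are well defined even though $V$ and $Y$ are only klt rather than smooth — this is fine because $\pi:Y\to V$ is a smooth morphism of relative dimension one, so its relative canonical is an honest Cartier divisor and the base-change identity holds at that level; the $\bQ$-Cartier hypothesis on $K_V$ and $K_X$ is what is needed to make $K_{V'/V}$ and $K_{Y'/X}$ meaningful $\bQ$-divisors.
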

\begin{proof}
Let $Y=Bl_oX\rightarrow X$ be the blow up with the exceptional divisor $V$. Then we have:
\[
A_X(\ord_{\bF})=A_{(Y, (1-r)V)}(\ord_{\bF})=A_{Y}(\ord_{\bF})+(r-1) \ord_{\bF}(V)=A_{Y}(\ord_{\bF}).
\]
The last identity is because that $\ord_{\bF}(V)=0$ by the definition of $\ord_{\bF}=v^{(0)}_E$. Now let $\mu: V'\rightarrow V$ and $\pi_2: Y'\rightarrow Y$
be the same as in the above paragraph. Assume that the support of exceptional divisors of $\mu$ is decomposed into irreducible components as $F+D_1+\dots+D_k=:F+D$. Then correspondingly the support of exceptional divisor of $\pi_2$ can be decomposed into $\bF+\mathbb{D}$ where $\mathbb{D}=\pi'^{-1}D$. So we can write:
\[
K_{Y'}+\bF=\pi_2^*K_Y+A_Y(\bF)\bF+\Delta, \quad \pi_2^*V=V',
\]
where $\Delta$ has the support contained in $D$.
By adjunction, we get $K_{V'}+F=\mu^*K_V+A_Y(\bF)F+\Delta|_{V'}$, and hence $A_V(F)=A_Y(\bF)$. The same argument can be applied to $(X_*, \bF_*)$ and we get the second identity.
%\begin{eqnarray*}
%A_X(v^{(0)}_E)&=&\\
%&=&A_V(\ord_{F})=A_{V\times\bC}(\cV_0)-1=-c_2 r,
%\end{eqnarray*}
\end{proof}
%we denote $\bF=\overline{(\pi_{\times})^{-1}(F)}$ where $\pi_{\times}: X\backslash o\rightarrow \bV$ is the natural $\bC^*$-bundle. Then it's easy to see that $v^{(0)}_{E}=\ord_{\mathbb{F}}$. 

Analogous to the discussion in \cite{BHJ15}, there are $\bC^*$-invariant extensions of $v_E$ from $\bC(V)$ to $\bC(X)$ defined as follows. For any $f\in \bC(X)$, we can use $T_0$-action to get Laurent typed series $f=\sum_{m\in\bZ}f_m$ with $f_m$ a meromorphic section of $mL$. Then we define a family of extensions of $v_E$ with parameter $\lambda$:
\begin{equation}
\bV_\lambda(f):=\min_{m\in \bZ}\left\{v^{(0)}_E(f_m)+\lambda m \right\}.
\end{equation}
%Since $v^{(0)}_E=q\cdot \ord_{\bF}$, equivalently $\bV_\lambda$ is defined as the quasi-monomial valuation of $(V+\bF)$ with weight $(1,q \lambda)$. 
It's clear that $\bV_\lambda$ is $\bC^*$-invariant, i.e. $T_0$-invariant.
In the same way, we define the extensions of $v_{E}$ from $\bC(V)$ to $\bC(X_*)$. For any $g_*=\sum_{k\in \bZ} g_{*k}\in \bC(X_*)$ with $g_{*k}$ a meromorphic section of $kL_*$, we define:
\[
\bV_{*\mu}(g_*):=\min_{k\in \bZ} \left\{v^{(0)}_{*E}(g_{*k})+\mu k \right\}.
\]
Because $H^0(V, mL)=H^0(V, m\sigma L_*)$, it's easy to verify that $\bV_\lambda$ is
the restriction of $\bV_{*\lambda/\sigma}$ under the embedding $R\subset R_*$ and $\bC(X)\subset \bC(X_*)$.

Let $A_{V\times\bC}(\cV_0)$ be the log discrepancy of $\cV_0$ over $V\times\bC$, and define a constant 
\begin{equation}\label{defc2}
a_1:=-\frac{A_{V\times\bC}(\cV_0)-1}{r}.
\end{equation}
\begin{rem}\label{rema1}
By \cite[Proposition 4.11]{BHJ15}, we have $a_1=-q\cdot A_V(F)/r$.
\end{rem}
%Now we can precisely define the valuations we wanted:
\begin{defn}
Define a valuation on $\bC(X)$ and a valuation on $\bC(X_*)$ as follows:
\begin{equation}
v^{(1)}_E=\bV_{a_1} \text{ on } R, \quad v^{(1)}_{*E}=\bV_{*(a_1/\sigma)} \text{ on } R_*.
\end{equation}
\end{defn}
By the following lemma, this is exactly what we want.
\begin{lem}\label{v01form}
If $v_0$ denotes the divisorial valuation $\ord_{V}$, then
\begin{enumerate}\item
For any $f\in H^0(V, mL)$, we have:
\[
\ord_{\cV_0}(\bar{f})=v^{(0)}_{E}(f)+a_1 m=v^{(0)}_E(f)+a_1 v_0(f)=\bV_{a_1}(f).
\]
\item We have a sharp lower bound:
\[
v^{(1)}_{E}(V)=\inf_{\fm}\frac{v^{(1)}_{E}}{v_0}=a_1.
\]
\end{enumerate}

\end{lem}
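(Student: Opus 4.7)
For part (1), I would exploit the $T_1$-equivariant identification $\cV\setminus\cV_0\cong V\times\bC^*$. Pick an affine open $U\subset V$ meeting the center of $v_E=\ord_{\cV_0}|_{\bC(V)}$ and a local trivializing section $s\in\cO_V(L)(U)$, so that $f=h\cdot s^m$ with $h\in\cO_V(U)$. On $U\times\bC^*\subset\cV$, the section $\bar f$ agrees with $h\cdot\bar s^m$, where $\bar s$ is the natural $T_1$-invariant extension of $s$ to a local generator of $\cL$. Both sides extend across $\cV_0$ as meromorphic sections of $m\cL$, so
\[
\ord_{\cV_0}(\bar f)=\ord_{\cV_0}(h)+m\cdot\ord_{\cV_0}(\bar s).
\]
Using $\ord_{\cV_0}|_{\bC(V)}=v_E$ (as in \cite[Lemma 4.1]{BHJ15} or its proof), the first summand equals $v_E(h)=v^{(0)}_E(f)$, and the required identity reduces to the single computation $\ord_{\cV_0}(\bar s)=a_1$.

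The crux is precisely this computation. My strategy is to pass to a common log resolution $\tilde\cV\to\cV$, $\tilde\cV\to V\times\bC$, with maps $\pi_2,\pi_1$ respectively. On $\tilde\cV$, the difference $\pi_2^*\cL-\pi_1^*\mathrm{pr}_V^*L$ is a vertical $\bQ$-divisor supported on the preimages of $\cV_0$ and $V\times\{0\}$. Using $\cL=-r^{-1}K_{\cV/\bC}$ together with $K_{V\times\bC/\bC}=\mathrm{pr}_V^*K_V$, the coefficient along the exceptional component over $\cV_0$ is forced by the log-discrepancy identity $A_{V\times\bC}(\cV_0)=\ord_{\cV_0}(K_{\cV/V\times\bC})+1$. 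Tracking that coefficient, together with the fact that $\bar s$ extends without vanishing on the $V\times\bC$ side, yields $\ord_{\cV_0}(\bar s)=-(A_{V\times\bC}(\cV_0)-1)/r=a_1$. Equivalently, one may invoke \cite[Proposition 4.11]{BHJ15} giving $A_{V\times\bC}(\cV_0)=q\cdot A_V(F)+1$, which matches the formula $a_1=-qA_V(F)/r$ from Remark~\ref{rema1}.

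For part (2), I would apply the characterization of Lemma~\ref{lemdefc1} to $v^{(1)}_E$ (the algebraic identities there hold verbatim even if $a_1<0$, as long as we interpret $v^{(1)}_E(V)$ via the ideal sheaf of the exceptional divisor on $Bl_oX$). Combined with part (1), for any $0\neq f\in R_m$ we have $v^{(1)}_E(f)/v_0(f)=v^{(0)}_E(f)/m+a_1$, so that
\[
v^{(1)}_E(V)=\inf_{m>0}\frac{v^{(1)}_E(R_m)}{m}=a_1+\inf_{m>0}\frac{1}{m}\inf_{0\neq f\in R_m}v^{(0)}_E(f).
\]
Because $L$ is ample, $|mL|$ is base-point free for $m\gg 0$, so we can select $f\in R_m$ not vanishing on $\mathrm{center}_V(v_E)$, giving $v^{(0)}_E(f)=0$. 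This forces the double infimum to vanish, yielding $v^{(1)}_E(V)=a_1$; the equality $\inf_\fm(v^{(1)}_E/v_0)=a_1$ follows by the same argument as in the proof of Lemma~\ref{lemdefc1}, where ampleness guarantees the existence of realizing sections.

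The main obstacle is the crux identity $\ord_{\cV_0}(\bar s)=a_1$ in part (1). It requires carefully interpreting the discrepancy between the polarization $\cL$ on $\cV$ and the trivial polarization $\mathrm{pr}_V^*L$ on $V\times\bC$ in terms of the discrepancy data of $\cV_0$ over $V\times\bC$, using in an essential way that we work with a \emph{special} test configuration, so that $\cL$ is proportional to the relative canonical. Once this is established, part (2) follows as a formal unravelling via Lemma~\ref{lemdefc1}.
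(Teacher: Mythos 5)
Your proof is correct and follows essentially the same route as the paper: both compare $K_{\tilde{\cV}}$ (your $\tilde\cV$; the paper's $\cW$) against the pullbacks of $K_{V\times\bC}$ and $K_\cV$ on a common dominating model, identify the vertical coefficient along $\widehat{\cV}_0$ via the log discrepancy $A_{V\times\bC}(\cV_0)$, and then deduce sharpness from global generation of $mL$ plus the argument of Lemma~\ref{lemdefc1}. Your only deviation is cosmetic: you factor the computation through a single local generator $\bar s$ and compute $\ord_{\cV_0}(\bar s)=a_1$, whereas the paper applies the pullback formula directly to $\bar f$.
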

In the following, we will denote by $v_{*0}$ the divisorial valuation $\ord_{V}$ on $R_*$. If we want to emphasize $\ord_V$ as a valuation on $\bC(X_*)$, we will write $\ord_{V_*}$.  
Notice that $\ord_{V}=v_{0}=\frac{v_{*0}}{\sigma}=\frac{\ord_{V_*}}{\sigma}$ 
under the embedding $R\hookrightarrow R_*$. The following corollary is clearly a consequence of the above lemma.
\begin{cor}\label{v*01form}
For any $f\in H^0(V, \sigma m L_*)=H^0(V, m L)$, we have:
\[
\ord_{\cV_0}(\bar{f})=v^{(0)}_{E}(f)+a_1 m=v^{(0)}_{*E}(f)+\frac{a_1}{\sigma} v_{*0}(f)=\bV_{*(a_1/\sigma)}(f),
\] 
and there is a sharp lower bound:
\[
v^{(1)}_{*E}(V)=\inf_{\mathfrak{m}} \frac{v^{(1)}_{*E}}{v_{*0}}=\frac{a_1}{\sigma}.
\]
Moreover, $\bV_{a_1}$ is the restriction of $\bV_{*(a_1/\sigma)}$ under the embedding $R\hookrightarrow R_*$.
\end{cor}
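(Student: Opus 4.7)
The plan is to describe $\ord_{\cV_0}$ explicitly as a quasi-monomial valuation on $\bC(\cV) = \bC(V\times\bC)$ and to compare the two line bundles $m\cL$ and $\phi^*(m\pi_1^*L)$ along $\cV_0$ using $\cL = -r^{-1}K_{\cV/\bC}$. Part~(2) will then follow from part~(1) by the argument already carried out in Lemma \ref{lemdefc1}.

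To pin down $\ord_{\cV_0}$ as a valuation on $V\times\bC$, observe that \eqref{ordF} gives $\ord_{\cV_0}|_{\bC(V)} = v_E = q\cdot\ord_F$, while speciality of the test configuration makes $\cV_0$ a reduced irreducible Cartier divisor on the normal variety $\cV$, so $\ord_{\cV_0}(t) = 1$ for $t$ the coordinate on $\bC$. This identifies $\ord_{\cV_0}$ with the quasi-monomial valuation on $V\times\bC$ of weights $q$ along $F$ and $1$ along $\{t=0\}$; additivity of log discrepancy then gives $A_{V\times\bC}(\cV_0) = qA_V(F)+1$, consistent with Remark \ref{rema1} and with $a_1 = -qA_V(F)/r$.

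For part (1) I would compute $\ord_{\cV_0}(\bar f)$ with $\bar f$ viewed as a (possibly meromorphic) section of $m\cL$ on $\cV$. Locally on $U\times\bC$ one has $\bar f = \bar h\cdot \pi_1^*s^m$ with $v^{(0)}_E(f) = v_E(h)$. Passing to a common log resolution dominating both $\cV$ and $V\times\bC$ (so that $\phi$ becomes a morphism), the strict transform of $\cV_0$ has log discrepancy $1$ over $\cV$ and $A_{V\times\bC}(\cV_0)$ over $V\times\bC$, so the discrepancy formula combined with $\cL = -r^{-1}K_{\cV/\bC}$ and $\pi_1^*L = -r^{-1}K_{V\times\bC/\bC}$ yields the $\bQ$-divisor identity
\[
m\cL - \phi^*(m\pi_1^*L) = m a_1\, \cV_0 + E',
\]
where $E'$ is supported on prime divisors distinct from $\cV_0$. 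Viewing $\bar f$ as a section of $m\cL$ therefore increases its order along $\cV_0$ by exactly $ma_1$ (as $\ord_{\cV_0}(E') = 0$), while its order along $\cV_0$ as a section of $\phi^*(m\pi_1^*L)$ equals $q\ord_F(h) = v^{(0)}_E(f)$ by the quasi-monomial description. Using $v_0(f) = m$ for $f\in R_m$ gives $\ord_{\cV_0}(\bar f) = v^{(0)}_E(f) + a_1 m = v^{(0)}_E(f) + a_1 v_0(f)$; the equality with $\bV_{a_1}(f)$ is immediate because $f\in R_m$ has only one nonzero graded component.

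For (2), part (1) gives $v^{(1)}_E(f)/i = v^{(0)}_E(f)/i + a_1 \ge a_1$ for any $f\in R_i$, with equality exactly when $f$ does not vanish at the center of $v_E$ on $V$; global generation of $iL$ for $i\gg 1$ supplies such $f$, so $\inf_i v^{(1)}_E(R_i)/i = a_1$, and the argument of Lemma \ref{lemdefc1} identifies this infimum with both $v^{(1)}_E(V)$ and $\inf_\fm v^{(1)}_E/v_0$. The main technical obstacle is the line bundle comparison along $\cV_0$: one must verify that the coefficient of $\cV_0$ in $m\cL - \phi^*(m\pi_1^*L)$ is precisely $ma_1$, and that the remainder $E'$ can indeed be arranged to have support disjoint from the strict transform of $\cV_0$ on a sufficiently high resolution (which is automatic once one recalls that $\ord_{\cV_0}$ annihilates every prime divisor other than $\cV_0$ itself). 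Once this identification is pinned down, both parts reduce to direct substitution.
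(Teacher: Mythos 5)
Your argument is a valid (and carefully spelled out) re-proof of Lemma~\ref{v01form}, and the key step is the same as the paper's: pass to a common log resolution $\cW$ of $\cV$ and $V\times\bC$, use $\cL=-r^{-1}K_{\cV/\bC}$ and $\pi_1^*L=-r^{-1}K_{V\times\bC/\bC}$ to compare $\rho_2^*(m\cL)$ and $\rho_1^*(m\pi_1^*L)$ as $\bQ$-divisors, and read off the coefficient $ma_1=-m(A_{V\times\bC}(\cV_0)-1)/r$ of $\widehat{\cV}_0$. Your divisor identity $m\cL-\phi^*(m\pi_1^*L)=ma_1\cV_0+E'$ is exactly the paper's \eqref{comptc2} once one accounts for the term $\frac{bm}{r}\hV-\sum_k\frac{m(d_k-e_k)}{r}F_k$ (all of which $\ord_{\cV_0}$ annihilates), and your derivation $A_{V\times\bC}(\cV_0)=qA_V(F)+1$ via the quasi-monomial description of $\ord_{\cV_0}$ is just Remark~\ref{rema1}.

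However, what you have written does not prove Corollary~\ref{v*01form}; it proves the preceding lemma. The corollary's actual content is the translation of Lemma~\ref{v01form} to the $X_*$-side: the second and third equalities
$v^{(0)}_E(f)+a_1 m=v^{(0)}_{*E}(f)+\tfrac{a_1}{\sigma}v_{*0}(f)=\bV_{*(a_1/\sigma)}(f)$, the lower bound $v^{(1)}_{*E}(V)=\inf_{\fm}v^{(1)}_{*E}/v_{*0}=a_1/\sigma$, and the claim that $\bV_{a_1}$ is the restriction of $\bV_{*(a_1/\sigma)}$ under $R\hookrightarrow R_*$. Your proposal never mentions $v^{(0)}_{*E}$, $v_{*0}$, or $\bV_{*(a_1/\sigma)}$ at all --- you even write $\bV_{a_1}$ where the target is $\bV_{*(a_1/\sigma)}$. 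The needed bookkeeping is elementary but is precisely what the corollary asserts: for $f\in H^0(V,\sigma mL_*)=H^0(V,mL)$ one has $v_{*0}(f)=\sigma m=\sigma v_0(f)$ and $v^{(0)}_E(f)=v^{(0)}_{*E}(f)$ (the restriction relation established just after Lemma~\ref{lem-v0Ediv}), giving the middle equality; the last equality is the definition of $\bV_{*(a_1/\sigma)}$ on a homogeneous element; and the sharp bound must then be verified over \emph{all} of $R_*=\bigoplus_k H^0(V,kL_*)$, not just the $\sigma$-divisible part --- one uses $v^{(0)}_{*E}\ge0$ on each $H^0(V,kL_*)$ together with global generation of $kL_*$ for $k\gg1$, exactly as in the proof of Lemma~\ref{lemdefc1}, but with $L_*$ in place of $L$. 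You should add this reduction step rather than (in effect) re-deriving the lemma.
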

\begin{proof}[Proof of Lemma \ref{v01form}]
The proof is motivated by that of \cite[Lemma 5.17, 5.18]{BHJ15}.
Let $\cW$ be a normal test configuration dominating both $V\times\bC$ and $\cV$ by the morphisms $\rho_1: \cW\rightarrow V\times \bC$ and $\rho_2: \cW\rightarrow \cV$. We can write down the identities:
\begin{equation}\label{comptc1}
K_{\cW}=\rho_1^* K_{V\times\bC}+a\widehat{\cV}_0+\sum_{k=1}^{N} d_k F_k=\rho_2^*K_{\cV}+b \hV+\sum_{k=1}^N e_k F_k
\end{equation}
By the definition of log discrepancy, we see that $A_{V\times \bC}(\cV_0)=a+1$. 
By \eqref{comptc1}, we also get:
\begin{equation}\label{comptc2}
\rho_2^*K_{\cV}^{-m/r}=\rho_1^*K_{V\times \bC}^{-m/r}-\frac{a m}{r}\widehat{\cV}_0+\frac{bm}{r}\hV-\sum_{k=1}^N\frac{m(d_k-e_k)}{r}F_k.
\end{equation}
So by the above identity, for any $f\in H^0(V, mL)$, we have (cf. \cite[Proof of Lemma 5.17]{BHJ15})
\[
\ord_{\cV_0}(\bar{f})=v^{(0)}_{\cV_0}(f)-\frac{am}{r}=v^{(0)}_E(f)-\frac{A_{V\times\bC}(\cV_0)-1}{r}m. 
\]
Because $v^{(0)}_{E}(f)=\ord_{F}(f)\ge 0$, %is strictly positive if $f$ does not vanish identically on the center of $\ord_{F}$ on $V$. 
we have
\begin{equation}\label{ratecV}
\frac{v^{(1)}_{E}(f)}{\ord_{V}(f)}=\frac{\ord_{\cV_0}(\bar{f})}{m}\ge -\frac{a}{r}.
\end{equation}
Because $mL$ is globally generated when $m\gg 1$, we can find $f'\in H^0(V, mL)$ which is nonzero on the center of $v_{\cV_0}=v_E$, so that $\ord_{\cV_0}(\bar{f'})=-\frac{a}{r}m$. So we see that the lower bound in \eqref{ratecV} can be obtained, and the second statement follows from the proof of Lemma \ref{lemdefc1}.
\end{proof}

%\subsection{Valuations $w_\beta$ on $X$}
Although the valuation $v^{(1)}_{E}$ is not necessarily positive over $R$, we can still consider the graded filtration of $v^{(1)}_{E}$ on $R$: 
\begin{eqnarray*}
\cF^\lambda_{v^{(1)}_{E}} H^0(V, kL)&=&\{f\in H^0(V, kL); v^{(1)}_E(f)=\ord_E(\bar{f})\ge \lambda \}\\
&=&\{f\in H^0(V, kL); t^{-\lambda}\bar{f}\in H^0(\cV, k\cL)\}.
\end{eqnarray*}
Notice that this filtration was studied in \cite{BHJ15} and \cite{WN12}. This is a decreasing, left-continuous, multiplicative filtration. Moreover, it's linearly bounded from below, which means that:
\[
e_{\min}=e_{\min}\left(R_\bull, \cF_{v^{(1)}_E}\right)=\liminf_{k\rightarrow +\infty}\frac{\inf\left\{\lambda; \cF^{\lambda}_{v^{(1)}_E}H^0(V, kL)\neq H^0(V, kL)\right\}}{k}>-\infty.
\]
Note that this follows from Lemma \ref{v01form}, by which we have $e_{\min}\ge -\frac{A_{V\times\bC}(\cV_0)-1}{r}$, and is clearly equivalent to the following statement.
\begin{lem}
There exists $M_0\gg 1\in \bZ$, such that for any $f\in H^0(V, kL)$ and any $k\ge 0$, $t^{M_0 k}\bar{f}$ extends to become a holomorphic section of $H^0(\cV, k\cL)$.
\end{lem}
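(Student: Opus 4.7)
The plan is to extract this statement directly from the sharp lower bound established in Lemma \ref{v01form}, which asserts that for any $f\in H^0(V,kL)$ one has $\ord_{\cV_0}(\bar f)=v^{(1)}_E(f)\ge a_1\,v_0(f)=a_1 k$, where $a_1=-\frac{A_{V\times\bC}(\cV_0)-1}{r}$. So one picks $M_0$ to compensate for the worst possible pole order along $\cV_0$.

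First, I would fix $M_0\in \bZ$ with $M_0\ge -a_1=\frac{A_{V\times\bC}(\cV_0)-1}{r}$. Because $(\cV,\cL)\to\bC$ is a \emph{special} test configuration, the central fibre $\cV_0$ is an irreducible $\bQ$-Fano variety, hence reduced. Therefore, if $t$ denotes the pullback to $\cV$ of the standard coordinate on $\bC$, then $\ord_{\cV_0}(t)=1$, and consequently
\[
\ord_{\cV_0}(t^{M_0 k}\bar f)=M_0 k+\ord_{\cV_0}(\bar f)\ge (M_0+a_1)k\ge 0
\]
for every $f\in H^0(V,kL)$ and every $k\ge 0$.

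Second, I would explain why this nonnegative order along $\cV_0$ is enough to conclude regularity on all of $\cV$. By construction, $\bar f$ is the pullback of $f$ under the $T_1$-equivariant isomorphism $(\cV\setminus\cV_0,\cL)\cong (V\times\bC^*,\pi_1^*L)$, so $t^{M_0 k}\bar f$ is already a genuine section of $k\cL$ on the open set $\cV\setminus\cV_0$. The only prime divisor of $\cV$ along which it might fail to be regular is $\cV_0$ itself, and the computation above shows the pole order there is $\le 0$. Since $\cV$ is normal, such a meromorphic section whose polar divisor is trivial extends to a holomorphic section of $k\cL$ on $\cV$, giving $t^{M_0 k}\bar f\in H^0(\cV,k\cL)$.

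The main conceptual point, which also identifies the ``obstacle'', is really the sharp bound in Lemma \ref{v01form}: once one knows that $v^{(1)}_E(f)/v_0(f)\ge a_1$ for every $f$, the uniform constant $M_0$ is immediate. Everything else is bookkeeping using the specialness of $(\cV,\cL)$ (so that $\cV_0$ has multiplicity one in $\pi^*\{0\}$) and the normality of $\cV$ (so that a section with no polar divisor extends). In particular, taking $M_0:=\lceil (A_{V\times\bC}(\cV_0)-1)/r\rceil$ gives an explicit choice, and the same bound yields $e_{\min}(R_\bullet,\cF_{v^{(1)}_E})\ge -M_0>-\infty$, as remarked just before the lemma statement.
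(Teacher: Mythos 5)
Your proof is correct and follows essentially the same route the paper indicates (but leaves implicit): the sharp bound $v^{(1)}_E(f)=\ord_{\cV_0}(\bar f)\ge a_1 k$ from Lemma \ref{v01form} provides the uniform constant, and the extension across $\cV_0$ is handled by normality. You make explicit two points the paper elides — that $\cV_0$ has multiplicity one in $\pi^*\{0\}$ because it is reduced (being a normal $\bQ$-Fano variety in a special test configuration), and that nonnegative order along the unique prime divisor in $\cV\setminus(V\times\bC^*)$ together with normality of $\cV$ gives the extension — but the underlying idea matches the paper's.
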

As a consequence, for any $M\in \bZ$ with $M\ge M_0$ we have an embedding of algebras $\Psi_M: R\rightarrow \cR$, which maps $f\in H^0(V, kL)$ to $t^{Mk} \bar{f}\in H^0(\cV, k\cL)$. Now we can define a family of valuations on $\bC(X)$ which are finite over $R=\bigoplus_{k=0}^{+\infty} H^0(V, kL)$ with the center being the vertex $o$. For any $\beta=M^{-1}$ with $M\in\bZ$ and $\beta^{-1}=M\ge M_0$ and any $f\in H^0(V, kL)$, define
\begin{eqnarray}\label{defwbeta}
w_{\beta} (f)&:=&\beta\cdot \ord_{\cV_0}(\Psi_{\beta^{-1}}(f))=\beta\cdot \ord_{\cV_0}\left(t^{\beta^{-1}k}\bar{f}\right)\nonumber\\
&=&k+\beta\cdot\ord_{\cV_0}(\bar{f})=\ord_V(f)+\beta\cdot v^{(1)}_{E}(f).
\end{eqnarray}
In other words, $w_\beta$ is a family of valuations starting from $v_0=\ord_{V}$ ``in the direction of $v^{(1)}_{E}$". Notice that
by Lemma \ref{v01form}, we have $v^{(1)}_E(f)=v^{(0)}_E(f)+a_1 v_0(f)$, where $a_1=v^{(1)}_E(V)=-(A_{V\times\bC}(\cV_0)-1)/r$. So we can rewrite 
$w_\beta(f)$ as:
\begin{eqnarray}\label{wbetaquasi}
w_\beta(f)=v_0(f)+\beta\cdot \left(v^{(0)}_E(f)+a_1 v_0(f)\right)=\left[(1+\beta a_1)v_0+\beta v^{(0)}_E\right] (f).
\end{eqnarray}
From the above formula, we see that $w_\beta$ is nothing but the quasi-monomial valuation of $(Y', V+\bF)$ with weight $(1+\beta a_1, \beta q)$, where $(Y', V+\bF)$ is the same pair as in the proof of Lemma \ref{lem-v0Ediv} . Note that $(1+\beta a_1, \beta q)$ is an admissible weight (i.e. having non-negative components) when $0\le \beta\ll 1$. If $v_E$ is trivial, then $w_\beta\equiv v_0=\ord_V$. Notice that the description of $w_\beta$ as quasi-monomial valuations does not require $\beta^{-1}=M$ to be an integer. So from now on we define
\begin{defn}\label{def-wbeta}
Using the above notation, for any $0\le \beta\ll 1$, the valuation $w_\beta$ is defined to be the quasi-monomial valuation on the model $(Y', V+\bF)$ with the weight $(1+\beta a_1, \beta q)$. Equivalently, for any $f\in R=\bigoplus_k H^0(V, kL)$, we define:
\[
w_\beta(f)=\min\left\{k+ \beta v^{(1)}_E(f); f=\sum_k f_k \text{ with } f_k\neq 0 \right\}.
\]
\end{defn}

Correspondingly, on $X_*$ we define $w_{*\beta}$ to be the quasi-monomial valuation of $V_*+\bF_*$ with the weight $(\frac{1}{\sigma}+\beta \frac{a_1}{\sigma}, \beta q)$ if $v_E$ is not trivial, and otherwise $w_{*\beta}\equiv \frac{v_{*0}}{\sigma}=\frac{1}{\sigma}\ord_{V_*}$, where $V_*=V$ is considered as a divisor over $X_*$. By the above discussion, $w_\beta$ is the restriction of $w_{*\beta}$ under the embedding $R\hookrightarrow R_*$.

In the following two lemmas, we compare the volumes and log discrepancies of $w_\beta$ and $w_{*\beta}$.
\begin{lem}\label{mulvol}
We have an identity of volumes:
\begin{equation}\label{volww*}
\vol(w_\beta)=\frac{1}{\sigma}\vol(w_{*\beta}).
\end{equation}
\end{lem}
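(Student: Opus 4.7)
The plan is to exploit the $\bC^*$-invariance of both valuations and compare the colengths of their valuative ideals graded-piece by graded-piece, then pass to the limit via a Riemann sum argument analogous to Lemma \ref{lemlim}.

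First, I would note that $w_{*\beta}$ is $\bC^*$-invariant on $X_*$ (it is the quasi-monomial valuation on the log smooth model $(Y'_*, V_* + \bF_*)$ whose components are preserved by the scaling $T_0$-action), and the same holds for $w_\beta$ on $X$. Consequently $\fa_p(w_{*\beta})$ and $\fa_p(w_\beta)$ are homogeneous with respect to the natural $\bN$-gradings of $R_* = \bigoplus_m H^0(V,mL_*)$ and $R = \bigoplus_k H^0(V,kL)$, respectively. Since $w_\beta = w_{*\beta}|_R$ and the embedding $R\hookrightarrow R_*$ identifies $R_k$ with $(R_*)_{k\sigma}$, homogeneity gives
\[
(\fa_p(w_\beta))_k = (\fa_p(w_{*\beta}))_{k\sigma}.
\]

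Writing the colengths as finite (by linear boundedness) sums over graded pieces and setting $H_p(m) := \dim_{\bC} (R_*)_m/(\fa_p(w_{*\beta}))_m$, I obtain
\[
\dim_{\bC} R_*/\fa_p(w_{*\beta}) = \sum_{m\ge 0} H_p(m), \qquad \dim_{\bC} R/\fa_p(w_\beta) = \sum_{k\ge 0} H_p(\sigma k).
\]
The key observation is that these two sums are sampling the same sequence $\{H_p(m)\}_m$, the second one only at multiples of $\sigma$. Exactly as in the proof of Lemma \ref{lemlim}, the pointwise limit $\phi(t) = \lim_{p\to\infty} p^{-(n-1)} H_p(\lfloor pt\rfloor)$ exists and is bounded with compact support (using $H_p(m) \le \dim (R_*)_m = O(m^{n-1})$ and the vanishing $H_p(m) = 0$ for $m \ge p/c_{*1}$). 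Treating the two sums as Riemann sums for $\int_0^{+\infty}\phi(t)\,dt$, with sample spacings $1/p$ and $\sigma/p$ respectively, yields
\[
\lim_{p\to\infty} \frac{n!}{p^n}\sum_{m\ge 0} H_p(m) = n!\int_0^{+\infty}\phi(t)\,dt = \vol(w_{*\beta}),
\]
\[
\lim_{p\to\infty} \frac{n!}{p^n}\sum_{k\ge 0} H_p(\sigma k) = \frac{n!}{\sigma}\int_0^{+\infty}\phi(t)\,dt = \frac{1}{\sigma}\vol(w_{*\beta}),
\]
which is exactly \eqref{volww*}.

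The only delicate point — and the main obstacle — is rigorously justifying the interchange of limit and summation for the step-$\sigma$ Riemann sum. This should be handled by the same Fatou-type bookkeeping used in Lemma \ref{lemlim}: dominate $p^{-(n-1)} H_p(\sigma k)$ by $p^{-(n-1)}\dim(R_*)_{\sigma k}$, whose limit against the sampling measure of step $\sigma/p$ converges to $\sigma^{-1}\int_0^{1/c_{*1}} \dim(R_*)_\bullet$-density, giving the required domination; the lower-bound inequality follows by the symmetric argument using $H_p(\sigma\lceil\cdot\rceil)$ in place of $H_p(\sigma\lfloor\cdot\rfloor)$. With these bounds in place the two Riemann sums differ asymptotically by the clean factor $1/\sigma$, completing the proof.
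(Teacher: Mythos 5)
Your proposal is correct, and it takes a genuinely different route from the paper's. The paper's proof works at the level of the filtered sub-linear series: it first observes that $\cF^{m}R_k = \cF_*^{m}R_{*\sigma k}$ (the same starting identity you use), deduces the relation $\vol(\cF R^{(t)}) = \sigma^{n-1}\vol(\cF_* R_*^{(t/\sigma)})$ for the volumes of the filtered pieces, and then simply applies the already-established integral volume formula \eqref{volv1b} with the change of variable $t\mapsto t/\sigma$ (checking that the lower integration limits $c_1 = \sigma c_{*1}$ transform correctly, which is where Lemma \ref{v01form} and Corollary \ref{v*01form} enter). That route is clean precisely because the Riemann-sum / Fatou work has already been absorbed once into the derivation of \eqref{volv1b} via Lemma \ref{lemlim}.

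You instead bypass the integral formula and compare colengths directly, viewing $\dim R_*/\fa_p(w_{*\beta}) = \sum_m H_p(m)$ and $\dim R/\fa_p(w_\beta) = \sum_k H_p(\sigma k)$ as step-$1/p$ and step-$\sigma/p$ Riemann sums of a common density. This is a valid and more elementary presentation; it buys you independence from the convex-geometric volume formula, at the cost of having to redo the Fatou/limsup–liminf bookkeeping that Lemma \ref{lemlim} already packages. One small caveat: you assert that the pointwise limit $\phi(t)=\lim_p p^{-(n-1)}H_p(\lfloor pt\rfloor)$ exists, which is slightly stronger than what Lemma \ref{lemlim} actually uses (it works with separate $\limsup$ and $\liminf$ domination on both sides rather than a clean pointwise limit). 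Your final paragraph shows you are aware of this and sketch the right fix, so this is a presentational gap rather than a logical one. Both proofs rest on the same identity between graded pieces; the paper's is shorter because it reuses machinery you chose to rebuild.
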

\begin{proof}
%Indeed, because the valuations are all $\bC^*$-invariant, we can use Corollary \ref{lemsat} to first get:
%\[
%\cF^m R_k=\fa_m(w_\beta)\bigcap R_k, \quad \cF_*^m R_{*\sigma k}=\fa_m(w_{*\beta})\bigcap R_{*\sigma k}.
%\]
For any fixed $\beta$, we use $\cF R_k$ (resp. $\cF_{*}R_{*}$) to denote the filtrations determined by $w_\beta$ (resp. $w_{*\beta}$) on $R$ (resp. $R_*$).
 For any $f\in H^0(k L)$, $f\in\cF^m R_k$ if and only $w_\beta (f)\ge m$.
Because $w_\beta=w_{*\beta}$ when both are restricted to $H^0(kL)=H^0(\sigma k L_*)$, this holds if and only if $f\in \cF_*^{m}R_{* \sigma k}$. So we see that $\cF^{m} R_k=\cF^m R_{*\sigma k}$. As
a consequence we can calculate:
\begin{eqnarray*}
\vol(\cF R^{(t)})&=&\lim_{k\rightarrow+\infty}\frac{\dim_{\bC}\cF^{kt} H^0(kL)}{k^{n-1}/(n-1)!}\\
&=&\lim_{k\rightarrow+\infty}\frac{\dim_{\bC}\cF_*^{\sigma k (t/\sigma)} H^0(\sigma k L_*)}{(\sigma k)^{n-1}/(n-1)}\sigma^{n-1}\\
&=&\sigma^{n-1}\vol\left(\cF_* R^{(t/\sigma)}_*\right).
\end{eqnarray*}
By the volume formula obtained in \eqref{volv1b}, we get the identity \eqref{volww*} by change of variables in the
integral formula. Notice the change of integration limits is valid by the sharp lower bounds obtained in Lemma \ref{v01form} and Corollary \ref{v*01form}.
\end{proof}
\begin{lem}
We have the identities of log discrepancies:
\begin{equation}
A_{X}(w_{\beta})\equiv r\equiv A_{X_*}(w_{*\beta}).
\end{equation}
\end{lem}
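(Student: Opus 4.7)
The strategy is a direct computation using the quasi-monomial structure of $w_\beta$ (and $w_{*\beta}$) together with the explicit value of the constant $a_1$.

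First, I would handle the case where $v_E$ is trivial. In that situation $w_\beta \equiv v_0 = \ord_V$, and since $V$ is the exceptional divisor of $Bl_o X \to X$ with $\pi^*K_X = K_Y - (r-1)V$, we get $A_X(w_\beta) = A_X(\ord_V) = r$. Similarly, $w_{*\beta} \equiv \frac{1}{\sigma}\ord_{V_*}$, and the rescaling property of log discrepancy together with $A_{X_*}(\ord_{V_*}) = r_*$ gives $A_{X_*}(w_{*\beta}) = r_*/\sigma = r$.

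Now assume $v_E = q\cdot \ord_F$ is nontrivial. Then $w_\beta$ is quasi-monomial on $(Y', V+\bF)$ with weight $(1+\beta a_1, \beta q)$, so by the weighted-sum formula for log discrepancies of quasi-monomial valuations,
\begin{equation*}
A_X(w_\beta) = (1+\beta a_1)\,A_X(V) + \beta q\, A_X(\bF).
\end{equation*}
I would then plug in $A_X(V) = r$, $A_X(\bF) = A_V(F)$ from Lemma \ref{ldbF}, and $a_1 = -q\,A_V(F)/r$ from Remark \ref{rema1}. The two terms involving $A_V(F)$ cancel and one reads off $A_X(w_\beta) = r$.

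The computation for $w_{*\beta}$ is entirely parallel: $w_{*\beta}$ is quasi-monomial on the analogous pair $(Y'_*, V_*+\bF_*)$ with weight $(\tfrac{1}{\sigma}+\beta\tfrac{a_1}{\sigma}, \beta q)$, and I would use $A_{X_*}(V_*) = r_*$, $A_{X_*}(\bF_*) = A_V(F)$ (again Lemma \ref{ldbF}), together with $r_*/\sigma = r$, to obtain
\begin{equation*}
A_{X_*}(w_{*\beta}) = \tfrac{r_*}{\sigma}(1+\beta a_1) + \beta q\,A_V(F) = r(1+\beta a_1) + \beta q\, A_V(F) = r.
\end{equation*}
There is no real obstacle here; the only point requiring care is keeping the normalizations straight ($L = \sigma L_*$, hence $r_* = \sigma r$, and the corresponding rescaling $\ord_V = \frac{1}{\sigma}\ord_{V_*}$ under $R \hookrightarrow R_*$), which is exactly what makes the two log discrepancies coincide with the same number $r$.
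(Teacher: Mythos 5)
Your proof is correct and follows essentially the same route as the paper: split into the trivial and nontrivial cases, use the weighted-sum formula for the log discrepancy of a quasi-monomial valuation on $(Y', V+\bF)$ (resp.\ $(Y'_*, V_*+\bF_*)$), and then invoke Lemma \ref{ldbF} and Remark \ref{rema1} to get the cancellation. Your treatment of the trivial case for $w_{*\beta}$ via the homogeneity $A_{X_*}(\tfrac{1}{\sigma}\ord_{V_*}) = \tfrac{1}{\sigma}A_{X_*}(\ord_{V_*}) = r$ is a small detail the paper leaves implicit, but otherwise the two arguments coincide.
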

\begin{proof}
If $v_E$ is trivial, then $w_\beta=v_0$ and $A_X(w_\beta)=A_X(v_0)=r$. Otherwise, by \eqref{wbetaquasi} $w_\beta$ is a quasi monomial valuation of $(V+\bF)$ with weight $(1+\beta a_1, \beta q)$. So we have:
\begin{eqnarray}\label{AXwbeta}
A_X(w_\beta)&=&(1+\beta a_1) A_X(\ord_V)+\beta q A_X(\ord_{\bF})=(1+\beta a_1) r+\beta q A_X(\bF)\nonumber\\
&=&(1+\beta a_1)r+\beta q A_V(F),
\end{eqnarray}
where we used $A_X(\bF)=A_V(F)$ by Lemma \ref{ldbF}.
%recall that $v^{(0)}_E=\ord_{\bF}$ and $v^{(0)}_E(V)=0$. As before, let $Y\rightarrow X$ be the blow up of $o$ with the exceptional divisor $V$. Then by adjunction, 
%we have $A_Y(v^{(0)}_E)-v^{(0)}_E(V)=A_V(\ord_{F})$, where $\ord_F=v_E$ is the restriction of $\ord_{\cV_0}$ to $\bC(V)$. So we have
%Now by \cite[Proposition 4.11]{BHJ15}, we have $A_{V\times \bC}(\cV_0)=1+A_V(v^{(0)}_E)=1+q A_V(F)$. So we get 
%\[
%q\cdot A_V(F)=\frac{A_{V\times\bC}(\cV_0)-1}{r}r=-a_1 r,
%\]
%by the definition of $a_1$ in \eqref{defc2}.
%The last identity is by Lemma \ref{v01form}. 
By Remark \ref{rema1}, $q\cdot A_V(F)=-a_1 r$.
So we see that $A_X(w_\beta)\equiv r$ by \eqref{AXwbeta}.

For $A_{X_*}(w_{*\beta})$, we notice that:
\begin{eqnarray*}
A_{X_*}(w_{*\beta})&=&\frac{1+\beta a_1}{\sigma}A_{X_*}(\ord_{V_*})+\beta q A_{X_*}(\ord_{\bF_*})\\
&=&(1+\beta a_1)r+\beta q A_V(F)=A_X(w_\beta).
\end{eqnarray*} 
\end{proof}
As a corollary of above two lemmas, we have:
\begin{cor}\label{corww*}
We have the identity of normalized volumes:
\[
\hvol(w_\beta)=\frac{1}{\sigma}\cdot \hvol(w_{*\beta}).
\]
\end{cor}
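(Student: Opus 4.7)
The proof is immediate once the two preceding lemmas are in hand: one identifying the ratio of volumes (the factor $\sigma$ from Lemma \ref{mulvol}) and the other showing that the log discrepancies agree (both equal $r$). My plan is simply to combine them via the definition $\hvol(v) = A_X(v)^n \vol(v)$.

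Concretely, I would compute
\[
\hvol(w_\beta) \;=\; A_X(w_\beta)^n \vol(w_\beta) \;=\; r^n \cdot \tfrac{1}{\sigma}\vol(w_{*\beta}) \;=\; \tfrac{1}{\sigma} A_{X_*}(w_{*\beta})^n \vol(w_{*\beta}) \;=\; \tfrac{1}{\sigma}\hvol(w_{*\beta}),
\]
using Lemma \ref{mulvol} in the second equality and the log discrepancy lemma ($A_X(w_\beta) = r = A_{X_*}(w_{*\beta})$) in the first and third. No further input is needed.

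There is no real obstacle here; all the substantive work has been packaged into the two preceding lemmas. The only thing to verify is that the degenerate case where $v_E$ is trivial (so $w_\beta \equiv v_0 = \ord_V$ and $w_{*\beta} \equiv \sigma^{-1}\ord_{V_*}$) is also consistent with the claimed scaling. In that case $\vol(w_\beta) = L^{n-1} = \sigma^{n-1}L_*^{n-1}$ while $\vol(w_{*\beta}) = \sigma^n L_*^{n-1}$, so $\vol(w_\beta) = \sigma^{-1}\vol(w_{*\beta})$, and both log discrepancies equal $r$, so the identity still holds. Hence the corollary follows directly.
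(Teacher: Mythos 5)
Your proof is exactly the paper's: the corollary is stated as an immediate consequence of Lemma \ref{mulvol} and the log discrepancy lemma, combined via $\hvol(v)=A(v)^n\vol(v)$ precisely as you write. Your extra sanity check of the degenerate case ($v_E$ trivial) is correct and harmless, though not needed since the two lemmas already cover it.
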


\subsection{$w_\beta$ vesus $\wt_\beta$}\label{sec-wvwbeta}

First recall the definition of $\wt_\beta$ from Section \ref{sec-wtbeta}. If we denote $\wt^{(1)}=\wt_\eta$, then for any $f\in H^0(\cV_0, k\cL_0)$, we have (see \eqref{eq-wtbeta})
% we consider the subtorus
%$G_\beta$ generated by the element $\xi_\beta=\xi_0+\beta\eta \in \bC\xi_0\bigoplus\bC\eta$. It's easy to see that for $0\le \beta\ll 1$, $G_\beta$ has nonnegative weights on $S=\cR/(t)\cR$ and has strictly positive weights on $S_{+}:=\bigoplus_{i>0}H^0(\cV_0, i\cL_0)$. As a consequence, $G_\beta$ determines a valuation $\wt_\beta$ which is finite over $S$ and is centered at the vertex $o\in \cX_0$. If we denote by $\wt^{(1)}$ the weight function associated to the $T_1$-action, then we have:
\begin{equation}\label{wtbetacomp}
\wt_\beta(f)=\wt_0(f)+\beta\cdot \wt^{(1)}(f):=\ord_{\cV_0}(f)+\beta\cdot \wt^{(1)}(f).
\end{equation}
Note that this decomposition corresponds exactly to \eqref{defwbeta}. Our main goal is to understand the following correspondence:
\[
\wt_\beta  \longleftrightarrow  w_\beta, \quad
\wt^{(1)}  \longleftrightarrow  v^{(1)}_{E}.
\]
\begin{prop}\label{wwbeta}
For any $0\le \beta\ll 1$, we have the following relation between $w_\beta$ and $\wt_\beta$.
\begin{enumerate}
\item $\vol(\wt_\beta)=\vol(w_\beta)$.
\item $A_{\cX_0}(\wt_\beta)\equiv r \equiv A_{\cX_0}(\ord_{\cV_0})$.
\end{enumerate}
\end{prop}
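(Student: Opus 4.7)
My plan is to derive both assertions from the correspondence between the filtration of $R$ by $v^{(1)}_{E}$ and the $T_1$-weight decomposition of $S$, together with the linearity of log discrepancy on quasi-monomial valuations. For part (1), the key input is the standard identification (cf.\ \cite{BHJ15, WN12}) $\cF^\lambda R_k/\cF^{>\lambda}R_k \cong S_{(k,\lambda)}$, where $\cF^\lambda R_k := \{f\in R_k : \ord_{\cV_0}(\bar f)\ge \lambda\}$; it arises by writing $\bar f = t^\lambda g + O(t^{\lambda+1})$ and observing that $g|_{\cV_0}$ sits in the $T_1$-weight space of weight $\lambda$. Since $w_\beta$ and $\wt_\beta$ are both $\bC^*$-invariant, their valuative ideals are $\bZ_{\ge 0}$-graded, and one checks directly that
\[
\fa_m(w_\beta)\cap R_k = \cF^{(m-k)/\beta}R_k, \qquad \fa_m(\wt_\beta)\cap S_k = \bigoplus_{\lambda\ge (m-k)/\beta} S_{(k,\lambda)}.
\]
Combining these with the graded identification gives
\[
\dim_\bC R/\fa_m(w_\beta) = \sum_{k\ge 0}\sum_{\lambda<(m-k)/\beta}\dim_\bC S_{(k,\lambda)} = \dim_\bC S/\fa_m(\wt_\beta)
\]
for every $m$, and dividing by $m^n/n!$ and passing to the limit yields $\vol(w_\beta)=\vol(\wt_\beta)$.

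For part (2), the equality $A_{\cX_0}(\ord_{\cV_0})=r$ is the standard cone formula (\cite[Lemma 3.1]{Kol13}) applied to the $\bQ$-Fano $\cV_0$ with $\cL_0=-r^{-1}K_{\cV_0}$, which already handles $\beta=0$ since $\wt_0=\ord_{\cV_0}$. For small $\beta>0$, I would take a $\bT$-equivariant log resolution $\mu:\cY\to\cX_0$ and choose, at a $\bT$-fixed point $p\in\cY$, equivariant local coordinates $(u_1,\dots,u_n)$ on which $\bT$ acts diagonally with weights $\alpha_i=(\alpha_{i,0},\alpha_{i,1})\in\bZ^2$. Then $\wt_\beta$ becomes the quasi-monomial valuation at $p$ with weight $\alpha_{i,0}+\beta\alpha_{i,1}$ on $u_i$, and by linearity of $A$ on quasi-monomial valuations one obtains
\[
A_{\cX_0}(\wt_\beta)=\sum_i(\alpha_{i,0}+\beta\alpha_{i,1})\,A_{\cX_0}(\ord_{E_i}),
\]
an affine function of $\beta$ whose value at $\beta=0$ is $r$. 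It then suffices to show that the $\beta$-coefficient $\sum_i\alpha_{i,1}\,A_{\cX_0}(\ord_{E_i})$ vanishes, which is the statement that the formal ``valuation'' $\eta$ pairs trivially with the relative canonical class at $p$.

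The hard part is making that vanishing rigorous. A clean route is to use the canonical $T_1$-linearization of $\cL_0=-r^{-1}K_{\cV_0}$ from Remark \ref{remaction} (pushforward of holomorphic $n$-vectors) and trace it through the cone construction to show that a $\bT$-invariant local generator of $K_{\cX_0}$ at the vertex $o'$ has $\bT$-weight $(-r,0)$, whose $\eta$-component is zero. An alternative is an approximation argument: for rational $\beta=p/q$ the integer vector $q\xi_\beta$ integrates to a $\bC^*$-subgroup $T_\beta\subset\bT$ giving a new cone-type structure on $\cX_0$, and a renormalized cone formula yields $A_{\cX_0}(q\wt_\beta)=qr$ directly. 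Either route completes part (2) and the proposition.
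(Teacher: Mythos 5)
Your proposal follows the paper's proof very closely. Part (1) is the same dimension count, organized around the graded identification between $\cF^\lambda R_k/\cF^{>\lambda}R_k$ and the $T_1$-weight spaces $S_{(k,\lambda)}$ of the central fiber; the paper proves this identification by constructing the $T_1$-equivariant splitting $\Phi_m:\bE_m^{(0)}\to\bE_m^{(1)}$, showing it is injective and filtration-compatible, and then forcing equality by comparing total dimensions. For part (2), your ``first route'' --- establishing that the invariant $m$-canonical section $\Omega$ has $\bT$-weight $(mr,0)$, so that the $\eta$-component of the log-discrepancy pairing vanishes and $A_{\cX_0}(\wt_\beta)$ is constant in $\beta$ --- is exactly the paper's central calculation (its Lemma constructs $\Omega$ explicitly, shows it is $T_1$-invariant, and computes $\mathfrak{L}_{\xi_0}\Omega=mr\cdot\Omega$), and your ``second route'' (rational $\beta$, integrate $\xi_\beta$, renormalized cone formula) is precisely the paper's base case via the Dolgachev--Pinkham--Demazure / Seifert-bundle structure.

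The one step you should not take for granted is ``take a $\bT$-equivariant log resolution $\mu:\cY\to\cX_0$ and choose, at a $\bT$-fixed point $p\in\cY$, equivariant local coordinates $(u_1,\dots,u_n)$ on which $\bT$ acts diagonally.'' For $n>2$ the $(\bC^*)^2$-action has complexity $n-2>0$, so $\cY$ need not contain an isolated $\bT$-fixed point, and in any case $\wt_\beta$ has rational rank $\le 2$ and is quasi-monomial at the generic point of a codimension-$\le 2$ stratum, not at a closed point. One also needs all $\wt_\beta$ for $0\le\beta\ll1$ to be monomial on the \emph{same} log smooth model with weights that vary affinely in $\beta$, before the linearity formula $A_{\cX_0}(\wt_\beta)=\sum_i w_i(\beta)A_{\cX_0}(E_i)$ can be differentiated. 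The paper supplies this by invoking the structure theory of $T$-varieties (\cite{AH06,AIP11,LS13}) to produce a common toroidal model of $\cX_0$, combined with the Seifert/DPD cone formula for rational $\beta$ and continuity of the log discrepancy in the weights. Once that model is in hand, your conclusion --- that the $\bT$-weight of $\Omega$ at the relevant stratum equals $\sum_i(a_i+1)\alpha_i=(r,0)$, so the $\eta$-coefficient vanishes --- does close part (2). So the plan is correct and matches the paper; the elided step is the realization of the $\wt_\beta$ as a linearly-varying family of quasi-monomial valuations on a fixed $\bT$-equivariant model.
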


%\subsection{Proof of Proposition \ref{wwbeta}}
\begin{proof}
%The rest of this section is devoted to the proof of Proposition \ref{wwbeta}. 
Notice that if $\beta=0$, then 
$w_0=\ord_{V}=v^X_{0}$ and $\wt_0=\ord_{\cV_0}=:v^{\cX_0}_{0}$. So we have:
\[
\vol(v^X_{0})=L^{n-1}=\vol(v^{\cX_0}_{0}), \quad A_X(v^X_{0})=r=A_{\cX_0}(v^{\cX_0}_{0}).
\]
For simplicity of notations, when it's clear, we will denote both $\ord_V$ and $\ord_{\cV_0}$ by $v_0$. So Proposition 1 holds for $\beta=0$. From now on we assume $\beta\neq 0$.
\begin{enumerate}
\item
We first prove item 1. 
%We will construct a linear isomorphism between the $\bC$-vector space $\cF^\lambda_{\wt_\beta} H^0(\cV_0, m\cL_0)$ and $\cF^\lambda_{w_\beta} H^0(V, mL)$ for any $\lambda\in \bR$ and $m\gg 1$.
We will prove 
\begin{equation}\label{equaldim1}
\dim_{\bC}\cF^\lambda_{\wt_\beta} H^0(\cV_0, m\cL_0)=\dim_{\bC}\cF^\lambda_{w_\beta} H^0(V, mL)
\end{equation} 
for any $\lambda\in\bR$ and $m\gg 1$.
Notice that, by the definition of $w_\beta$ and $\wt_\beta$, we have that, (when $\beta\neq 0$):
\begin{eqnarray*}
&&\cF^\lambda_{\wt_\beta} H^0(\cV_0, m\cL_0)=\cF^{(\lambda-m)/\beta}_{\wt^{(1)}}H^0(\cV_0, m\cL_0);\\ 
&&\cF^\lambda_{w_\beta} H^0(V, mL)=\cF^{(\lambda-m)/\beta}_{v_E^{(1)}} H^0(V, mL).
\end{eqnarray*}
So we just need to show 
\begin{equation}\label{equaldim2}
\dim_{\bC}\cF^\lambda_{v^{(1)}_{E}}H^0(X, mL)=\dim_{\bC}\cF^\lambda_{\wt^{(1)}}H^0(\cV_0, m\cL_0)
\end{equation} 
for any $\lambda\in \bR$ and $m\gg 1$. This actually follows from \cite[Section 2.5]{BHJ15}. We give a separate proof for the convenience of the reader.

When $m\gg 1$, by flatness, we have a $T_1$-equivariant vector bundle $\mathcal{E}_m$ over $\bC^1$, whose space of global sections is $H^0(\cV, m\cL)=:\mathbb{E}_m$. Notice that $\bE_m$ is a natural $\bC[t]$ module satisfying $\bE_m/(t)\bE_m\cong H^0(\cV_0, m\cL_0)=:\bE_m^{(0)}$. The $T_1$-action is compatible under this quotient map. For any $g\neq 0\in \bE^{(0)}_m$ of weight $\lambda$, we can extend it to become $\cG\neq 0\in \bE_m$ of weight $\lambda$. In this way, we get a $T_1$-equivariant splitting morphism $\tau_m: \bE_m^{(0)}\rightarrow \bE_m$. We denote by $\Phi_m: \bE_m^{(0)}\rightarrow \bE_m^{(1)}$ the composition of $\tau_m$ with the quotient $\bE_m\rightarrow \bE_m/(t-1)\bE_m\cong H^0(V, mL)=:\bE_m^{(1)}$. From the construction, if $g$ has weight $\lambda $, then $t^{\lambda}\cG=:\bar{f}$ is $T_1$-invariant with its restriction on $\cV_1\cong V$ denoted by $f$. Then we have $v_{\cV_0}(f)=\ord_{\cV_0}(\bar{f})=\ord_{\cV_0}(t^{\lambda}g)=\lambda$. So we see that $\Phi_m\left(\cF_{\wt^{(1)}}^{\lambda}H^0(\cV_0, m\cL_0)\right)\subseteq \cF^\lambda_{v^{(1)}_{E}} H^0(V, mL)$. We claim that $\Phi_m$ is injective. For any $g=g_{\lambda_1}+\dots+g_{\lambda_k}\neq 0$ with each nonzero $g_{\lambda_j}$ having weight $\lambda_j$ and $\lambda_1<\lambda_2<\dots<\lambda_k$, we have that $\widetilde{\cG}_{\lambda_j}:=t^{\lambda_j}\cG_{\lambda_j}$ are $T_1$-invariant sections of $\cL\rightarrow \cV$. So $\left.\widetilde{G}_{\lambda_j}\right|_{\cV\backslash\cV_0}\cong: \left.\left(\pi_1^*f_{\lambda_j}\right)\right|_{V\times\bC^*}$ where $\pi_1: V\times\bC\rightarrow V$ is the projection and we used the isomorphism $(\cV\backslash\cV_0, \cL)\cong (V\times\bC^*, \pi_1^*L)$. $\widetilde{\cG}$ are linearly independent because they have different vanishing orders along $\cV_0$.  Hence $f_{\lambda_j}$ are also linearly independent. So $\Phi_m(g)=f_{\lambda_1}+\dots+f_{\lambda_k}\neq 0$ if $g\neq 0$. So we get 
%\begin{equation}\label{eq-eqwtdim}
$\dim_{\bC}\cF^{\lambda}_{\wt^{(1)}}H^0(\cV_0, m\cL_0)\le \dim_{\bC}\cF^{\lambda}_{v^{(1)}_E} H^0(V, mL)$.
%\end{equation}
%By flatness of the family $(\cV, \cL)\rightarrow \bC$, we get the identity $\dim_\bC H^0(\cV_0, m\cL_0)=\dim_\bC H^0(V, mL)$ for $m\gg 1$. Combined with \eqref{eq-eqwtdim},  

The above argument also shows that for any $\lambda\in \mathbb{R}$, we have:
\begin{equation}\label{ineq-qdim}
\dim_{\bC}\frac{\cF^\lambda_{\wt^{(1)}} H^0(\cV_0, m \cL_0)}{\cF^{>\lambda}_{\wt^{(1)}}H^0(\cV_0, m\cL_0)}\le \dim_{\bC}\frac{\cF^\lambda_{v^{(1)}}H^0(V, mL)}{\cF^{>\lambda}_{v^{(1)}}H^0(V, mL)}.
\end{equation}
On the other hand, we have the equality:
\begin{eqnarray*}
\sum_{\lambda} \dim_{\bC}\frac{\cF^\lambda_{\wt^{(1)}} H^0(\cV_0, m \cL_0)}{\cF^{>\lambda}_{\wt^{(1)}}H^0(\cV_0, m\cL_0)}&=&\dim_{\bC} H^0(\cV_0, m\cL_0)\\
&=& \dim_{\bC} H^0(V, mL)=\sum_{\lambda} \dim_{\bC}\frac{\cF^\lambda_{v^{(1)}}H^0(V, mL)}{\cF^{>\lambda}_{v^{(1)}}H^0(V, mL)}.
\end{eqnarray*}
So in fact \eqref{ineq-qdim} is an equality and consequently \eqref{equaldim1} holds for any $\lambda \in \bR$ and $m\gg 1$.
%Reversely any $f\neq 0\in H^0(V, mL)$ can be extended to a unique $T_1$-invariant section $F\in H^0(\cV\backslash \cV_0, m\cL)$ by using the $T_1$-equivariant isomorphism $(\cV\backslash \cV_0, \cL)\cong (V\times\bC^*, \pi_1^*L)$ where $\pi_1: V\times\bC\rightarrow V$ is the projection. Let $\lambda=\ord_{\cV_0}(\bar{f})$. Then $\cG':=t^{-\lambda}\bar{f}$ extends to become a nonzero holomorphic section of $H^0(\cV, m\cL)$ with weight $\lambda$. The restriction $g:=\cG'|_{\cV_0}$ is nonzero in $H^0(\cV_0, m\cL_0)$. % we have $\cG'-\tau_m(g)\in (t)\bE$. 
%In this way we get a linear morphism $\Psi_m: \cF^\lambda_{v^{(1)}_{E}}H^0(V, mL)\rightarrow \cF^\lambda_{\wt^{(1)}}H^0(\cV_0, m\cL_0)$. We claim that $\Psi_m$ is injective. Indeed if $0\neq f=f_{\lambda_1}+\cdots+f_{\lambda_k}$ such that $\ord_{\cV_0}\bar{f}_{\lambda_j}=\lambda_j$ and $-\infty<\lambda_1<\lambda_2<\dots<\lambda_k<+\infty$. Then $\cG'_{\lambda_j}=t^{-\lambda_j}\bar{f}_{\lambda_j}$ have different weights and hence are linearly independent. So we get the inequality in the other direction $\dim_{\bC}\cF^{\lambda}_{\wt^{(1)}}H^0(\cV_0, m\cL_0)\ge \dim_{\bC}\cF^{\lambda}_{v^{(1)}_{E}}H^0(V, mL)$.
%we have $\Psi_m^{-1}\left(\cF^\lambda_{\wt^{(1)}} H^0(V, mL)\right)\subset \cF^{\lambda}_{\ord_{\cV_0}}H^0(\cV_0, m\cL_0)$.
%\[
%\ord_{\cV_0}(f)=\WT(\tau_m^{-1}(f)).
%\]
%Now $g=\cG|_V$ such that $t^{j_l}\bar{g}_{j_l}=\cG_{j_l}$. So
%$v_{\cV_0}(g)=j_l=\wt^0(g^{(0)})$.
%$v_{\cV_0}(g)=\ord_{\cV_0}(\bar{g})$.

Combining the above discussions, we see that \eqref{equaldim1} hold for any $\lambda\in \bR$ and $m\gg 1$. So we can compare the volumes by calculating:
\begin{eqnarray*}
\dim_{\bC}R/\fa_\lambda(w_\beta)&=&\sum_{m=0}^{+\infty}\dim_{\bC}H^0(V,mL)/\cF^\lambda_{w_\beta} H^0(V,mL)\\
&=&\sum_{m=0}^{+\infty}\dim_{\bC}H^0(\cV_0, m\cL_0)/\cF^\lambda_{\wt_\beta} H^0(\cV_0, m\cL_0)+O(1)\\
&=&\dim_{\bC}S/\fa_\lambda(\wt_\beta)+O(1).
\end{eqnarray*}
Now the identity on volumes follow by dividing both sides by $\lambda^n/n!$ and taking limits as $\lambda\rightarrow +\infty$.

\item
Next we calculate the log discrepancies of $\wt_\beta$. For this purpose we will use the interpretation of log
discrepancy of $\wt_\beta$ as the weight on the corresponding $\bC^*$-action on an equivariant nonzero pluricanonical-form (see \cite{Li15a}). Choose $m$ sufficiently divisible so that $mr$ is an integer and $mK_{\cX_0}=m r\cL^{-1}_0$ is Cartier. Then on $\cX_0=C(\cV_0, \cL_0)$, there is a nowhere zero $m$-canonical section $\Omega\in H^0(\cX_0, m K_{\cX_0})$ (alternatively we could think of $\Omega^{1/m}$ as a multi-section of $K_{\cX_0}$). This is a well known construction showing that $\cX_0$ is $\mathbb{Q}$-Gorenstein in our setting (see \cite[Proposition 3.14.(4)]{Kol13}). For the convenience of the reader, we give an explicit description.

To define $\Omega$ first choose an affine covering $\{U_\alpha\}$ of $\cV_0$ that induces an open covering $\{\pi^{-1}U_\alpha\}$ of $\cX_0\setminus \{o\}$. Notice that there is a natural identification $\cX_0\setminus \{o\}\cong \cL_0^{-1}\setminus \cV_0$ where $\cV_0$ is considered as the zero section of the total space $\cL_0^{-1}$ of the line bundle $\cL_0^{-1}\rightarrow \cV_0$. So we have a canonical projection $\pi: \cX_0\setminus \{o\}\rightarrow \cV_0$. 

For any affine open set $U=U_\alpha \subset \cV_0$, choose a local generator $s=s(p)$ of $\mathcal{O}_{\cV_0}(\cL_0^{-1})(U)$. Then 
\[
\pi^{-1}(U)=\{h\cdot s(p); p\in U, h\neq 0 \}\subset \cX_0\setminus \{o\} .
\]
In other words, any point $P\in \cX_0\setminus \{o\}$ can be represented as $h\cdot s$ over an affine open set $U$ of $\cV_0$ containing $p=\pi(P)$. Since $mK_{\cV_0}$ is Cartier and $K_{\cV_0}= r\cL_0^{-1}$, $s^{\otimes m r}$ is a local generator of $\mathcal{O}_{\cV_0}(m K_{\cV_0})$. Moreover if $p=\pi(P)$ is a smooth point, we can choose local  coordinates $z=\{z_1, \dots, z_{n-1}\}$ around $p$ such that $s^{\otimes mr}=(dz_1\wedge \cdots\wedge dz_{n-1})^{\otimes m}=:(dz)^{\otimes m}$. Then $\{z_1, \dots, z_{n-1}, h\}$ are local holomorphic coordinates on a neighborhood $U'$ of $P\in \pi^{-1}(U_{\rm reg})\subset \cX_0\setminus \{o\}$. We define 
\begin{equation}\label{eq-Omega}
\Omega=(d h^r)^{\otimes m} \wedge (dz)^{\otimes m}:=\pm\;  r^m h^{rm-m} (dh\wedge dz)^{\otimes m} \in H^0(U', m K_{\cX_0}).
\end{equation}
\begin{lem}
$\Omega$ defines a global section in $H^0(\cX_0, m K_{\cX_0})$. Moreover $\Omega$ is invariant under the canonical lifting (given by pull-back of pluri-canonical forms) of $T_1$-action on $mK_{\cX_0}$. 
\end{lem}
\begin{proof}
Because $\cX_0$ is normal and $mK_{\cX_0}$ is Cartier, we just need to verify the statement on the regular locus of $\cX_{0}$. To verify $\Omega$ is globally defined, we need to show that
$\Omega$ does not depend on the choices of local holomorphic coordinates. 

If $\tilde{h}^{mr}\cdot (d\tilde{z})^{\otimes m}=h^{mr} \cdot (dz)^{\otimes m}$, then $\tilde{h}^{mr}=h^{mr}{\rm Jac}\left(\frac{\partial z}{\partial \tilde{z}}\right)^{m}$. 
So 
\begin{eqnarray*}
mr \tilde{h}^{mr-1} d\tilde{h}\wedge d\tilde{z}&=& mr h^{mr-1} (dh)  {\rm Jac}(\partial z/\partial \tilde{z})^m\wedge {\rm Jac}(\partial \tilde{z}/\partial z)dz\\
&=&m r h^{mr-1}\tilde{h}^{mr}/h^{mr} {\rm Jac}(\partial \tilde{z}/\partial z)dh\wedge dz.
\end{eqnarray*}
So we get $\tilde{h}^{-1}d\tilde{h}\wedge d\tilde{z}={\rm Jac}(\partial\tilde{z}/{\partial z}) \cdot h^{-1} dh\wedge dz$. From this identity we easily get:
\begin{eqnarray}\label{eq-cvf}
\tilde{h}^{rm-m}(d\tilde{h}\wedge d\tilde{z})^{\otimes m}&=& \tilde{h}^{rm} (\tilde{h}^{-1}d\tilde{h}\wedge d\tilde{z})^{\otimes m}\nonumber\\
&=&h^{rm}(h^{-1} dh\wedge dz)^{\otimes m}
=h^{rm-m}(dh\wedge dz)^{\otimes m}.
\end{eqnarray}
So $\Omega$ is indeed globally defined. 

To see the invariance under $T_1$-action, we just need to verify the invariance of $\Omega$ on the regular locus. Recall that $T_1$ action on $\cX_0$ is induced by the canonical lifting of $\bC^*$-action on $\cL_0^{-mr}=K_{\cV_0}^m$ that is given by the pull back of $m$-pluri-canonical-forms (see Remark \ref{remaction}). For any 
smooth point $p\in \cV_0$ and $t\in T_1$, $t\circ p=\tilde{p}$ is also a smooth point of $\cV_0$. By the above change of variable formula in \eqref{eq-cvf}, we get $t^*\Omega=\Omega$ for any $t\in T_1=e^{\bC\eta}$. So $\Omega$ is indeed invariant under the action of $T_1$ on $\cX_0$. 

\end{proof}

On the other hand, $\Omega$ has weight $m r$ under the $T_0$-action generated by $\xi_0=h\partial_h$:
\[
\mathfrak{L}_{\xi_0}\Omega=\mathfrak{L}_{h\partial_h} (\pm r^m h^{rm-m}(dh\wedge dz)^{\otimes m})=mr \cdot \Omega, 
\]
where $\mathfrak{L}$ denotes the Lie derivative with respect to the generating holomorphic vector field. 
So we get
\begin{equation}\label{eq-Lxi}
\mathfrak{L}_{\xi_\beta}\Omega=\mathfrak{L}_{\xi_0}\Omega+\beta\mathfrak{L}_{\eta}\Omega=m r \cdot \Omega. 
\end{equation}
We claim that \eqref{eq-Lxi} implies $A_{\cX_0}(\wt_\beta)\equiv r$ which is independent of $\beta$. In the case where $\beta\in \bQ$ and hence $\xi_\beta$ generates a $\bC^*$-action, this was observed in \cite{Li15a}, which in essence depends on Dolgachev-Pinkham-Demazure's description of normal graded rings as expounded and developed by Koll\'{a}r. For general $\beta\in \bR$ (with $0<\beta\ll 1$), the claim follows from the case where $\beta\in \bQ$ and the fact that we can realize $\wt_\beta$ as quasi-monomial valuations on a common log smooth model. One way to see this fact is to use the same construction as used for $w_\beta$ in Section \ref{sec-v1E} which lead to the Definition \ref{def-wbeta}. The other way, which works for general torus invariant valuations, is to use structure results of normal $T$-varieties from \cite{AH06, AIP11, LS13}. The latter papers indeed generalized Dolgachev-Pinkham-Demazure's construction to varieties with torus actions of higher ranks. For the reader's convenience, we briefly explain how the arguments fit together to work. 
\begin{enumerate}
\item 
In the case where $\beta\in \bQ$ and $0\le \beta\ll 1$, the vector field $\xi_\beta=\xi_0+\beta \eta$ generates a $\bC^*$-action $\lambda_\beta: \bC^*\rightarrow {\rm Aut}(\cX_0)$. The GIT quotient $\cX_0/\lambda_\beta$ is an orbifold (or a stack) $(S_\beta, \Delta_\beta)$ and $\cX_0\setminus\{o'\}$ is a Seifert $\bC^*$-bundle over $(S_\beta, \Delta_\beta)$ (see \cite{Kol04}). There is a natural compactification $\tilde{\cX}_0$ of $\cX_0\setminus \{o'\}$ such that the following conditions hold.
\begin{enumerate}
\item[(i)]
$\tilde{\cX}_0$ is the total space of an orbifold line bundle $L_\beta$ over $(S_\beta, \Delta_\beta)$. In other words, if we denote $A=\bigoplus_{m=0}^{+\infty} H^0(S_\beta, kL_\beta)$ where $L_\beta$ is viewed as a $\bQ$-Weil divisor, then $\cX_0={\rm Spec}_{\bC} (A)$ and $\tilde{\cX}_0={\rm Spec}_{S_\beta} (A)$. Moverover $-(K_{S_\beta}+\Delta_\beta)=r_\beta L_\beta$ for some $r_\beta>0\in \bQ$ (see \cite{Wat81}, \cite[40-42]{Kol04} and \cite[3.1]{Kol13}). 
\item[(ii)]
The vector field $\xi_\beta$ lifts to become a vector field $\xi_\beta$ on $\tilde{\cX}_0$, which generates a $\bC^*$-action and is a real positive multiple of the natural rescaling vector field $\xi_\beta'=u\partial_u$ along the fibre of $\tilde{\cX}_0\rightarrow (S_\beta, \Delta_\beta)$ where $u$ is an affine coordinate along the fibre of $L_\beta\rightarrow S_\beta$. Notice that $\xi'_\beta$ is well-defined, independent of the choice of $u$ and invariant under finite stabilizers over the orbifold locus $\Delta_\beta$.
\item[(iii)]
There is a $\lambda_\beta$-equivariant nowhere vanishing holomorphic section $\Omega'\in H^0(\cX_0, m K_{\cX_0})$ which can be constructed by using the structure of the orbifold line bundle $L_\beta\rightarrow (S_\beta, \Delta_\beta)$ in the same way that $\Omega$ was constructed. By the same calculation as above, we know that $\cL_{\xi_\beta'}\Omega'=m r_\beta \Omega'$. 
\end{enumerate}
From (i) and by using the adjunction formula, it's easy to get $A_{\cX_0}(\wt_{\xi'_\beta})=A_{\cX_0}(\ord_{S_\beta})=r_\beta$ (see \cite[3.1]{Kol13}).   Because both $\Omega'$ and $\Omega$ are equivariant under the $\bC^*$-action $\lambda_\beta$, their ratio $f=\Omega'/\Omega$, which is a nowhere vanishing regular function over $\cX_0$, is also $\lambda_\beta$-equivariant. In other words, there exists $\kappa \in \bZ$ such that $f(t\cdot p)=t^{\kappa}\cdot f(p)$ for any $p\in \cX_0$ and $t\in \bC^*$. Letting $t\rightarrow 0$ and using that $|f(o')|$ is bounded away from $0$ and $+\infty$, we see that $\kappa=0$, i.e. $\Omega'/\Omega$ is $\lambda_\beta$-invariant. So $f$ is a pull back of a regular function that is globally defined on $S_\beta$. This implies that $f$ has to be a constant. So we get from (iii) that $\cL_{\xi'_\beta}\Omega=m r_\beta \Omega$. By (ii) above, $\xi_\beta=a \xi_\beta'$ for some $a\in \bR$. So $\cL_{\xi_\beta} \Omega=a\cdot \cL_{\xi'_\beta}\Omega=a m r_\beta \Omega$. 
Comparing this with \eqref{eq-Lxi}, we see that $a m r_\beta=m r$ and hence $A_{\cX_0}(\wt_{\xi_\beta})=a A_{\cX_0}(\wt_{\xi'_\beta})=a r_\beta= r$. 
\item
If $\beta\not\in \bQ$, then $\wt_{\xi_\beta}$ is quasi-monomial of rational rank 2. Because $\cX_0$ is a normal variety with a $(\bC^*)^2$-action, by \cite[Theorem 3.4]{AH06} there exists a normal semi-projective variety $Y$, a birational morphism $\mu: \tilde{\cX}_0\rightarrow \cX_0$ and a projection $\pi: \tilde{\cX}_0\rightarrow Y$ such that the generic fibre of $\pi$ is a normal toric variety of dimension $2$. This toric variety, denoted by $Z$, is associated to the polyhedral cone $\sigma\subset N\otimes_{\bZ}\bR\cong \bR^2$ where $N={\rm Hom}(\bC^*, (\bC^*)^2)\cong \bZ^2$. Each valuation $\wt_{\xi_\beta}$ (for $0\le \beta\ll 1$) then corresponds to a vector contained in the interior of $\sigma$ (see \cite[11]{AIP11}). Let $\tilde{Z}\rightarrow Z$ be a toric resolution of singularities of $Z$ and choose a Zariski open set $U$ of $Y$ such that the fibre $\pi^{-1}(p)$ of any point $p\in U$ is isomorphic to $Z$. Then there exists a smooth model $\mathfrak{X}\rightarrow \cX_0$ that dominates both $\tilde{\cX}_0$ and $U\times \tilde{Z}$. In fact one can choose $\mathfrak{X}$ to be a toroidal desingularization of $\tilde{\cX}_0$ as constructed in \cite[Section 2]{LS13}. On the model $\mathfrak{X}$ we can realize $\wt_{\xi_\beta}$, for all $0\le \beta\ll 1$, as monomial valuations around the same regular point $p\in Y$ but with different non-negative weights.\footnote{In the case where $Y=\{\rm pt\}$ we are just using toric resolution of singularities. This kind of construction can be considered as a special case of the construction in \cite[Proof of Lemma 3.6.(ii)]{JM10}.} 
By the linearity and hence the continuity of log discrepancy function with respect to the weights (see \cite[5.1]{JM10}), we use (a) and the denseness of rational weights to finally conclude that $A_{\cX_0}(\wt_{\xi_\beta})\equiv r$ for any $\beta\in \bR$ with $0\le \beta\ll 1$. \end{enumerate}

\end{enumerate}

\end{proof}

We provide a simple example illustrating the proof of $A(\wt_\beta)\equiv r$ in Proposition \ref{wwbeta}.
\begin{examp}
Choose $V=\mathbb{CP}^{n-1}$ and $L=n^{-1} K_{V}^{-1}$ the hyperplane bundle over $\bC\bP^{n-1}$. Then $X=C(V, H)=\bC^n$. Consider a holomorphic vector field $\eta$ on $\bC\bP^{n-1}$ that 
is induced by the holomorphic vector field $\sum_{i=0}^{n-1} \lambda_i Z_i\frac{\partial}{\partial Z_i}$ on $\bC^n$ under the quotient $\pi: \bC^n-\{0\}\rightarrow\bC\bP^{n-1}$, where $\lambda_i\in \bZ$ and $Z=[Z_0, \cdots, Z_{n-1}]$ are the homogeneous coordinates of $\bC\bP^{n-1}$ which are also considered as coordinates on $\bC^n$. Using the same notations as in the above proof of item 2 of Proposition \ref{wwbeta} (in particular $m=1$ and $r=n$), we are going to show:
\begin{lem}
$\Omega=n dZ_0\wedge\cdots\wedge dZ_{n-1}$. The canonical lifting of $\eta$ to $\bC^n$ is given by
\[
\eta=\sum_{i=0}^{n-1} (\lambda_i-\bar{\lambda}) Z_i \frac{\partial}{\partial Z_i}
\]
where $\bar{\lambda}=\frac{1}{n}\sum_{p=0}^{n-1}\lambda_p$.
\end{lem}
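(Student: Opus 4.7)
The plan is to verify both formulas by a direct coordinate computation in a standard affine chart of $\bC\bP^{n-1}$, using the local definition of $\Omega$ from \eqref{eq-Omega} for the first part and the characterization that the canonical lift preserves $\Omega$ for the second.

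For the identification of $\Omega$, I would work on the chart $U_0=\{Z_0\neq 0\}$ with affine coordinates $z_k=Z_k/Z_0$ ($k=1,\ldots,n-1$) and identify $\cO_{\bC\bP^{n-1}}(-1)|_{U_0}$ with the tautological subbundle, so that a point of its total space over $U_0$ reads $(Z_0,\ldots,Z_{n-1})=(h,hz_1,\ldots,hz_{n-1})$ in $\bC^n$ with fiber coordinate $h=Z_0$. Then $dZ_0=dh$ and $dZ_k=z_k\,dh+h\,dz_k$ for $k\ge 1$, so antisymmetry of the wedge collapses to
\[
dZ_0\wedge dZ_1\wedge\cdots\wedge dZ_{n-1}=h^{n-1}\,dh\wedge dz_1\wedge\cdots\wedge dz_{n-1}.
\]
Substituting $m=1$, $r=n$ into \eqref{eq-Omega} gives $\Omega|_{U_0}=n\,h^{n-1}\,dh\wedge dz$, which equals $n\,dZ_0\wedge\cdots\wedge dZ_{n-1}$; global well-definedness of the right-hand side on $\bC^n$ then identifies $\Omega$ globally.

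For the lifting, the defining property of the canonical lift is that the $T_1$-action on $K^m_{\cV_0}$ in its construction is the one by pull-back of pluri-canonical forms, which automatically preserves any pluri-canonical section. So the canonical lift $\tilde\eta$ of $\eta$ is the unique lift to $\bC^n$ satisfying $\mathfrak{L}_{\tilde\eta}\Omega=0$. Any lift of $\eta$ to $\bC^n$ is linear (since the fibers of $\pi\colon\bC^n\setminus\{0\}\to\bC\bP^{n-1}$ are the orbits of the Euler field $E=\sum_i Z_i\partial_{Z_i}$) and differs from $\sum_i\lambda_i Z_i\partial_{Z_i}$ by a scalar multiple $c$ of $E$, so $\tilde\eta=\sum_i(\lambda_i+c)Z_i\partial_{Z_i}$. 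A direct application of Cartan's formula to the linear vector field yields
\[
\mathfrak{L}_{\tilde\eta}(dZ_0\wedge\cdots\wedge dZ_{n-1})=\Bigl(\sum_{i=0}^{n-1}(\lambda_i+c)\Bigr)\,dZ_0\wedge\cdots\wedge dZ_{n-1},
\]
and $\mathfrak{L}_{\tilde\eta}\Omega=0$ forces $\sum_i(\lambda_i+c)=0$, i.e.\ $c=-\bar\lambda$, giving the stated expression.

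There is no real obstacle: the lemma is a direct coordinate verification designed to illustrate the general identity $A_{\cX_0}(\wt_{\xi_\beta})\equiv r$ from Proposition \ref{wwbeta} in the concrete toy case of $\bC\bP^{n-1}$. The only numerical point to watch is the factor $n$ in $\Omega$, which comes from the $r$-th-power structure $h^n$ and produces the $h^{n-1}$ prefactor both in \eqref{eq-Omega} and in the direct computation of $dZ_0\wedge\cdots\wedge dZ_{n-1}$, making the two expressions match cleanly.
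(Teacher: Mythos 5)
Your computation of $\Omega$ is correct and essentially identical to the paper's: both work in the chart $U_0$, trivialize $L^{-1}$ by the tautological section, and collapse $dZ_0\wedge\cdots\wedge dZ_{n-1}$ to $h^{n-1}\,dh\wedge dz$. For the canonical lift of $\eta$, however, you take a genuinely different and shorter route. The paper's proof is entirely explicit: it computes the weight of the $T_1$-action on the trivializing section $s_0$ from $s_0^{\otimes n}=dz$, writes the lift in the mixed coordinates $(Z_0,z_1,\dots,z_{n-1})$, and then performs a change of basis to $(Z_0,\dots,Z_{n-1})$. You instead invoke the $T_1$-invariance of $\Omega$ (which is exactly the content of the paper's preceding lemma, so you are entitled to it), observe that any $T_0$-equivariant lift extending over the origin must be linear and of the form $\sum_i\lambda_i Z_i\partial_{Z_i}+cE$, and use $\mathfrak{L}_{\tilde\eta}\Omega=\bigl(\sum_i(\lambda_i+c)\bigr)\Omega=0$ to solve $c=-\bar\lambda$. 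This is more conceptual and avoids the coordinate change altogether; it also makes transparent why the answer is a traceless shift of the diagonal weights. One small point you should tighten: the parenthetical justification ``any lift of $\eta$ to $\bC^n$ is linear'' is not quite a proof as stated --- a general lift is only $\tilde\eta_0+fE$ with $f$ pulled back from $\bC\bP^{n-1}$; you need that the \emph{canonical} lift commutes with the scaling action $T_0$ (it does, by the commuting-torus setup) and extends holomorphically over the origin, and then $f$ is a global holomorphic function on $\bC\bP^{n-1}$, hence a constant $c$. Also, the phrase ``which automatically preserves any pluri-canonical section'' is loose; the relevant fact is that the $T_1$-action on $\cX_0$ preserves the specific section $\Omega$, which is what the preceding lemma proves. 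With these two clarifications your argument is complete and arguably cleaner than the paper's coordinate manipulation.
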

\begin{proof}
We will do the calculation on the affine chart $U_0=\{Z_0\neq 0\}$. Exactly the same calculation can be done on other $U_i=\{Z_i\neq 0\}$ and the result will turn out to be symmetric with respect to the index $i$. First choose $s_0$ to be the canonical trivializing section of $L^{-1}$ over $U_0=\{Z_0\neq 0\}$ given by $s_0(Z)=(1, Z_1/Z_0, \cdots, Z_{n-1}/Z_0)$.  
%\[
%(Z_0, \cdots, Z_{n-1})=Z_0(1, z_1, \cdots, z_{n-1}).
%\]
 Any point $Z=(Z_i)\in \bC^n-\{0\}$ can be written as:
\[
Z=Z_0(1, Z_1/Z_0, \cdots, Z_{n-1}/Z_0)=Z_0\cdot s_0.
\]
So the globally defined holomorphic $n$-form from \eqref{eq-Omega} is equal to
\begin{equation}
\Omega=d (Z_0^n)\wedge (dz_1\wedge\cdots \wedge d z_{n-1})=n dZ_0\wedge dZ_1\wedge \cdots \wedge d Z_{n-1}.
\end{equation}
On the other hand, the natural trivializing section of the holomorphic line bundle $K_V$ over $U_0$ is
$dz=d {z_1}\wedge \cdots \wedge d {z_{n-1}}$. Under the isomorphism $K_V=n L^{-1}$, we have $s_0^{\otimes n}=dz$ on $U_0$ (by looking at the transition functions).

In the affine coordinate chart $U_0=\{Z_0\neq 0\}$, we have $\eta=\sum_{i=1}^{n-1}(\lambda_i-\lambda_0)z_i\frac{\partial}{\partial z_i}$ where $\{z_i=Z_i/Z_0\}$ are inhomogeneous coordinates on $U_0\cong \bC^{n-1}$. So we have $t^* dz=t^{\left(\sum_{i=1}^{n-1}(\lambda_i-\lambda_0)\right)} dz$ and hence $t\circ s_0=t^{\left(\sum_{i=1}^{n-1}(\lambda_i-\lambda_0)/n\right)}s_0$ for any 
$t\in e^{\bC\eta}\cong \bC^*$. So the canonical lifting of $\eta$ on $\pi^{-1}(U_0)$, still denoted by $\eta$, is equal to (under the coordinate 
$\{Z_0, z_1, \cdots, z_{n-1}\}$):
\begin{equation}\label{eq-eta1}
\eta=-\frac{1}{n} \sum_{i=1}^{n-1}(\lambda_i-\lambda_0) Z_0 \frac{\partial}{\partial Z_0}+ \sum_{i=1}^{n-1}(\lambda_i-\lambda_0)z_i\frac{\partial}{\partial z_i}.
\end{equation}
Now the coordinate change from $\{Z_0, z_1, \cdots, z_{n-1}\}$ to $\{Z_0, Z_1, \cdots, Z_{n-1}\}$ is given by:
\begin{equation*}
Z_0=Z_0, \quad Z_i=z_i\cdot Z_0,\quad i=1, \cdots, n-1; 
\end{equation*}
which implies the change of basis formula:
\begin{equation*}
 \frac{\partial}{\partial Z_0}=\frac{\partial}{\partial Z_0}+\sum_{j=1}^{n-1}\frac{Z_j}{Z_0}\frac{\partial}{\partial Z_j}, \quad \frac{\partial}{\partial z_i}=Z_0 \frac{\partial}{\partial Z_i}, \quad i=1,\cdots, n-1.
\end{equation*}
Substituting these into \eqref{eq-eta1} we get:
\begin{eqnarray*}
\eta&=& \frac{1}{n} \sum_{i=1}^{n-1} (\lambda_0-\lambda_i) Z_0\left(\frac{\partial}{\partial Z_0}+\sum_{j=1}^{n-1} \frac{Z_j}{Z_0}\frac{\partial}{\partial Z_j}\right)+\sum_{i=1}^{n-1}(\lambda_i-\lambda_0)\frac{Z_i}{Z_0}Z_0\frac{\partial}{\partial Z_i}\\
&=&\sum_{p=0}^{n-1}(\lambda_p-\bar{\lambda})Z_p\frac{\partial}{\partial Z_p}.
\end{eqnarray*}
Indeed, this follows from easy manipulation:
\[
\frac{1}{n}\sum_{i=1}^{n-1}(\lambda_0-\lambda_i)=\lambda_0-\bar{\lambda}, \quad \left(\frac{1}{n}\sum_{j=1}^{n-1}(\lambda_0-\lambda_j)\right)+(\lambda_i-\lambda_0)=\lambda_i-\bar{\lambda}.
\]
\end{proof}
We let $\xi_0=\sum_{i=0}^{n-1}Z_i \frac{\partial}{\partial Z_i}$ as in the proof of Proposition \ref{wwbeta}. Then $\xi_\beta=\sum_{i=0}^{n-1}(1+\beta(\lambda_i-\bar{\lambda}))Z_i\frac{\partial}{\partial Z_i}$. It's clear that, for $0\le \beta\ll 1$, the valuation $\wt_\beta$ is the monomial valuation centered at $0\in \bC^n$  and its log discrepancy is given by:
\[
A_{\bC^n}(\wt_\beta)=\sum_{i=0}^{n-1} (1+\beta(\lambda_i-\bar{\lambda}))\equiv n.
\]
\end{examp}

\subsection{Completion of the proof of Theorem \ref{mv2ks}}
Now we can finish the proof of Theorem \ref{mv2ks}. Indeed, it's well known now that the derivative of
$\hvol(\wt_\beta)=r^n \vol(\wt_\beta)$ on $\cX_0$ is given by a positive multiple of the Futaki invariant on $\cV_0$ defined in \cite{Fut83, DT92}, or equivalently the CM weight in \eqref{CMstc}. This was first discovered in \cite{MSY08} when $V$ is smooth, and generalized to the mildly singular case in \cite{DS15} (see also \cite{CS12}). For the convenience of the reader, we give a different proof in Lemma \ref{lemMSY} following the spirit of calculations in previous sections and using results on Duistermaat-Heckman measures in \cite{BC11, BHJ15}.\footnote{The generalized convexity of the volume in Lemma \ref{lemMSY} has also been notice by Collins-Sz\'{e}kelyhidi in \cite[Page 6]{CS15}.}

Under the assumption that $\hvol$ obtains the minimum at $\ord_{V}$ on $(X_*, o_*)$, by the above Proposition and Corollary \ref{corww*}, we have
\[
\hvol(\wt_\beta)=\hvol(w_\beta)=\sigma^{-1}\hvol(w_{*\beta})\ge\sigma^{-1}\hvol(w_{*0})=\hvol(w_0)=\hvol(\wt_0).
\]
for any $0\le \beta\ll1$. So we must have $\left.\frac{d}{d\beta}\hvol(\wt_\beta)\right|_{\beta=0}\ge 0$ which is equivalent to ${\rm CM}(\cV, \cL) \ge 0$. Since
this non-negativity  holds for any special test configuration $(\cV, \cL)$, we are done.

\begin{lem}\label{lemMSY}
The following properties hold. 
\begin{enumerate}
\item The derivative of $\hvol(\wt_\beta)=r^n \cdot \vol(\wt_\beta)$ at $\beta=0$ is a positive multiple of the Futaki invariant on the central fibre $\cV_0$ for the $T_1$-action.
More precisely, we have the equality:
\begin{equation}\label{derCM}
\left.\frac{d}{d\beta}\hvol(\wt_\beta)\right|_{\beta=0}=-\left(-K_{\bar{\cV}/\bP^1}\right)^{n}=(n (-K_V)^{n-1})\cdot {\CM}(\cV,\cL).
\end{equation}

\item For any $\xi_1\in \ft_{\bR}^{+}$ (see \eqref{eq-conepos}) with associated weight function $\wt_1:=\wt_{\xi_1}$, $\Phi(t)=\vol((1-t)\wt_0+t\; \wt_1)$ is convex with respect to $t\in [0,1]$.
\end{enumerate}
\end{lem}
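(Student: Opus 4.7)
For part (1), my plan is to apply the volume formula \eqref{volv1b} of Section~4 to the $T_0$-invariant valuation $\wt_\beta$ on the cone $\cX_0 = C(\cV_0, \cL_0)$, with reference valuation $v_0 = \wt_0 = \ord_{\cV_0}$. Since $\wt_\beta$ is $\bC^*$-invariant, Lemma~\ref{lemsat} simplifies the filtration of \eqref{mirafil} to $\cF_\beta^\lambda S_k = \{f \in S_k : \wt_\beta(f) \ge \lambda\}$, and the decomposition $\wt_\beta(f) = k + \beta\wt^{(1)}(f)$ for $f\in S_k$ from \eqref{wtbetacomp} then gives $\cF_\beta^\lambda S_k = \cG^{(\lambda-k)/\beta} S_k$, where $\cG$ is the filtration on $S = \bigoplus_k H^0(\cV_0, k\cL_0)$ associated to $\wt^{(1)} = \wt_\eta$. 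Substituting $u = (t-1)/\beta$ in \eqref{volv1b} and identifying $-d\vol(\cG S^{(u)}) = L^{n-1}\,\mathrm{DH}(\cG)(du)$ by Proposition~\ref{BHJvol} produces the compact expression
\[
\vol(\wt_\beta) \;=\; L^{n-1}\int_{\bR} \frac{\mathrm{DH}(\cG)(du)}{(1+\beta u)^n},
\]
valid whenever $\xi_\beta \in \ft^+_\bR$.

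Differentiating at $\beta=0$ under the integral --- legitimate because $\mathrm{DH}(\cG)$ has compact support, reflecting linear boundedness of $\cG$ that comes from ampleness of $\cL_0$ on the projective $\cV_0$ --- then multiplying by $r^n$ and using $r^n L^{n-1} = r(-K_V)^{n-1}$, will yield
\[
\left.\frac{d}{d\beta}\right|_{\beta=0}\hvol(\wt_\beta) \;=\; -n r (-K_V)^{n-1}\int_{\bR} u\,\mathrm{DH}(\cG)(du).
\]
The proof of part (1) then closes by invoking Wang's intersection identity $L^{n-1}\int u\,\mathrm{DH}(\cG) = \bar\cL^n/n$ --- a direct asymptotic of the equivariant Hilbert function of $(\bar\cV,\bar\cL)$, as treated in \cite[Section~3]{BHJ15} --- combined with the simplified CM weight \eqref{CMstc} for special test configurations. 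The required compatibility between the $T_1$-weight filtration $\cG$ on $S$ and the filtration of $\bigoplus_k H^0(V,kL)$ induced by the test configuration $(\cV,\cL)$ is exactly the dimension identity \eqref{equaldim1} already established in Proposition~\ref{wwbeta}(1), so the intersection-theoretic interpretation transfers correctly to $\cX_0$.

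For part (2), the same machinery applied to $w_t := (1-t)\wt_0 + t\wt_1$, which lies in $\Val_{\cX_0,o'}$ for every $t\in[0,1]$ since $\xi_t = (1-t)\xi_0 + t\xi_1 \in \ft^+_\bR$ throughout the interval, gives, after the substitution $u = (x-(1-t))/t$,
\[
\Phi(t) \;=\; L^{n-1}\int \frac{\mathrm{DH}(\cG_{\wt_1})(du)}{((1-t)+tu)^n},
\]
where $\cG_{\wt_1}$ is the filtration of $S$ associated to $\wt_1$. Because $\xi_1 \in \ft^+_\bR$ forces $\wt_1 > 0$ on $\fm$, the support of $\mathrm{DH}(\cG_{\wt_1})$ lies in $(0,+\infty)$, whence $(1-t)+tu > 0$ for all $t\in[0,1]$ and all $u$ in this support. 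The pointwise second derivative $\tfrac{d^2}{dt^2}((1-t)+tu)^{-n} = n(n+1)(u-1)^2((1-t)+tu)^{-n-2} \ge 0$ shows the integrand is convex in $t$ for each fixed $u$, and convexity of $\Phi$ then follows by integrating against the positive measure $\mathrm{DH}(\cG_{\wt_1})$, exactly as in Lemma~\ref{lemintfn}. The main obstacle I foresee is essentially technical: rigorously transplanting the Section~4 volume-formula machinery from the $\bQ$-Fano cone $C(V,L)$ to the possibly more singular cone $\cX_0 = C(\cV_0, \cL_0)$, verifying linear boundedness of $\cG$ and $\cG_{\wt_1}$, and justifying the change of variables and differentiation under the integral via dominated convergence. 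Once these preliminaries are secured, both statements follow cleanly from the Duistermaat--Heckman identification and Wang's intersection formula.
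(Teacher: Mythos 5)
Your proposal is correct and follows essentially the same route as the paper: express $\vol(\wt_\beta)$ as $L^{n-1}\int (1+\beta u)^{-n}\,\mathrm{DH}(\cG)(du)$ via the $\bC^*$-invariant volume formula, differentiate at $\beta=0$ to get $-nL^{n-1}\int u\,\mathrm{DH}(\cG)$, and identify this first moment with $-\bar\cL^n/n$ via \cite{BC11}, \cite[Theorem 5.3]{BHJ15}; then part (2) follows exactly as in Lemma~\ref{lemintfn} from convexity of $u\mapsto((1-t)+tu)^{-n}$. The ``main obstacle'' you flag at the end is not actually an obstacle: $\cV_0$ is a $\bQ$-Fano variety by definition of a special test configuration, so $\cX_0 = C(\cV_0,\cL_0)$ has $\bQ$-Gorenstein klt singularities just like $C(V,L)$, and linear boundedness of $\cG$ (and of $\cG_{\wt_1}$) is automatic from finite generation of $S$ as a $\bT$-algebra --- which is precisely what Lemma~\ref{lem-opencone} uses; so the Section~4 machinery applies verbatim. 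One notational caution: the paper writes the derivative via the quantity $c_3 := \inf_\fm \wt^{(1)}/\wt_0$, which can be negative (since $\wt^{(1)}=\wt_\eta$ is not itself centered at $o'$), so the integral runs from $c_3$ rather than from a positive lower limit; your DH-measure packaging absorbs this correctly since $\mathrm{supp}\,\mathrm{DH}(\cG)$ is a compact interval that may straddle the origin.
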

\begin{proof}
First notice that, by definition, $\wt_\beta=\wt_0+\beta\wt^{(1)}$ where $\wt_0=\ord_{\cV_0}$ is the canonical valuation. So we see that
\[
\fa_{\lambda}(\wt_\beta)\bigcap S_m=\cF^\lambda_{\wt_\beta} S_m=\cF^{(\lambda-m)/\beta}_{\wt^{(1)}} H^0(\cV_0, m\cL_0).
\]
%When $0\le \beta\ll 1$, $\wt_\beta$ has positive weights on $H^0(\cV_0, m\cL_0)$. In other words, $\cF^x_{\wt_\beta}=0$ if $x\le 0$.
Similar as before, we define $c_3:=\inf_{\fm}\wt^{(1)}/\wt_0>-\infty$ and calculate:
\begin{eqnarray*}
&&n!\dim_{\bC} S/\fa_\lambda(\wt_\beta)%&=&\sum_{m=0}^{+\infty} \dim_{\bC} S_m-\dim_{\bC} \cF^{(x-m)/\beta}_{\wt^{(1)}}S_m\\
=n!\sum_{m=0}^{\lfloor \lambda/(1+c_3\beta)\rfloor}\left( \dim_{\bC} S_m-\dim_{\bC} \cF^{(\lambda-m)/\beta}_{\wt^{(1)}}S_m\right)\\
%&=&\frac{m^n}{(1+\beta c_3)^n}L^{n-1}-\sum_{i=0}^{\infty}\vol\left(\cF^{\frac{m-i}{\alpha i}}\right) i^{n-1}\\
&=&\frac{\lambda^n\cL_0^{n-1}}{(1+\beta c_3)^n}-n \lambda^n \int^{+\infty}_{c_3}\vol\left(\cF_{\wt^{(1)}}S^{(x)}\right)\frac{\beta dx}{(1+\beta  x)^{n+1}}+O(\lambda^{n-1}).
\end{eqnarray*}
For simplicity of notations, in the rest part of the argument we will denote $\cF_{\wt^{(1)}}S^{(x)}$ simply by $\cF_{\wt^{(1)}}^{(x)}$.
This gives the following formula (compare \eqref{eqvolvt1} and \eqref{eqvolvt2}):
\begin{eqnarray}
\vol(\wt_\beta)&=&\frac{\cL_0^{n-1}}{(1+\beta c_3)^n}-n\int^{+\infty}_{c_3}\vol\left(\cF_{\wt^{(1)}}^{(x)}\right)\frac{\beta dx}{(1+\beta x)^{n+1}}\\
&=&-\int^{+\infty}_{c_3}\frac{d \vol\left(\cF_{\wt^{(1)}}^{(x)}\right)}{(1+\beta x)^n}.
\end{eqnarray}
The second identity follows from integration by parts as in \eqref{eqvolvt2}. So the normalized volume is equal to (compare \eqref{eqvolvt1}):
\begin{eqnarray*}
\hvol(\wt_\beta)&=&r^n\left(\frac{\cL_0^{n-1}}{(1+\beta c_3)^n}-n\beta\int^{+\infty}_{c_3}\vol\left(\cF_{\wt^{(1)}}^{(x)}\right)\frac{dx}{(1+\beta x)^{n+1}}\right)=:\Phi(\beta).
\end{eqnarray*}
Again $\Phi(0)=r^n \cL_0^{n-1}=\hvol(\wt_0)$ and its derivative at $\beta=0$ is equal to:
\begin{eqnarray*}
\Phi'(0)&=& -n c_3 r^n \cL_0^{n-1} -n r^n  \int_{c_3}^{+\infty} \vol\left(\cF_{\wt^{(1)}}^{(x)}\right)d x\\
%&=&-n c_3 r^n \cL_0^{n-1}-n r^n \int_{c_3}^{+\infty}\vol(\cF^{x}_{\wt^{(1)}})dx\\
\end{eqnarray*}
Using integration by parts and noticing that $\vol(\cF_{\wt^{(1)}}^{(x)})=\cL_0^{n-1}$ if $x\le c_3$, we get:
\begin{eqnarray*}
\Phi'(0)&=&-n c_3 r^n \cL_0^{n-1}-n r^n \lim_{M\rightarrow +\infty}\left.x \vol(\cF^{(x)}_{\wt^{(1)}})\right|_{c_3}^{M}+n r^n \int_{c_3}^{+\infty} x d\vol(\cF^{(x)}_{\wt^{(1)}})\\
&=&n r^n \int_{c_3}^{+\infty} x \cdot d\vol\left(\cF^{(x)}_{\wt^{(1)}}\right).
\end{eqnarray*}
Now by \cite{BC11} (see \cite[Theorem 5.3]{BHJ15}), we know that
\begin{equation}
\int_{c_3}^{+\infty}x\cdot d\vol\left(\cF^{(x)}_{\wt^{(1)}}\right)=-\lim_{m \rightarrow+\infty}\frac{w_m}{m N_m}=-\frac{\bar{\cL}^n}{n},
\end{equation}
where $w_m$ is the weight of $T_1$-action on $H^0(\cV_0, m\cL_0)$ and $N_m=\dim_{\bC}H^0(\cV_0, m\cL_0)$. 
So $\frac{\Phi'(0)}{n (-K_V)^{n-1}}=-\frac{r^n\bar{\cL}^{n}}{n (-K_V)^{n-1}}$ is nothing but the Futaki invariant on $\cV_0$ or equivalently the CM weight in \eqref{CMstc}. 

To see the second item, it's clear that the same calculation gives the formula as in \eqref{eqvolvt2}:
\[
\vol((1-t)\wt_0+t\wt_1)=\int^{+\infty}_{c_1}\frac{-d\vol\left(\cF_{\wt_1}S^{(x)}\right)}{((1-t)+t x)^{n}},
\]
where $c_1=\inf_{\fm}\wt_1/\wt_0$. Because $\xi_1\in \ft_{\bR}^{+}$, $\wt_1\in \left(\Val_{\cX_0,o'}\right)^{\bC^*}$ (see Section \ref{sec-wtbeta}). In particular $c_1>0$. The convexity with respect to $t$ follows from the fact that $ t \mapsto 1/((1-t)+tx)^n$ is convex for $t\in [0,1]$ (cf. proof of Lemma \ref{lemintfn}).
\end{proof}
%Notice that $v_{\cV_0}(f):=\ord_{\cV_0}(\bar{f})$ is an example of valuations studied in \cite{BHJ15}. However,
%here we consider it as a valuation on the coordinate ring $R$ of the cone $X=C(V,L)$. By \cite[Proposition 4.11]{BHJ15}, $v_{\cV_0}$ is divisorial or trivial. $w_{\beta}$ is nothing but a linear interpolation of $v_0$ and $v_{\cV_0}$.

\begin{rem}
As pointed by a referee, one could simplify the proof of Theorem \ref{mv2ks} by directly calculating the normalized volume $w_\beta$ and its derivative at $\beta=0$ by using the Duistermaat-Heckman measure of $\cF^x_{v^{(1)}_E} R$ without the help of $\wt_\beta$. Indeed, based on the results in Section \ref{sec-v1E} and by the same calculation as in the proof of Lemma \ref{lemMSY} (see \eqref{eq-volvalpha} and \eqref{eq-hvolvalpha}), we can get the following formula of $\hvol(w_\beta)$:
\[
\hvol(w_\beta)=r^n\left(\frac{L^n}{(1+\beta a_1)^n}-n \int^{+\infty}_{a_1} \vol\left(\cF^{(x)}_{v^{(1)}_E}\right)\frac{\beta dx}{(1+\beta x)^{n+1}}\right)
\]
where $a_1=-\frac{A_{V\times\bC}(\cV_0)-1}{r}=-q\cdot A_V(F)/r$ (see \eqref{defc2} and Remark \ref{rema1}). Taking derivative on both sides at $\beta=0$ and then integrating by parts, we get:
\begin{eqnarray*}
\left.\frac{d}{d\beta}\right|_{\beta=0}\hvol(w_\beta)&=&- n r^n a_1 L^n -n r^n \int^{+\infty}_{a_1}\vol\left(\cF^{(x)}_{v^{(1)}_E}R\right)dx\\
&=& n r^n \int^{+\infty}_{a_1} x \cdot d \vol\left(\cF^{(x)}_{v^{(1)}_E} R\right).
\end{eqnarray*}
Now applying directly \cite[Proposition 3.12]{BHJ15} gives: 
\[
\left.\frac{d}{d\beta}\right|_{\beta=0}\hvol(w_\beta)=-r^n \bar{\cL}^n= n (-K_V)^{n-1}\cdot \CM(\cV, \cL).
\]
Arguing in the same way as at the beginning of this subsection, we then complete the proof of Theorem \ref{mv2ks}.
On the other hand, we keep the original argument because the correspondence between $w_\beta$ and $\wt_\beta$ is natural and will be important for our future work. 
\end{rem}

\subsection{An example of calculation}

Here we provide an example to illustrate our calculations. This is related to the example discussed in \cite[Section 5]{Li13}. We let $V$ be an $(n-1)$-dimensional Fano manifold and assume that $D$ is a smooth prime divisor
such that $D\sim_{\bQ}-\lambda K_V$ with $0<\lambda<1$. Then we can construct a special degeneration of $V$ by using deformation to the normal cone in the following way. First denote by 
$\Pi_1: \cW:=Bl_{D\times\{0\}}(V\times\bC)\rightarrow V\times\bC$ the blow up of $V\times\bC$ along the codimension 2 subvariety $D\times\{0\}$. Then $\cW\rightarrow \bC$ is a flat family and the central fibre $\cW_0$ is the union of two components $\hV\cup \hE$. Here the $\hV$ component is the strict
transform of $V\times \{0\}$ which isomorphic to $V$ because $D\times \{0\}$ is of codimension one inside $V\times \{0\}$. $\hE$ denotes the exceptional divisor, which in this case is nothing but $\bP(N_D\oplus\bC)$, where $N_D$ is the normal bundle of $D\subset V$. For any $c>0\in \bQ$, there is a $\bQ$-line bundle $\mathcal{M}_c:=\Pi_1^*(-K_{V\times\bC})-c\hE$ on $\cW$. $\mathcal{M}_c$ is relatively ample over $\bC$ if and only if $c\in (0, \lambda^{-1})$. Moreover $\mathcal{M}_{\lambda^{-1}}$ is semi ample over $\bC$ and the linear system $|-p \mathcal{M}_{\lambda^{-1}}|$ for sufficiently divisible $p$ gives a birational morphism $\Pi_2: \cW\rightarrow \cV$ by contracting the component $V$ in the central fibre, and we have $\mathcal{M}_{\lambda^{-1}}^{p}\sim_{\bQ} \Pi_2^*(-pK_{\cV/\bC})$ over $\bC$ (see \eqref{eqld2}). We choose a $p \in \bZ_{>0}$ sufficiently divisible such that $\cL:=-p K_{\cV/\bC}$ is Cartier. Then the pair $(\cV, \cL)$ is a special degeneration of $(V, L)$ where $L= -p K_V$. Notice that the central fibre $\cV_0$, also denoted by $E$, is obtained from $\hE=\bP(N_D\oplus\bC)$ by contracting the infinity section $D_\infty$ of the $\bP^1$-bundle and hence in general has an isolated singularity. 

The divisorial valuation $v_E$ on $\bC(V)$ coincides with the divisorial valuation $\ord_D$. In other words, by using the notation of \eqref{ordF}, we have $F=D$ and $q=1$.  
The valuation $v^{(0)}_E$ on $X=C(V, L)$ is the divisorial valuation $\ord_{\bD}$ where $\bD$ is 
the closure of $\tau^{*}D$ where $\tau: L\setminus V\rightarrow V$ is the natural $\bC^*$-bundle. By Lemma \ref{ldbF}, $A_X(\bD)=A_V(D)=1$. Because 
$K_{\cW}=\Pi_1^* K_{V\times \bC}+\hE$ and $r=p^{-1}$, we have (see \eqref{defc2})
\[
A_{V\times\bC}(E)-1=1=A_V(v_E), \quad a_1:=-\frac{A_{V\times\bC}(E)-1}{r}=-p.
\]
\begin{lem}
We have the identity of $\bQ$-Cartier divisors:
\begin{equation}\label{eqld}
\Pi_2^*(-K_{\cV})=\Pi_1^*K_{V\times\bC}^{-1}-\hE+(\lambda^{-1}-1)\hV.
\end{equation}
\end{lem}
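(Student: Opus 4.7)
The plan is to compare two different formulas for the $\bQ$-Cartier divisor $\mathcal{M}_{\lambda^{-1}}$ on $\cW$ and then rewrite using a single key linear equivalence on $\cW$. By construction, $\mathcal{M}_{\lambda^{-1}} = \Pi_1^{\ast}(-K_{V\times\bC}) - \lambda^{-1}\hE$. On the other hand, the discussion preceding the lemma gives $\mathcal{M}_{\lambda^{-1}}^{p}\sim_{\bQ} \Pi_2^{\ast}(-pK_{\cV/\bC})$, so dividing by $p$ we have $\mathcal{M}_{\lambda^{-1}} \sim_{\bQ} \Pi_2^{\ast}(-K_{\cV/\bC})$. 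Hence the problem reduces to relating $\Pi_2^{\ast}(-K_{\cV/\bC})$ and $\Pi_2^{\ast}(-K_{\cV})$, and to converting the coefficient in front of $\hE$ into the mixture of $\hE$ and $\hV$ appearing on the right-hand side.

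First I would pass from $K_{\cV/\bC}$ to $K_{\cV}$ using $K_{\cV} = K_{\cV/\bC} + \pi_{\cV}^{\ast}K_{\bC}$, where $\pi_{\cV}\colon \cV\to\bC$. Since $\bC=\mathbb{A}^{1}$ and $K_{\bC}$ is the trivial line bundle (trivialized by $dt$), its pullback $\pi_{\cV}^{\ast}K_{\bC}$ is trivial on $\cV$, giving $K_{\cV}\sim K_{\cV/\bC}$ as divisor classes. Pulling back via $\Pi_{2}$ yields
\[
\Pi_2^{\ast}(-K_{\cV}) \sim_{\bQ} \mathcal{M}_{\lambda^{-1}} = \Pi_1^{\ast}(-K_{V\times\bC}) - \lambda^{-1}\hE.
\]

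Next I would establish the key linear equivalence $\hE + \hV \sim 0$ on $\cW$. Indeed, letting $t$ denote the coordinate on $\bC$ and $\pi\colon V\times\bC\to\bC$ the projection, the principal divisor of $\Pi_1^{\ast}\pi^{\ast}t$ equals the scheme-theoretic preimage under $\Pi_{1}$ of the Cartier divisor $V\times\{0\}\subset V\times\bC$. A local calculation in the standard chart of the blow-up along $D\times\{0\}$ (where the local equation $t = z\cdot u$ factors, with $\{z=0\}$ cutting out $\hE$ and $\{u=0\}$ cutting out $\hV$) shows this preimage is $\hV + \hE$ with both components appearing with multiplicity one. Thus $\hE + \hV = \mathrm{div}(\Pi_1^{\ast}\pi^{\ast}t) \sim 0$ on $\cW$, i.e.\ $-\hE\sim\hV$ as $\bQ$-Cartier divisor classes.

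Finally, I would combine the two ingredients: using $-\hE\sim \hV$,
\[
-\lambda^{-1}\hE \;=\; -\hE - (\lambda^{-1}-1)\hE \;\sim\; -\hE + (\lambda^{-1}-1)\hV,
\]
so that substituting into the display above gives exactly $\Pi_2^{\ast}(-K_{\cV}) \sim_{\bQ} \Pi_1^{\ast}K_{V\times\bC}^{-1} - \hE + (\lambda^{-1}-1)\hV$, which is the claimed identity (read in the $\bQ$-Picard group as suggested by the use of pullbacks of canonical divisor classes). There is no serious obstacle; the only subtlety to watch is making sure the identifications are compatible and that the multiplicities in the pullback $\Pi_1^{\ast}(V\times\{0\}) = \hV+\hE$ are indeed both one, which is a routine local computation since $D\times\{0\}$ has codimension one in $V\times\{0\}$.
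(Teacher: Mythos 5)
Your argument is circular. The step where you divide by $p$ in $\mathcal{M}_{\lambda^{-1}}^{p}\sim_{\bQ} \Pi_2^{\ast}(-pK_{\cV/\bC})$ and treat this as a known input is precisely the fact that the paper is setting out to establish: notice the forward reference ``(see \eqref{eqld2})'' attached to that assertion in the text preceding the lemma. Equation \eqref{eqld2} is the remark following the lemma, which is itself derived from the lemma by rearranging $\hE$ and $\hV$ using $\cW_0=\hV+\hE$. So the relation you are taking as a given is a repackaging (up to pullbacks from $\bC$) of the identity you are asked to prove, and your chain of equivalences amounts to showing that the lemma implies the lemma. Concretely, what the lemma is really computing is the discrepancy coefficient $b$ in $K_{\cW}=\Pi_2^{*}K_{\cV}+b\hV$; your ingredients ($\mathcal{M}_{\lambda^{-1}}$'s defining formula and $\hE+\hV\sim 0$) reduce the problem to showing $(b-(\lambda^{-1}-1))\hV\sim_{\bQ} 0$ modulo $\cW_0$, but they give you no independent leverage on $b$.

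The paper's route supplies that leverage by working \emph{on} $\hV$: write both $K_{\cW}=\Pi_1^{*}K_{V\times\bC}+\hE$ and $K_{\cW}=\Pi_2^{*}K_{\cV}+b\hV$, restrict the second to $\hV$ via adjunction $K_{\hV}=(K_{\cW}+\hV)|_{\hV}$, use that $\Pi_2^{*}K_{\cV}|_{\hV}$ is trivial (since $\Pi_2$ contracts $\hV$ to a point) and that $\hV|_{\hV}=-\hE|_{\hV}$ because $\cO_{\cW}(\hV+\hE)=\cO_{\cW}(\cW_0)$ restricts trivially to $\hV$. This turns everything into the concrete relation $K_{\hV}=(b+1)\cO_{\hV}(-D_\infty)=\lambda(b+1)K_{\hV}$ on $\hV\cong V$, from which $b=\lambda^{-1}-1$ drops out. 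Your observation that $\hE+\hV=\operatorname{div}(\Pi_1^{*}\pi^{*}t)$ is correct and is in fact the same triviality of $\cO_{\cW}(\cW_0)$ used in the paper, but you apply it on $\cW$ rather than restricting to $\hV$, which is what lets the paper avoid assuming the pullback relation you invoked. To repair your argument you would need an independent proof that the ample $\bQ$-Cartier divisor on $\cV$ coming from $|p\mathcal{M}_{\lambda^{-1}}|$ agrees with $-pK_{\cV/\bC}$ over $\bC$; without that, reduce instead to the discrepancy computation on $\hV$ as above.
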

\begin{proof}
We have the identities:
\[
K_{\cW}=\Pi_2^*K_{\cV}+b \hV=\Pi_1^*K_{V\times \bC}+\hE.
\]
To determine the coefficient $b$, we use the adjunction formula:
\begin{eqnarray*}
K_{\hV}&=&(K_{\cW}+\hV)|_{\hV}=\Pi_2^*K_{\cV}|_{\hV}+(b+1)\hV|_{\hV}=(b+1)\hV|_{\hV}\\
&=&(b+1)(\hV+\hE)|_V-(b+1)\hE|_{\hV}=-(b+1)\hE|_{\hV}\\
&=&(b+1)\cO_{\hV}(-D_\infty)=\lambda(b+1)K_{\hV}.
\end{eqnarray*}
In the above we used $\cO_{\cV}(\hV+\hE)=\cO_{\cV}(\cW_0)$ is trivial over $\hV$. So $b=\lambda^{-1}-1$ and we obtain the equality.
\end{proof}
\begin{rem}
Since $\cW_0=\hV+\hE$, the equality \eqref{eqld} can also be written in a different form:
\begin{eqnarray}\label{eqld2}
\Pi_2^*(-K_\cV)&=&\Pi_1^*K_{V\times\bC}^{-1}-\lambda^{-1}\hE+(\lambda^{-1}-1)(\hV+\hE)\nonumber\\
&=&\mathcal{M}_{\lambda^{-1}}+(\lambda^{-1}-1)\cW_0.
\end{eqnarray}
\end{rem}
%For any $f\in H^0(V, kL)$, we have $
%\Pi_1^*f \in H^0(\cW,  k\Pi_1^* (-pK_V))$. If we denote by $s_{\hE}$ the defining section of $\cO_{\cW}(\hE)$, then we get a holomorphic section of $kp \mathcal{M}_{\lambda^{-1}}=kp(\Pi_1^*(-K_V)-\lambda^{-1} \hE)$:
%\[
%g:=\Pi_1^*f\cdot  s_{\hE}^{-k p \lambda^{-1}}.
%\]
%$g$ descends to become a holomorphic section $\bar{f}$ of $-kp K_{\cV}$. So we get:
%\[
%\ord_{E}(\bar{f})=v^{(0)}_E (f) - k p\lambda^{-1}=v^{(0)}_E(f)+c_2 k.
%\]
%\[
%\Pi_2^*(-p K_{\cV})%=\mathcal{M}^p_{\lambda^{-1}}
%=p\left(\Pi_1^*(K_{X\times\bC}^{-1})-\lambda^{-1}\hE\right)+p(\lambda^{-1}-1)\cW_0.
%\]
%\begin{eqnarray*}
%\Pi_2^*K_{\cV}&=&\Pi_1^*K_{X\times \bC}+\hE-b\hV=(\Pi_1^*K_{X\times\bC}+\lambda^{-1} E)-(b\hV+(\lambda^{-1}-1)\hE)\\
%&=&(\Pi_1^*K_{X\times\bC}+\lambda^{-1}E)-(\lambda^{-1}-1)\cW_0.
%\end{eqnarray*}
%\begin{equation}
%\Pi_2^* K_{\cV}^{-1}=\Pi_1^*(-K_{X\times\bC}-\lambda^{-1}E)+(\lambda^{-1}-1)\cW_0.
%\end{equation}
As above, let $w_\beta$ be the quasi-monomial valuation of $V+\bD$ with weight $(1+\beta a_1, \beta q)=(1-\beta p, \beta)$. We can calculate the volume of $w_\beta$ by applying the formula \eqref{volv1a} to get ($c_1=1+\beta a_1=1-\beta p$):
\begin{eqnarray}
\vol(w_\beta)&=&\frac{1}{(1-\beta p)^n}L^{n-1}- n\int_{1-\beta p}^{+\infty}\vol\left(\cF_{w_\beta} R^{(t)}\right)\frac{dt}{t^{n+1}}\nonumber.
\end{eqnarray}
To calculate $\vol\left(R^{(t)}\right)$, we notice that:
\[
\cF_{w_\beta}^x R_k=\cF^{(x-(1-\beta p) k)/\beta}_{v^{(0)}_E} R_k,
\]
so that we have the identity:
\begin{equation}\label{eq2vol}
\vol\left(\cF_{w_\beta} R^{(t)}\right)=\vol\left(\cF_{v^{(0)}_E}R^{\left(\frac{t-(1-\beta p)}{\beta}\right)}\right).
\end{equation}
So by changing of variables, we get another formula 
\begin{equation}\label{volbe2}
\vol(w_\beta)=\frac{L^{n-1}}{(1-\beta p)^n}-n\int^{+\infty}_0 \vol\left(\cF_{v^{(0)}_E}R^{(s)}\right)\frac{\beta ds}{(1-\beta p+\beta s)^{n+1}}.
\end{equation}
It's clear that the filtration defined by $v^{(0)}_E=\ord_D$ is given by
\[
\cF^{y}_{v^{(0)}_E} R_k=H^0(V, \cO_V(kL-\lceil y\rceil D))=H^0\left(V, \cO_V((k-\lambda p^{-1}\lceil y\rceil )L)\right).
\]
So it's straight-forward to calculate that:
\begin{equation}\label{volv0E}
\vol\left(\cF_{v^{(0)}_E}R^{(s)}\right)=
\left\{
\begin{array}{ll}
(1-\lambda p^{-1} s)^{n-1}L^{n-1} & \text{ if } s<p\lambda^{-1}; \\
0 & \text{ if } s\ge p\lambda^{-1}.
\end{array}
\right.
\end{equation}
Substituting \eqref{eq2vol} into \eqref{volbe2}, we get the volume of $w_\beta$. We can also calculate the derivative:
\begin{eqnarray*}
\left.\frac{d}{d\beta}\right|_{\beta=0}\vol(w_\beta)&=&pn L^{n-1}-n \int^{+\infty}_0\vol\left(\cF_{v^{(0)}_E}R^{(s)}\right)ds\\
&=&pn L^{n-1}-n L^{n-1}\int^{p\lambda^{-1}}_0(1-\lambda p^{-1} s)^{n-1} ds\\
&=&p(n-\lambda^{-1})L^{n-1}\ge 0.
\end{eqnarray*}
For the last inequality, we used the bound of Fano index. 
Indeed since $K_V^{-1}=\lambda^{-1} D$, it's well know that $\lambda^{-1}\le \dim V+1=n$. So $\vol(w_\beta)$ is increasing with respect to $\beta$
in this example. 
On the other hand, noticing that $\hV|_\hV=(\hV+\hE)|_\hV-\hE|_\hV=\cO_\hV(-D)=\lambda K_V$, it's an exercise to obtain:
\begin{eqnarray*}
(-K_{\bar{\cV}/\bP^1})^n&=&(\Pi_2^*(-K_{\bar{\cV}/\bP^1}))^n=\left(\Pi_1^* K_{V}^{-1}+\lambda^{-1}\hV-\cW_0 \right)^{n}\\
%&=&(\Pi_1^*K_{V\times\bP^1}^{-1}-\lambda^{-1}\hE)^n+n (\Pi_1^*K_{V\times\bP^1}^{-1}-\lambda^{-1}\hE)^{n-1}\cdot(\lambda^{-1}-1)\cW_0\\
%&=&\lambda^{-n}(-\hE)^n+n \lambda^{-(n-1)}(-\hE)^{n-1} \cdot (\lambda^{-1}-1)\hE
&=&(\lambda^{-1} - n)  \langle K_V^{-(n-1)}, [V]\rangle=p^{1-n}(\lambda^{-1}-n)\cdot L^{n-1}.
\end{eqnarray*}
So we obtain an identity illustrating the general formula \eqref{derCM}:
\begin{eqnarray}
\left.\frac{d}{d\beta}\right|_{\beta=0}\hvol(w_\beta)&=&p^{-n+1}(n-\lambda^{-1})L^{n-1}=-(-K_{\cV/\bP^1})^n.
\end{eqnarray}

\section{Proof of Theorem \ref{thmdiv}}\label{sec-thmdiv}

%From the above proof of Theorem \ref{main}, we see that to test K-semistability. We just need to compare the normalized volume of quasi-monomial valuations of $(V, \bF)$ where $\bF$ corresponds to a prime
%divisor $F$ over $V$. Using our volume formula, it's easy to calculate the normalized volumes and compare. 

For any divisorial valuation $\ord_F$ of $\bC(V)$, we consider the filtration:
\begin{equation}\label{divfil}
\cF_{F}^x R_k:=\{f\in H^0(V, kL)\; |\; \ord_F(f)\ge x\}.
\end{equation}
We know that this is a multiplicative, left-continuous and linearly bounded filtration by \cite{BKMS15} (see also \cite[Section 5.4]{BHJ15}). Similar to Lemma \ref{mulvol}, we have the following 
observation. 
\begin{lem}\label{intresc}
If $L=\sigma L_*$ and we denote $R_*=\bigoplus_{k=0}^{+\infty} H^0(V, kL_*)=:\bigoplus_{k=0}^{+\infty}R_{*k}$, then 
$\cF^x_F R_k=\cF^x_F R_{*\sigma k}$. As a consequence 
\[
\vol\left(\cF_F R^{(x)}\right)=\sigma^{n-1}\cdot \vol\left(\cF_F R_*^{(x/\sigma)}\right),
\]
and by change of variables we have:
\[
r^n\int_0^{+\infty}\vol\left(\cF_F R^{(x)}\right)dx=(r\sigma)^n \int_0^{+\infty}\vol\left(\cF_F R_*^{(x)}\right)dx.
\]
\end{lem}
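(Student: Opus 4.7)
The proof plan is essentially a direct unpacking of definitions together with a change of variables, since the content of the lemma is just the rescaling behavior of the filtration volume under $L \mapsto \sigma L$.

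First, I would verify the identity of filtered pieces $\cF_F^x R_k = \cF_F^x R_{*\sigma k}$. Because $L = \sigma L_*$, one has the literal equality of vector spaces $R_k = H^0(V, kL) = H^0(V, k\sigma L_*) = R_{*\sigma k}$. The filtration in \eqref{divfil} is defined purely as the set of global sections $f$ with $\ord_F(f) \ge x$, a condition intrinsic to the function field $\bC(V)$ and independent of the choice of polarization. So the two sides are the same subspace by inspection.

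Next, I would translate this into the claimed volume identity. By definition,
\[
\vol\bigl(\cF_F R^{(x)}\bigr) = \limsup_{k \to +\infty} \frac{\dim_\bC \cF_F^{kx} R_k}{k^{n-1}/(n-1)!}.
\]
Applying the first step with filtration level $kx$ gives $\cF_F^{kx} R_k = \cF_F^{kx} R_{*\sigma k}$. Substituting $m = \sigma k$ so that $kx = m(x/\sigma)$ and $k^{n-1} = \sigma^{-(n-1)} m^{n-1}$, the limsup rewrites as $\sigma^{n-1}$ times $\limsup_{m \to +\infty} \dim_\bC \cF_F^{m(x/\sigma)} R_{*m}/(m^{n-1}/(n-1)!)$, which is $\sigma^{n-1} \vol(\cF_F R_*^{(x/\sigma)})$, as claimed. (One should note that along $m = \sigma k$ the limsup equals the full limsup, since by \cite{BC11} and \cite[Theorem 5.3]{BHJ15} the limit in fact exists.)

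Finally, the integral identity is a routine change of variables: substituting $y = x/\sigma$ (so $dx = \sigma\, dy$) in
\[
r^n \int_0^{+\infty} \vol\bigl(\cF_F R^{(x)}\bigr)\, dx = r^n \int_0^{+\infty} \sigma^{n-1} \vol\bigl(\cF_F R_*^{(x/\sigma)}\bigr)\, dx
\]
yields $r^n \sigma^n \int_0^{+\infty} \vol(\cF_F R_*^{(y)})\, dy = (r\sigma)^n \int_0^{+\infty} \vol(\cF_F R_*^{(x)})\, dx$. No step here is an obstacle; the only mild subtlety is keeping track that the filtration index scales as $x \mapsto x/\sigma$ while the dimension normalization scales as $\sigma^{n-1}$, and that these combine with the extra $\sigma$ from $dx$ to produce the overall factor $\sigma^n$ that compensates for the change from $r = r_*/\sigma$ to $r_*$ implicit in the corollary's intended application.
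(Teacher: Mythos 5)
Your proof is correct and takes essentially the same route the paper intends: the paper states the lemma with only the remark "Similar to Lemma \ref{mulvol}, we have the following observation," and the proof of Lemma \ref{mulvol} proceeds exactly as you do — check the filtration identity $\cF^m R_k = \cF^m_* R_{*\sigma k}$, substitute $m = \sigma k$ in the volume limit to extract the factor $\sigma^{n-1}$, and finish with a change of variables in the integral. Your added remark that the $\limsup$ along the subsequence $m = \sigma k$ equals the full $\limsup$ because the limit exists is a worthwhile clarification that the paper leaves implicit.
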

To put Theorem \ref{thmdiv} in the setting of our discussions, we adopt the notation in Section \ref{secvol2semi} and consider the quasi-monomial $\bC^*$-invariant valuation $v_\alpha$ of $(V, \bF)$ with weight $(1, \alpha)$, similar to what we did in \cite{Li15a}. Then we have:
\begin{itemize}
\item $\inf_{\fm}\frac{v_\alpha}{v_0}=1$;
\item $\cF_{v_\alpha}^x R_k=\cF_F^{(x-k)/\alpha}R_k$. As a consequence:
\[
\vol\left(\cF_{v_\alpha} R^{(x)}\right)=\vol\left(\cF_FR^{(\frac{x-1}{\alpha})}\right).
\]
\end{itemize}
We can use the volume formula \eqref{volv1a} to get:
\begin{eqnarray}\label{eq-volvalpha}
\vol(v_\alpha)&=&L^{n-1}-n\int^{+\infty}_1 \vol\left(\cF_F R^{(\frac{x-1}{\alpha})}\right)\frac{dx}{x^{n+1}}\nonumber\\
&=&L^{n-1}-n\int^{+\infty}_0\vol\left(\cF_F R^{(t)}\right)\frac{\alpha dt}{(1+\alpha t)^{n+1}}.
\end{eqnarray}
The log discrepancy of $v_\alpha$ is given by
\[
A_X(v_\alpha)=r+\alpha A_X(\ord_{\bF})=r+\alpha A_V(F),
\]
where the last identity is by Lemma \ref{ldbF}. So we get the normalized volume of $v_\alpha$:
\begin{equation}\label{eq-hvolvalpha}
\hvol(v_\alpha)=(r+\alpha A_V(F))^n L^{n-1}- n A_X(v_\alpha)^n \int^{+\infty}_0 \vol\left(\cF_F R^{(t)}\right)\frac{\alpha dt}{(1+\alpha t)^{n+1}}.
\end{equation}
The derivative at $\alpha=0$ is equal to:
\begin{eqnarray}\label{eq-dvolalpha}
\left.\frac{d}{d\alpha}\right|_{\alpha=0}\hvol(v_\alpha)&=&n r^{n-1} A_V(F) L^{n-1}-n r^n \int^{+\infty}_0 \vol\left(\cF_F R^{(t)}\right)dt\nonumber\\
&=&n(-K_V)^{n-1}\left(A_V(F)-\frac{r^n}{(-K_V)^{n-1}}\int^{+\infty}_0\vol\left(\cF_F R^{(t)}\right)dt\right)\nonumber\\
&=&n(-K_V)^{n-1}\Theta_V(F).
\end{eqnarray}
We can apply these formula for $F$ coming  from any nontrivial special test configuration as considered in Section \ref{secvol2semi}. Using the notations in Section \ref{secvol2semi}, the valuation $w_\beta$ is associated to the divisor:
\[
(1+\beta a_1)V+ \beta q \bF=\frac{1}{1+\beta a_1}\left(V+\frac{\beta q}{1+\beta a_1} \bF\right).%=\frac{1}{1+\beta a_1}v_\alpha,
\]
So $w_\beta=v_\alpha/(1+\beta a_1)$ with $\alpha=\frac{\beta q}{1+\beta a_1}$. So we get $\hvol(w_\beta)=\hvol(v_\alpha)$ and 
\[
\left.\frac{d}{d\beta}\right|_{\beta=0}\hvol(w_\beta)=q \left.\frac{d}{d\alpha}\right|_{\alpha=0}\hvol(v_\alpha)=n(-K_V)^{n-1}\Theta_V(F).
\]
Note that $q>0$. By the proof in Section \ref{secvol2semi}, we know that $\Theta_V(F)$ is the Futaki invariant (or CM weight) of the special test configuration. 
\begin{rem}
For any special test configuration, one can also prove that $\Theta_V(F)$ is equal to the Futaki invariant by applying integration by parts to the formula in \cite[Theorem 3.12.(iii)]{BHJ15}. 
%and using Lemma \ref{v01form}.
\end{rem}
\begin{proof}[Proof of Theorem \ref{thmdiv}]
If $V$ is K-semistable or equivalently Ding-semistable, then by Theorem \ref{ks2mv} $\hvol(v_\alpha)\ge \hvol(v_0)$ for $0\le \alpha\ll 1$. So $\Theta_V(F)\ge 0$ by \eqref{eq-dvolalpha}. Actually the proof of Theorem \ref{ks2mv} in Section \ref{secC*inv} shows that one can apply Fujita's result directly for $\cF_F R_\bullet$ to get $\Theta_V(F)\ge 0$ (see also \cite{LL16}). 

Reversely, if $\Theta_V(F)\ge 0$ (resp. $>0$) for any divisorial valuation $\ord_F$ over $V$, then this holds for $F$ coming from any non-trivial special test configuration. By the above discussion this exactly means that the Futaki invariant of any nontrivial special test configuration is nonnegative (resp. positive). So $V$ is indeed K-semistable (resp. K-stable) in the sense of Tian (\cite{Tia97}). By using \cite{LX14} we conclude that $V$ is K-semistable (resp. K-stable) as in definition \ref{defKstable}.
\end{proof}

\section{Two examples of normalized volumes}\label{sec-exnv}

\begin{enumerate}
\item
Consider the quasi monomial valuation $v$ on $\bC(x,y)$ determined by $\gamma\in \bR^2_{>0}$:
\[
v\left(\sum_{e_1,e_2\ge 0} p_{e_1, e_2} x^{e_1}y^{e_2}\right)=\min\{e_1 \gamma_1+e_2\gamma_2 | p_{e_1,e_2}\neq 0 \}.
\] 
For any $m\in \bZ_{>0}$, $\dim_{\bC}(\bC[x,y]/\fa_m(v))$ is the number of positive integral solutions to the inequalities:
\[
\gamma_1 a+\gamma_2 b<m, a\ge 0, b\ge 0. 
\]
So it's easy to see that 
\[
\vol(v)=\lim_{m\rightarrow+\infty} \frac{\dim_{\bC}(\bC[x,y]/\fa_m(v))}{m^2/2!}=\frac{1}{\gamma_1 \gamma_2}.
\] 
On the other hand, the log discrepancy $A_{\bC^2}(v)=\gamma_1+\gamma_2$ so that
\[
\hvol(v)=\frac{(\gamma_1+\gamma_2)^2}{\gamma_1\gamma_2}\ge 2^2.
\]
The last equality holds if and only if $\gamma_1=\gamma_2$.

\item ({\bf Non-Abhyankar examples which are not $\bC^*$-invariant})
Here we calculate the normalized volumes of non-Abhyankar valuations in \cite[Example 3.15]{ELS03}. These valuations were constructed in \cite[pp. 102-104]{ZS60} and we follow the exposition in \cite[Example 3.15]{ELS03} to recall their constructions. Let $v(y)=1$ and set $v(x)=\beta_1>1$ some rational number (compared to \cite{ELS03}, we shift the subindex by $1$). Let $c_1$ 
be the smallest positive integer such that $c_1\beta_1 \in \bZ$. By \cite{ZS60}, there exists a valuation $v$ such that the polynomial
\[
q_2=x^{c_1}+y^{\beta_1 c_1}
\]
has value $\beta_2$ equal to any rational number greater than (or equal to) $\beta_1 c_1$. We choose this value so that $\beta_2=\frac{b_2}{c_2}>\beta_1 c_1$, where $b_2$ and $c_2$ are relatively 
prime positive integers with $c_2$ relatively prime to $c_1$. 
\begin{rem}
Notice that this already implies that $v$ is not $\bC^*$-invariant because for general $t\in \bC^*$, we have
\begin{eqnarray*}
v(t^{c_1} x^{c_1}+t^{\beta_1 c_1}y^{\beta_1 c_1})&=&v(t^{c_1} (x^{c_1}+y^{\beta_1 c_1})+(t^{\beta_1 c_1}-t^{c_1})y^{\beta_1 c_1})\\
&=&v(y^{\beta_1 c_1})=\beta_1 c_1\neq \beta_2=v(x^{c_1}+y^{\beta_1 c_1}).
\end{eqnarray*}
\end{rem}
This process can be repeated and in this way, we inductively construct a sequence of polynomials $q_i$, rational numbers $\beta_i=\frac{b_i}{c_i}$ such that:
\begin{enumerate}
\item
$c_i>1$ and $b_i$ are relatively prime positive integers with $c_i$ relatively prime to the product $c_1 c_2\cdots c_{i-1}$;
\item 
$q_{i+1}=q_i^{c_i}+y^{\beta_i c_i}$;
\item
$\beta_{i+1}>\beta_i c_i$.
\end{enumerate}
As shown in \cite{ZS60}, this uniquely defines a valuation $v$ on $\bC(x,y)$ such that $v(q_i)=\beta_i$ in the following way. Using Euclidian algorithm with respect to the power of $x$, and
setting $q_0=y$ and $q_1=x$, every polynomial has a unique expression as a sum of ``monomials" in the $q_i$:
\[
q^{\bf a}:= q_0^{a_0}q_1^{a_1}q_2^{a_2}\cdots q_k^{a_k},
\]
where $a_0$ can be arbitrary but the remaining exponents $a_i$ satisfy $a_i<c_i$. By the pair wisely prime properties of $c_i$, it's easy to see that these ``monomials" have distinct
values, i.e. $v(q^{\bf a})=a_0+\sum_{i=1}^k a_i \beta_i$ are all distinct. So the valuation ideals on $\bC[x,y]$ have the form:
\[
\fa_m=\left(\left\{q_0^{a_0}q_1^{a_1}q_2^{a_2}\cdots q_k^{a_k}|\; a_0+\sum_{i=1}^k a_i \beta_i \ge m, a_j\le c_j-1 \text{ for }  j\ge 1\right\}\right).
\]
In particular, the quotients $\bC[x,y]/\fa_m$ have vector space basis consisting of ``monomials"
\[
q_0^{a_0}q_1^{a_1}q_2^{a_2}\cdots q_k^{a_k} \text{ where } a_0+\sum_{i=1}^k a_i \beta_i <m \text{ and } a_j\le c_j-1 \text{ for } j\ge 1.
\]
So computing the volume of $v$ amounts to counting the number of ``monomials" in this basis and the result was given in \cite{ELS03}:
\begin{equation}\label{vola*}
\vol(v)=\lim_{k\rightarrow+\infty}\alpha_k^{-1}=:\alpha_*^{-1}, 
\end{equation}
where 
\[
\alpha_k^{-1}=\frac{c_1c_2\cdots c_{k-1}}{\beta_{k}}=\frac{1}{\beta_1}\left(\frac{c_1 \beta_1}{\beta_2}\right)\cdots\left(\frac{c_{k-1} \beta_{k-1}}{\beta_k}\right),
\]
is a decreasing sequence of rational numbers strictly less than 1. 
As pointed out in \cite{ELS03}, a key to show this is that:
\begin{equation}\label{a*vol}
\alpha_*^{-1}=\lim_{k\rightarrow+\infty }\frac{\dim_{\bC}(\bC[x,y]/\fa_{c_k\beta_k})}{(c_k\beta_k)^2/2!}\le \vol(v).
\end{equation}
Indeed, because $\beta_{k+1}>c_k\beta_k$, $\dim_{\bC}(\bC[x,y]/\fa_{c_k\beta_k})$ is equal to 
the number of integral solutions to the inequalities:
\begin{equation}\label{ineqexp}
a_0+\sum_{i=1}^k a_i \beta_i<c_k\beta_k, \text{ and } 0\le a_j\le c_j-1 \text{ for } j\ge 1.
\end{equation}
We notice that
\[
c_k\beta_k-\sum_{i=1}^{k} (c_i-1)\beta_i=\sum_{i=1}^k \beta_i-\sum_{i=1}^{k-1} c_i\beta_i=\sum_{i=1}^{k-1}(\beta_{i+1}-c_i\beta_i)+\beta_1>\beta_1>1,
\]
so that in particular 
\[
c_k\beta_k> \sum_{i=1}^k (c_i-1)\beta_i. 
\]
So the number of integral solutions to \eqref{ineqexp} is equal to:
\[
\sum_{a_1=0}^{c_1-1}\sum_{a_2=0}^{c_2-1}\cdots\sum_{a_k=0}^{c_k-1}\left\lceil c_k\beta_k-a_1\beta_1-a_2\beta_2-\cdots-a_k\beta_k \right\rceil=:\tilde{\alpha}_k^{-1}. 
\]
A straight forward calculation shows that $\lim_{k\rightarrow +\infty}\frac{\alpha_k}{\tilde{\alpha}_k}=1$, so that \eqref{a*vol} holds. 

On the other hand, there is a sequence of approximating divisorial valuations $v_i$ satisfying $v_i(q_j)=\beta_j$ for any $j\le i$ and $v_i\le v$ (see \cite{ELS03, FJ04}). 
One can similarly show that $\vol(v_i)=\alpha_i^{-1}$. So we get $\alpha_i^{-1}=\vol(v_i)\ge \vol(v)$. Combining this with \eqref{a*vol}, we see that the equality \eqref{vola*} holds.

Next we calculate the log discrepancy of $v$ by using the work of Favre-Jonsson (\cite{FJ04}). In \cite[Chapter 2]{FJ04}, Favre-Jonsson studied the representation 
of a valuation on $\bC[x,y]$ by a countable {\it sequence of key-polynomials (SKP)} and real numbers. Using their notations, the valuation considered here is represented by
$[(q_i, \beta_i)]_{i=0}^{+\infty}$. The representation via key-polynomials allowed Favre-Jonsson to approximate any valuation by a sequence of divisorial valuations (\cite[Section 3.5]{FJ04}) and calculate various numerical invariants. In particular, they obtained the formula
for $\vol(v)$ and $A(v)=A_{\bC^2}(v)$.  Their formula for log discrepancy reads that (\cite[(3.8)]{FJ04}): 
\begin{equation}\label{eq-AvFJ}
A(v)=2+\sum_{i=1}^{+\infty}m_i (\alpha_i-\alpha_{i-1}),
\end{equation}
where $\alpha_i=\alpha(v_i)$ is the {\it skewness} of $v_i$ (see \cite[Definition 3.23]{FJ04}) defined as:
\[
\alpha_i=\sup\left\{\frac{v_i(f)}{\ord_0(f)}; f\in \fm \right\}
\]
and $m_i=m(v_i)$ is the {\it multiplicity} of $v_i$ (see \cite[Section 3.4]{FJ04}).
By \cite[Lemma 3.32]{FJ04} and \cite[Lemma 3.42]{FJ04} (the roles of $x$ and $y$ are reversed), we get for our valuation $v$ that,
\[
\alpha_i=\frac{\beta_i}{c_1c_2\cdots c_{i-1}}, \quad m_i=c_1c_2\cdots c_{i-1}, \quad \text{ for any } i\ge 1.
\]
So by Favre-Jonsson's formula \eqref{eq-AvFJ}, we get:
\begin{eqnarray*}
A(v)&=&2+\sum_{i=1}^{+\infty} c_1 c_2 \cdots c_{i-1} \left(\frac{\beta_i}{c_1c_2\cdots c_{i-1}}-\frac{\beta_{i-1}}{c_1c_2\cdots c_{i-2}}\right)\\
&=&2+\sum_{i=1}^{+\infty}(\beta_i-c_{i-1}\beta_{i-1}).
\end{eqnarray*}

For an explicit example, we follow \cite{ELS03} to take $c_i$ to be the $i$-th prime number ($c_1=2, c_2=3, \dots$) and set $\beta_1=\frac{3}{2}$ and $\beta_{i+1}=c_i \beta_i+\frac{1}{c_{i+1}}$ for $i\ge 1$.
Then the volume is equal to
\begin{eqnarray*}
\vol(v)=\alpha_*^{-1}%&=&\lim_{k\rightarrow +\infty} \left(1+\sum_{i=1}^{k}\frac{1}{\prod_{j=1}^{i}c_j}\right)^{-1}\\
&=&\left(1+\frac{1}{c_1}+\frac{1}{c_1c_2}+\frac{1}{c_1 c_2 c_3}+\cdots\right)^{-1}\\
&\approx& 0.58643.
\end{eqnarray*}
On the other hand,
\begin{eqnarray*}
A(v)&=&2+\sum_{i=1}^{+\infty}(\beta_i-c_{i-1}\beta_{i-1})=2+\sum_{i=1}^{+\infty}\frac{1}{c_i}=+\infty.
\end{eqnarray*}
The last identity is Euler's result: the sum of reciprocals of the primes diverges. 
\end{enumerate}

\section{Appendix: Convex geometric interpretation of volume formulas}\label{seconvex}

In this section, we use the tool of Okounkov bodies \cite{Oko96, LM09} and coconvex sets \cite{KK14} to provide a clear geometric interpretation of our volume formulas. 

\subsection{Integration formula by Okounkov bodies and coconvex sets}

Consider a $\bZ^n$-valuation defined on $R=\bigoplus_{k=0}H^0(V, kL)$ as follows. Choose a flag:
\[
V_\bull: V=V_0\supset V_1\supset V_2\supset \cdots\supset V_{n-2}\supset V_{n-1}=\{{\rm pt}\}
\]
of irreducible subvarieties of $V$, where ${\rm codim}_V(V_i)=i$ and each $V_i$ is non-singular at the point $V_{n-1}$. Following
\cite{Oko96, LM09}, this determines a $\bZ^n$-valuation $\bV$ on $R$. This is a function satisfying the axiom of valuations ($\bZ^n$ is endowed with the lexicographic order):
\[
\bV=\bV_{V_\bull}=(\bV_0, \bV_1, \cdots, \bV_{n-1})=:(\bV_0, \bV'): R\rightarrow \bZ^{n}\cup \{\infty\}.
\]
It's defined in the following way (see \cite{Oko96, LM09} for more details)
\begin{itemize}
\item
$g=g_{k_1}+\cdots+g_{k_p}$ with $g_{k_j}\neq 0\in R_{k_j}$ and $k_1<k_2<\cdots<k_p$. Define $\bV_0(g)=k_1$ and denote $g^{(0)}=g_{k_1} \in R_{k_1}=H^0(V, k_1 L)$;
\item
Define $\bV_1(g)=\ord_{V_1}(g^{(0)})=:\nu_1$. 
After choosing a local equation for $V_1$ in $V$, $g^{(0)}$ determines a section $\tilde{g}^{(0)}\in H^0(V, \cO_V(k_1 L-\nu_1 V_1))$ that
does not vanish identically along $V_1$, and so we get by restricting a non-zero section $g^{(1)}\in H^0(V_1, \cO_{V_1}(k_1L-\nu_1V_1))$. 
\item
Define $\bV_2(g)=\ord_{V_2}(g^{(1)})=:\nu_2$.
After choosing a local equation for $V_2$ in $V_1$, $g^{(1)}$ determines a section $\tilde{g}^{(1)}\in H^0(V_1, \cO_{V_1}(k_1L-\nu_1 V_1-\nu_2 V_2))$ that does not vanish
identically along $V_2$, and so we get by restricting a non-zero
section $g^{(2)}\in H^0(V_2, \cO_{V_2}(k_1 L-\nu_1 V_1-\nu_2 V_2))$. 
\item This process continues to define $\bV'$.
\end{itemize}
Denote  by $\cS=\bV(R\setminus\{0\})$ the value semigroup of $\bV$ and by $\cC=\mathcal{C}(\cS)$ the cone of $\cS$, i.e. the closure of the convex hull of 
$\cS\cup\{0\}$. 

With the above valuation $\bV$, we follow \cite{KK14} to interpret the volumes (mixed multiplicities) as covolumes of convex sets. 
From now on, let $v_1$ be a real valuation in $\Val_{X,o}$ satisfying $A_X(v_1)<+\infty$. The graded family of ideals 
$\fa_\bull:=\fa_\bull(v_1)=\{\fa_m(v_1)\}$ give rise to a primary sequence of subsets $\cA_\bull=\{\cA_m\}$ where 
\[
\cA_m=\left\{\bV(f); f\in \fa_m(v_1)\setminus\{0\} \right\}.
\]
Define the convex set
\begin{eqnarray*}
\Gamma=\Gamma(\cA_\bull)%&=&\text{closed convex hull}\left(\bigcup_{m>0}\frac{1}{m}\bV(\fa_m(v_1)-\{0\})\right)\\
&=&\text{closed convex hull}\left(\bigcup_{m>0}\frac{1}{m}\cA_m\right).
\end{eqnarray*}
By \cite[Theorem 8.12]{KK14}, we have (notice that the multiplicity $\fe(\fa_\bull(v_1))$ is $\vol(v_1)$ in our notation, see \eqref{vol=mul}):
\[
\vol(v_1)=n! \lim_{m\rightarrow+\infty}\frac{\dim_\bC R/\fa_m(v_1)}{m^n}=n!\; {\rm covol}(\Gamma).
\]
To get $\vol(v_1)$, we will calculate the covolume $\covol(\Gamma)$ by integrating the volumes of slices $\left(\cC\setminus \Gamma\right)\bigcap\{\bV_0=\tau\}$. 
The volumes of these slices will be shown to be ``covolume"'s of rescaled Okounkov bodies. 
From now on let $\cF^xR_k$ be the filtration determined by $v_1$. We recall that (see \cite{Oko96, LM09}) the Okounkov body  $\Delta^t$ of the graded sub algebra $R^{(t)}:=\bigoplus_k \cF^{kt}R_k$ is defined as:
\[
\Delta^t:=\text{closed convex hull}\left(\bigcup_{k\ge 1}\frac{1}{k}\cS^t_k \right)\subset \bR^{n-1},
\]
where we denoted:
\[
\cS^t_k:=\{\bV'(f); f\in \cF^{kt} R_k-\{0\}\}.
\]
To see the relation between the Okounkov bodies $\{\Delta^t\}$ and convex set $\Gamma$, we consider the following set of displaced union of rescaled Okounkov bodies: 
\begin{equation}\label{tgam}
\tilde{\Gamma}:=\bigcup_{t>0} t^{-1}\left(\{1\}\times\Delta^t\right).
\end{equation}
\begin{prop}\label{propslice}
%For any $t>0$, we have the following characterization of $\Delta^t$:
%\[
%\Delta^t=t\Gamma\bigcap \{\bV_0=1\}=t\left(\Gamma\bigcap \{\bV_0=t^{-1}\}\right).
%\]
%Equivalently, t
%We have the following description of the convex set $\Gamma$.
$\tilde{\Gamma}$ is convex. Moreover its closure is equal to the closed convex set $\Gamma$:
\[
\Gamma=\overline{\tilde{\Gamma}}=\overline{\bigcup_{t>0}\left(\{t^{-1}\}\times t^{-1} \Delta^t\right)}.
\]
\end{prop}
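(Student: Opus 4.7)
The plan is to prove the proposition by establishing three statements, from which both claims follow immediately: (a) $\bigcup_{m>0}\frac{1}{m}\cA_m \subseteq \tilde{\Gamma}$; (b) $\tilde{\Gamma} \subseteq \Gamma$; and (c) $\tilde{\Gamma}$ is convex. Given (a) and (c), $\overline{\tilde{\Gamma}}$ is a closed convex set containing every $\frac{1}{m}\cA_m$, so $\Gamma \subseteq \overline{\tilde{\Gamma}}$; combined with (b) this yields $\overline{\tilde{\Gamma}} = \Gamma$. For (a), I take $\bV(f) \in \cA_m$ with $\bV_0(f) = k$. The condition $v_1(f) \ge m$ together with \eqref{mirafil} gives $\bin(f) \in \cF^m R_k$; setting $t = m/k$, one has $\bV'(\bin(f))/k \in \frac{1}{k}\cS^t_k \subseteq \Delta^t$, and since $\bV(f) = (k,\bV'(\bin(f)))$, we obtain $\frac{1}{m}\bV(f) = t^{-1}(1, \bV'(\bin(f))/k) \in \tilde{\Gamma}$.

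For (b), I take $(t^{-1}, t^{-1}y) \in \tilde{\Gamma}$ with $y \in \Delta^t$. First suppose $t = q/p$ is rational. For any $k = p\ell$ and $f \in \cF^{kt}R_k = \cF^{q\ell}R_{p\ell}$, the definition \eqref{mirafil} provides $g \in R$ with $v_1(g) \ge q\ell$ and $\bin(g) = f$, hence $g \in \fa_{q\ell}(v_1)$ with $\bV(g) = (p\ell, \bV'(f))$. Thus $\frac{1}{q\ell}\bV(g) = t^{-1}(1, \bV'(f)/k) \in \frac{1}{q\ell}\cA_{q\ell} \subseteq \Gamma$. Since $y$ lies in the closed convex hull of such $\bV'(f)/k$ and $\Gamma$ is closed convex, $(t^{-1}, t^{-1}y) \in \Gamma$. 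For irrational $t$, choose rationals $t_n \uparrow t$; from $\cF^{kt}R_k \subseteq \cF^{kt_n}R_k$ one gets $\Delta^t \subseteq \Delta^{t_n}$, so $y \in \Delta^{t_n}$ and $(t_n^{-1}, t_n^{-1}y) \in \Gamma$ by the rational case; passing to the limit yields $(t^{-1}, t^{-1}y) \in \Gamma$ by closedness.

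For (c), the crucial ingredient is the following concavity property of the Okounkov bodies: if $y_i \in \Delta^{t_i}$ for $i=1,2$ and $\mu_1, \mu_2 \ge 0$ with $\mu_1+\mu_2 = 1$, then $\mu_1 y_1 + \mu_2 y_2 \in \Delta^{\mu_1 t_1 + \mu_2 t_2}$. Assuming this, take $P_i = t_i^{-1}(1, y_i) \in \tilde{\Gamma}$ and $\lambda \in [0,1]$. Writing $s = \lambda t_1^{-1} + (1-\lambda)t_2^{-1}$, $\mu_1 = \lambda t_1^{-1}/s$, $\mu_2 = (1-\lambda)t_2^{-1}/s$, and $z = \mu_1 y_1 + \mu_2 y_2$, a direct computation gives $\lambda P_1 + (1-\lambda)P_2 = s(1,z)$ and $\mu_1 t_1 + \mu_2 t_2 = (\lambda + (1-\lambda))/s = 1/s$. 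The concavity lemma then yields $z \in \Delta^{1/s}$, so $\lambda P_1 + (1-\lambda)P_2 = s(1,z) \in \tilde{\Gamma}$.

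The concavity lemma itself stems from the multiplicativity of $\cF$: for $f_i \in \cF^{k_i t_i}R_{k_i}$, one has $f_1^{p_1} f_2^{p_2} \in \cF^{p_1 k_1 t_1 + p_2 k_2 t_2}R_{p_1 k_1 + p_2 k_2}$, which gives $\alpha y_1 + (1-\alpha)y_2 \in \Delta^{\alpha t_1 + (1-\alpha)t_2}$ for $\alpha = p_1 k_1/(p_1 k_1 + p_2 k_2)$, a set dense in $[0,1] \cap \bQ$ as $(p_1,p_2)$ varies. The main technical obstacle is promoting this elementary semigroup statement from rational $\alpha$ and finite-level points $y_i = \bV'(f_i)/k_i$ to arbitrary $\mu_1 \in [0,1]$ and arbitrary $y_i \in \Delta^{t_i}$. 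This is handled by a double approximation: each $y_i$ is approached by convex combinations of points of the form $\bV'(f_i)/k_i$ using the definition of $\Delta^{t_i}$ as a closed convex hull, while $\mu_1$ is approached by rationals $\alpha_n$ chosen so that $\alpha_n t_1 + (1-\alpha_n)t_2 \ge \mu_1 t_1 + \mu_2 t_2$; the monotonicity $\Delta^{s'} \subseteq \Delta^s$ for $s' \ge s$ together with the closedness of $\Delta^{\mu_1 t_1 + \mu_2 t_2}$ then forces the limit into the target body.
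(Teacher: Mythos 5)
Your proof is correct and follows essentially the same three-part structure as the paper: show $\bigcup_m \frac{1}{m}\cA_m\subseteq\tilde\Gamma$, show $\tilde\Gamma\subseteq\Gamma$, and show $\tilde\Gamma$ is convex using the inclusion $\mu_1\Delta^{t_1}+\mu_2\Delta^{t_2}\subseteq\Delta^{\mu_1t_1+\mu_2t_2}$ derived from multiplicativity of $\cF$. Parts (a) and (c) match the paper's argument closely, including the reduction of convexity to the concavity lemma, the use of the rescaled "monomials" $f_1^{p_1}f_2^{p_2}$, and the final monotonicity-plus-closedness limiting step.

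The one genuine variation is in part (b). The paper handles $\tilde\Gamma\subseteq\Gamma$ uniformly in $t$ by noting $v_1(g)\ge kt\ge\lfloor kt\rfloor$, writing $\frac{\bV(g)}{k}=\frac{\lfloor pkt\rfloor}{pkt}\,t\,\frac{\bV(g^p)}{\lfloor pkt\rfloor}$ for powers $g^p$, and letting $p\to\infty$; this sidesteps any rational/irrational dichotomy. You instead treat rational $t=q/p$ directly (restricting to $k=p\ell$, which is harmless since $f\in\cF^{kt}R_k$ implies $f^p\in\cF^{pkt}R_{pk}$) and then approximate irrational $t$ from below by rationals $t_n\uparrow t$, using $\Delta^t\subseteq\Delta^{t_n}$ and closedness of $\Gamma$. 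Both routes work; the paper's power trick is slicker and handles all $t$ at once, while yours is more transparent about where rationality enters. If you write this up formally you should make two small points explicit: the restriction to $k$ divisible by $p$ loses nothing because $\Delta^t$ is already the closed convex hull of $\bigcup_\ell \frac{\cS^t_{p\ell}}{p\ell}$ (via the power trick just noted), and the passage from "the points $t^{-1}(1,\bV'(f)/k)$ lie in $\Gamma$" to "$t^{-1}(1,y)\in\Gamma$ for all $y\in\Delta^t$" uses that $\Delta^t$ is compact (bounded by $\Delta^0$), so the image of $\Delta^t$ under the affine embedding $x\mapsto t^{-1}(1,x)$ is closed.
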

\begin{proof}
We first show the easier inclusion:
\begin{equation}\label{usubgam2}
\overline{\bigcup_{t>0}\left(\{t^{-1}\}\times t^{-1}\Delta^t\right)}\subseteq \Gamma.
\end{equation}
Since $\Gamma$ is closed, we just need to show that:
\begin{equation}\label{usubgam}
\bigcup_{t>0}\left(\{t^{-1}\}\times t^{-1}\Delta^t\right)\subseteq \Gamma.
\end{equation}
For any $f\neq 0\in \cF^{kt}R_k\subseteq R_k$, $\bV(f)=(k, \bV'(f))\in \{k\}\times \cS^t_k$. By the definition of $\cF^{kt}R_k$, there exists $g\in R$ with $v_1(g)\ge kt$ and $\bin(g)=f$. By the definition of $\bV$, we have $\bV(g)=\bV(f)$. So we have:
\begin{equation}\label{bVfg1}
\left(1, \frac{\bV'(f)}{k}\right)=\frac{\bV(f)}{k}=\frac{\bV(g)}{k}.
\end{equation}
We claim the right-hand-side is contained in $t \Gamma$. To see this, first notice that, since $v_1(g)\ge kt\ge \lfloor kt\rfloor$, we have
\[
\frac{\bV(g)}{k}=\frac{\lfloor kt\rfloor}{kt}t\frac{\bV(g)}{\lfloor kt\rfloor}\in \frac{\lfloor kt\rfloor}{kt} t\frac{\cA_{\lfloor kt\rfloor}}{\lfloor kt \rfloor}\subset \frac{\lfloor kt\rfloor}{kt} t\Gamma.
\]
Replacing $f$ by $f^p$ and $g$ by $g^p$ for $p\gg 1$, we also have:
\[
\frac{\bV(g)}{k}=\frac{\bV(g^p)}{p k}=\frac{\lfloor pkt\rfloor }{pkt}t\frac{\bV(g^p)}{\lfloor pkt\rfloor}\in \frac{\lfloor pkt\rfloor}{pkt} t\Gamma.
\]
Taking $p\rightarrow +\infty$ and using $\Gamma$ is a closed set, we indeed get $
\frac{\bV(g)}{k}\in t\Gamma$. So by \eqref{bVfg1}, we get:
\[
\{1\}\times \frac{\cS^t_k}{k}\subset t\Gamma\bigcap \{\bV_0=1\}.
\]
Since this holds for any $k\ge 1$ and $f\neq 0\in \cF^{kt}R_k$, we get an inclusion:
\[
\{1\}\times \Delta^t=\text{closed convex hull}\left(\bigcup_{k}\frac{S^t_k}{k}\right)\subseteq t\Gamma\bigcap \{\bV_0=1\}.
\]
Equivalently we have that, 
\[
\{t^{-1}\}\times t^{-1}\Delta^{t}\subseteq \Gamma\bigcap\{\bV_0=t^{-1}\}, \text{ for any } t>0.
\]
This gives the inclusion \eqref{usubgam}.

Next we show the other direction of inclusion. For any $g\neq 0\in R$ with $v_1(g)\ge m$ and $\bin(g)=f\neq 0\in \cF^{m}R_k$. By definition, we 
have $\bV(g)=(k, \bV'(f))$ so that:
\begin{eqnarray*}
\frac{\bV(g)}{m}=\frac{1}{m}(k, \bV'(f))=\frac{k}{m}\left(1, \frac{\bV'(f)}{k}\right).
\end{eqnarray*}
Notice that
\[
\frac{\bV'(f)}{k}\in \frac{\bV'(\cF^{(m/k)k}R_k-\{0\})}{k}=\frac{\cS^{m/k}_k}{k}\subset  \Delta^{m/k}.
\]
So we see that:
\begin{equation}\label{Vgmin}
\frac{\bV(g)}{m}\in \frac{k}{m}\left(\{1\}\times \Delta^{m/k}\right).
\end{equation}
Because \eqref{Vgmin} holds for any $m\ge 1$ and any $g\neq 0\in \fa_m(v_1)-\{0\}$, we get:
\begin{equation}\label{unsub}
\bigcup_{m\ge 1}\frac{\cA_m}{m}=\bigcup_{m\ge 1}\frac{\bV(\fa_m(v_1)-\{0\})}{m}\subset \tilde{\Gamma},
\end{equation}
where $\tilde{\Gamma}$ denotes the displaced union in \eqref{tgam}. 
%\begin{equation}\label{tgam}
%\tilde{\Gamma}:=\bigcup_{t>0} t^{-1}\left(\{1\}\times\Delta^t\right).
%\end{equation}
Now we claim that $\tilde{\Gamma}$ is a convex set. Assuming the claim, we can take the closed convex hull on the left hand side of \eqref{unsub} to get the other direction of inclusion:
\[
\Gamma\subseteq \overline{\tilde{\Gamma}}=\overline{\bigcup_{t>0}t^{-1}(\{1\}\times\Delta^t)}=\overline{\bigcup_{t>0}\{t^{-1}\}\times t^{-1} \Delta^{t}}.
\]
Next we verify the claim on the convexity of $\tilde{\Gamma}$. First notice that we can rewrite:
\[
\tilde{\Gamma}=\bigcup_{\tau>0} \{\tau\}\times \tau\Delta^{\tau^{-1}}.
\]
Because each slice $\tau \Delta^{\tau^{-1}}$ is a convex set, the convexity of $\tilde{\Gamma}$ is equivalent to the inclusion relation (compare to a similar but different relation in 
\cite[(1.6)]{BC11}):
\begin{equation}\label{inconv1}
(1-s)\tau_1\Delta^{\tau_1^{-1}}+s \tau_2\Delta^{\tau_2^{-1}}\subseteq ((1-s)\tau_1+s \tau_2)\Delta^{((1-s)\tau_1+s\tau_2)^{-1}},
\end{equation}
for any $\tau_1, \tau_2\in \bR_{>0}$ and $s\in [0,1]$. We first reduce this to proving the inclusion:
\begin{equation}\label{rediscr}
(1-s)\tau_1\frac{\cS^{\tau_1^{-1}}_{k_1}}{k_1}+s\tau_2\frac{\cS^{\tau_2^{-1}}_{k_2}}{k_2}\subset ((1-s)\tau_1+s\tau_2)\Delta^{((1-s)\tau_1+s\tau_2)^{-1}}.
\end{equation}
For any two points $u_i\in {\rm int}\left(\Delta^{\tau_i^{-1}}\right), i=1,2$, we can find two sets $\{a_i, x_i\}_{i=1}^{N_1}$ and $\{b_j, y_j\}_{j=1}^{N_2}$ satisfying:
\[
\sum_{i}a_i=1=\sum_{j} b_j,\quad 0 \le a_i, b_j\le 1, \quad x_i\in \frac{S^{\tau_1^{-1}}_{k_i}}{k_i}, y_j\in \frac{S^{\tau_2^{-1}}_{l_j}}{l_j},
\] 
such that $u_1=\sum_i a_i x_i$ and $u_2=\sum_j b_j y_j$. The linear combination 
$\displaystyle
(1-s)\tau_1 u_1+s\tau_2 u_2=(1-s)\tau_1\sum_{i} a_i x_i+s \tau_2\sum_{j} b_j y_j$
can be written as a convex linear combination:
\begin{equation}\label{convcomb}
\sum_{i,j} c_{i,j} ((1-s)\tau_1 x_i+s \tau_2 y_j)
\end{equation}
with $c_{i,j}\in [0,1]$, $\sum_{j}c_{i,j}=a_i$ and $\sum_{i}c_{i,j}=b_j$. If each term in \eqref{convcomb} is in the right-hand-side of \eqref{rediscr}, the convex combination is also in it as well. For any two points $u_i\in \Delta^{\tau_i^{-1}}, i=1,2$, we can find two sequence of points $\left\{u_i^{(k)}\right\}_{k=1}^{+\infty}\subset {\rm int}\left(\Delta^{\tau_i^{-1}}\right), i=1,2$ such that $u_i^{(k)}\rightarrow u_i$ as $k\rightarrow+\infty$. By the above discussion, we have $(1-s)\tau_1 u_1^{(k)}+s \tau_2 u_2^{(k)}$ is contained in the right-hand-side of \eqref{rediscr} that is a closed set. Letting $k\rightarrow+\infty$, we get $(1-s)\tau_1 u_1+s \tau_2 u_2\in$ is contained in it too.

So finally, we show \eqref{rediscr} holds. For $i=1,2$, choose $k_i\in \bZ_{>0}$ and $f_i\in \cF^{k_i \tau_i^{-1}}R_{k_i}$. Then there exists $g_i \in R_{k_i}$ such that $v_1(g_i)\ge k_i \tau_i^{-1}$ and $\bin(g_i)=f_i$. 
We fix any $s\in [0,1]$, and for simplicity we will denote
\[
 u_i=\frac{\bV'(f_i)}{k_i}, i=1,2;\quad \tau=\tau(s)=(1-s)\tau_1+s\tau_2.
\] 
For any integer $l_1, l_2\in \bZ_{>0}$, we define
\[
g=g(l_1,l_2)=g_1^{l_1}g_2^{l_2}, \quad f=f(l_1, l_2)=f_1^{l_1}f_2^{l_2}=\bin(g).
\]
Then we have
\begin{equation}\label{valfg}
\def\arraystretch{1.2}
\begin{array}{l}
v_0(f)=l_1 k_1+l_2 k_2=:k;\\
\bV'(f)=l_1 \bV'(f_1)+l_2 \bV'(f_2)=l_1 k_1 u_1+l_2 k_2 u_2;\\
v_1(g)\ge l_1 \frac{k_1}{\tau_1}+l_2 \frac{k_2}{\tau_2}=:m.
\end{array}
\end{equation}
So we get:
\begin{equation}\label{eqf+}
f=\bin(g)\in \cF^{m }R_{k}= \cF^{(m/k)k}R_{k}.
\end{equation}
We want to show that for any $\ep>0$, there exists $l_1=l_1(\epsilon)$ and $l_2=l_2(\epsilon)$ such that 
the following properties are satisfied:
\begin{enumerate}
\item
$m/k\ge \tau^{-1}$ such that $f\in \cF^{k \tau^{-1}}R_k$ and $\frac{\bV'(f)}{k}\in \Delta^{\tau^{-1}}$. By
\eqref{valfg}, this is equivalent to:
\begin{equation}\label{eqcond1}
\frac{l_1k_1}{l_1k_1+l_2k_2}\frac{1}{\tau_1}+\frac{l_2 k_2}{l_1k_1+l_2k_2}\frac{1}{\tau_2}\ge \frac{1}{\tau}.
\end{equation}
\item
\begin{equation}\label{intapp}
\left\|\tau \frac{\bV'(f)}{k}-(1-s)\tau_1 u_1-s \tau_2 u_2\right\|\le \ep.
\end{equation}
By \eqref{valfg}, this is equivalent to:
\begin{equation}\label{eqcond2}
\left\|\left(\tau \frac{l_1k_1}{l_1k_1+l_2k_2}-(1-s)\tau_1\right)u_1+\left(\tau \frac{l_2k_2}{l_1k_1+l_2k_2}-s\tau_2 \right)u_2\right\|\le \ep.
\end{equation}
\end{enumerate}
Assuming that we can achieve these, then the proof would be completed. Indeed, by these two conditions, we would have that
for any $\epsilon>0$, $(1-s)\tau_1 u_1+s\tau_2 u_2$ is approximated arbitrarily close by elements in $\tau\Delta^{\tau^{-1}}$. 
Because $\Delta^{\tau^{-1}}$ is a closed set, we indeed get $(1-s)\tau_1 u_1+s\tau_2 u_2\in \tau \Delta^{\tau^{-1}}$.

Notice that the equalities in \eqref{eqcond1} and \eqref{eqcond2} are both satisfied if and only if
\begin{equation}\label{eqcond3}
\frac{l_2 k_2}{l_1 k_1}=\frac{s\tau_2}{(1-s)\tau_1}.
\end{equation}
More precisely, if we define the ratios:
\[
\delta=\delta(l_1, l_2)=\frac{l_2k_2}{l_1k_1}, \quad \delta_0=\frac{s\tau_2}{(1-s)\tau_1},
\]
then the left hand side minus the right hand side of \eqref{eqcond1} is equal to:
\begin{equation}\label{cond1b}
\frac{\delta-\delta_0}{(1+\delta)(1+\delta_0)}\frac{\tau_1-\tau_2}{\tau_1\tau_2};
\end{equation}
while the absolute difference on the left hand side of \eqref{eqcond2} is equal to:
\begin{equation}\label{cond2b}
\frac{\tau\|u_1-u_2\|}{(1+\delta)(1+\delta_0)}|\delta-\delta_0|.
\end{equation}
If the right-hand-side of \eqref{eqcond3} is a rational number then we can find a solution to \eqref{eqcond3}. Otherwise, it's clear
that we can find infinitely many pairs of $(l_1, l_2)\in \bZ_{>0}^2$ such that \eqref{cond1b} is strictly positive and \eqref{cond2b} 
is as small as possible. 

%\begin{eqnarray*}
%\frac{k}{\tau}\left((1-s)\tau_1\frac{\bV'(f_1)}{k_1}+s\tau_2\frac{\bV'(f_2)}{k_2}\right)&=&\frac{(1-s)k_2\tau_1\bV'(f_1)+sk_1\tau_2\bV'(f_2)}{k_1k_2}\\
%&=&\frac{(1-s)k_2 p_2\tau_1\bV'(f^{p_1}_1)+s k_1 p_1\tau_2\bV'(f^{p_2}_2)}{k_1p_1 k_2p_2}=:{\bf I}(p_1,p_2).
%\end{eqnarray*}
\end{proof}

We can now calculate the covolume of $\Gamma$ using $\bV_0$-slices. First there exists $b_1>0$ such that $\Gamma\bigcap \{\bV_0> b_1\}=\cC\bigcap\{\bV_0> b_1\}$.  
So we have:
\begin{eqnarray*}
{\rm covol}(\Gamma)&=&\vol(\cC\bigcap\{\bV_0 \le b_1\})-\vol(\Gamma\bigcap \{\bV_0 \le b_1\})\\
&=&\frac{L^{n-1}}{(n-1)!}\int_0^{b_1} \tau^{n-1}d \tau- \vol(\Gamma\bigcap\{\bV_0\le b_1\})\\
&=&\frac{L^{n-1}}{n!}b_1^n-\vol(\Gamma\bigcap \{\bV_0\le b_1\}).
\end{eqnarray*}
For the second term, we first notice that 
\begin{equation}\label{eq-2vol}
\vol(\Gamma\cap \{\bV_0\le b_1\})=\vol(\tilde{\Gamma}\cap \{\bV_0\le b_1\}),
\end{equation}
where $\tilde{\Gamma}$ was the set defined in \eqref{tgam}:
\begin{equation*}
\tilde{\Gamma}=\bigcup_{t>0} t^{-1}\left(\{1\}\times\Delta^t\right).
\end{equation*}
To see this, recall that by Proposition \ref{propslice}, $\tilde{\Gamma}$ is convex and $\Gamma=\overline{\tilde{\Gamma}}$. It's also clear that $\tilde{\Gamma}\cap \{\bV_0> b_1\}=\cC\cap \{\bV_0> b_1\}$ for $b_1$ sufficiently large. So for $b_1\gg 1$ we also have:
\[
\Gamma\cap \{\bV_0\le b_1\}=\overline{\tilde{\Gamma}\cap \{\bV_0\le b_1\}}.
\] 
Since $\tilde{\Gamma}\cap \{\bV_0\le b_1\}$ is again convex, its boundary has Lebesgue measure 0 (see \cite{Lan86} for example) and hence \eqref{eq-2vol} holds. So we get: %By Proposition \ref{propslice},  we have:
\begin{eqnarray*}
\vol(\Gamma\bigcap \{\bV_0 \le b_1\})&=&\vol(\tilde{\Gamma}\bigcap\{\bV_0 \le b_1\})\\
&=&\int_0^{b_1}\vol(\tilde{\Gamma}\bigcap\{\bV_0=\tau\})d\tau=\int_0^{b_1}\vol(\tau \Delta^{\tau^{-1}})d\tau\\
&=&\int_0^{b_1} \tau^{n-1} \vol(\Delta^{\tau^{-1}})d\tau=\int^{+\infty}_{b_1^{-1}} \vol(\Delta^t)\frac{dt}{t^{n+1}}\\
&=&\frac{1}{(n-1)!}\int^{+\infty}_{b_1^{-1}} \vol\left(R^{(t)}\right)\frac{dt}{t^{n+1}}.
\end{eqnarray*}
So we indeed recover the formula \eqref{volv1a} for the volume of $v_1$:
\begin{eqnarray}\label{form1}
\vol(v_1)&=&n!\; \covol(\Gamma)=b_1^n L^{n-1}-n\int^{+\infty}_{b_1^{-1}}\vol\left(R^{(t)}\right)\frac{dt}{t^{n+1}}.
\end{eqnarray}
As before, by integration by parts we get formula \eqref{volv1b}:
\begin{eqnarray}\label{form2}
\vol(v_1)&=& b_1^n L^{n-1}+\int_{b_1^{-1}}^{+\infty}\vol\left(R^{(t)}\right)d\left(\frac{1}{t^n}\right)\nonumber\\
&=&\int^{+\infty}_{b_1^{-1}}\frac{-d\vol\left(R^{(t)}\right)}{t^n}.
\end{eqnarray}
To get another formula, we will consider the following function
\begin{eqnarray*}
H_{\cF}(x)%&=&\inf\{\tau \in \bR_{>0}; (\tau, \tau x)\in \Gamma\}\\
&=&\inf \left\{\tau>0; x\in \Delta^{\tau^{-1}}\right\}\quad \text{ for any } x\in \Delta^{0}.
\end{eqnarray*}
%The last identity holds because we have:
%\[
%(\tau, \tau x)\in \Gamma \Longleftrightarrow \tau x\in \Gamma\bigcap\{\bV_0=\tau\}\Longleftrightarrow x\in \tau^{-1}\Gamma\bigcap\{\bV_0=1\}=\Delta^{\tau^{-1}}.
%\]
So we see that $H_{\cF}(x)$ is related to the function considered by Boucksom-Chen in \cite{BC11}. 
\begin{eqnarray*}
H_{\cF}^{-1}(x)&=&G_{\cF}(x)=\sup\{t>0; x\in \Delta^t\}.
%&=&\sup\{t>0; (t^{-1}, t^{-1}x)\in \Gamma\}.
\end{eqnarray*}
%We also have
%\begin{eqnarray*}
%&& \Gamma=\{(t, tx); x\in \Delta^0, t>0, H_{\cF}(x/t)\le t\}. %; \\
%&&\{x\in \Delta^0; \tau\ge H_{\cF}(x)\}=\tau^{-1}\left(\Gamma\bigcap \{\bV_0=\tau\}\right)=\Delta^{\tau^{-1}}.
%\end{eqnarray*}
By \cite{BC11}, $H_{\cF}^{-1}(x)$ is a concave function and it's continuous on ${\rm int}(\Delta^{0})$.
If we denote by $\lambda$ the Lebesgue measure of $\bR^{n-1}$, then we have (cf. \cite[Proof of Theorem 1.11]{BC11}):
\[
d\vol(\Delta^{\tau^{-1}})=\left(\frac{d}{d\tau}\vol(\{x\in \Delta^0; H_{\cF}(x)\le \tau\})\right) d\tau=(H_{\cF})_*\lambda.
\]
We can then calculate covolume of $\Gamma$:
\begin{eqnarray}
{\rm covol}(\Gamma)&=&{\rm covol}(\tilde{\Gamma})\\%&=&\frac{L^{n-1}}{(n-1)!}\int_0^{b_1}\tau^{n-1}d\tau-\int_0^{b_1}\tau^{n-1}\vol(\Delta^{\tau^{-1}})d\tau\\
&=&\int_0^{b_1} \tau^{n-1}\left(\vol(\Delta^0)-\vol(\Delta^{\tau^{-1}})\right)d\tau\nonumber\\
&=&\int_0^{b_1} \tau^{n-1}\left(\int_{\Delta^0} {\bf 1}_{\{\tau\le H_{\cF}(x)\}}dx\right)d\tau\nonumber\\
&=&\int_{\Delta^0} dx \int_0^{H_{\cF}(x)} \tau^{n-1} d\tau=\frac{1}{n}\int_{\Delta^0} H_{\cF}^n(x) dx.
\end{eqnarray}
So we get another formula for $\vol(v_1)$:
\begin{equation}\label{form3}
\vol(v_1)=n!\; \covol(\Gamma)=(n-1)! \int_{\Delta^0} H_\cF^n(x)dx.
\end{equation}
The relation between \eqref{form2} and \eqref{form3} is given by:
\begin{eqnarray*}
(n-1)!\int_{\Delta^0} H^n_{\cF}(x)dx%&=&(n-1)!\int_{\Delta^0} G_{\cF}^{-n}(x)dx\\
&=&(n-1)!\int_0^{+\infty} \tau^{n}( H_{\cF})_* \lambda\\
&=&(n-1)!\int_0^{+\infty} \tau^{n}d\vol(\Delta^{\tau^{-1}})\\
&=&-(n-1)! \int_0^{+\infty}  t^{-n} d\vol(\Delta^t)\\
&=&\int_0^{+\infty} \frac{-d\vol\left(R^{(t)}\right)}{t^n}.
\end{eqnarray*}
Finally, let's point out the geometric meaning of the expression in \eqref{eqintfn}. For simplicity we can assume $\lambda=1$
by rescaling the coconvex set. Then  
\begin{eqnarray*}
\Phi(1,s)&=&\int^{+\infty}_{c_1}\frac{-d\vol\left(R^{(t)}\right)}{((1-s)+s t)^{n}}\\
&=&(n-1)!\int^{c_1^{-1}}_0 \frac{d\vol(\Delta^{\tau^{-1}})}{(1-s+s \tau^{-1})^n}\\
&=&(n-1)!\int^{c_1^{-1}}_0 \frac{\tau^n}{((1-s)\tau+s)^n} (H_\cF)_*\lambda\\
&=&\frac{n!}{n}\int_{\Delta^0}\left(\frac{H_\cF}{(1-s)H_\cF+s}\right)^n dx.
\end{eqnarray*}
Comparing with \eqref{form3}, it's clear that, if we define the function 
\[
H_s(x)=\frac{H_\cF}{(1-s)H_\cF+s}. 
\] 
Then $\Phi(1,s)=n!\; \covol(\Gamma_s)$ where the convex set $\Gamma_s$ for any $s\in [0,1]$ is defined as:
\[
\Gamma_s=\left\{(t,tx); x\in \Delta^0, t>0,  H_s(x)\le t\right\}.
\]

\subsection{Examples}
We will give convex geometric interpretation of the volumes of valuations in Section \ref{sec-exnv}. 
\begin{enumerate}
\item
Let $Y\rightarrow \bC^2$ be the blow up at the origin with exceptional divisor $\bP^1$. We choose the point $\{0\}=[1,0]\in \bP^1$ and get a $\bZ^2$-valuation $\bV=(\bV_0, \bV_1)$ on $\bC[x,y]$ such that for each homogeneous polynomial $\bV_1$ gives the lowest degree in $y$ variable. Denote by $(\tau, \mu)$ the rectangular coordinates on the $\bR^2=\bZ^2\otimes_{\bZ}\bR$. Then using the previous notations, it's easy see that
\[
\cS=\left\{(a+b, b); (a,b) \in \bZ_{\ge 0}^2\right\};  \quad \cC=\cC(\mathcal{S})=\left\{(\tau, \mu)\in \bR^2_{\ge 0}; \mu\le \tau\right\}.
\]
The convex set $\Gamma$ associated to $v$ is a closed $\cC$-convex set. Because it contains the vectors:
\[
\frac{(a+b, b)}{\lfloor \gamma_1 a+\gamma_2 b\rfloor} \text{ with } a>0, b>0,
\]
it's easy to see that:
\[
\Gamma=\cC\setminus \triangle_{OAB},
\]
where the triangle $\triangle_{OAB}$ has vertices $O=(0,0)$, $A=(\frac{1}{\gamma_1},0)$ and $B=(\frac{1}{\gamma_2},\frac{1}{\gamma_2})$. Notice that on the blow up $Y$, 
the center of $v$ is $[1,0]\in \bP^1$ (resp. $[0,1]\in \bP^1$) if and only if $\gamma_2>\gamma_1$ (resp. $\gamma_2<\gamma_1$) if and only if the slope of the line $\overline{AB}$ is negative (resp. positive). 

\item
%We will give a convex geometric interpretation of this result. For this we bring back
We will use the same $\bZ^2$-valuation $\bV$ as in the previous example.
Denote by $v_0=\ord_{o}$ (where $o=(0,0)$) the canonical valuation on $\bC[x,y]$:
\[
v_0\left(\sum_{e_1, e_2\in \bZ_{\ge 0}} p_{\alpha_1, \alpha_2} x^{e_1} y^{e_2}\right)=\min\{e_1+e_2; p_{e_1,e_2}\neq 0\}.
\]
For convenience, we define the numbers:
\[
d_1=1 \text{ and } d_i=c_1c_2\cdots c_{i-1} \text{ for } i\ge 2.
\]
Notice that $\alpha_i=\frac{d_i}{\beta_i}$. 
Then we have:
\begin{lem}\label{lemv0in}
For any $k\in \bZ_{\ge 0}$, $a_0\in \bZ_{\ge 0}$ and $a_i\in \bZ_{\ge 0}\cap [0,c_i-1]$, we have
\[
v_0(q^{\bf a})=a_0+\sum_{i=1}^k a_i d_i
\]
and the homogeneous component of lowest degree of $q^{\bf a}$, denoted by $\bin(q^{\bf a})$, is 
$y^{a_0}x^{\sum_{i=1}^k a_i d_i}$. Moreover, for any linear combination of $q^{\bf a}$, 
\[
v_0\left(\sum_{\bf a} b_{\bf a} q^{\bf a}\right)=\min\{v_0(q^{\bf a}), b_{\bf a}\neq 0\}.
\]

\end{lem}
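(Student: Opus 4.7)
The plan rests on one key observation: the lowest-degree-component map $\bin$ is multiplicative on the domain $\bC[x,y]$. Indeed, if $f$ has lowest-degree part of degree $m$ and $g$ of degree $\ell$, the degree-$(m+\ell)$ component of $fg$ equals $\bin(f)\bin(g)\ne 0$ (since $\bC[x,y]$ is a domain), and there are no lower-degree contributions; hence $\bin(fg)=\bin(f)\bin(g)$ and $v_0(fg)=v_0(f)+v_0(g)$. Everything else reduces to controlling the initial forms $\bin(q_i)$.

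First I would run a short induction on $i\ge 1$ showing $\bin(q_i)=x^{d_i}$. Base case: $q_1=x$, $d_1=1$. For the inductive step, $q_{i+1}=q_i^{c_i}+y^{\beta_ic_i}$ gives $v_0(q_i^{c_i})=c_id_i=d_{i+1}$ and $v_0(y^{\beta_ic_i})=\beta_ic_i$. These are distinct exactly when $\beta_i>d_i$, which I verify by a parallel induction using $\beta_1>1=d_1$ and $\beta_{i+1}>c_i\beta_i>c_id_i=d_{i+1}$; equivalently this is the recorded inequality $\alpha_i^{-1}=d_i/\beta_i<1$. The $y$-term being of strictly larger $v_0$-value, the initial form of $q_{i+1}$ equals $\bin(q_i^{c_i})=(x^{d_i})^{c_i}=x^{d_{i+1}}$.

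Multiplicativity of $\bin$ then immediately yields the two displayed formulas:
\[
\bin(q^{\mathbf{a}})=y^{a_0}\prod_{i=1}^{k}(x^{d_i})^{a_i}=y^{a_0}x^{\sum_{i=1}^{k}a_id_i},
\]
whose total degree is $a_0+\sum_i a_id_i$, which must therefore be $v_0(q^{\mathbf{a}})$.

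For the final claim, the key point is that the map $\mathbf{a}\mapsto\bin(q^{\mathbf{a}})$ is injective on admissible tuples. Matching monomials forces $a_0=a_0'$ and $\sum_{i\ge 1}a_id_i=\sum_{i\ge 1}a_i'd_i$; the latter is a mixed-radix expansion in base $(c_1,c_2,\dots)$ with digit constraint $0\le a_i<c_i$, which is unique (recover $a_1$ as the residue mod $c_1$, subtract and divide, iterate). Given a nonzero combination $g=\sum_\mathbf{a}b_\mathbf{a}q^{\mathbf{a}}$, set $m=\min\{v_0(q^\mathbf{a}) : b_\mathbf{a}\ne 0\}$. Since each $q^\mathbf{a}$ with $v_0(q^\mathbf{a})>m$ contributes only in degrees strictly greater than $m$, the degree-$m$ component of $g$ is $\sum_{v_0(q^\mathbf{a})=m}b_\mathbf{a}\bin(q^\mathbf{a})$, a nonzero combination of pairwise distinct monomials by injectivity; hence $v_0(g)=m$. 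No substantial obstacle arises; the only delicate points are keeping the parallel inductions on $\beta_i>d_i$ and $\bin(q_i)=x^{d_i}$ coordinated and invoking mixed-radix uniqueness to rule out cancellation among the initial forms.
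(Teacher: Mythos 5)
Your proof is correct and follows essentially the same route as the paper: induct to get $\bin(q_i)=x^{d_i}$ via $\beta_i>d_i$, use multiplicativity of $\bin$ to identify $\bin(q^{\mathbf a})$, and then rule out cancellation of the minimal-degree part by uniqueness of the digit representation $\sum a_i d_i$ with $0\le a_i<c_i$. The paper establishes this last uniqueness by the telescoping estimate $d_k-\sum_{i<k}(c_i-1)d_i=1>0$ rather than your mod-$c_1$-and-divide recursion, and it cites the previously noted inequality $\alpha_i^{-1}=d_i/\beta_i<1$ where you re-derive $\beta_i>d_i$ by a parallel induction, but these are only presentational differences.
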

\begin{proof}
For the first statement,
it's clear that we just need to prove it for each $q_i$. 
We prove this by induction. Since $q_1=x$, $\deg(q_1)=1$ and $\bin(q_1)=1$.  Since $q_2=x^{c_1}+y^{c_1\beta_1}$, $\deg(q_2)=c_1<\beta_1 c_1$ and $\bin(q_2)=x^{c_1}$. Assume we have proved $\deg(q_i)=d_i$ and $\bin(q_i)=x^{d_i}$. To show that $q_{i+1}=q_i^{c_i}+y^{c_i\beta_i}$ satisfies the first statement, we just need to show that $c_i d_i < c_i \beta_i$. This is equivalent to $\alpha_i=\frac{d_i}{\beta_i}<1$ which indeed holds true.

To prove the second statement, we choose the minimum among $v_0(q^{\bf a})$ with $b_{\bf a}\neq 0$. Then $\bin(q^{\bf a})=y^{a_0}x^{\sum_{i=1}^k a_id_i}$. We claim that $\bin(q^{\bf a})$ can not be cancelled in the standard monomial expansion of $\sum_{\bf a} b_{\bf a}q^{\bf a}$. Indeed, if it is cancelled, then there exists ${\bf a'}$ with $p_{\bf a'}\neq 0$ and $a'_0=a_0$ such that $\bin(q^{\bf b})=y^{a_0}x^{\sum_{j=1}^{k'} a'_i d_j}$ satisfies $\sum_{j=1}^{k'} a'_j d_j=\sum_{i=1}^k a_i d_i$. By allowing zero coefficients and without loss of generality, we can assume that $k'=k$ and $c_j-1\ge a'_j> a_j\ge 0$. Then we can estimate that
\begin{eqnarray*}
0&=&\sum_{j=1}^k a'_jd_j-\sum_{i=1}^k a_id_i=(a'_k-a_k)d_k-\sum_{i=1}^{k-1} (a'_i-a_i)d_i\\
&\ge & d_k-\sum_{i=1}^{k-1} (c_i-1)d_i=\sum_{i=1}^{k-1} (d_{i+1}-c_i d_i)+d_1=1>0. 
\end{eqnarray*}
So we get a contradiction.
\end{proof}
\begin{cor}
We have the following formula:
\[
v(\fm)=\inf_{\fm}\frac{v}{v_0}=1, \quad \sup_{\fm}\frac{v}{v_0}=\alpha_*.
\] 
\end{cor}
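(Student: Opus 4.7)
The strategy is to reduce both identities to the standard monomial expansion supplied by Lemma \ref{lemv0in}. Any nonzero $f \in \fm$ is written uniquely as $f = \sum_{\bf a} b_{\bf a}\, q^{\bf a}$ with $a_0 \in \bZ_{\geq 0}$ and $0 \leq a_i \leq c_i - 1$ for $i \geq 1$. Lemma \ref{lemv0in} already asserts $v_0(f) = \min\{v_0(q^{\bf a}) : b_{\bf a} \neq 0\}$ with $v_0(q^{\bf a}) = a_0 + \sum_{i \geq 1} a_i d_i$; and since by construction the numbers $v(q^{\bf a}) = a_0 + \sum_{i \geq 1} a_i \beta_i$ are pairwise distinct (as recalled right before Lemma \ref{lemv0in}), the ultrametric inequality forces the companion formula $v(f) = \min\{v(q^{\bf a}) : b_{\bf a} \neq 0\}$.

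Granted this, I first analyze a single standard monomial. Writing $\alpha_i = \beta_i/d_i$ (and $\alpha_0 := 1$), one rewrites
\[
\frac{v(q^{\bf a})}{v_0(q^{\bf a})} \;=\; \frac{a_0 \cdot 1 \;+\; \sum_{i \geq 1}(a_i d_i)\,\alpha_i}{a_0 \;+\; \sum_{i \geq 1} a_i d_i},
\]
which is a convex combination of $1 = \alpha_0$ and finitely many of the numbers $\alpha_1, \alpha_2, \dots$. The defining inequality $\beta_{i+1} > c_i \beta_i$ translates to $\alpha_{i+1} > \alpha_i$, so one has the strict chain $1 < \alpha_1 < \alpha_2 < \cdots \nearrow \alpha_*$. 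Consequently the ratio lies in $[1,\alpha_*)$; it equals $1$ precisely when $a_i = 0$ for all $i \geq 1$ (i.e., $q^{\bf a} = y^{a_0}$), and it equals $\alpha_k$ when $q^{\bf a} = q_k$.

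Passing to a general $f \in \fm$: for the lower bound, $v(f) = \min v(q^{\bf a}) \geq \min v_0(q^{\bf a}) = v_0(f)$ by the single-monomial estimate, so $v(f)/v_0(f) \geq 1$, with equality realized at $f = y$. For the upper bound, pick any index ${\bf a}_\star$ achieving $v_0(q^{{\bf a}_\star}) = v_0(f)$ and let $K$ be the largest $i$ with $a_{\star,i} \neq 0$; then
\[
v(f) \;\leq\; v(q^{{\bf a}_\star}) \;\leq\; \alpha_K\, v_0(q^{{\bf a}_\star}) \;=\; \alpha_K\, v_0(f),
\]
so $v(f)/v_0(f) \leq \alpha_K < \alpha_*$ strictly for every $f$. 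On the other hand $v(q_k)/v_0(q_k) = \beta_k/d_k = \alpha_k \nearrow \alpha_*$, which forces $\sup_{\fm}(v/v_0) = \alpha_*$ (and this supremum is not attained). Finally, the identity $v(\fm) = 1$ is immediate from $v(\fm) = \min\{v(x), v(y)\} = \min\{\beta_1, 1\} = 1$ together with the bound $v(f) \geq v_0(f) \geq 1$ for $f \in \fm$.

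The proof is essentially bookkeeping once Lemma \ref{lemv0in} and the distinctness of the values $v(q^{\bf a})$ are in hand, so I anticipate no serious obstacle. The one subtlety worth flagging is that the minimizing multi-indices for $v_0$ and for $v$ in the expansion of $f$ need not coincide; but this causes no trouble, since choosing ${\bf a}_\star$ to achieve the $v_0$-minimum is exactly what the upper bound argument needs.
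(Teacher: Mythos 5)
Your proof is correct and follows essentially the same strategy as the paper's own terse argument: bound the ratio $v(q^{\bf a})/v_0(q^{\bf a})$ for a single standard monomial between $1$ and $\alpha_*$, then reduce a general $f\in\fm$ to its standard monomial expansion via Lemma \ref{lemv0in} together with the distinctness of the values $v(q^{\bf a})$. Your convex-combination reformulation of the single-monomial estimate and the explicit handling of the fact that the $v_0$-minimizing multi-index need not be the $v$-minimizing one fill in details the paper leaves implicit.
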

\begin{proof}
We first notice the estimates:
\[
1\le \frac{v(q^{\bf a})}{v_0 (q^{\bf a})}=\frac{a_0+\sum_{i=1}^{k}a_i \beta_i }{a_0+\sum_{i=1}^{k}a_i d_i}\le \frac{\beta_i}{d_i}\le \alpha_*.
\]
The supreme can be proved by letting $k\rightarrow +\infty$. The infimum can proved using the second statement of the above lemma.
\end{proof}
\begin{rem}
Notice that by \eqref{vola*}, the above corollary is compatible with Favre-Jonsson's formula for $\vol(v)$ in dimension $2$ (\cite[Remark 3.33]{FJ04}): 
\[
\vol(v)=\left(\sup_{\fm}\frac{v}{\ord_0}\right)^{-1}\frac{1}{v(\fm)}.
\] 
\end{rem}

As before, let $Y\rightarrow \bC^2$ be the blow up at the origin with exceptional divisor $\bP^1$. We choose the point $\{0\}=[1,0]\in \bP^1$ then we get a $\bZ^2$-valuation $\bV=(\bV_0, \bV_1)$ on $\bC[x,y]$. Then by lemma \ref{lemv0in}, we get:
\begin{cor}
For any exponents ${\bf a}=(a_0, a_1, \dots, a_k)$ satisfying the assumption in the above lemma, we have $\bV(q^{\bf a})=(a_0+\sum_{i=1}^{k} a_i d_i, a_0)$. 
\end{cor}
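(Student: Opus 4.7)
The plan is to read off both coordinates of $\bV(q^{\bf a})$ directly from Lemma \ref{lemv0in}. Recall that the $\bZ^2$-valuation $\bV=(\bV_0,\bV_1)$ is constructed from the flag on the blow-up $Y\to\bC^2$ at the origin with center at $[1,0]\in\bP^1$: its first coordinate $\bV_0$ records the order of vanishing along the exceptional divisor, which on polynomials is precisely the lowest total degree $v_0$, while $\bV_1$ records, after passing to the initial homogeneous form and restricting to $\bP^1$, the order of vanishing at the point $[1,0]$, where $y/x$ is a local parameter.

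The identity $\bV_0(q^{\bf a})=a_0+\sum_{i=1}^{k} a_i d_i$ will follow immediately from the first assertion of Lemma \ref{lemv0in}, using $\bV_0=v_0$ on $\bC[x,y]$. For the second coordinate I will invoke the explicit initial form provided by the same lemma, namely $\bin(q^{\bf a})=y^{a_0}x^{\sum_{i=1}^k a_i d_i}$. Passing to the associated homogeneous polynomial of degree $a_0+\sum_{i=1}^k a_i d_i$ and dehomogenizing on the affine chart around $[1,0]\in\bP^1$, the restriction to the exceptional divisor becomes $(y/x)^{a_0}$ up to a unit; since $y/x$ is a uniformizer at $[1,0]$, this forces $\bV_1(q^{\bf a})=a_0$.

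Combining these two computations yields $\bV(q^{\bf a})=(a_0+\sum_{i=1}^{k} a_i d_i,\,a_0)$, as asserted. All of the substantive work has already been carried out in Lemma \ref{lemv0in}; its nontrivial input is the inductive verification that $c_id_i<c_i\beta_i$ (equivalently $\alpha_i<1$), which forces $\bin(q_{i+1})=q_i^{c_i}$ rather than $y^{c_i\beta_i}$ and, recursively together with the absence of cancellation among standard monomials of the same $v_0$-value, identifies $\bin(q^{\bf a})$ with $y^{a_0}x^{\sum a_i d_i}$. Given that lemma, the corollary is a bookkeeping step, and I anticipate no genuine obstacle.
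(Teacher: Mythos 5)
Your proof is correct and is exactly the intended reading: once Lemma \ref{lemv0in} supplies $v_0(q^{\bf a})$ and $\bin(q^{\bf a})=y^{a_0}x^{\sum_i a_i d_i}$, the first coordinate is $\bV_0=v_0=a_0+\sum a_i d_i$ and the second coordinate is the lowest $y$-degree of the initial form, namely $a_0$, whether one reads that combinatorially from the paper's phrase ``lowest degree in the $y$ variable'' or, as you do, geometrically via $y/x$ being a uniformizer at $[1,0]\in\bP^1$. The paper treats the corollary as immediate and gives no separate argument, so there is nothing to contrast; the only nit is that your recap of the lemma writes $\bin(q_{i+1})=q_i^{c_i}$ where you mean $\bin(q_{i+1})=\bin(q_i^{c_i})=x^{d_{i+1}}$, but that concerns the lemma you are quoting, not the corollary itself.
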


\begin{prop}
The convex set $\Gamma$ associated to $v$ is equal to the region: 
\[
\Gamma=\cC\bigcap \left\{\mu \le \frac{\tau-\alpha_*^{-1}}{1-\alpha_*^{-1}}\right\}=\cC\setminus \triangle_{OAB}, 
\]
where $O=(0,0)$, $A=(\alpha_*^{-1},0)$ and $B=(1,1)$. As a consequence, $\vol(v)=2!\cdot\covol(\Gamma)=\alpha_*^{-1}$.
\end{prop}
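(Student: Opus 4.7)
The plan is to identify $\Gamma$ explicitly by proving both inclusions with the closed region $\cC\setminus\triangle_{OAB}$, and then read off the volume from its covolume. Throughout I use the formulas
\[
\bV(q^{\mathbf{a}})=\left(a_0+\sum_{i=1}^{k}a_i d_i,\;a_0\right), \qquad v(q^{\mathbf{a}})=a_0+\sum_{i=1}^{k}a_i\beta_i,
\]
established just above, together with the monotonicity bound $\beta_i\le\alpha_*d_i$ coming from the fact that $\alpha_i=\beta_i/d_i$ is increasing toward $\alpha_*$ (equivalently, $\alpha_k^{-1}$ decreases to $\alpha_*^{-1}$).

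For the inclusion $\Gamma\subseteq\cC\setminus\triangle_{OAB}$ I will show that every generator $\frac{1}{m}\bV(q^{\mathbf{a}})$ with $v(q^{\mathbf{a}})\ge m$ satisfies the linear inequality $\tau-(1-\alpha_*^{-1})\mu\ge\alpha_*^{-1}$ defining the half-plane on the far side of the line $\overline{AB}$. A direct rearrangement makes this inequality equivalent to $a_0+\alpha_*\sum_i a_i d_i\ge m$, which follows immediately from $a_0+\sum_i a_i\beta_i\ge m$ and the bound $\beta_i\le\alpha_* d_i$. Since a closed half-plane is closed and convex, this bound passes to the closed convex hull $\Gamma$.

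For the reverse inclusion I use that the closed unbounded convex set $\cC\setminus\triangle_{OAB}$ is the closed convex hull of the segment $\overline{AB}$ together with the two extreme rays from $A$ in direction $(1,0)$ and from $B$ in direction $(1,1)$. I exhibit approximating sequences in $\bigcup_m \frac{1}{m}\cA_m$ for each piece. The diagonal ray is hit on the nose by $f=y^N$ (giving $\frac{1}{m}(N,N)$ for $m\le N$). The horizontal ray is produced by $f=q_k^l$, whose scaled images $(ld_k/m,0)$ densely fill $\{(\tau,0):\tau\ge\alpha_k^{-1}\}$, and then letting $k\to\infty$ gives the ray from $A=(\alpha_*^{-1},0)$ by closedness. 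Finally, for the segment $\overline{AB}$ itself I use $f=y^j q_k^l$ with $j,l\in\bZ_{>0}$: for $m\approx j+l\beta_k$ the point
\[
\frac{1}{m}\bV(f)\;\approx\;\left(\frac{j+l d_k}{j+l\beta_k},\,\frac{j}{j+l\beta_k}\right)
\]
lies exactly on the line through $(\alpha_k^{-1},0)$ and $(1,1)$, with the second coordinate sweeping densely over $[0,1]$ as $j/l$ varies. Letting $k\to\infty$ these segments converge to $\overline{AB}$, which thus belongs to $\Gamma$ by closedness.

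Once both inclusions yield $\Gamma=\cC\setminus\triangle_{OAB}$, the conclusion is immediate: the triangle with vertices $(0,0)$, $(\alpha_*^{-1},0)$, $(1,1)$ has area $\tfrac{1}{2}\alpha_*^{-1}$, so $\covol(\Gamma)=\tfrac{1}{2}\alpha_*^{-1}$ and the general covolume formula gives $\vol(v)=2!\,\covol(\Gamma)=\alpha_*^{-1}$. The main technical obstacle is in exhibiting the segment $\overline{AB}$: for every finite $k$ the approximating segment $\overline{A_kB}$ lies strictly in the interior of the half-plane from Step~1, and the integrality constraint on $m$ produces further $O(1/l)$ perturbations, so the proof must carry out two simultaneous limiting operations ($k\to\infty$ and $m\to j+l\beta_k$) and rely on the closedness of $\Gamma$ to absorb both errors.
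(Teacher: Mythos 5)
Your proof is correct and follows the same two-inclusion strategy as the paper, but the technical execution differs in both directions. For the inclusion $\Gamma\subseteq\cC\setminus\triangle_{OAB}$, you verify the linear inequality $\tau-(1-\alpha_*^{-1})\mu\ge\alpha_*^{-1}$ directly from the bound $\beta_i\le\alpha_*d_i$; the paper instead exhibits each scaled valuation vector $\bV(q^{\mathbf a^*})/v(q^{\mathbf a^*})$ as a convex combination of $(1,1)$ and the points $(\alpha_i^{-1},0)$, which are then checked to lie in $\cC\cap\mathcal{H}$. These two arguments are algebraically equivalent, but you should make explicit (as the paper does, invoking the final statement of Lemma~\ref{lemv0in}) that a general $f\in\fa_m(v)$ reduces to a single key-polynomial ``monomial'' $q^{\mathbf a^*}$ with $\bV(f)=\bV(q^{\mathbf a^*})$ and $v(q^{\mathbf a^*})\ge v(f)\ge m$; your phrase ``every generator'' elides this reduction. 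For the reverse inclusion, the paper only exhibits the two extreme points $B=(1,1)$ (via $y^{a_0}$) and $A=(\alpha_*^{-1},0)$ (via $q_k^{c_k-1}$ and $k\to\infty$), then invokes the fact that $\Gamma$ is a closed $\cC$-convex set so that $\cC\cap\mathcal H=\overline{AB}+\cC\subseteq\Gamma$ follows automatically. You instead manually exhibit the segment $\overline{AB}$ and both extreme rays. This works, but the segment construction via $y^jq_k^l$ and the double limit $k\to\infty$, $m\to j+l\beta_k$ is redundant: once $A$ and $B$ are both shown to lie in the closed set $\Gamma$, convexity alone gives $\overline{AB}\subseteq\Gamma$, and the two extreme rays (which you do need if you avoid $\cC$-convexity) then close up the region. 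In short: same route, with one direction done by a more elementary but equivalent inequality, and the other done with some avoidable extra labor that the $\cC$-convexity observation would shortcut.
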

Notice that the set $\mathcal{H}:=\left\{\mu\le \frac{\tau-\alpha_*^{-1}}{1-\alpha_*^{-1}}\right\}$ is the half space on the right hand side of the line $\mu=\frac{\tau-\alpha_*^{-1}}{1-\alpha_*^{-1}}$ which 
passes through $(1,1)$ and $(\alpha_*^{-1}, 0)$.  
\begin{proof}
By Corollary \ref{lemv0in}, we know that for any exponent ${\bf a}$ satisfying $a_0\ge 0$ and $0\le a_i\le c_i-1$, we have: 
\[
\Gamma\ni \frac{\bV(q^{\bf a})}{\lfloor v(q^{\bf a})\rfloor}=\frac{(a_0+\sum_{i=1}^k a_i d_i, a_0)}{\lfloor a_0+\sum_{i=1}^k a_i \beta_i \rfloor}
\]
Letting $a_i=0$ and $a_0\neq 0$, we see that $(1,1)\in \Gamma$. Letting $a_0=a_1=\cdots=a_{k-1}=0$ and $a_i=c_{i}-1$, we see that
\[
\Gamma\ni \frac{((c_k-1) d_k, 0)}{\lfloor (c_k-1) \beta_k\rfloor }=\left(\frac{d_k}{\beta_k},0\right)\frac{(c_k-1)\beta_k}{\lfloor (c_k-1)\beta_k\rfloor}\stackrel{k\rightarrow+\infty}{\longrightarrow} \left(\alpha_*^{-1},0\right).
\]
So $(\alpha_*^{-1}, 0)\in \Gamma$ because $\Gamma$ is closed. Because $\Gamma$ is a $\cC$-convex closed set, we deduce that $\Gamma\supseteq \cC\bigcap \mathcal{H}$. To prove 
the other direction of inclusion, by the definition of $\Gamma$ we just need to show that:
\[
\frac{\cA_m}{m}=\frac{\bV\left(\fa_m(v)-\{0\}\right)}{m}\in \cC\bigcap \mathcal{H}.
\]
In other words, for any $f\in \fa_m(v)$, we wan to show $\frac{\bV(f)}{m}\in \cC\bigcap\mathcal{H}$.  To prove this, we expand $f$ into ``monomials" $f=\sum_{\bf a}b_{\bf a} q^{\bf a}$ with each $b_{\bf a}\neq 0$. Then by the proof of Lemma \ref{lemv0in}, we see that there is an exponent ${\bf a}^*$ among the $\bf a$'s such that:
\[
\bV(f)=v_0(f)=v_0(q^{\bf a^*})=a^*_0+\sum_{i=1}^{k} a^*_i \beta_i, \text{ and } \bV_1(f)=a^*_0. 
\]
Recall that we have $v(f)=\min_{\bf a}\{v(q^{\bf a})\}$, so that we can write the normalized valuation vector as:
\begin{eqnarray*}
\frac{\bV(f)}{m}=\lambda \frac{\bV(q^{\bf a^*})}{v(q^{\bf a^*})},
\end{eqnarray*}
where 
\[
\lambda=\frac{v(q^{\bf a^*})}{m}=\frac{v(q^{\bf a^*})}{\min_{\bf a}\{v(q^{\bf a})\}}\frac{v(f)}{m}\ge 1.
\]
Because $\cC\bigcap\mathcal{H}$ is closed under rescalings, we just need to show 
\[
\frac{\bV(q^{\bf a^*})}{v(q^{\bf a^*})}=\frac{(a_0+\sum_{i}a^*_i d_i, a^*_0)}{a^*_0+\sum_{i}a^*_i \beta_i}\in \cC\bigcap\mathcal{H}.
\]
This is indeed true because it can be written as a convex combination:
\[
\frac{\bV(q^{\bf a^*})}{v(q^{\bf a^*})}=\frac{a^*_0}{a^*_0+\sum_i a^*_i\beta_i}(1,1)+\sum_{i} \frac{a^*_i}{a^*_0+\sum_i a^*_i\beta_i} \left(\frac{d_i}{\beta_i},0\right)
\]
and the set of points $\{(1,1)\}\bigcup\cup_i \{(\frac{d_i}{\beta_i},0)\}$ is contained in $\cC\bigcap\mathcal{H}$.

%Because $\covol(\Gamma)=\vol(v_1)=\alpha_*^{-1}$ by \eqref{vola*} and $\covol(\cC\bigcap\mathcal{H})={\rm Area}(\triangle_{OAB})=\alpha_*^{-1}$, we see that the two closed coconvex sets actually coincide. 
\end{proof}

\begin{center}
\renewcommand*{\arraystretch}{1.5}
\begin{tabular}{|c|c|c|c|c|}
\hline
$c_k$ & $\beta_k=v(q_k)$ & $q_k$ & $\#(\cS\setminus\cA_{c_k\beta_k})$ & $\frac{\dim_{\bC}\left(\bC[x,y]/\fa_{c_k\beta_k}\right)}{(c_k\beta_k)^2/2!}$ \\
\hline
%\thickhline
$c_1=2$ & $\frac{3}{2}$     &    $x$      &  $5$  & $1.11111$ \\
\hline
$c_2=3$ & $\frac{10}{3}$    &    $x^2+y^3$      &  $38$  & $0.76$ \\
\hline
$c_3=5$ & $\frac{51}{5}$   & $(x^2+y^3)^3+y^{10}$ & $810$ & $0.62284$\\
\hline
$c_4=7$ & $\frac{358}{7}$ & $ q_3^5+y^{51}$& $37923$ & $0.59179$ \\
\hline
$c_5=11$ & $\frac{3939}{11}$ & $q_4^7+y^{358}$ & $4553318$ & $0.58693$\\
\hline
\end{tabular}
\end{center}
Figure \ref{figam} shows $\frac{1}{c_k\beta_k}\left(\cS\setminus \cA_{c_k \beta_k}\right)$ for $k=2,3,4$.  As an example of Proposition \ref{propdim}, we let $m=c_2 \beta_2=3\cdot 10/3=10$ so that $\dim_{\bC}(\bC[x,y]/\fa_m(v))=\#(\cS\setminus \cA_{10})=38$. On the other hand, we can write down the basis of $\cF^m R_k$ for $k=6,7,8, 9$ for which $0\subsetneq \cF^{10} R_k\subsetneq R_k$:
\begin{enumerate}
\item $k=6$: $x^6=\bin((x^2+y^3)^3)=\bin(q_2^3)$.
\item $k=7$: $x^7$, $x^6y$, $x^5y^2=\bin\left(y^2 x (x^2+y^3)^2\right)=\bin(y^2q_1 q_2^2)$.
\item $k=8$: $x^8$, $x^7y$, $x^6 y^2$, $x^5 y^3$, $x^4 y^4$. 
\item $k=9$: $x^9$, $x^8y$, $x^7 y^2$, $x^6 y^3$, $x^5 y^4$, $x^4 y^5$, $x^3 y^6$, $x^2 y^7$.
\end{enumerate}
Notice that the elements $x^6, x^5y^2$ show that the equality $\cF^{m} R_k=R_k\bigcap \fa_m(v)$, which holds in the $\bC^*$-invariant case by Lemma \ref{lemsat}, does not hold in general.
\begin{figure}[h]
  \begin{center}
  \subfigure[$c_2\beta_2=10$]{\label{gam2}\includegraphics[height=3.5cm]{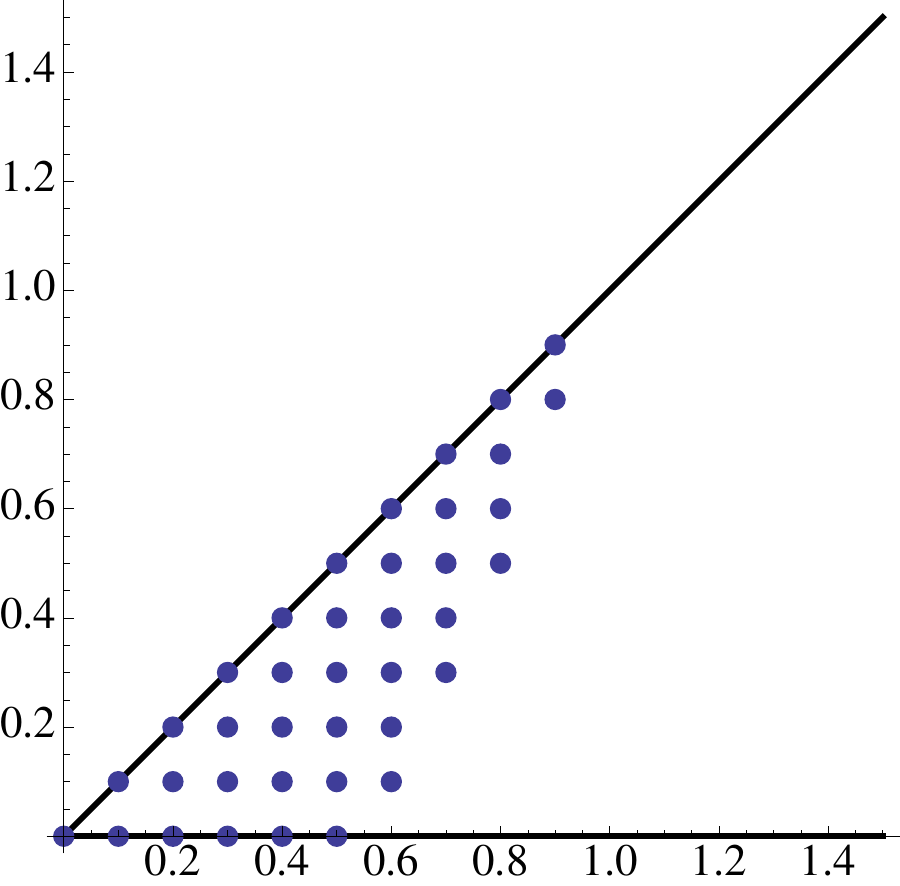}}
  \hskip 0.6cm
    \subfigure[$c_3\beta_3=51$]{\label{gam3}\includegraphics[height=3.5cm]{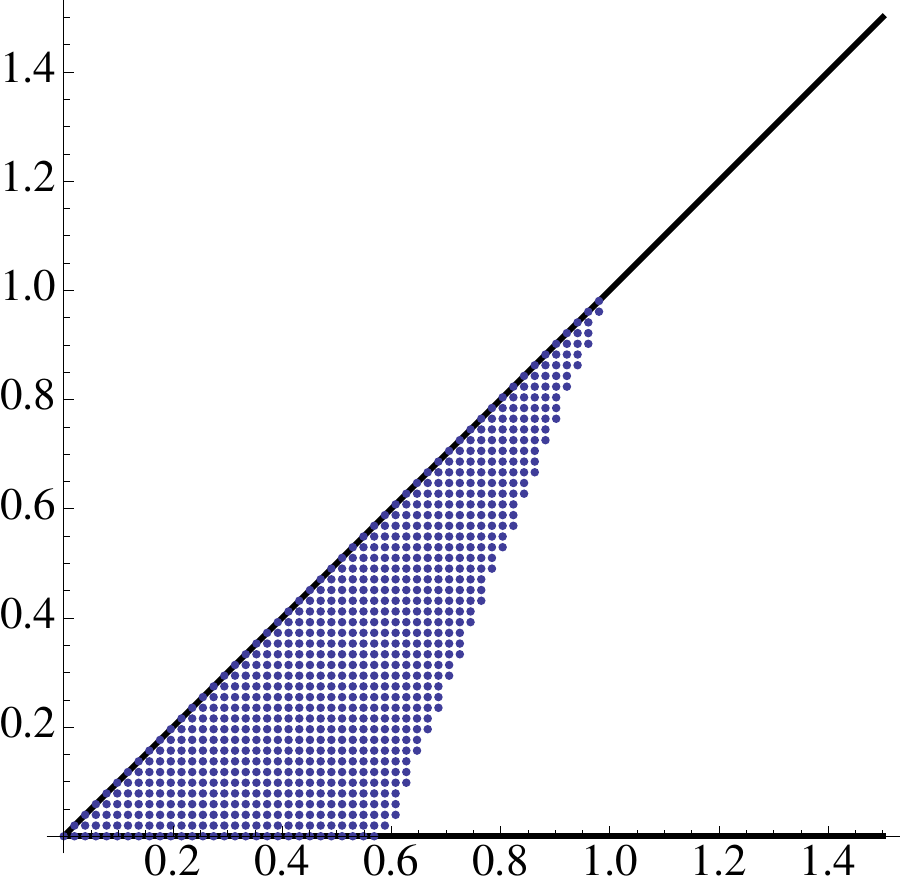}}
   \hskip 0.6cm
   \subfigure[$c_4\beta_4=358$]{\label{gam4}\includegraphics[height=3.5cm]{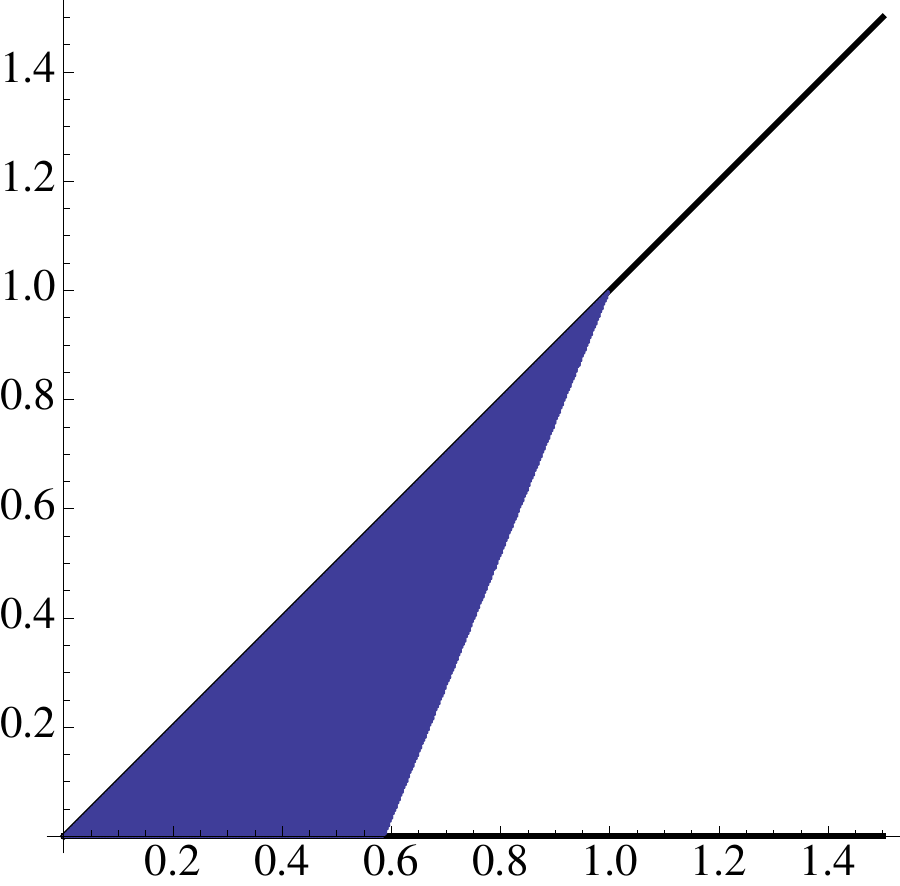}}   
  \end{center}
  \caption{Complement of Primary Sequence}
  \label{figam}
\end{figure}

\end{enumerate}
%\begin{center}
%\renewcommand*{\arraystretch}{1.5}
%\begin{tabular}{|c|c|c|c|c|}
%\hline
%$c_k$ & $\beta_k=v(q_k)$ & $q_k$ & $\#(\cS\setminus\cA_{c_k\beta_k})$ & $\frac{\dim_{\bC}\left(\bC[x,y]/\fa_{c_k\beta_k}\right)}{(c_k\beta_k)^2/2!}$ \\
%\hline
%\thickhline
%$c_1=2$ & $\frac{3}{2}$     &    $x$      &  $5$  & $1.11111$ \\
%\hline
%$c_2=5$ & $\frac{16}{5}$    &    $x^2+y^3$      & $93$   & $0.72656$  \\
%\hline
%$c_3=7$ & $\frac{113}{7}$   & $(x^2+y^3)^5+y^{16}$ & $4054$ & $0.63498$\\
%\hline
%$c_4=11$ & $\frac{1244}{11}$ & $ q_3^7+y^{113}$& $480069$ & $0.62043$ \\
%\hline
%$c_5=13$ & $\frac{3939}{11}$ & $q_4^7+y^{358}$ & $4553318$ & $0.58693$\\
%\hline
%\end{tabular}
%\end{center}

%\newpage

\section{Postscript Note:}\label{sec-ps}

After the second version of the preprint on arXiv was posted, we were informed by K. Fujita that he independently prove the results in Theorem \ref{thmdiv} (see \cite{Fuj16}). Moreover, he obtained a similar result for the uniform K-stability.

\section{Acknowledgment}
The author is partially supported by NSF DMS-1405936.
I would like to thank the math department at Purdue University for providing a nice environment for my research. I would like to thank especially Laszlo Lempert, Kenji Matsuki, Bernd Ulrich and Sai-Kee Yeung for their interest and encouragement. I am grateful to Kento Fujita for pointing out a mistake in an earlier draft which helps me to get a correct statement of the result. I also would like to thank Chenyang Xu for helpful comments, Yuchen Liu for fruitful discussions, Tommaso de Fernex and Mircea Musta\c{t}\u{a} for their interests in this work, and anonymous referees for very useful comments which help me to improve the presentation.

\noindent
li2285@purdue.edu

\noindent
Department of Mathematics, Purdue University, West Lafayette, IN 47907-2067 USA

\end{document}